\documentclass[leqno,11pt]{article}

\usepackage[utf8]{inputenc}
\usepackage[T1]{fontenc}
\usepackage{microtype}

\usepackage[dvipsnames]{xcolor}
\usepackage{xcolor-solarized}

\usepackage{calc}
\usepackage[a4paper]{geometry}

\ifdefined\screenLayout
  \geometry{paperwidth=\textwidth+2em,left=1em,right=1em}
  \geometry{paperheight=\textheight+2em,head=0em,foot=0em,top=1em,bottom=1em}
  \pagecolor{solarized-base3}
  \color{solarized-base03}
  \usepackage{fancyhdr}
  \pagestyle{fancy}
  \fancyfoot{}
\fi

\usepackage{amsmath}
\usepackage{amsthm}

\usepackage{tikz-cd}
\usetikzlibrary{arrows} 
\tikzset{
  commutative diagrams/.cd, 
  arrow style=tikz, 
  diagrams={>=stealth}
}
\usetikzlibrary{matrix,decorations.pathreplacing,calc}
\usetikzlibrary{graphs,graphs.standard}
\usepackage{pgfplots}
\usepgfplotslibrary{external}

\usepackage{afterpage}
\usepackage{pdflscape}

\usepackage{colortbl}
\usepackage{adforn}
\usepackage{nameref}

\usepackage{textcomp}
\usepackage[sb]{libertine}
\usepackage[varqu,varl]{zi4}%
\usepackage[libertine,bigdelims,vvarbb]{newtxmath}

\IfPackageAtLeastTF{superiors}{2.0}{%
  \usepackage[supsfam=LibertinusSerif-Sup,supscaled=1.2,raised=-.13em]{superiors}
}{%
  \usepackage[supstfm=libertinesups,supscaled=1.2,raised=-.13em]{superiors}
}

\useosf
\usepackage[scr=boondox,cal=euler]{mathalfa}
\usepackage{marvosym}

\usepackage{slashed}
\usepackage{esint} 

\usepackage[ngerman,english]{babel}
\usepackage{imakeidx}
\makeindex[intoc]
\usepackage{csquotes}
\usepackage[
  backend=biber,
  hyperref=true,
  backref=true,
  isbn=false,
  doi=true,
  natbib=true,
  eprint=true,
  useprefix=true,
  maxcitenames=99,
  maxbibnames=99,  
  maxalphanames=99, 
  minalphanames=99,
  safeinputenc,
  style=alphabetic,
  citestyle=alphabetic,
  block=space,
  datamodel=preamble/ext-eprint,
  sorting=nyt
]{biblatex}
\usepackage[
  bookmarksnumbered = true,
  hypertexnames = false,
  colorlinks    = true,
  citecolor     = solarized-blue,
  linkcolor     = solarized-blue,
  urlcolor      = solarized-blue,
  breaklinks
]{hyperref}

\DeclareFieldFormat{url}{%
  \href{#1}{\ComputerMouse}
}
\DeclareFieldFormat{doi}{%
  \mkbibacro{DOI}\addcolon\space\href{https://doi.org/#1}{#1}
}
\makeatletter
\DeclareFieldFormat{arxiv}{%
  arXiv\addcolon\space\href{http://arxiv.org/\abx@arxivpath/#1}{#1}
}
\makeatother
\DeclareFieldFormat{mr}{%
  MR\addcolon\space\href{http://www.ams.org/mathscinet-getitem?mr=MR#1}{#1}
}
\DeclareFieldFormat{zbl}{%
  Zbl\addcolon\space\href{http://zbmath.org/?q=an:#1}{#1}
}
\renewbibmacro*{eprint}{%
  \printfield{arxiv}%
  \newunit\newblock
  \printfield{mr}%
  \newunit\newblock
  \printfield{zbl}%
  \newunit\newblock
  \iffieldundef{eprinttype}
  {\printfield{eprint}}
  {\printfield[eprint:\strfield{eprinttype}]{eprint}}
}

\AtEveryBibitem{%
  \clearlist{address}%
}
\DeclareFieldFormat[article,inproceedings,inbook,incollection,thesis]{title}{\textit{#1}}
\renewbibmacro{in:}{}
\addbibresource{preamble/refs.bib}

\newcommand{\printreferences}{\raggedright\printbibliography}

\usepackage[inline,shortlabels]{enumitem}

\usepackage{subcaption}

\usepackage[yyyymmdd]{datetime}

\usepackage{etoolbox}
\ifundef{\abstract}{}{\patchcmd{\abstract}%
    {\quotation}{\quotation\noindent\ignorespaces}{}{}}

\usepackage[super]{nth}


\usepackage{thmtools}
\usepackage[framemethod=TikZ]{mdframed}

\numberwithin{equation}{section}

\renewcommand{\qedsymbol}{$\blacksquare$}

\newcommand{\CorollaryQED}{\qedsymbol}
\newcommand{\ConjectureQED}{$\square$}
\newcommand{\SituationQED}{$\times$}
\newcommand{\DefinitionQED}{$\bullet$}
\newcommand{\NotationQED}{$\circ$}
\newcommand{\ExampleQED}{$\spadesuit$}
\newcommand{\RemarkQED}{$\clubsuit$}
\newcommand{\ExerciseQED}{?!}

\ifdefined\screenLayout
  \declaretheoremstyle[
  bodyfont=\itshape,
  mdframed={    
    backgroundcolor=solarized-base3!90!solarized-blue,
    linewidth=0,
    innerleftmargin=.5em,
    innerrightmargin=.5em,
    innertopmargin=.5em,
    innerbottommargin=.5em,
    leftmargin=-.5em,
    rightmargin=-.5em,
  }
]{theorem}

\declaretheoremstyle[
mdframed={
  backgroundcolor=solarized-base3!90!solarized-green,
  linewidth=0,
  innerleftmargin=.5em,
  innerrightmargin=.5em,
  innertopmargin=.5em,
  innerbottommargin=.5em,
  leftmargin=-.5em,
  rightmargin=-.5em,
  }
]{definition}

\declaretheoremstyle[
  mdframed={
    backgroundcolor=solarized-base3!90!solarized-yellow,
    linewidth=0,
    innerleftmargin=.5em,
    innerrightmargin=.5em,
    innertopmargin=.5em,
    innerbottommargin=.5em,
    leftmargin=-.5em,
    rightmargin=-.5em,
  }
]{example}

\declaretheoremstyle[
  mdframed={
    backgroundcolor=solarized-base3!90!solarized-orange,
    linewidth=0,
    innerleftmargin=.5em,
    innerrightmargin=.5em,
    innertopmargin=.5em,
    innerbottommargin=.5em,
    leftmargin=-.5em,
    rightmargin=-.5em,
  }
]{remark}
\else
  \declaretheoremstyle[
  bodyfont=\itshape
  ]{theorem}
  \declaretheoremstyle[]{definition}
  \declaretheoremstyle[]{example}
  \declaretheoremstyle[]{remark}
\fi

\declaretheorem[numberlike=equation,style=theorem]{theorem}
\declaretheorem[numbered=no,name=Theorem,style=theorem]{theorem*}
\declaretheorem[numberlike=equation,name=Lemma,style=theorem]{lemma}
\declaretheorem[numberlike=equation,name=Proposition,style=theorem]{prop}
\declaretheorem[numberlike=equation,name=Corollary,qed=\CorollaryQED,style=theorem]{cor}
\declaretheorem[numberlike=equation,name=Conjecture,qed=\ConjectureQED,style=theorem]{conjecture}

\declaretheorem[numberlike=equation,name=Hypothesis]{hypothesis}

\declaretheorem[numberlike=equation,name=Definition,style=definition,qed=\DefinitionQED]{definition}
\declaretheorem[numbered=no,name=Definition,style=definition,qed=\DefinitionQED]{definition*}

\declaretheorem[numberlike=equation,style=definition,qed=\ExampleQED]{example}

\declaretheorem[numberlike=equation,style=remark,qed=\RemarkQED]{remark}
\declaretheorem[numbered=no,style=remark,name=Remark,qed=\RemarkQED]{remark*}

\declaretheorem[numberlike=equation,style=remark]{convention}
\declaretheorem[numberlike=equation,style=definition]{question}

\def\makeautorefname#1#2{\AtBeginDocument{\expandafter\def\csname#1autorefname\endcsname{#2}}}
\makeautorefname{table}{Table}        
\makeautorefname{chapter}{Chapter}
\makeautorefname{section}{Section}
\makeautorefname{subsection}{Section}
\makeautorefname{subsubsection}{Section}
\makeautorefname{footnote}{Footnote}
\AtBeginDocument{\def\itemautorefname~#1\null{(#1)\null}}
\AtBeginDocument{\def\equationautorefname~#1\null{(#1)\null}}

\newtheorem{step}{Step}

\numberwithin{substep}{step}
\makeautorefname{step}{Step}
\makeautorefname{substep}{Step}

\makeautorefname{case}{Case}
\makeautorefname{substep}{Step}

\setlist[description]{leftmargin=!,labelindent=1em}
\setlist[enumerate]{label={\rm (\arabic*)},ref=\arabic*}
\setlist[enumerate,2]{label={\rm (\alph*)},ref=\theenumi.\alph*}
\setlist[enumerate,3]{label={\rm (\roman*)},ref=\theenumii.\roman*}

\let\C\undefined

\usepackage{bm}
\usepackage{mathtools} 
\usepackage{stmaryrd} 

\DeclareFontFamily{U}{mathx}{\hyphenchar\font45}
\DeclareFontShape{U}{mathx}{m}{n}{
      <5> <6> <7> <8> <9> <10>
      <10.95> <12> <14.4> <17.28> <20.74> <24.88>
      mathx10
      }{}
\DeclareSymbolFont{mathx}{U}{mathx}{m}{n}
\DeclareFontSubstitution{U}{mathx}{m}{n}
\DeclareMathAccent{\widecheck}{0}{mathx}{"71}
\DeclareMathAccent{\wideparen}{0}{mathx}{"75}

\DeclareMathOperator{\Aut}{Aut}
\DeclareMathOperator{\aut}{\mathfrak{aut}}

\DeclareMathOperator{\Diff}{Diff}

\DeclareMathOperator{\Ext}{Ext}

\DeclareMathOperator{\HF}{\HF}

\DeclareMathOperator{\Map}{Map}

\DeclareMathOperator{\PD}{PD}

\DeclareMathOperator{\Res}{Res}
\DeclareMathOperator{\Spec}{Spec}

\DeclareMathOperator{\codim}{codim}

\DeclareMathOperator{\coker}{coker}

\DeclareMathOperator{\im}{im}
\DeclareMathOperator{\ind}{index}
\DeclareMathOperator{\inj}{inj}

\DeclareMathOperator{\sign}{sign}

\DeclarePairedDelimiter\paren{\lparen}{\rparen}
\DeclarePairedDelimiter\sqparen{[}{]}
\DeclarePairedDelimiter{\Abs}{\|}{\|}

\DeclarePairedDelimiter{\Inner}{\langle}{\rangle}

\DeclarePairedDelimiter{\abs}{\lvert}{\rvert}
\DeclarePairedDelimiter{\bracket}{\langle}{\rangle}

\DeclarePairedDelimiter{\set}{\lbrace}{\rbrace}
\def\({\left(}
\def\){\right)}
\def\<{\left\langle}
\def\>{\right\rangle}

\newcommand{\CP}{{\C P}}

\newcommand{\C}{{\mathbf{C}}}

\newcommand{\N}{{\mathbf{N}}}

\newcommand{\R}{\mathbf{R}}

\newcommand{\Vect}{\mathrm{Vect}}

\newcommand{\Z}{\mathbf{Z}}

\newcommand{\co}{\mskip0.5mu\colon\thinspace}

\newcommand{\cyl}{{\rm{cyl}}}

\newcommand{\defined}[2][\key]{\def\key{#2}\textbf{#2}\index{#1}}
\newcommand{\delbar}{\bar{\del}}

\newcommand{\del}{\partial}
\newcommand{\ev}{\mathrm{ev}}

\newcommand{\id}{\mathrm{id}}

\newcommand{\emb}{\hookrightarrow}

\newcommand{\inner}[2]{\bracket{#1, #2}}
\newcommand{\into}{\hookrightarrow}
\newcommand{\iso}{\cong}

\newcommand{\loc}{\mathrm{loc}}

\newcommand{\ob}{\mathrm{ob}}

\newcommand{\one}{\mathbf{1}}

\newcommand{\pr}{\mathrm{pr}}
\newcommand{\qandq}{\quad\text{and}\quad}

\newcommand{\qifq}{\quad\text{if}\quad}

\newcommand{\qand}{\quad\text{and}}

\newcommand{\qforeveryq}{\quad\text{for every}\quad}

\newcommand{\qwithq}{\quad\text{with}\quad}

\newcommand{\sExt}{\mathrm{\sE\!xt}}
\newcommand{\sHom}{\mathrm{\sH\!om}}

\newcommand{\vol}{\mathrm{vol}}

\renewcommand{\P}{\mathbf{P}}

\renewcommand{\emptyset}{\varnothing}
\renewcommand{\epsilon}{\varepsilon}
\renewcommand{\setminus}{{\backslash}}

\renewcommand{\leq}{\leqslant}
\renewcommand{\geq}{\geqslant}

\makeatletter
\renewcommand*\env@matrix[1][*\c@MaxMatrixCols c]{%
  \hskip -\arraycolsep
  \let\@ifnextchar\new@ifnextchar
  \array{#1}}

\renewcommand\xleftrightarrow[2][]{%
  \ext@arrow 9999{\longleftrightarrowfill@}{#1}{#2}}
\newcommand\longleftrightarrowfill@{%
  \arrowfill@\leftarrow\relbar\rightarrow}
\makeatother

\newcommand{\even}{\mathrm{even}}



\newcommand{\rd}{{\rm d}}

\newcommand{\rI}{{\rm I}}
\newcommand{\rII}{{\rm II}}
\newcommand{\rIII}{{\rm III}}
\newcommand{\rIV}{{\rm IV}}




\newcommand{\bx}{{\mathbf{x}}}

\newcommand{\bF}{{\mathbf{F}}}



\newcommand{\sE}{\mathscr{E}}
\newcommand{\sF}{\mathscr{F}}

\newcommand{\sH}{\mathscr{H}}
\newcommand{\sI}{\mathscr{I}}
\newcommand{\sJ}{\mathscr{J}}

\newcommand{\sM}{\mathscr{M}}

\newcommand{\sO}{\mathscr{O}}

\newcommand{\sS}{\mathscr{S}}
\newcommand{\sT}{\mathscr{T}}
\newcommand{\sU}{\mathscr{U}}

\newcommand{\sX}{\mathscr{X}}
\newcommand{\sY}{\mathscr{Y}}


\newcommand{\fd}{{\mathfrak d}}
\newcommand{\fe}{{\mathfrak e}}

\newcommand{\fn}{{\mathfrak n}}
\newcommand{\fo}{{\mathfrak o}}

\newcommand{\fq}{{\mathfrak q}}
\newcommand{\fr}{{\mathfrak r}}

\newcommand{\fF}{{\mathfrak F}}

\newcommand{\fO}{{\mathfrak O}}




%

\newcommand{\bxi}{{\bm\xi}}


\usepackage{halloweenmath}

\renewcommand{\emb}{\mathrm{emb}}
\newcommand{\push}{\mathrm{push}}
\newcommand{\pull}{\mathrm{pull}}
\newcommand{\fuse}{\mathrm{fuse}}
\newcommand{\diff}{\mathrm{diff}}
\newcommand{\ghost}{\mathrm{ghost}}
\newcommand{\basefree}{\bullet}
\newcommand{\bubble}{\mathrm{bubble}}

\DeclareMathOperator{\vdim}{vdim}
\DeclareMathOperator{\bdry}{bd}
\newcommand{\rext}{\mathrm{ext}}
\newcommand{\rint}{\mathrm{int}}
\newcommand{\GW}{\mathrm{GW}}
\newcommand{\BPS}{\mathrm{BPS}}
\newcommand{\fibprod}[2]{\,{}_{#1}\mkern-4mu\times_{#2}}

\author{
  Aleksander Doan
  \and
  Thomas Walpuski
}
\title{
  Counting embedded curves in symplectic $6$--manifolds
}
\date{2022-02-14}

\begin{document}

\maketitle

\begin{abstract}
  Based on computations of \citet{Pandharipande1999},  
  \citet{Zinger2011} proved that the Gopakumar--Vafa BPS invariants $\BPS_{A,g}(X,\omega)$ for primitive Calabi--Yau classes and arbitrary Fano classes $A$ on a symplectic $6$--manifold $(X,\omega)$ agree with the signed count $n_{A,g}(X,\omega)$ of embedded $J$--holomorphic curves representing $A$ and of genus $g$ for a generic almost complex structure $J$ compatible with $\omega$.
  \citeauthor{Zinger2011}'s proof of the invariance of $n_{A,g}(X,\omega)$ is indirect,
  as it relies on Gromov--Witten theory.
  In this article we give a direct proof of the invariance of $n_{A,g}(X,\omega)$.
  Furthermore, we prove that $n_{A,g}(X,\omega) = 0$ for $g \gg 1$,
  thus proving the Gopakumar--Vafa finiteness conjecture for primitive Calabi--Yau classes and arbitrary Fano classes.
\end{abstract}

\tableofcontents


\section{Introduction}

\emph{Are there invariants of symplectic manifolds which count \emph{embedded} pseudo-holomorphic curves?}
Such counts can fail to be invariants for two reasons:
\begin{enumerate*}[(a)]
\item
  \label{Eq_MultipleCovers}
  pseudo-holomorphic embeddings can degenerate to multiple covers, and
\item
  \label{Eq_NodalMaps}
  they can undergo bubbling and their domains can degenerate.
\end{enumerate*}
In the following we consider two situations in which both of these can be ruled out.

Let $(X,\omega)$ be a closed symplectic $6$--manifold equipped with an almost complex structure $J$ compatible with $\omega$.
Denote by $\sM_{A,g}^\star(X,J)$ the moduli space of simple $J$--holomorphic maps representing a homology class $A \in H_2(X,\Z)$ and of genus $g$.
For a generic choice of $J$ the moduli space $\sM_{A,g}^\star(X,J)$ is an oriented smooth manifold of dimension 
\begin{equation*}
  \dim \sM_{A,g}^\star(X,J)
  =
  2\inner{c_1(X,\omega)}{A}.
\end{equation*}
If $A$ is a \defined{Calabi--Yau class}, that is: $\Inner{c_1(X,\omega),A} = 0$,
then $\sM_{A,g}^\star(X,J)$ is a finite set of signed points and can be counted.
If $A$ is primitive in $H_2(X,\Z)$,
then multiple cover phenomena can be ruled out, and it will be proved that this count defines an invariant $n_{A,g}(X,\omega)$.
If $A$ is a \defined{Fano class},
that is: $\Inner{c_1(X,\omega),A} > 0$,
then $\sM_{A,g}^\star(X,J)$ can be cut-down to a finite set of signed points by imposing incidence conditions governed by suitable cohomology classes $\gamma,\ldots,\gamma_\Lambda \in H^\even(X,\Z)$.
In this case, multiple cover phenomena can be ruled out regardless of whether $A$ is primitive or not,
and it will be proved that counting the cut-down moduli space defines an invariant $n_{A,g}(X,\omega;\gamma_1,\ldots,\gamma_\Lambda)$.

These invariants are not new.
They were considered by 
\citet[Theorem 1.5 and footnote 11]{Zinger2011} who proved that they agree with Gopakumar and Vafa's BPS invariants.
The proof of the invariance of $n_{A,g}(X,\omega)$ and $n_{A,g}(X,\omega;\gamma_1,\ldots,\gamma_\Lambda)$ in \cite{Zinger2011} is indirect:
it relies on these numbers satisfying the Gopakumar--Vafa formula and the invariance of Gromov--Witten invariants.
The novelty in the present work is that we give a much simpler direct proof of invariance.
Furthermore, we prove that the invariants vanish for $g$ sufficiently large; thus establishing the Gopakumar--Vafa finiteness conjecture for primitive Calabi--Yau classes and arbitrary Fano classes.

\subsection{Ghost components}

The main technical result of this paper allows us to rule out,
in certain situations,
degenerations in which the limiting nodal pseudo-holomorphic map has a \defined{ghost component},
that is: a component on which it is constant.
The precise definitions used in the following statement are given in \autoref{Sec_ModuliSpaces} and \autoref{Sec_GromovCompactness}.

\begin{theorem}
  \label{Thm_LimitConstraints}
  Let $(X,g_\infty,J_\infty)$ be an almost Hermitian manifold and 
  let $(J_k)_{k\in\N}$ be a sequence of almost complex structure on $X$ converging to $J_\infty$ in the $C^1$ topology.
  If $\paren*{u_k\co (\Sigma_k,j_k) \to (X,J_k)}_{k\in\N}$ is a sequence of pseudo-holomorphic maps from smooth, closed Riemann surfaces which Gromov converges to the nodal $J_\infty$--holomorphic map $u_\infty\co (\Sigma_\infty,j_\infty,\nu_\infty) \to (X,J_\infty)$,
  then one of the following holds:
  \begin{enumerate}
  \item
    $(\Sigma_\infty,j_\infty,\nu_\infty)$ has no ghost components.
   \item
    $(\Sigma_\infty,j_\infty,\nu_\infty)$ has a ghost component $C$ with at least two non-ghost components attached to $C$.
   \item
     $(\Sigma_\infty,j_\infty,\nu_\infty)$ has a ghost component $C$ with a non-ghost component attached to $C$ at at least two nodes.
   \item
     $(\Sigma_\infty,j_\infty,\nu_\infty)$ has a ghost component $C$ with precisely one non-ghost component attached to $C$ at a single node $n \in C$; in that case,  $\rd_{\nu_\infty(n)}u_\infty = 0$, 
     that is: the corresponding node $\nu_\infty(n)$ in the non-ghost component is a critical point of $u_\infty$. 
  \end{enumerate}
\end{theorem}

\begin{remark}
  \citet[Theorem 1.2]{Zinger2009} has analyzed in detail when a nodal pseudo-holomorphic map whose domain has arithmetic genus one appears as a Gromov limit of pseudo-holomorphic maps with smooth domain.
  Jingchen Niu's PhD thesis \cite{Niu2016} extends \citeauthor{Zinger2009}'s analysis to genus two.
  Their results are based on analyzing the obstruction map of a Kuranishi model of a neighborhood of the limiting pseudo-holomorphic map.
  The proof of \autoref{Thm_LimitConstraints} in \autoref{Sec_SmoothingNodalMaps} uses similar methods.
  This idea goes back to \citet[Proposition 1.20]{Ionel1998} and \citet[Lemma 1]{Pandharipande1995}.
  Recently, a different proof of  a result similar to \autoref{Thm_LimitConstraints} has appeared in the work of \citet[Lemma 4.9]{Ekholm2019}.
\end{remark}

Given a symplectic manifold $(X,\omega)$ of dimension at least $6$,
denote by $\sJ(X,\omega)$ the set of almost complex structures $J$ compatible with $\omega$ and denote by $\sJ_\emb(X,\omega)$ the subset of those $J$ for which the following hold:
\begin{enumerate}[(a)]
\item
  there are no simple $J$--holomorphic maps of negative index,
\item
  every simple $J$--holomorphic map is an embedding, and
\item
  every two simple $J$–holomorphic maps of index zero either have disjoint images or are related by a reparametrization;
\end{enumerate}
see \autoref{Def_JEmb}.
The complement of $\sJ_\emb(X,\omega)$ in $\sJ(X,\omega)$ has codimension two;
in particular:
$\sJ_\emb(X,\omega)$ is open and dense, and
every path $(J_t)_{t\in[0,1]}$ in $\sJ(X,\omega)$ with end points in $\sJ_\emb(X,\omega)$ is homotopic relative to the end points to a path in $\sJ_\emb(X,\omega)$.

\begin{theorem}
  \label{Thm_CY3PrimitiveGenericLimitHasSmoothDomain}
  Let $(X,\omega)$ be a compact symplectic $6$--manifold,
  let $(J_k)_{k\in\N}$ be a sequence of almost complex structures compatible with $\omega$ converging to $J_\infty$, and
  let $\paren*{u_k\co (\Sigma_k,j_k) \to (X,J_k)}_{k\in\N}$ be a sequence of pseudo-holomorphic maps which Gromov converges to the nodal $J_\infty$--holomorphic map $u_\infty\co (\Sigma_\infty,j_\infty,\nu_\infty) \to (X,J_\infty)$.
  Set $A \coloneq (u_\infty)_*[\Sigma_\infty] \in H_2(X,\Z)$.
  If $A$ is primitive,
  satisfies $\Inner{c_1(X,\omega),A} = 0$,
  and $J_\infty \in \sJ_\emb(X,\omega)$,  
  then $(\Sigma_\infty,j_\infty,\nu_\infty)$ is smooth and $u_\infty$ is an embedding.
\end{theorem}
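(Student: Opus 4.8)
The plan is to run the trichotomy of \autoref{Thm_LimitConstraints} against the hypotheses $\Inner{c_1(X,\omega),A}=0$, $A$ primitive, and $J_\infty\in\sJ_\emb(X,\omega)$, and show that every alternative leading to a node is impossible, leaving only the smooth case; the embedding statement then follows from the definition of $\sJ_\emb$. First I would set up the bookkeeping for a hypothetical nodal limit $u_\infty\co(\Sigma_\infty,j_\infty,\nu_\infty)\to(X,J_\infty)$: decompose $\Sigma_\infty$ into its irreducible components, separate the ghost components (on which $u_\infty$ is constant) from the non-ghost ones, and let $A_i\in H_2(X,\Z)$ be the class carried by the $i$-th non-ghost component, so $A=\sum_i A_i$ with each $A_i\neq 0$. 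Because $J_\infty\in\sJ_\emb(X,\omega)$ rules out simple curves of negative index, every non-ghost component $C_i$, once we pass to its underlying simple curve with multiplicity $m_i\geq 1$ and primitive class $B_i$ (so $A_i=m_iB_i$), satisfies $\Inner{c_1(X,\omega),B_i}\geq 0$; summing, $0=\Inner{c_1(X,\omega),A}=\sum_i m_i\Inner{c_1(X,\omega),B_i}$ forces $\Inner{c_1(X,\omega),B_i}=0$ for every $i$, i.e.\ every non-ghost component is itself a Calabi--Yau class.

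Next I would exploit primitivity of $A$. If there were two or more non-ghost components, or one non-ghost component appearing with multiplicity $m_i\geq 2$, then $A=\sum m_iB_i$ would be a sum of at least two (not necessarily distinct) nonzero classes $B_i$; I want to argue this contradicts primitivity. The clean way is: each $B_i$ is primitive with $\Inner{c_1,B_i}=0$, and in the $\sJ_\emb$ regime each such $B_i$ is represented by an embedded index-zero curve, and by condition (c) in \autoref{Def_JEmb} any two index-zero simple curves either coincide up to reparametrization or are disjoint. Connectedness of $\Sigma_\infty$ (the Gromov limit of connected domains is connected) means the non-ghost components and ghosts form a connected configuration; two distinct disjoint embedded curves cannot be joined through ghost components without the ghost carrying the intersection, and a ghost attached to a single non-ghost component is then ruled out by the combinatorial analysis below. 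The upshot I want is: there is exactly one non-ghost component $C_0$, carrying class $B_0$ with $m_0=1$, hence $A=B_0$; this is consistent with $A$ primitive. So all other components of $\Sigma_\infty$ are ghosts, and since $\Sigma_\infty$ is connected, if it is not already smooth there is at least one ghost component $C$.

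Now I would invoke the fine structure of \autoref{Thm_LimitConstraints} applied to such a ghost component $C$. Alternative (b)(ii) — at least two non-ghost components attached to $C$ — is impossible since there is only one non-ghost component. Alternative (b)(i) — exactly one non-ghost component attached to $C$ at a node, with $\rd u_\infty$ vanishing at the corresponding point of $C_0$ — would force the embedded Calabi--Yau curve $C_0$ (representing $A=B_0$) to have a critical point; but an embedded $J_\infty$-holomorphic curve is an immersion, and more: in the $\sJ_\emb$ regime a simple curve is an embedding with nowhere-vanishing differential, contradicting $\rd u_\infty=0$ there. (The rank of $\rd u_\infty$ being less than two at an interior point would make the curve non-immersed; I would cite that embedded $\Rightarrow$ immersed in dimension $6$, which is part of what $\sJ_\emb$ guarantees via the automatic-transversality/Riemann--Roch analysis that underlies \autoref{Def_JEmb}.) Having eliminated all nodal alternatives, \autoref{Thm_LimitConstraints} forces $(\Sigma_\infty,j_\infty,\nu_\infty)$ to be smooth; then $u_\infty$ is a smooth $J_\infty$-holomorphic map representing the primitive Calabi--Yau class $A$, hence simple (a multiple cover would write $A$ as a nontrivial multiple, contradicting primitivity), hence an embedding by condition (b) of \autoref{Def_JEmb}.

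The main obstacle is the connectedness/combinatorics step in the middle paragraph: ruling out the configuration in which several ghost components are chained together or a single ghost is attached to the lone non-ghost curve at two or more of its nodes (a ghost "loop"), which would raise the arithmetic genus without violating any class-level constraint. Handling this requires pushing \autoref{Thm_LimitConstraints} a little further — iterating its dichotomy over a maximal ghost subtree, or directly analyzing the dual graph: a ghost component $C$ meeting the rest of $\Sigma_\infty$ in a single node must, by alternative (b)(i), force a critical point on $C_0$, which we have excluded; and a ghost meeting the rest in $\geq 2$ nodes, all landing on the single non-ghost $C_0$, again produces points where $\rd u_\infty$ degenerates (the two branches of $C_0$ through those nodes would have to be tangent, or the limit of the rescaled maps on $C$ would be constant forcing vanishing differential at the attaching points). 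I expect the argument to reduce cleanly to: every ghost component has at least one node at which the attached non-ghost curve has vanishing differential, which $\sJ_\emb$ forbids — so no ghosts, hence smooth domain.
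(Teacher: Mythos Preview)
Your approach is the paper's: use $\langle c_1(X,\omega),A\rangle=0$ and \autoref{Def_JEmb} to force every non-ghost component to index zero, then condition (c) of \autoref{Def_JEmb} together with connectedness of $\im u_\infty=\bigcup_i\im u_\infty^i$ and primitivity of $A$ to reduce to a single simple (hence embedded) non-ghost component, and finally \autoref{Thm_LimitConstraints} to exclude ghosts. Your last-paragraph concern about a ghost attached to the lone non-ghost component at several nodes is unnecessary: such a ghost still has \emph{precisely one} non-ghost component attached, so case (b)(i) of \autoref{Thm_LimitConstraints} applies as stated and furnishes a critical point on the embedded curve---the paper simply cites \autoref{Thm_LimitConstraints} at that point with no further dual-graph analysis.
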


There is a variant of the definition of $\sJ(X,\omega)$ adapted to pseudo-holomorphic maps with $\Lambda$ marked points constrained by pseudo-cycles $f_1,\ldots,f_\Lambda$.
(See \autoref{Sec_Pseudocycles} for a review of the theory of pseudo-cycles.)
The precise definition of this subspace $\sJ(X,\omega;f_1,\ldots,f_\Lambda)$ is rather lengthy and deferred to \autoref{Def_JEmbPseudoCycles}.

\begin{theorem}
  \label{Thm_Fano3GenericLimitHasSmoothDomain}
  Let $(X,\omega)$ be a compact symplectic $6$--manifold,
  let $(J_k)_{k\in\N}$ be a sequence of almost complex structures compatible with $\omega$ converging to $J_\infty$, and
  let $\paren*{u_k\co (\Sigma_k,j_k) \to (X,J_k)}_{k\in\N}$ be a sequence of pseudo-holomorphic maps which Gromov converges to the nodal $J_\infty$--holomorphic map $u_\infty\co (\Sigma_\infty,j_\infty,\nu_\infty) \to (X,J_\infty)$.
  Set $A \coloneq (u_\infty)_*[\Sigma_\infty] \in H_2(X,\Z)$.  
  Let $f_1,\ldots,f_\Lambda$ be even-dimensional pseudo-cycles  of positive codimension in general position.
  If 
  \begin{enumerate}
  \item
    $\im u_k \cap \im f_\lambda \neq \emptyset$ for every $\lambda=1,\ldots,\Lambda$,
  \item
    $2\Inner{c_1(X,\omega),A} = \sum_{\lambda=1}^\Lambda \paren*{\codim f_\lambda - 2} >0$, and
  \item
    $J_\infty \in \sJ_\emb\paren[\big]{X,\omega;f_1,\ldots,f_\Lambda}$,
  \end{enumerate}
  then $(\Sigma_\infty,j_\infty,\nu_\infty)$ is smooth and $u_\infty$ is an embedding with $\im u_\infty \cap \im f_\lambda \neq \emptyset$ for every $\lambda=1,\ldots,\Lambda$. 
\end{theorem}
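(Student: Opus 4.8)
The plan is to reduce this statement to Theorem (Thm_LimitConstraints) together with a dimension-counting argument for the pseudo-cycle constraints, mirroring the structure one would use for the Calabi--Yau case (Thm_CY3PrimitiveGenericLimitHasSmoothDomain). First I would unwind the Gromov limit: write $\Sigma_\infty$ as a union of components $C_\alpha$, with $u_\infty|_{C_\alpha}$ either a simple map onto its image composed with a multiple cover, or a ghost (constant) component. For each non-ghost component let $A_\alpha = (u_\infty|_{C_\alpha})_*[C_\alpha]$ and let $d_\alpha \geq 1$ be the covering multiplicity onto the underlying simple curve in class $A_\alpha/d_\alpha$. Since $J_\infty \in \sJ_\emb(X,\omega;f_1,\dots,f_\Lambda)$, hypothesis (a) in the definition of $\sJ_\emb$ forces every underlying simple curve to have nonnegative index, and stability of the limit together with the hypothesis $2\Inner{c_1(X,\omega),A} = \sum_\lambda(\codim f_\lambda - 2) > 0$ pins down how the total index $2\Inner{c_1(X,\omega),A} = \sum_\alpha 2 d_\alpha \Inner{c_1(X,\omega),A_\alpha/d_\alpha}$ is distributed: the constraint budget $\sum_\lambda(\codim f_\lambda - 2)$ must be absorbed by the components that actually meet the $f_\lambda$.

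The core of the argument is then a case analysis driven by Theorem (Thm_LimitConstraints). If $(\Sigma_\infty,j_\infty,\nu_\infty)$ is smooth we are in case (1) and must only rule out multiple covers and non-embeddedness; here the three defining properties of $\sJ_\emb(X,\omega;f_1,\dots,f_\Lambda)$ (no negative index, every simple map an embedding, index-zero maps with disjoint-or-equal images), combined with the fact that a smooth-domain limit hitting all the $f_\lambda$ must be a single simple curve by the dimension count, give the conclusion directly. So suppose the domain has a node. By Theorem (Thm_LimitConstraints) we are in case (2a), (2b.i), or (2b.ii). In each case I would derive a contradiction by showing the resulting configuration cannot satisfy the incidence conditions (a) and the index identity (b) simultaneously when $J_\infty$ is generic in the pseudo-cycle sense: splitting the curve into $\geq 2$ pieces, each carrying part of the constraint budget, forces at least one piece to have strictly negative "expected dimension after constraints," i.e. the moduli space of such constrained configurations has negative virtual dimension and is therefore empty for $J_\infty \in \sJ_\emb(X,\omega;f_1,\dots,f_\Lambda)$. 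The ghost-component subcases (2b.i) and (2b.ii) are handled by the standard genus-bookkeeping: a ghost component of arithmetic genus $h$ attached at $m$ nodes contributes $\geq 2h + m - 2$ (or, in the one-node case (2b.i), the vanishing $\rd_{\nu_\infty(n)}u_\infty = 0$) to lower the count on the adjacent non-ghost component, again producing negative virtual dimension.

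Concretely, the bookkeeping I would set up is: for a configuration with simple non-ghost components in classes $B_1,\dots,B_r$ (the reduced classes $A_\alpha/d_\alpha$), total arithmetic genus matching the $u_k$, and incidence pattern assigning pseudo-cycle $f_\lambda$ to some component, the virtual dimension is
\begin{equation*}
  \sum_{i=1}^r 2\Inner{c_1(X,\omega),B_i} \;-\; \sum_{\lambda=1}^\Lambda \paren*{\codim f_\lambda - 2} \;-\; 2(\#\text{nodes}) \;-\; (\text{ghost corrections}),
\end{equation*}
and hypothesis (b) says the first two terms cancel up to the factor coming from the multiplicities $d_\alpha \geq 1$; since at least one node is present (and, in the ghost cases, an extra critical-point or genus correction), the total is $\leq -2 < 0$. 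A transversality statement for $J \in \sJ_\emb(X,\omega;f_1,\dots,f_\Lambda)$—which is exactly what the lengthy Definition (Def_JEmbPseudoCycles) is engineered to supply—then says such strata are empty, contradicting the existence of $u_\infty$. Finally, once $u_\infty$ is a single simple curve of the right index meeting all the $f_\lambda$, embeddedness is immediate from property (b) of $\sJ_\emb$, and $\im u_\infty \cap \im f_\lambda \neq \emptyset$ passes to the limit since the $f_\lambda$ are closed (being pseudo-cycles, their images have closed support up to a set of codimension $\geq 2$, which the generic-position hypothesis lets us avoid).

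The main obstacle I expect is the genericity/transversality input: one must check that the intersection conditions against the pseudo-cycles $f_\lambda$ can be made transverse \emph{simultaneously} with all the strata appearing in the case analysis, for a residual set of $J$, and that this residual set is what Definition (Def_JEmbPseudoCycles) isolates—including the subtle strata where a pseudo-cycle constraint lands on a node or on a ghost component. Handling pseudo-cycles rather than honest submanifolds means the transversality argument must be carried out on the top stratum of each $f_\lambda$ and the lower-dimensional "boundary" $\Omega$-part (codimension $\geq 2$) must be shown to be generically missed; this is where the hypothesis "in general position" and the openness-density of $\sJ_\emb(X,\omega;f_1,\dots,f_\Lambda)$ do the real work, and it is the step I would write out most carefully.
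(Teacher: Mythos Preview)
Your overall plan---dimension counting plus \autoref{Thm_LimitConstraints}---is the right shape, but the mechanism you propose does not match what the definition of $\sJ_\emb(X,\omega;f_1,\ldots,f_\Lambda)$ actually delivers, and this is a genuine gap rather than a detail to be filled in later.

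The paper does \emph{not} use a global virtual-dimension formula for the nodal configuration with a $-2(\#\text{nodes})$ penalty. Look again at \autoref{Def_JEmbPseudoCycles}: it controls (a) emptiness of \emph{individual} moduli spaces $\sM_{A,g}^\star(X,J;(f_\lambda^\bullet)_{\lambda\in I})$ of negative virtual dimension, (b) embeddedness of individual maps of small virtual dimension, and (c) disjointness-or-equality for \emph{pairs} of maps whose combined constrained virtual dimensions are small. There is no clause that says ``nodal strata of negative virtual dimension are empty'', so your formula $\sum_i 2\Inner{c_1,B_i} - \sum_\lambda(\codim f_\lambda-2) - 2(\#\text{nodes})$ is not something you can invoke genericity against. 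The paper's argument instead proceeds as follows: for each underlying simple non-ghost component $u_\infty^i$, let $\Lambda_i \subset \{1,\ldots,\Lambda\}$ be the set of $\lambda$ with $\im u_\infty^i \cap \overline{\im f_\lambda}\neq\emptyset$ (distinguishing $f_\lambda$ from its boundary piece $f_\lambda^\del$), use clause~(a) to get $2\Inner{c_1,A_i} \geq \sum_{\lambda\in\Lambda_i}(\codim f_\lambda^i-2)$, then multiply by the covering degrees $d_i$ and sum. This yields an inequality chain whose two ends are equal by hypothesis~(2), forcing $d_i=1$, each constrained virtual dimension to be exactly zero, $f_\lambda^i=f_\lambda$ (so no constraint falls onto a pseudo-cycle boundary), and the $\Lambda_i$ pairwise disjoint. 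At that point clause~(c), applied to any pair, shows the images are pairwise disjoint; connectedness of $\im u_\infty$ then forces a single non-ghost component which is simple and embedded.

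This also means your order of operations is inverted. You propose to start with the case split from \autoref{Thm_LimitConstraints} and then rule out each case by dimension count. The paper first runs the argument above to reduce to one simple embedded non-ghost component, and only \emph{then} appeals to \autoref{Thm_LimitConstraints}: case~(2b.i) is impossible because an embedding has no critical points, and case~(2b.ii) is impossible because there is only one non-ghost component. This avoids entirely the ``ghost corrections'' and node-on-ghost bookkeeping you anticipate as the main obstacle. Your sketch also omits the inequality chain handling the multiplicities $d_i$ and the pseudo-cycle boundaries $f_\lambda^\del$, which is precisely where the work happens; without it you cannot conclude that each $f_\lambda$ is assigned to a unique component or that no multiple covers appear.
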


\subsection{Embedded curve counts}

Denote by $\sJ_\emb^\star(X,\omega)$ the subset of those $J \in \sJ_\emb(X,\omega)$ for which every simple $J$--holomorphic map is unobstructed;
see \autoref{Def_JEmbStar}.

\begin{theorem}
  \label{Thm_CY3PrimitiveGV}
  Let $(X,\omega)$ be a symplectic $6$--manifold.
  Let $A \in H_2(X,\Z)$ be a primitive class such that $\Inner{c_1(X,\omega),A} = 0$.
  \begin{enumerate}
  \item
    \label{Thm_CY3PrimitiveGV_Defined}
    For every $g \in \N_0$ and $J \in \sJ_\emb^\star(X,\omega)$ the moduli space $\sM_{A,g}^\star(X,J)$ of simple $J$--holomorphic maps representing the class $A$ and of genus $g$ is a compact oriented zero-dimensional manifold,
    and the signed count 
    \begin{equation}
      \label{Eq_CY3PrimitiveGV}
      n_{A,g}(X,\omega)
      \coloneq
      \# \sM_{A,g}^\star(X,J)
    \end{equation}
    is independent on the choice of $J$.
  \item
    \label{Thm_CY3PrimitiveGV_Finiteness}
    There is a $g_0 \in \N_0$, depending on $(X,\omega)$ and $A$, such that 
    \begin{equation*}
      n_{A,g}(X,\omega) = 0 \qforeveryq g\geq g_0.
    \end{equation*}
  \end{enumerate}  
\end{theorem}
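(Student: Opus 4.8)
\textbf{First assertion} (well-definedness of the count $n_{A,g}(X,\omega)$). This I would obtain essentially formally from \autoref{Thm_CY3PrimitiveGenericLimitHasSmoothDomain}. Since $A$ is primitive, every $J$--holomorphic map representing $A$ is simple, so $\sM_{A,g}^\star(X,J)$ is the full genus $g$ moduli space in class $A$; for $J \in \sJ_\emb^\star(X,\omega)$ all such maps are unobstructed, so $\sM_{A,g}^\star(X,J)$ is a smooth manifold of dimension $2\Inner{c_1(X,\omega),A} = 0$, oriented by the determinant line of the linearised operator. Compactness follows by combining Gromov compactness with \autoref{Thm_CY3PrimitiveGenericLimitHasSmoothDomain}: any Gromov limit of a sequence in $\sM_{A,g}^\star(X,J)$ has smooth domain and is an embedding, so no nodes form, the domain still has genus $g$, and the limit again lies in $\sM_{A,g}^\star(X,J)$; thus the moduli space is a finite set of signed points. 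For invariance I would join $J_0, J_1 \in \sJ_\emb^\star(X,\omega)$ by a path $(J_t)_{t \in [0,1]}$ in $\sJ(X,\omega)$; since $\sJ(X,\omega) \setminus \sJ_\emb(X,\omega)$ has codimension two the path can be taken inside $\sJ_\emb(X,\omega)$, and after a further small perturbation rel endpoints it is also regular, so that the parametrised moduli space $\cM \coloneq \set*{(t,u) : u \in \sM_{A,g}^\star(X,J_t)}$ is a smooth oriented one--manifold with $\partial\cM = \sM_{A,g}^\star(X,J_0) \sqcup \sM_{A,g}^\star(X,J_1)$. The same compactness argument---now with \autoref{Thm_CY3PrimitiveGenericLimitHasSmoothDomain} applied at each $J_t \in \sJ_\emb(X,\omega)$---shows $\cM$ is compact, so $\#\sM_{A,g}^\star(X,J_0) = \#\sM_{A,g}^\star(X,J_1)$ and $n_{A,g}(X,\omega)$ is well defined.

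\textbf{Second assertion} (vanishing for $g \gg 1$). I would prove this by contradiction; here the genus is unbounded along the relevant sequence, so \autoref{Thm_CY3PrimitiveGenericLimitHasSmoothDomain} no longer applies directly. If the statement fails there is a sequence $g_k \to \infty$ with $n_{A,g_k}(X,\omega) \neq 0$; fixing any $J \in \sJ_\emb^\star(X,\omega)$ produces, for each $k$, an embedded $J$--holomorphic curve $C_k \subset X$ of genus $g_k$ in class $A$, all of the same area $E \coloneq \Inner{[\omega],A}$. Passing to a subsequence, the $C_k$ converge as $J$--holomorphic cycles to $Z = \sum_i m_i Z_i$ with $\sum_i m_i [Z_i] = A$, $\sum_i m_i \area(Z_i) = E$, and each $Z_i$ the image of a simple $J$--holomorphic map. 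Since $J \in \sJ_\emb(X,\omega)$ admits no simple map of negative index, $\Inner{c_1(X,\omega),[Z_i]} \geq 0$ for every $i$; as these pair to $0$ against the Calabi--Yau class $A$, each $Z_i$ has vanishing $c_1$--pairing, hence index zero, and therefore (again by \autoref{Def_JEmb}) is embedded, with distinct $Z_i$ having disjoint images. Since $C_k$ is connected it cannot converge onto two or more disjoint curves, so $Z$ has a single component $Z_1$, and $m_1[Z_1] = A$ with $A$ primitive forces $m_1 = 1$. Thus $C_k \to Z_1$ with multiplicity one, where $Z_1$ is an embedded $J$--holomorphic curve in class $A$.

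The main obstacle is the last step: promoting this to the statement that $C_k$ is diffeomorphic to $Z_1$ for $k \gg 1$, which contradicts $g_k \to \infty$. One must exclude the possibility that the $C_k$ accumulate arbitrarily much topology in subsurfaces whose area tends to zero. The analysis above already precludes any concentration of area: a bubble sphere would contribute a component to $Z$, contradicting either irreducibility of $Z$ or---if $Z_1$ is that sphere---the assumption $g_k \to \infty$. With no concentration of area and multiplicity one, $\epsilon$--regularity for $J$--holomorphic curves shows that for large $k$ each $C_k$ is a normal graph over $Z_1$ with uniformly bounded second fundamental form, hence diffeomorphic to $Z_1$. (Whenever a genuine Gromov limit is available this can instead be read off from \autoref{Thm_LimitConstraints}: a ghost component of that limit would attach to the unique non-ghost component $Z_1$ at a single node that is a critical point of $Z_1 \to X$, which an embedding does not have.) Both hypotheses on $A$ enter essentially at this point: embeddedness of $Z_1$ prevents handles from forming across self-intersections, and primitivity of $A$ forces the limit cycle to be reduced, which excludes the genus a high-multiplicity limit could carry.
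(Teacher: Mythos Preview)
Your argument for \autoref{Thm_CY3PrimitiveGV_Defined} matches the paper's: smoothness and orientations from \autoref{Prop_UnobstructedModuliSpaces}, compactness from Gromov's theorem together with \autoref{Thm_CY3PrimitiveGenericLimitHasSmoothDomain}, and invariance via the parametrised moduli space over a path in $\sJ_\emb^\star(X,\omega;J_0,J_1)$.

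For \autoref{Thm_CY3PrimitiveGV_Finiteness} you take a genuinely different route. The paper simply cites \cite[Theorem~1.6]{Doan2018a}, which proves the stronger assertion that $\coprod_{g\geq 0}\sM_{A,g}^\star(X,J)$ is finite for every $J\in\sJ_\emb^\star(X,\omega)$; that result is established by a rescaling argument in the normal bundle of the limit curve (precisely the argument spelled out for the Fano case later in this paper), producing a nonzero section in $\ker\fd_\iota^N$ and contradicting unobstructedness. You instead argue directly for the weaker genus bound that is actually needed: after cycle convergence to a single embedded multiplicity-one curve $Z_1$, you claim $C_k$ is eventually a normal graph over $Z_1$, forcing $g_k=g(Z_1)$. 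This is correct, but the step you call ``$\epsilon$--regularity for $J$--holomorphic curves'' is really Allard's regularity theorem for stationary integral varifolds (applicable because $\omega$--compatible $J$--holomorphic curves are calibrated by $\omega$ and hence minimal): multiplicity-one cycle convergence gives mass ratio close to one in every small ball centred on $Z_1$, monotonicity confines $C_k$ to an arbitrarily thin tubular neighbourhood, and Allard then supplies local $C^{1,\alpha}$ graphs which patch to a global one. Your remarks about bubble spheres and the parenthetical appeal to \autoref{Thm_LimitConstraints} mix the parametrised-map picture back in; neither is available here since the genus is unbounded and no Gromov limit in the sense of \autoref{Def_GromovConvergence} exists. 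What your approach buys is a self-contained genus bound without the rescaling machinery; what the paper's citation buys is a one-line proof together with the stronger total-finiteness statement.
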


\begin{remark}
  \label{Rmk_CY3PrimitiveGV}
  In fact, $n_{A,g}(X,\omega)$ depends on $\omega$ only up to deformation.
\end{remark}

Again, there is a variant $\sJ_\emb^\star(X,\omega;f_1,\ldots,f_\Lambda)$ of $\sJ_\emb^\star(X,\omega)$ adapted to pseudo-holomorphic maps with $\Lambda$ marked points constrained by pseudo-cycles $f_1,\ldots,f_\Lambda$;
see \autoref{Def_JEmbStarPseudoCycles}.

\begin{theorem}
  \label{Thm_Fano3GV}
  Let $(X,\omega)$ be a symplectic $6$--manifold,
  let $A \in H_2(X,\Z)$,
  let $\gamma_1,\ldots,\gamma_\Lambda \in H^\even(X,\Z)$ be such that $\deg(\gamma_\lambda) > 0$ and 
  \begin{equation*}
    2\Inner{c_1(X,\omega),A} = \sum_{\lambda=1}^\Lambda \paren*{\deg(\gamma_\lambda)-2} > 0.
  \end{equation*}
  \begin{enumerate}
  \item
    \label{Thm_Fano3GV_Defined}
    Let $f_1,\ldots,f_\Lambda$ be pseudo-cycles in $X$ which are Poincaré dual to $\gamma_1,\ldots,\gamma_\Lambda$ and in  general position.
    For every $g \in \N_0$ and $J \in \sJ_\emb^\star\paren[\big]{X,\omega;f_1,\ldots,f_\Lambda}$ the moduli space $\sM_{A,g}^\star\paren[\big]{X,J;f_1,\ldots,f_\Lambda}$ of simple $J$--holomorphic maps representing the class $A$, of genus $g$, and intersecting $f_1,\ldots,f_\Lambda$ is a compact oriented zero-dimensional manifold,
    and the signed count 
    \begin{equation}
      \label{Eq_Fano3GV}
      n_{A,g}(X,\omega;\gamma_1,\ldots,\gamma_\Lambda)
      \coloneq
      \#\sM_{A,g}^\star(X,J;f_1,\ldots,f_\Lambda)
    \end{equation}
    is independent on the choice of $f_1,\ldots,f_\Lambda$ and $J$.
  \item
    \label{Thm_Fano3GV_Finiteness}
    There exists a $g_0 \in \N_0$, depending on $(X,\omega)$, $A$, and $\gamma_1,\ldots,\gamma_\Lambda$,  such that
    \begin{equation*}
      n_{A,g}(X,\omega;\gamma_1,\ldots,\gamma_\Lambda) = 0 \quad\text{for all } g\geq g_0.
    \end{equation*}
  \end{enumerate}  
\end{theorem}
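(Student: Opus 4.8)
I would prove \autoref{Thm_Fano3GV} by running the argument for \autoref{Thm_CY3PrimitiveGV} with the bookkeeping needed to accommodate the $\Lambda$ marked points and the constraints $f_1,\ldots,f_\Lambda$. For the first assertion: the moduli space $\sM_{A,g}^\star(X,J;f_1,\ldots,f_\Lambda)$ sits inside the moduli space of simple genus--$g$ maps representing $A$ and carrying $\Lambda$ marked points — which, for $J\in\sJ_\emb^\star(X,\omega;f_1,\ldots,f_\Lambda)$, is by \autoref{Def_JEmbStarPseudoCycles} an oriented manifold of dimension $2\Inner{c_1(X,\omega),A}+2\Lambda$ on which evaluation at the $\Lambda$ marked points is transverse to $f_1,\ldots,f_\Lambda$ — as the locus where the $\lambda$--th marked point maps into $\im f_\lambda$. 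Hence it is an oriented manifold of dimension $2\Inner{c_1(X,\omega),A}+2\Lambda-\sum_{\lambda}\codim f_\lambda=2\Inner{c_1(X,\omega),A}-\sum_{\lambda}\paren*{\deg\gamma_\lambda-2}$, which vanishes by hypothesis. For compactness I would feed a sequence in this space to Gromov compactness: a subsequence converges to a nodal $J$--holomorphic map whose image still meets every $f_\lambda$, and since $J\in\sJ_\emb(X,\omega;f_1,\ldots,f_\Lambda)$ and $2\Inner{c_1(X,\omega),A}=\sum_{\lambda}(\deg\gamma_\lambda-2)$, \autoref{Thm_Fano3GenericLimitHasSmoothDomain} forces the limit to have smooth domain, to be an embedding, and to still meet every $f_\lambda$; equipping its domain with the limiting marked points exhibits it as a point of $\sM_{A,g}^\star(X,J;f_1,\ldots,f_\Lambda)$. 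So the moduli space is a compact oriented $0$--manifold and \eqref{Eq_Fano3GV} makes sense.

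For the independence statements I would argue by cobordism, varying $J$ first and the pseudo-cycles second. The complement of $\sJ_\emb(X,\omega;f_1,\ldots,f_\Lambda)$ in $\sJ(X,\omega)$ has codimension at least two — the constrained analogue of the statement preceding \autoref{Thm_CY3PrimitiveGenericLimitHasSmoothDomain} — so any two members of $\sJ_\emb^\star(X,\omega;f_1,\ldots,f_\Lambda)$ are joined by a generic path inside $\sJ_\emb(X,\omega;f_1,\ldots,f_\Lambda)$ along which the parametrised moduli space is a smooth oriented $1$--manifold with boundary the two end moduli spaces (with signs); it is compact because \autoref{Thm_Fano3GenericLimitHasSmoothDomain} applies at each $J_t$, so the two signed counts agree. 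To change the pseudo-cycles I would use that two tuples Poincaré dual to the same classes and in general position cobound a pseudo-cycle cobordism in general position (\autoref{Sec_Pseudocycles}); cutting the parametrised moduli space down by this cobordism yields, for generic $J$, the required oriented $1$--dimensional cobordism, with compactness again furnished by \autoref{Thm_Fano3GenericLimitHasSmoothDomain} and the fact that general position keeps the curves off the codimension--two ``boundary at infinity'' of the cobordism. Allowing $\omega$ to move in a deformation as well gives the analogue of \autoref{Rmk_CY3PrimitiveGV}.

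The finiteness assertion I would prove as in the finiteness part of \autoref{Thm_CY3PrimitiveGV}. Suppose, for contradiction, that $n_{A,g_k}(X,\omega;\gamma_1,\ldots,\gamma_\Lambda)\neq 0$ for a sequence $g_k\to\infty$; fix $J\in\sJ_\emb^\star(X,\omega;f_1,\ldots,f_\Lambda)$ and pick $u_k\in\sM_{A,g_k}^\star(X,J;f_1,\ldots,f_\Lambda)$. These maps all have energy $\Inner{[\omega],A}$ and each meets every $f_\lambda$, but their domains have genus $g_k\to\infty$, so the $u_k$ admit no Gromov limit as stable maps; I would take a limit of images instead. The currents $(u_k)_*[\Sigma_k]$ have uniformly bounded mass, hence subconverge weakly to a $J$--holomorphic cycle $Z=\sum_i m_i C_i$; the monotonicity formula for $J$--holomorphic curves forbids loss of mass, so $[Z]=A$, $\im Z$ meets every $f_\lambda$, and $\im u_k\to\im Z$ in the Hausdorff distance. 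Since $J\in\sJ_\emb(X,\omega;f_1,\ldots,f_\Lambda)$, every $C_i$ is a smooth embedded simple $J$--holomorphic curve of some finite genus, and the $C_i$ meet transversally in finitely many points. Then $\epsilon$--regularity — Allard's theorem, using that $\im u_k$ has density ratios close to $m_i$ near $C_i$, followed by elliptic bootstrapping — shows that for large $k$ the surface $\Sigma_k$ is obtained from the normalisation of $Z$ by smoothing those finitely many nodes and by replacing a component $C_i$ of multiplicity $m_i\geq 2$ with an $m_i$--sheeted branched cover whose branch locus has degree bounded in terms of $\deg N_{C_i}$ alone. Hence $g(\Sigma_k)$ is bounded by a constant depending only on $Z$, contradicting $g_k\to\infty$. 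Therefore $n_{A,g}(X,\omega;\gamma_1,\ldots,\gamma_\Lambda)=0$ for all but finitely many $g$, and one may take $g_0$ to be one more than the largest exceptional genus.

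The hard part is this last step: recovering control of the topology of $\Sigma_k$ even though the $u_k$ have no Gromov limit. One must pin down $\im u_k$ via the monotonicity formula and then invoke the regularity theory for $J$--holomorphic curves — which, being calibrated, are area--minimising minimal surfaces — to see that over the smooth part of each $C_i$ the map $u_k$ is a normal graph, over each node a standard neck, and over a component of multiplicity $\geq 2$ a controlled branched cover, leaving no room for extra handles; verifying Allard's hypotheses, and above all bounding the number of branch points over the components of multiplicity $\geq 2$ (using the adjunction bound on $\deg N_{C_i}$ coming from the fixed value of $\Inner{c_1(X,\omega),[C_i]}$), is where the real work lies. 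Morally this is the phenomenon controlled by \autoref{Thm_LimitConstraints}: it is the impossibility of a high--genus ghost piece forming in a degeneration that prevents the genus of $\Sigma_k$ from escaping into a collapsing region invisible to the cycle $Z$.
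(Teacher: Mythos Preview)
Your argument for part~\ref{Thm_Fano3GV_Defined} matches the paper's: \autoref{Prop_UnobstructedModuliSpacesPseudoCycles} gives the oriented $0$--manifold, compactness comes from \autoref{Thm_Fano3GenericLimitHasSmoothDomain}, and independence of $J$ and of the pseudo-cycle representatives follows from the cobordism arguments built on \autoref{Def_JEmbStarPseudoCycles} and \autoref{Def_JEmbPseudoCycleCobordism}.

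For part~\ref{Thm_Fano3GV_Finiteness} there is a gap. Having passed to a cycle limit $Z=\sum_i m_iC_i$, you assert that over a component with $m_i\geq 2$ the curve $\im u_k$ is an $m_i$--fold branched cover of $C_i$ whose branch locus has degree ``bounded in terms of $\deg N_{C_i}$ alone''. No such bound is available in dimension six: adjunction relates $\deg N_{C_i}$ to $g(C_i)$ and $\Inner{c_1(X,\omega),[C_i]}$ but says nothing about the branch points of a nearby multiple cover, and controlling those is precisely the hard content of the super-rigidity type arguments behind \cite[Theorem~1.6]{Doan2018a}. The appeal to \autoref{Thm_LimitConstraints} at the end is also misplaced, since that result concerns Gromov limits at bounded arithmetic genus and does not apply when $g_k\to\infty$. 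What you are missing is the observation that the cases $m_i\geq 2$ and $I\geq 2$ never arise here.

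The paper supplies this by re-running, at the level of cycles, the index count from the proof of \autoref{Thm_Fano3GenericLimitHasSmoothDomain}: distributing the constraints $f_\lambda$ among the $C_i$ and comparing against $2\Inner{c_1(X,\omega),A}=\sum_\lambda(\codim f_\lambda-2)$ forces $I=1$, $m_1=1$, and $C_\infty\coloneq C_1$ embedded and unobstructed. From here your graph argument would in fact already give the genus bound (for $k\gg 0$ the embedded curve $\im u_k$ is a single-sheeted normal graph over $C_\infty$, hence $g_k=g(C_\infty)$). The paper instead proves the stronger statement that only finitely many distinct curves of \emph{any} genus represent $A$ and meet every $f_\lambda$: it rescales $C_k$ away from $C_\infty$ in the normal direction (as in Taubes's argument, cf.~\cite[Proposition~5.1]{Doan2018a}) to produce a non-zero section $\xi\in\ker\fd_\iota$ on $C_\infty$, and the limiting incidence conditions give $\xi(z_\lambda)+\rd_{z_\lambda}\iota\cdot v_\lambda=\rd_{x_\lambda}f_\lambda\cdot w_\lambda$ for suitable $v_\lambda,w_\lambda$, contradicting the transversality of $\ev$ and $\prod_\lambda f_\lambda$ encoded in $J\in\sJ_\emb^\star(X,\omega;f_1,\ldots,f_\Lambda)$.
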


\begin{remark}
  \autoref{Rmk_CY3PrimitiveGV} applies mutatis mutandis.
\end{remark}

\subsection{Gopakumar and Vafa's BPS invariants}

Using ideas from $M$--theory,
\citet{Gopakumar1998,Gopakumar1998a} predicted that there are integer invariants $\BPS_{A,g}(X,\omega)$ associated with every closed symplectic $6$--manifold $(X,\omega)$, a class $A \in H_2(X,\Z)$ with $\Inner{c_1(X,\omega),A} = 0$, and $g \in \N_0$,
which count BPS states supported on embedded $J$--holomorphic curves representing $A$ and of genus $g$.
\citeauthor{Gopakumar1998} did not give a direct mathematical definition of $\BPS_{A,g}(X,\omega)$;
however, they conjectured that their invariants are related to the Gromov--Witten invariants $\GW_{A,g}(X,\omega)$ by the marvelous formula
\begin{equation}
  \label{Eq_CY3GopakumarVafa}
  \sum_{A}\sum_{g=0}^\infty
  \GW_{A,g}(X,\omega)
  \cdot
  t^{2g-2} q^A
  =
  \sum_{A}\sum_{g=0}^\infty
  \BPS_{A,g}(X,\omega)
  \cdot
  \sum_{k=1}^\infty
  \frac1k
  \paren*{2\sin\paren*{kt/2}}^{2g-2}
  q^{kA}
\end{equation} 
with the sum taken over all non-zero Calabi--Yau classes $A$ and, moreover,
that $\BPS_{A,g}(X,\omega) = 0$ for $g \gg 1$.
 
In algebraic geometry, there are approaches to defining the BPS invariants for projective Calabi--Yau three-folds \cite{Hosono2001, Pandharipande2009, Pandharipande2010, Kiem2012, Maulik2018}.
These satisfy the Gopakumar--Vafa formula \autoref{Eq_CY3GopakumarVafa} in some cases,
but it is not currently known whether the formula holds in general.

An alternative approach is to take \autoref{Eq_CY3GopakumarVafa} as the \emph{definition} of $\BPS_{A,g}(X,\omega)$;
see \cite[Section 2]{Bryan2001}.
This approach leads to the following conjecture.

\begin{conjecture}[{\citet{Gopakumar1998,Gopakumar1998a}; see also \cite[Conjecture 1.2]{Bryan2001}}]
  \label{Conj_GV}
  The numbers $\BPS_{A,g}(X,\omega)$ defined by \autoref{Eq_CY3GopakumarVafa} satisfy
  \begin{description}[labelwidth=\widthof{\bfseries (integrality)}]
  \item[(integrality)]
    \label{Conj_GV_Integrality}
    $\BPS_{A,g}(X,\omega) \in \Z$, and
  \item[(finiteness)]
    \label{Conj_GV_Finiteness}
    $\BPS_{A,g}(X,\omega) = 0$ for $g \gg 1$.
    \qedhere
  \end{description}
\end{conjecture}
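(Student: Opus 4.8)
The plan is to establish \autoref{Conj_GV} for primitive Calabi--Yau classes --- together with the analogous statement in the constrained Fano setting --- by reducing both assertions to the signed embedded-curve counts of \autoref{Thm_CY3PrimitiveGV} and \autoref{Thm_Fano3GV}. The point is that for a primitive class $A$ the Gopakumar--Vafa relation \autoref{Eq_CY3GopakumarVafa} collapses, in the $q^A$--direction, to an invertible triangular linear system: $\BPS_{A,g}(X,\omega)$ is then a fixed $\Q$--linear combination of finitely many Gromov--Witten invariants $\GW_{A,h}(X,\omega)$, $h\leq g$. By the computations of \citet{Pandharipande1999}, completed by \citet[Theorem 1.5 and footnote 11]{Zinger2011}, the integers $n_{A,g}(X,\omega)$ satisfy exactly the same relation with the $\GW_{A,h}(X,\omega)$; hence $\BPS_{A,g}(X,\omega)=n_{A,g}(X,\omega)$, and the \textbf{(integrality)} and \textbf{(finiteness)} clauses of \autoref{Conj_GV} become precisely the well-definedness as an integer, respectively the vanishing for $g\gg 1$, of $n_{A,g}(X,\omega)$ --- which is the content of \autoref{Thm_CY3PrimitiveGV}.

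Concretely, extracting the coefficient of $q^A$ from \autoref{Eq_CY3GopakumarVafa} and using that $A$ primitive forces $k=1$ in the inner sum (there is no $A'\in H_2(X,\Z)$ with $A=kA'$, $k\geq 2$) gives
\begin{equation*}
  \sum_{g=0}^\infty \GW_{A,g}(X,\omega)\,t^{2g-2}
  =
  \sum_{h=0}^\infty \BPS_{A,h}(X,\omega)\,t^{2h-2}\paren*{\frac{\sin(t/2)}{t/2}}^{2h-2}.
\end{equation*}
Since $\paren*{\sin(t/2)/(t/2)}^{2h-2}=1+O(t^2)$, the matrix sending $(\BPS_{A,h}(X,\omega))_{h\in\N_0}$ to $(\GW_{A,g}(X,\omega))_{g\in\N_0}$ is lower triangular with $1$'s on the diagonal, hence invertible; as the sequence $(n_{A,h}(X,\omega))_{h\in\N_0}$ solves the same system (Zinger's theorem), it coincides with $(\BPS_{A,h}(X,\omega))_{h\in\N_0}$. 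By \autoref{Thm_CY3PrimitiveGV}, for generic $J$ the moduli space $\sM_{A,g}^\star(X,J)$ is a compact oriented zero--dimensional manifold, so $n_{A,g}(X,\omega)\in\Z$; and there is $g_0$ with $n_{A,g}(X,\omega)=0$ for $g\geq g_0$. This proves both clauses. Replacing \autoref{Eq_CY3GopakumarVafa} by its constrained analogue and \autoref{Thm_CY3PrimitiveGV} by \autoref{Thm_Fano3GV} handles arbitrary Fano classes verbatim.

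It remains to say where the work is. The triangularity and the $\GW$-to-$\BPS$ inversion are formal once Zinger's computation is granted, so the entire content lives in \autoref{Thm_CY3PrimitiveGV} and \autoref{Thm_Fano3GV}. The well-definedness part reduces to \autoref{Thm_CY3PrimitiveGenericLimitHasSmoothDomain}: along a generic path in $\sJ_\emb^\star(X,\omega)$ no bubbling or nodal degeneration occurs, so the two endpoint moduli spaces form an oriented cobordism --- and here primitivity of $A$ is exactly what makes every $J$--holomorphic map in class $A$ simple, so that the genericity package of $\sJ_\emb^\star(X,\omega)$ applies. The genuinely new and hardest point is the vanishing $n_{A,g}(X,\omega)=0$ for $g\gg 1$, i.e.\ the absence of embedded $J$--holomorphic curves of arbitrarily large genus in the fixed class $A$. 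I expect this to go by contradiction: such a sequence has bounded energy $\Inner{[\omega],A}$, so a Gromov limit exists, and since the domains' genus is unbounded while the non-ghost part of the limit has bounded genus and carries at least a definite amount of energy on each component, the limit must absorb the escaping genus in a ghost component. \autoref{Thm_LimitConstraints} then constrains how that ghost component can be attached (one attaching node, at a critical point of an adjacent non-ghost component; or at least two non-ghost components meeting it), and one must show that every such configuration is incompatible with the transversality and index conditions defining $\sJ_\emb^\star(X,\omega)$ --- a stratum containing a high-genus ghost together with its attaching data has negative expected dimension and hence is empty for generic $J$. Turning ``unbounded genus $\Rightarrow$ ghost component'' into a precise statement, and carrying out this index bookkeeping against \autoref{Thm_LimitConstraints}, is the main obstacle.
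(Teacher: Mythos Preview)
Your reduction is correct and matches the paper's approach: the conjecture is not proved in full, only for primitive Calabi--Yau classes (and the Fano analogue), via the identification $\BPS_{A,g}=n_{A,g}$ from \cite{Zinger2011} together with \autoref{Thm_CY3PrimitiveGV} (resp.\ \autoref{Thm_Fano3GV}). The triangular inversion you describe is exactly what is reproduced in \autoref{Sec_N=BPS}, and your identification of where the content lies is accurate.

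However, your sketch of the finiteness part has a genuine gap. You propose to take a sequence of embedded $J$--holomorphic curves $C_k$ in class $A$ with $g(C_k)\to\infty$, note that the energy is bounded by $\Inner{[\omega],A}$, extract a Gromov limit, and then argue that the escaping genus must concentrate in a ghost component to which \autoref{Thm_LimitConstraints} applies. But Gromov compactness for stable maps (\autoref{Thm_GromovCompactness}) requires a uniform bound on the arithmetic genus; without it there is no stable-map limit at all, and hence no ghost component on which to invoke \autoref{Thm_LimitConstraints}. The paper does \emph{not} use \autoref{Thm_LimitConstraints} for the finiteness statement---that result is used only for the invariance part, where the genus is fixed.

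The paper instead cites \cite[Theorem~1.6]{Doan2018a}, whose argument (adapted in the Fano case in the paper itself) uses a different compactness: one regards the $C_k$ as $J$--holomorphic \emph{cycles} and passes to a geometric limit cycle $C_\infty=\sum_i m_iC_\infty^i$; this requires only an energy bound, not a genus bound. The index and transversality hypotheses then force $C_\infty$ to be a single embedded unobstructed curve, and a Taubes-style rescaling in its normal bundle produces a nonzero element of $\ker\fd_{C_\infty}^N$ (in the constrained case, one meeting the incidence conditions infinitesimally), contradicting unobstructedness. Your ghost-component/index-bookkeeping strategy cannot be salvaged as stated, because the stable-map compactification you appeal to simply does not exist when the genus is unbounded.
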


The Gopakumar--Vafa integrality conjecture has been proved by \citet{Ionel2018}.
\citet[footnote 11]{Zinger2011} has proved that for primitive Calabi--Yau classes
\begin{equation*}
  \BPS_{A,g}(X,\omega) = n_{A,g}(X,\omega);
\end{equation*}
see also \autoref{Sec_N=BPS}.
Therefore,
\autoref{Thm_CY3PrimitiveGV} implies the following.

\begin{cor}
  \label{Cor_CY3GVFiniteness}
  The Gopakumar--Vafa finiteness conjecture holds for primitive Calabi--Yau classes;
  that is: for every closed symplectic $6$--manifold $(X,\omega)$ and every primitive Calabi--Yau class $A \in H_2(X,\Z)$ there is a $g_0(\omega,A)$ such that for every $g \geq g_0(\omega,A)$
  \begin{equation*}
    \BPS_{A,g}(X,\omega) = 0.
    \qedhere
  \end{equation*}
\end{cor}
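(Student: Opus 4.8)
The plan is to derive the Corollary by combining two facts already available above: the identification of the Gopakumar--Vafa numbers with the embedded curve counts for primitive Calabi--Yau classes, and the large-genus vanishing of the latter provided by \autoref{Thm_CY3PrimitiveGV}.

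First I would recall why the identification holds --- and why primitivity makes it clean. Extracting the coefficient of $q^A$ from the Gopakumar--Vafa formula \autoref{Eq_CY3GopakumarVafa}, only the $k=1$ summand on the right-hand side survives, since $kA' = A$ with $A$ primitive forces $k=1$ and $A'=A$. Hence the generating function $\sum_{g\geq 0}\GW_{A,g}(X,\omega)\,t^{2g-2}$ equals $\sum_{g\geq 0}\BPS_{A,g}(X,\omega)\,t^{2g-2}\paren*{\sin(t/2)/(t/2)}^{2g-2}$. Since each factor $\paren*{\sin(t/2)/(t/2)}^{2g-2}$ equals $1+O(t^2)$, matching coefficients of $t^{2h-2}$ expresses $\GW_{A,h}(X,\omega)$ as $\BPS_{A,h}(X,\omega)$ plus a $\Q$--linear combination of the $\BPS_{A,g}(X,\omega)$ with $g<h$; this unipotent triangular system inverts to write each $\BPS_{A,g}(X,\omega)$ as a $\Q$--linear combination of finitely many $\GW_{A,g'}(X,\omega)$ with $g'\leq g$. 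A computation of these Gromov--Witten invariants in terms of the finitely many embedded $J$--holomorphic curves in class $A$ (and the contributions of the ghost components attached to them) --- due to \citet{Zinger2011}, based on \citet{Pandharipande1999}, and reviewed in \autoref{Sec_N=BPS} --- then yields $\BPS_{A,g}(X,\omega) = n_{A,g}(X,\omega)$ for every $g\in\N_0$; the first part of \autoref{Thm_CY3PrimitiveGV} is what makes the right-hand side meaningful.

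Granting this, the Corollary follows at once: by the finiteness part of \autoref{Thm_CY3PrimitiveGV} there is $g_0 = g_0(\omega,A)\in\N_0$ with $n_{A,g}(X,\omega)=0$ for every $g\geq g_0$, hence $\BPS_{A,g}(X,\omega)=0$ for every $g\geq g_0$, and by \autoref{Rmk_CY3PrimitiveGV} this $g_0$ depends on $\omega$ only through its deformation class. All the substance sits upstream, inside \autoref{Thm_CY3PrimitiveGV}: the invariance of $n_{A,g}(X,\omega)$ --- for which primitivity of $A$ rules out multiple covers while \autoref{Thm_CY3PrimitiveGenericLimitHasSmoothDomain} rules out nodal Gromov limits --- and, above all, the large-genus vanishing, whose proof rests on the ghost-component trichotomy of \autoref{Thm_LimitConstraints}. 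I therefore expect that vanishing, rather than anything in the present deduction, to be the main obstacle; a secondary point needing care is to match normalizations so that the quantity produced in \autoref{Sec_N=BPS} is literally the $\BPS_{A,g}(X,\omega)$ of \autoref{Eq_CY3GopakumarVafa}.
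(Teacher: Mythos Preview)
Your proposal is correct and follows essentially the same route as the paper: invoke Zinger's identification $\BPS_{A,g}(X,\omega) = n_{A,g}(X,\omega)$ for primitive Calabi--Yau classes (reviewed in \autoref{Sec_N=BPS}) and then apply the finiteness clause of \autoref{Thm_CY3PrimitiveGV}. One small inaccuracy in your commentary: the large-genus vanishing in \autoref{Thm_CY3PrimitiveGV}\autoref{Thm_CY3PrimitiveGV_Finiteness} is deduced from \cite[Theorem 1.6]{Doan2018a}, not from \autoref{Thm_LimitConstraints}, though this does not affect your deduction of the corollary.
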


\begin{remark}
  The finiteness conjecture for general Calabi--Yau classes has been resolved recently \cite{Doan2021}.
\end{remark}

The genus bound in \autoref{Cor_CY3GVFiniteness} is not effective;
therefore, it is natural to ask the following.

\begin{definition}
  Let $(X,\omega)$ be a closed symplectic $6$--manifold and $A \in H_2(X,\Z)$ a Calabi--Yau class.
  Define the \defined{BPS Castelnuovo number} $\gamma_A(X,\omega)$ by
  \begin{equation*}
    \gamma_A(X,\omega)
    \coloneq
    \inf\set*{ g \in \N : \BPS_{A,g}(X,\omega) = 0 }
    \in \N_0
    \qedhere
  \end{equation*}
\end{definition}

\begin{question}
  \label{Q_CY3Castelnuovo}
  Is there an bound on $\gamma_A(X,\omega)$ analogous to Castelnuovo's bound for the genus of an irreducible degree $d$ curve in $\P^n$ \cites{Castelnuovo1889}[Chapter III Section 2]{Arbarello1985};
  that is: a bound of $\gamma_A(X,\omega)$ by a formula involving $A$ and the geometry of $X$?
  (See \citet{Huang2015,Knapp2021} for some work in this direction.)
\end{question}

There is an analogue of the Gopakumar--Vafa formula for Fano classes.
Given $A \in H_2(X,\Z)$, $g \in \N_0$, and $\gamma_1, \ldots, \gamma_\Lambda \in H^\even(X,\Z)$
satisfying $\deg(\gamma_\lambda) > 0$ and 
\begin{equation}
  \label{Eq_Fano3DimensionConstraint}
  2\Inner{c_1(X,\omega),A} = \sum_{\lambda=1}^\Lambda (\deg(\gamma_\lambda) - 2) > 0,
\end{equation}
denote by $\GW_{A,g}(X,\omega;\gamma_1,\ldots,\gamma_\Lambda)$ be the corresponding Gromov--Witten invariant.
The analogue of \autoref{Eq_CY3GopakumarVafa} is
\begin{equation}
  \label{Eq_Fano3GopakumarVafa}
  \begin{split}
    &
    \sum_A\sum_{g=0}^\infty
    \GW_{A,g}(X,\omega;\gamma_1,\ldots,\gamma_\Lambda)
    \cdot
    t^{2g-2} q^A \\
    &=
    \sum_A\sum_{g=0}^\infty
    \BPS_{A,g}(X,\omega;\gamma_1,\ldots,\gamma_\Lambda)
    \cdot
    \paren*{2\sin(t/2)}^{2g-2+\inner{c_1(X,\omega)}{A}}
    q^A
  \end{split}
\end{equation}
with the sum taken over all $A \in H_2(X,\Z)$ satisfying \autoref{Eq_Fano3DimensionConstraint}.
\citet[Theorem 1.5]{Zinger2011} has proved that
\begin{equation*}
  \BPS_{A,g}(X,\omega;\gamma_1,\ldots,\gamma_\Lambda)
  =
  n_{A,g}(X,\omega;\gamma_1,\ldots,\gamma_\Lambda);
\end{equation*}
thus establishing the analogue of the Gopakumar--Vafa integrality conjecture.
Furthermore,
\autoref{Thm_Fano3GV} implies the following.

\begin{cor}
  The analogue of the Gopakumar--Vafa finiteness conjecture holds for all Fano classes.
\end{cor}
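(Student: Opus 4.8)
The plan is to obtain this corollary as a formal consequence of \autoref{Thm_Fano3GV} together with \citeauthor{Zinger2011}'s identification of the Fano BPS numbers with the embedded curve counts $n_{A,g}(X,\omega;\gamma_1,\ldots,\gamma_\Lambda)$. First I would make the assertion precise. Fix a Fano class $A \in H_2(X,\Z)$ and classes $\gamma_1,\ldots,\gamma_\Lambda \in H^\even(X,\Z)$ with $\deg(\gamma_\lambda) > 0$ subject to the dimension constraint \autoref{Eq_Fano3DimensionConstraint}, and check that \autoref{Eq_Fano3GopakumarVafa} really does determine the numbers $\BPS_{A,g}(X,\omega;\gamma_1,\ldots,\gamma_\Lambda)$: extracting the coefficient of $q^A$ turns \autoref{Eq_Fano3GopakumarVafa} into an identity of Laurent series in $t$, and since $\paren*{\sin(t/2)/(t/2)}^{m} = 1 + O(t^2)$ for every $m \in \Z$, the summand indexed by $g$ on the right-hand side contributes $t^{2g-2}$ plus strictly higher-order terms in $t$. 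Comparing coefficients of $t^{2h-2}$ therefore writes $\GW_{A,h}(X,\omega;\gamma_1,\ldots,\gamma_\Lambda)$ as $\BPS_{A,h}(X,\omega;\gamma_1,\ldots,\gamma_\Lambda)$ plus a $\Q$--linear combination of the $\BPS_{A,g}(X,\omega;\gamma_1,\ldots,\gamma_\Lambda)$ with $g < h$, a triangular system that can be solved recursively for the BPS numbers. The statement to be proved is then that, for every such $A$ and $\gamma_1,\ldots,\gamma_\Lambda$, one has $\BPS_{A,g}(X,\omega;\gamma_1,\ldots,\gamma_\Lambda) = 0$ for $g \gg 1$.

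Next I would invoke \citet[Theorem 1.5]{Zinger2011}, reviewed in \autoref{Sec_N=BPS}, which yields
\begin{equation*}
  \BPS_{A,g}(X,\omega;\gamma_1,\ldots,\gamma_\Lambda)
  =
  n_{A,g}(X,\omega;\gamma_1,\ldots,\gamma_\Lambda),
\end{equation*}
the right-hand side being the signed count \autoref{Eq_Fano3GV}, which is well-defined by part \ref{Thm_Fano3GV_Defined} of \autoref{Thm_Fano3GV} (choose pseudo-cycles $f_1,\ldots,f_\Lambda$ Poincaré dual to $\gamma_1,\ldots,\gamma_\Lambda$ in general position and $J \in \sJ_\emb^\star(X,\omega;f_1,\ldots,f_\Lambda)$). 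Then part \ref{Thm_Fano3GV_Finiteness} of \autoref{Thm_Fano3GV} supplies a $g_0 \in \N_0$, depending only on $(X,\omega)$, $A$, and $\gamma_1,\ldots,\gamma_\Lambda$, with $n_{A,g}(X,\omega;\gamma_1,\ldots,\gamma_\Lambda) = 0$ for all $g \geq g_0$. Combining the two gives $\BPS_{A,g}(X,\omega;\gamma_1,\ldots,\gamma_\Lambda) = 0$ for all $g \geq g_0$, which is the asserted analogue of Gopakumar--Vafa finiteness.

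I do not expect a genuine obstacle here: all of the substantive work — ruling out ghost components and bubbling so that a generic Gromov limit has smooth domain and is an embedding (\autoref{Thm_Fano3GenericLimitHasSmoothDomain}), and the genus bound forcing $\sM_{A,g}^\star(X,J;f_1,\ldots,f_\Lambda)$ to be empty for large $g$ — is already carried out in \autoref{Thm_Fano3GV}, and the present corollary is a bookkeeping step on top of it. Two small points deserve care. First, unlike in the Calabi--Yau case, primitivity of $A$ plays no role, since multiple covers are excluded by the positivity $\Inner{c_1(X,\omega),A} > 0$; this is already built into \autoref{Thm_Fano3GV}, so the corollary really does cover \emph{all} Fano classes. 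Second, the constant $g_0$ produced this way is not effective, so, as in the Calabi--Yau case, the argument yields no Castelnuovo-type inequality.
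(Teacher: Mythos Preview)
Your proposal is correct and follows exactly the route the paper intends: the corollary is stated immediately after the sentence ``Furthermore, \autoref{Thm_Fano3GV} implies the following,'' with no separate proof given, so the argument is precisely Zinger's identification $\BPS_{A,g} = n_{A,g}$ combined with \autoref{Thm_Fano3GV}\ref{Thm_Fano3GV_Finiteness}. Your added discussion of the triangular system determining the BPS numbers and the remark on why primitivity is unnecessary are helpful elaborations but not required.
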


Of course,
there is an analogue of \autoref{Q_CY3Castelnuovo} in the Fano case.

\paragraph*{Acknowledgements}
We thank Aleksei Zinger for several discussions about \cite{Zinger2009} and Eleny Ionel for answering our questions about \cite{Ionel2018}.
We thank the referees for their meticulous work and, especially, for alerting us to an issue in an earlier version of \autoref{Sub_GhostComponents} and its implications for \autoref{Sec_LeadingOrderTermOfObstructionGhostComponents}.
This material is based upon work supported by \href{https://www.nsf.gov/awardsearch/showAward?AWD_ID=1754967&HistoricalAwards=false}{the National Science Foundation under Grant No.~1754967}, 
\href{https://sloan.org/grant-detail/8651}{an Alfred P. Sloan Research Fellowship},
\href{https://sites.duke.edu/scshgap/}{the Simons Collaboration on Special Holonomy in Geometry, Analysis, and Physics}, 
and \href{https://www.simonsfoundation.org/simons-society-of-fellows/}{the Simons Society of Fellows}.



\section{Nodal pseudo-holomorphic maps}
\label{Sec_ModuliSpaces}

This section reviews a few definitions and results regarding nodal pseudo-holomorphic maps.

\subsection{Nodal manifolds}

\begin{definition}
  \label{Def_NodalManifold}
  Let $X$ be a manifold, possibly disconnected. 
  A \defined{nodal structure} on $X$ is an involution $\nu \co X \to X$ whose fixed-point set has a discrete complement. 
  (This involution is discontinuous unless $\nu = \id$.)
  The set of points not fixed by $\nu$ is called the \defined{nodal set}.
  A \defined{nodal manifold} is a manifold together with a nodal structure.
\end{definition}

The quotient $X/\nu$ should be considered as the topological space underlying the nodal manifold $(X,\nu)$.
The atlas of $X$ induces a ``nodal atlas'' for $X/\nu$ consisting of ``charts'' mapping either to $\R^n$ or $\R^n\times\set{0} \cup \set{0}\times \R^n \subset \R^{2n}$.
The nodes of $X/\nu$ are precisely the points mapping to $(0,0) \in \R^{2n}$ in some chart
or, equivalently,
the images of the points in the nodal set.

\begin{definition}
  Let $(X_1,\nu_1)$ and $(X_2,\nu_2)$ be nodal manifolds.
  A \defined{nodal map} $f \co (X_1,\nu_1) \to (X_2,\nu_2)$ is a smooth map $f\co X_1 \to X_2$ such that
  \begin{equation*}
    f\circ\nu_1 = \nu_2 \circ f.
    \qedhere
  \end{equation*}
\end{definition}

\begin{definition}
  Let $(X,\nu)$ be a nodal manifold.
  A \defined{diffeomorphism} of $(X,\nu)$ is an element of
  \begin{equation*}
    \Diff(X,\nu)
    \coloneq
    \set*{
      \phi \in \Diff(X)
      :
      \phi\circ\nu = \nu\circ\phi
    }.
    \qedhere
  \end{equation*}
\end{definition}

Every manifold $X$ canonically is a nodal manifold with $\nu = \id_X$ and
a smooth map between manifolds, trivially, is a nodal map.
In other words, the category of manifolds is a full subcategory of the category of nodal manifolds.

\begin{definition}
  Let $(X,\nu)$ be a nodal manifold,
  let $Y$ be a manifold, and
  let $f\co (X,\nu) \to Y$ be a nodal map.
  For a vector bundle $E \to Y$, set 
  \begin{equation*}
    \Gamma(X,\nu;f^*E)
    \coloneq
    \set*{
      \xi \in \Gamma(X,f^*E) : \xi \circ \nu = \xi
    }.
    \qedhere
  \end{equation*}
\end{definition}

\begin{remark}
  In the situation of the preceding definition,
  set $n \coloneq \dim X$ and let $p > n$.
  Given a Riemannian metric on $X$, a Euclidean metric on $E$, and a metric covariant derivative on $E$,
  denote by $W^{1,p}\Gamma(X, f^*E)$ the completion of $\Gamma(X,f^*E)$ with respect to the corresponding $W^{1,p}$ norm.
  By Morrey's embedding theorem,
  $W^{1,p} \into C^{0,1-n/p}$.
  Therefore,
  the evaluations maps $\ev_x\co \Gamma(X,f^*E) \to E_{f(x)}$ extend to $W^{1,p}\Gamma(X,f^*E)$ and
  \begin{equation*}
    W^{1,p}\Gamma(X,\nu;f^*E)
    =
    \set*{
      \xi \in W^{1,p}\Gamma(X;f^*E)
      :
      \xi(\nu(x)) = \xi(x) \text{ for every } x \in X
    }.
  \end{equation*}
  For $p < n$ it can be shown that the $W^{1,p}$ completion of $\Gamma(X,\nu;f^*E)$ agrees with the $W^{1,p}$ completion of $\Gamma(X;f^*E)$.
\end{remark}

\subsection{Nodal Riemann surfaces}

\begin{definition}
  A \defined{nodal Riemann surface} is a Riemann surface $(\Sigma,j)$ together with a nodal structure $\nu$.
\end{definition}

\begin{definition}
  Let $C$ be a complex analytic curve. 
  A point of $C$ is a \defined{node} if it has a neighborhood which is isomorphic to a neighborhood of the point $(0,0)$ in the curve
  \begin{equation*}
    \set*{ (z,w) \in \C^2 : zw = 0 }. 
  \end{equation*}  
  A \defined{nodal curve} is a complex analytic curve all of whose points are either smooth or a node.
\end{definition}

Let $C$ be a nodal curve and denote by $\pi \co \tilde C \to C$ its normalization.
The complex analytic curve $\tilde C$ is smooth and, hence, equivalent to a closed Riemann surface $(\Sigma,j)$.
Since $\tilde C$ is obtained from $C$ by replacing every node with a pair of points,
$\Sigma$ inherits a canonical nodal structure $\nu$.
This sets up an equivalence between complete, nodal curves $C$ and closed, nodal Riemann surfaces $(\Sigma,j,\nu)$.

\begin{definition}
  The \defined{automorphism group} of a nodal Riemann surface $(\Sigma,j,\nu)$ is
  \begin{equation*}
    \Aut(\Sigma,j,\nu) \coloneq \set*{ \phi \in \Diff(\Sigma,\nu) : \phi_*j = j }.
  \end{equation*}
  A nodal Riemann surface $(\Sigma,j,\nu)$ is \defined{stable} if $\Aut(\Sigma,j,\nu)$ is finite.
\end{definition}

\begin{definition}
  Let $(\Sigma,\nu)$ be a nodal surface with nodal set $S$.
  The \defined{arithmetic genus} of $(\Sigma,\nu)$ is
  \begin{equation}
    \label{Eq_ArithmeticGenus}
    p_a(\Sigma,\nu) \coloneq 1 - \frac12\paren*{\chi(\Sigma) - \#S}.
    \qedhere
  \end{equation}
\end{definition}

\begin{remark}
  \label{Rmk_ArithmeticGenus_Smoothing}
  If $(\tilde \Sigma,\tilde \nu)$ denotes a nodal surface obtained from $(\Sigma,\nu)$ by attaching a $1$--handle at some pairs of nodes $\set{n,\nu(n)}$,
  then
  \begin{equation*}		
    p_a(\Sigma,\nu) = p_a(\tilde\Sigma,\tilde\nu).
    \qedhere
  \end{equation*}
\end{remark}

\subsection{Nodal \texorpdfstring{$J$}{J}--holomorphic maps}
\label{Subsec_NodalJHolomorphicMaps}

Throughout the next four subsections,
let $(X,J)$ be an almost complex manifold of dimension $2n$.

\begin{definition}
  A \defined{nodal $J$--holomorphic map} $u\co (\Sigma,j,\nu) \to (X,J)$ is a nodal Riemann surface $(\Sigma,j,\nu)$ together with a nodal map $u\co (\Sigma,\nu) \to X$ which is $J$--holomorphic;
  that is:
  \begin{equation}
    \label{Eq_JHolomorphic}
    \delbar_J(u,j)
    \coloneq
    \frac12\paren*{
      \rd u + J(u)\circ\rd u\circ j
    }
    =
    0.
    \qedhere
  \end{equation}
\end{definition}

\begin{definition}
  If $u\co (\Sigma,j,\nu) \to (X,J)$ is a nodal $J$--holomorphic map and $\phi \in \Diff(\Sigma,\nu)$,
  then the \defined{reparametrization} $\phi_*u \coloneq u\circ\phi^{-1}\co (\Sigma,\phi_*j,\nu) \to (X,J)$ is a nodal $J$--holomorphic map as well.
  The \defined{automorphism group} of a nodal $J$--holomorphic map $u\co(\Sigma,j,\nu) \to (X,J)$ is 
  \begin{equation*}
    \Aut(u) \coloneq \set*{ \phi\in \Aut(\Sigma,j,\nu) : u\circ\phi = u }.
  \end{equation*}
  The map $u$ is said to be \defined{stable} if $\Aut(u)$ is finite. 
\end{definition}

\begin{definition}
  Let $(\Sigma,j)$ and $(\tilde\Sigma, \tilde j)$ be smooth Riemann surfaces.
  Let $u\co (\Sigma,j) \to (X,J)$ be a $J$--holomorphic map and let $\pi\co (\tilde \Sigma,\tilde j) \to (\Sigma,j)$ be a holomorphic map of degree $\deg(\pi) \geq 2$.
  The composition $u\circ \pi \co (\tilde\Sigma,\tilde j) \to (X,J)$ is said to be a \defined{multiple cover of $u$}.
  A $J$--holomorphic map is \defined{simple} if it is not constant and not a multiple cover.
\end{definition}

\subsection{Ghost components}
\label{Sub_GhostComponents} 

Let $u\co (\Sigma,j,\nu) \to (X,J)$ be a nodal $J$--holomorphic map.
Let $S$ be the nodal set of $(\Sigma,\nu)$. 

\begin{definition}
  Suppose $C \subset \Sigma$ is a union of connected components of $\Sigma$.
  Set
  \begin{equation*}
    S_C^\rint \coloneq \set { n \in S : n \in C \text{ and } \nu(n) \in C} \qandq
    S_C^\rext \coloneq \set { n \in S : n \in C \text{ and } \nu(n) \notin C}
  \end{equation*}
  and denote by $\nu_C$ the nodal structure on $C$ which agrees with $\nu_0$ on $S_C^\rint$ and the identity on the complement of $S_C^\rint$.
  Denote by $\check{C}$ the nodal curve associated with $(C,j,\nu)$.
\end{definition}

\begin{definition}
  A \defined{ghost component} of $u$ is a union $C$ of connected components of $\Sigma$ such that $u|_C$ is constant, $\check{C}$ is connected, and which is a maximal subset satisfying these properties.
\end{definition}

\autoref{Prop_BaseLocusOfDualizingSheaf} below, which will be used in the proof of  \autoref{Thm_LimitConstraints} (specifically  in \autoref{Sec_LeadingOrderTermOfObstructionGhostComponents}), concerns the dualizing sheaf of a nodal curve $C$. 
The dualizing sheaf is a generalization of the canonical sheaf of a smooth curve; for the reader's convenience, we describe its construction in the proof of \autoref{Prop_BaseLocusOfDualizingSheaf}. 

\begin{definition}
  Let $C$ be a nodal curve.
  The \defined{dual graph of $C$} is the weighted graph whose
  set of vertices is the set of irreducible components of $C$ with the genus as the weight function,
  and edges between two vertices if and only if the corresponding irreducible components of $C$ intersect.
\end{definition}

\begin{prop}
  \label{Prop_BaseLocusOfDualizingSheaf}
  Let $C$ be a nodal curve.
  Denote the dual graph of $C$ by $\Gamma$.
  Denote by $\omega_C$ the dualizing sheaf of $C$ and by $B$ its base-locus:
  \begin{equation*}
    B \coloneq \set{ x \in C : \zeta(x) = 0 ~\text{for every}~ \zeta \in H^0(C,\omega_C)}.
  \end{equation*}
  The base-locus has the following description:
  \begin{enumerate}
  \item
    \label{Prop_BaseLocusOfDualizingSheaf_Components}
    $B$ is a union of irreducible rational components of $C$.
  \item
    \label{Prop_BaseLocusOfDualizingSheaf_Graph}
    The dual graph of $B$ is the subgraph $\Delta \subset \Gamma$ obtained by
    \begin{enumerate}
    \item
      removing every vertex of non-zero weight, and
    \item
      removing every simple cycle in $\Gamma$.
    \end{enumerate}
    In particular, $\Delta$ is a forest with weight zero.
    Moreover, if $e_1,e_2$ are distinct vertices of a tree $T \subset \Delta$,
    then they cannot be connected by a path in $(\Gamma\setminus T) \cup \set{e_1,e_2}$.
  \end{enumerate}
\end{prop}

The proof relies on the following.

\begin{prop}
  \label{Prop_MeromorphicOneFormsWithPoles}
  Let $\Sigma$ be a connected smooth curve.
  For every three $p,q,r$ distinct points on $\Sigma$
  there is a $\zeta \in H^0(K_\Sigma(p+q))$ with
  \begin{equation*}
    \Res_p\zeta = - \Res_q\zeta \neq 0 \qandq
    \zeta(r) \neq 0.
  \end{equation*}
  Here $\Res_p\zeta$ denotes the residue at $p$ of the meromorphic $1$--form $\zeta$.
\end{prop}

\begin{proof}
  For $\Sigma = \P^1$ without loss of generality $p=0$ and $q=\infty$;
  hence, the meromorphic $1$--form can be taken to be $z^{-1}\rd z$.

  Suppose $\Sigma \neq \P^1$.
  Consider the exact sequence
  \begin{equation*}
    \begin{tikzcd}[column sep=scriptsize]
      H^0(K_\Sigma) \ar[hook]{r} & H^0(K_\Sigma(p+q)) \ar{r}{\rho} & H^0(\sO_p\oplus \sO_q) \iso \C\oplus\C \ar[two heads]{r}{\delta} & H^1(K_\Sigma) \iso \C
    \end{tikzcd}
  \end{equation*}
  with $\rho(\zeta) \coloneq (\Res_p\zeta,\Res_q\zeta)$ and $\delta(a,b) \coloneq a-b$.
  This implies that there is a $\zeta \in H^0(K_\Sigma(p+q))$ with non-vanishing residues at $p$ and $q$.
  Since $K_\Sigma$ is base-point-free, $\zeta$ can be arranged not to vanish at $r$. 
\end{proof}

\begin{proof}[Proof of \autoref{Prop_BaseLocusOfDualizingSheaf}]
  The dualizing sheaf of $C$ is constructed as follows;
  see \cite[p. 91]{Arbarello2011}.
  Denote by $\pi\co \Sigma \to C$ the normalization map.
  Denote by $S$ the set of nodal points of $C$.
  Denote by $\tilde \omega_C$ the subsheaf of $K_\Sigma(S)$ whose sections $\zeta$ satisfy
  \begin{equation}
    \label{Eq_DualizingSheafResidueCondition}
    \Res_n \zeta + \Res_{\nu(n)}\zeta = 0
  \end{equation}
  for every $n \in S$.
  Here $\nu$ denotes the obvious involution on $\pi^{-1}(S)$.
  The dualizing sheaf $\omega_C$ then is
  \begin{equation*}
    \omega_C = \pi_*\tilde \omega_C.
  \end{equation*}

  The base-locus of $K_\Sigma$ are precisely the rational connected components of $\Sigma$.
  This implies \autoref{Prop_BaseLocusOfDualizingSheaf_Components}.
  It follows from the above \autoref{Prop_MeromorphicOneFormsWithPoles} that the dual graph of $B$ is contained in $\Delta$.
  By the Residue Theorem any meromorphic $1$--form with simple poles must have at least two poles.
  This implies that the dual graph of $B$ agrees with $\Delta$.  
\end{proof}

\subsection{Moduli spaces of nodal pseudo-holomorphic maps}

\begin{definition}
  \label{Def_ModuliOfStableMaps}
  Given $A \in H_2(X,\Z)$ and $g \in \N_0$,
  the \defined{moduli space of stable nodal $J$--holomorphic maps} representing $A$ and of genus $g$ is the set
  \begin{equation*}
    \overline\sM_{A,g}(X,J)
  \end{equation*}
  of equivalence classes of stable nodal $J$--holomorphic maps $u\co (\Sigma,j,\nu) \to (X,J)$ up to reparametrization with
  \begin{equation*}
    u_*[\Sigma] = A
    \qandq
    p_a(\Sigma,\nu) = g.
  \end{equation*}
  The subset of $\overline\sM_{A,g}(X,J)$ parametrizing simple $J$--holomorphic maps is denoted by
  \begin{equation*}
    \sM_{A,g}^\star(X,J).
    \qedhere
  \end{equation*}
\end{definition}

At this stage,
$\overline\sM_{A,g}(X,J)$ is just a set.
In \autoref{Sub_GromovTopology},
it will be equipped with the \defined{Gromov topology}.
This topology induces the $C^\infty$ topology on $\sM_{A,g}^\star(X,J)$.

\begin{definition}
  \label{Def_UniversalModuliSpace}
  Let $(X,\omega)$ be a symplectic manifold.
  Denote by $\sJ(X,\omega)$ the space of almost complex structures on $X$ which are \defined{compatible with $\omega$};
  that is:
  \begin{equation*}
    g(\cdot,\cdot) \coloneq \omega(\cdot, J\cdot)
  \end{equation*}
  defines a Riemannian metric on $X$.
  Equip $\sJ(X,\omega)$ with the $C^\infty$ topology.   
\end{definition}

\begin{definition}
  Given $A \in H_2(X,\Z)$ and $g \in \N$,
  set
  \begin{equation*}
    \overline\sM_{A,g}(X,\omega)
    \coloneq
    \coprod_{J \in \sJ(X,\omega)} \overline\sM_{A,g}(X,J)
    \qandq
    \sM_{A,g}^\star(X,\omega)
    \coloneq
    \coprod_{J \in \sJ(X,\omega)} \sM_{A,g}^\star(X,J).
  \end{equation*}
  Denote by $\pi\co \overline\sM_{A,g}(X,\omega) \to \sJ(X,\omega)$ the canonical projection.
\end{definition}

\subsection{Linearization of the \texorpdfstring{$J$}{J}--holomorphic map equation}

Let $u\co (\Sigma,j,\nu) \to (X,J)$ be a nodal $J$--holomorphic map.
Let $h$ be a Hermitian metric on $(X,J)$ and let $\nabla$ be a torsion-free connection on $TX$.
Throughout the remainder of this article,
let $p>2$. 

\begin{definition}
  \label{Def_FU}
  Given $\xi \in W^{1,p}\Gamma(\Sigma,\nu;u^*TX)$,
  set
  \begin{equation*}
    u_\xi \coloneq \exp_u(\xi)
  \end{equation*}
  and denote by $\Psi_\xi\co L^p\Omega^{0,1}(\Sigma,u^*TX) \to L^p\Omega^{0,1}(\Sigma,u_\xi^*TX)$ the map induced by parallel transport along the geodesics $t \mapsto \exp_u(t\xi)$.
  Define $\fF_{u,j,\nu;J} \co W^{1,p}\Gamma(\Sigma,\nu;u^*TX) \to L^p\Omega^{0,1}(\Sigma,u^*TX)$ by
  \begin{equation*}
    \fF_{u,j,\nu;J}(\xi) \coloneq \Psi_\xi^{-1}\delbar_J(u_\xi,j).
    \qedhere
  \end{equation*}
\end{definition}

\begin{definition}
  \label{Def_JHolomorphicLinearization}
  Define the linear operator $\fd_{u,j,\nu;J} \co W^{1,p}\Gamma(\Sigma,\nu;u^*TX) \to L^p\Omega^{0,1}(\Sigma,u^*TX)$ by
  \begin{align*}
    \fd_{u,j,\nu;J}\xi
    &\coloneq
      \rd_0\fF_{u,j,\nu;J}\xi
      =
      \frac12\paren*{
      \nabla\xi
      +
      J(u)\circ(\nabla\xi)\circ j
      +
      (\nabla_\xi J)\circ\rd u\circ j
      }.
      \qedhere
  \end{align*}
\end{definition}

\begin{remark}
  If $u$ is $J$--holomorphic,
  then $\fd_{u,j,\nu;J}$ does not depend on the choice of torsion-free connection $\nabla$ on $TX$;
  see \cite[Proposition 3.1.1]{McDuff2012}.
\end{remark}

The operator $\fd_{u,j,\nu;J}$ is the restriction to $W^{1,p}\Gamma(\Sigma,\nu;u^*TX)$ of the operator
\begin{equation*}
  \fd_{u,j;J}\co W^{1,p}\Gamma(\Sigma,u^*TX) \to L^p\Omega^{0,1}(\Sigma,u^*TX)
\end{equation*} 
given by the same formula. 
The former controls the deformation theory of $u$ as a nodal $J$--holomorphic map from the nodal Riemann surface $(\Sigma,j,\nu)$ whereas the latter controls the deformation theory of $u$ as a smooth $J$--holomorphic map from the smooth Riemann surface $(\Sigma,j)$, ignoring the nodal structure.

\begin{prop}
  The index of $\fd_{u,j,\nu;J}$ is given by
  \begin{equation}
    \label{Eq_IndexFormula}
    \ind \fd_{u,j,\nu;J}
    =
    2\inner{[\Sigma]}{u^*c_1(X,J)} + 2n\paren*{1-p_a(\Sigma,\nu)}.
  \end{equation}
\end{prop}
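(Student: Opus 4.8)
The plan is to reduce the computation to the index of the operator $\fd_{u,j;J}$ on the full space $W^{1,p}\Gamma(\Sigma,u^*TX)$, which is controlled by Riemann--Roch, and then to account for the finite codimension of the subspace $W^{1,p}\Gamma(\Sigma,\nu;u^*TX)$ cut out by the node-matching condition. For the first part I would observe that $\fd_{u,j;J}$ is a real-linear Cauchy--Riemann operator on the rank-$n$ complex vector bundle $u^*TX$ over the closed, possibly disconnected, Riemann surface $(\Sigma,j)$: its principal part $\tfrac12\paren*{\nabla\xi + J(u)\circ(\nabla\xi)\circ j}$ is the $\delbar$--operator determined by $J$ and the pull-back connection, and the zeroth-order term $\tfrac12(\nabla_\xi J)\circ\rd u\circ j$ is a bounded bundle homomorphism, hence does not affect the Fredholm index. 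Applying the Riemann--Roch index theorem for real-linear Cauchy--Riemann operators componentwise (see \cite[Appendix C]{McDuff2012}) then gives
\begin{equation*}
  \ind \fd_{u,j;J}
  =
  n\chi(\Sigma) + 2\inner{[\Sigma]}{u^*c_1(X,J)},
\end{equation*}
since on a connected genus-$g$ component the right-hand side is the familiar $2n(1-g) + 2\inner{[\Sigma]}{u^*c_1(X,J)}$ and Euler characteristics are additive over components.

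Next I would identify $W^{1,p}\Gamma(\Sigma,\nu;u^*TX)$ as the kernel of an evaluation map. Write the nodal set $S$ as the disjoint union of its $\#S/2$ pairs $\{n,\nu(n)\}$. Since $u$ is a nodal map, $u(n)=u(\nu(n))$, so the fibres $(u^*TX)_n$ and $(u^*TX)_{\nu(n)}$ are literally the same vector space $T_{u(n)}X$, and the condition $\xi(n)=\xi(\nu(n))$ is meaningful. As $p>2$, Morrey's embedding $W^{1,p}\into C^0$ makes the map $\xi \mapsto \paren*{\xi(n)-\xi(\nu(n))}_{\{n,\nu(n)\}\subset S}$, valued in $\bigoplus_{\{n,\nu(n)\}\subset S} T_{u(n)}X$, bounded; it is surjective because evaluation at finitely many distinct points is jointly surjective on $W^{1,p}$--sections, and its kernel is exactly $W^{1,p}\Gamma(\Sigma,\nu;u^*TX)$. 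Hence this subspace is closed of real codimension $2n\cdot\tfrac{\#S}{2}=n\#S$.

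Finally, $\fd_{u,j,\nu;J}$ is the composition of the inclusion $W^{1,p}\Gamma(\Sigma,\nu;u^*TX)\into W^{1,p}\Gamma(\Sigma,u^*TX)$ --- a Fredholm operator of index $-n\#S$ --- with $\fd_{u,j;J}$, so by additivity of the Fredholm index
\begin{equation*}
  \ind\fd_{u,j,\nu;J}
  =
  \ind\fd_{u,j;J} - n\#S
  =
  n\paren*{\chi(\Sigma)-\#S} + 2\inner{[\Sigma]}{u^*c_1(X,J)};
\end{equation*}
substituting $\chi(\Sigma)-\#S = 2\paren*{1-p_a(\Sigma,\nu)}$ from \eqref{Eq_ArithmeticGenus} gives \eqref{Eq_IndexFormula}. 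I do not expect a genuine obstacle here: the only substantive input is Riemann--Roch, and the sole points to watch are that one needs the version for \emph{real}-linear (rather than complex-linear) Cauchy--Riemann operators and that one must keep straight the factor of two between the number $\#S$ of nodal points and the number $\#S/2$ of node-pairs when counting the codimension.
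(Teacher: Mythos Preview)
Your proof is correct and follows essentially the same approach as the paper: Riemann--Roch for the operator $\fd_{u,j;J}$ on the full $W^{1,p}$--space, the observation that the inclusion $W^{1,p}\Gamma(\Sigma,\nu;u^*TX)\hookrightarrow W^{1,p}\Gamma(\Sigma,u^*TX)$ has index $-n\#S$, and the arithmetic genus identity \eqref{Eq_ArithmeticGenus}. Your version is simply more explicit about why the inclusion has the stated index and why the real-linear Riemann--Roch theorem applies.
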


\begin{proof}
  The inclusion
  \begin{equation*}
    W^{1,p}\Gamma(\Sigma,\nu;u^*TX) \to W^{1,p}\Gamma(\Sigma,u^*TX).
  \end{equation*}
  has index $-n\#S$.
  By the Riemann--Roch Theorem,
  \begin{equation*}
    \ind \fd_{u,j;J}
    =
    2\inner{[\Sigma]}{u^*c_1(X,J)} + n\chi(\Sigma).
  \end{equation*}
  These together with \autoref{Eq_ArithmeticGenus} imply the index formula.
\end{proof}

\begin{remark}
  \label{Rmk_NodalKernelAndCokernel}
  For our discussion in \autoref{Sec_LeadingOrderTermOfObstructionGhostComponents},
  which establishes the key technical result of this article,
  the following detailed description of the kernel and cokernel of $\fd_{u,j,\nu;J}$ will be important.
  Denote by
  \begin{equation*}
    V_- \subset \bigoplus_{n \in S} T_{u(n)}X
  \end{equation*}
  the subspace of those $(v_n)_{n \in S}$ satisfying
  \begin{equation*}
    v_{\nu(n)} = -v_n.
  \end{equation*}
  Define $\diff \co \ker\fd_{u,j;J} \to V_-$ by
  \begin{equation*}
    \diff \kappa \coloneq \paren*{\kappa(n) - \kappa(\nu(n))}_{n\in S}.
  \end{equation*}
  Evidently,
  \begin{equation*}
    \ker\fd_{u,j,\nu;J} = \ker \diff.
  \end{equation*}
  The map $\diff$ is induced by the analogously defined map $W^{1,p}(\Sigma,u^*TX) \to V_-$ which fits in to the following commutative diagram with exact rows
  \begin{equation*}
    \begin{tikzcd}
      0 \ar[r] & W^{1,p}(\Sigma,\nu;u^*TX) \ar{r}\ar{d}{\fd_{u,j,\nu;J}} & W^{1,p}(\Sigma,u^*TX) \ar{r}\ar{d}{\fd_{u,j;J}} & V_- \ar{r}\ar{d} & 0 \\
      0 \ar{r} & L^p\Omega^{0,1}(\Sigma,u^*TX) \ar{r}{=}& L^p\Omega^{0,1}(\Sigma,u^*TX) \ar{r} & 0 \ar{r} & 0.
    \end{tikzcd}  
  \end{equation*}
  Therefore,
  the Snake Lemma yields the short exact sequence
  \begin{equation*}
    \begin{tikzcd}[column sep=small]
      0 \ar[r] & \coker\diff \ar[r] & \coker\fd_{u,j,\nu;J} \ar[r] &  \coker\fd_{u,j;J} \ar[r] & 0.
    \end{tikzcd}    
  \end{equation*}
  The dual sequence
  \begin{equation*}
    \begin{tikzcd}[column sep=small]
      0 \ar[r] & (\coker\fd_{u,j;J})^* \ar[r] & (\coker\fd_{u,j,\nu;J})^* \ar[r] & (\coker\diff )^* \ar[r] & 0
    \end{tikzcd}
  \end{equation*}
  can be understood as follows.  
  Let $q \in (1,2)$ be such that $1/p+1/q = 1$.
  The dual space $\paren{\coker\fd_{u,j\nu;J}}^*$ can be identified via the pairing between $L^p$ and $L^q$ with the space $\sH$ consisting of those $\zeta \in L^q\Omega^{0,1}(\Sigma,u^*TX)$ which satisfy a distributional equation of the form
  \begin{equation*}
    \fd_{u,j,J}^*\zeta = \sum_{n\in S} v_n \delta_n.
  \end{equation*}
  with $v = (v_n)_{n\in S} \in (\im \diff)^\perp \iso (\coker\diff)^*$ and $\delta_n$ denoting the Dirac $\delta$ distribution at $n$.  
  The map $(\coker\fd_{u,j,\nu;J})^* \to (\coker\diff)^*$ maps $\zeta$ to $v$.
\end{remark}

\begin{definition}
  Define the map $\fn_{u,j,\nu;J} \co W^{1,p}\Gamma(\Sigma,\nu;u^*TX) \to L^p\Omega^{0,1}(\Sigma,u^*TX)$ by
  \begin{equation*}
    \fn_{u,j,\nu;J}(\xi)
    \coloneq
    \fF_{u,j,\nu;J}(\xi) - \delbar_J(u,j) - \fd_{u,j,\nu;J}\xi.
    \qedhere
  \end{equation*}  
\end{definition}

\begin{prop}[{\cite[Proposition 3.5.3 and Remark 3.5.5]{McDuff2012}}]
  \label{Prop_QuadraticEstimate}
  Denote by $c_S > 0$ an upper bound for the norm of the embedding $W^{1,p}(\Sigma) \into C^{0,1-2/p}(\Sigma)$ and let $c_\xi > 0$.
  For every $\xi_1,\xi_2$ with $\Abs{\xi_1}_{W^{1,p}} \leq c_\xi$ and $\Abs{\xi_2}_{W^{1,p}} \leq c_\xi$
  \begin{equation*}
    \Abs{\fn_{u,j,\nu;J}(\xi_1)-\fn_{u,j,\nu;J}(\xi_2)}_{L^p}
    \leq
    c(c_S,c_\xi,\Abs{\rd u}_{L^p})
    \cdot
    \paren*{
      \Abs{\xi_1}_{W^{1,p}}
      +
      \Abs{\xi_2}_{W^{1,p}}
    }
    \cdot
    \Abs{\xi_1-\xi_2}_{W^{1,p}}.
  \end{equation*}
\end{prop}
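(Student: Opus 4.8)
The plan is to deduce this from the corresponding quadratic estimate for the analogous map on the \emph{full} space of $W^{1,p}$ sections, which is precisely \cite[Proposition 3.5.3 and Remark 3.5.5]{McDuff2012}, and then simply restrict. Concretely: let $\fn_{u,j;J}\co W^{1,p}\Gamma(\Sigma,u^*TX) \to L^p\Omega^{0,1}(\Sigma,u^*TX)$ be the map defined by the same formula $\fn_{u,j;J}(\xi) \coloneq \fF_{u,j;J}(\xi) - \delbar_J(u,j) - \fd_{u,j;J}\xi$, where $\fF_{u,j;J}$ and $\fd_{u,j;J}$ are the versions of Definitions~\ref{Def_FU} and \ref{Def_JHolomorphicLinearization} on the full space. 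By construction $\fF_{u,j,\nu;J}$, $\fd_{u,j,\nu;J}$, and hence $\fn_{u,j,\nu;J}$, are the restrictions of $\fF_{u,j;J}$, $\fd_{u,j;J}$, $\fn_{u,j;J}$ to the closed subspace $W^{1,p}\Gamma(\Sigma,\nu;u^*TX)$, and the $W^{1,p}$ norm on this subspace is the restriction of the ambient one. Therefore any estimate valid for $\fn_{u,j;J}$ holds verbatim for $\fn_{u,j,\nu;J}$, with the same constants; in particular the Morrey embedding constant for $W^{1,p}\Gamma(\Sigma,\nu;u^*TX) \into C^{0,1-2/p}$ is bounded by the constant $c_S$ for $W^{1,p}(\Sigma) \into C^{0,1-2/p}$. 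It is worth noting that $\Sigma$ here is an honest smooth (though possibly disconnected) closed surface — the nodal structure $\nu$ enters only through the linear constraint $\xi\circ\nu = \xi$ cutting out the subspace — so Morrey's theorem applies without any modification.

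For completeness I would recall the shape of the standard argument for $\fn_{u,j;J}$. Writing $u_\xi = \exp_u(\xi)$, the map $\xi\mapsto \Psi_\xi^{-1}\delbar_J(u_\xi,j)$ is obtained by composing the $J$--holomorphic map equation with the fibrewise-smooth families $\xi\mapsto\exp_u(\xi)$, $\xi\mapsto\Psi_\xi$, and the complex structure $J$ pulled back along the geodesics $t\mapsto\exp_u(t\xi)$. Subtracting the zeroth- and first-order terms in $\xi$ — which is exactly what the definition of $\fn$ does — leaves, \emph{pointwise on $\Sigma$}, a Taylor remainder of order $\geq 2$ whose coefficients depend smoothly on $u$ and on the $1$--jet of $J$ along $u$, and which at each point is controlled by $\abs{\xi}\cdot(\abs{\nabla\xi} + \abs{\rd u})$ times a continuous function of $\abs{\xi}$. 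Applying the mean value theorem in the $\xi$--variable to $\fn_{u,j;J}(\xi_1) - \fn_{u,j;J}(\xi_2)$ yields, pointwise, a bound of the form $C(\abs{\xi_1},\abs{\xi_2})\cdot(\abs{\xi_1}+\abs{\xi_2})\cdot\abs{\xi_1-\xi_2}$ multiplied by the relevant first-derivative factors. One then uses $\Abs{\xi_i}_{C^0}\leq c_S\Abs{\xi_i}_{W^{1,p}}\leq c_S c_\xi$ to absorb the $\sup$-norms of $\xi_1,\xi_2$ into the constant, and Hölder's inequality (e.g.\ $\Abs{fg}_{L^p}\leq\Abs{f}_{C^0}\Abs{g}_{L^p}$ applied to the derivative factors) to pass from the pointwise product to the asserted $L^p$ estimate; the dependence on $\Abs{\rd u}_{L^p}$ enters exactly through the terms in which a factor of $\rd u$ appears undifferentiated.

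There is no genuine obstacle here: the proposition is a verbatim instance of the $W^{1,p}$ quadratic estimate of McDuff–Salamon, and the only thing that needs to be said is that passing to the closed subspace $W^{1,p}\Gamma(\Sigma,\nu;u^*TX)$ with the restricted norm changes nothing — all the analytic input (Morrey embedding, Hölder, the fibrewise Taylor expansions) is over the smooth surface $\Sigma$ and requires no nodal-specific modification.
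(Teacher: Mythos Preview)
Your proposal is correct. The paper does not give its own proof of this proposition: it is stated with a direct citation to \cite[Proposition 3.5.3 and Remark 3.5.5]{McDuff2012} and no further argument. Your observation that $\fn_{u,j,\nu;J}$ is simply the restriction of $\fn_{u,j;J}$ to the closed subspace $W^{1,p}\Gamma(\Sigma,\nu;u^*TX)$, and that all the analytic ingredients (Morrey embedding, Hölder, pointwise Taylor expansion) live on the smooth surface $\Sigma$ and are oblivious to the linear constraint $\xi\circ\nu=\xi$, is exactly the justification needed for the citation to apply verbatim in the nodal setting.
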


So far, the complex structure $j$ has been held fixed.
Denote by $\sJ(\Sigma)$ the space of complex structures on $\Sigma$ and by $\Diff_0(\Sigma,\nu)$ the group of diffeomorphism of $\Sigma$ which are isotopic to the identity and commute with $\nu$.
Denote by
\begin{equation*}
  \sT \coloneq \sJ(\Sigma)/\Diff_0(\Sigma,\nu)
\end{equation*}
the corresponding Teichmüller space.
This is a complex manifold whose real dimension satisfies
\begin{equation*}
  \dim \sT  - \dim\aut(\Sigma,j,\nu) +\#S= 6(p_a(\Sigma,\nu)-1).
\end{equation*}
For every $j \in \sJ(\Sigma)$ there is a \defined{Teichmüller slice through $j$};
that is:
an open neighborhood $\Delta$ of $0 \in \C^{\dim_\C \sT}$ together with a $\Aut(\Sigma,j,\nu)$--equivariant map $\jmath\co \Delta \to \sJ(\Sigma)$ such that $\jmath(0) = j$.

\begin{definition}
  Consider the bundle over $\Delta$ whose fiber over $\sigma\in\Delta$ is the Banach space
  \begin{equation*}
  L^p\Omega^{0,1}(\Sigma,u^*TX).
  \end{equation*}
  Here the space of $(0,1)$ forms on $\Sigma$ is defined with respect to complex structure $\jmath(\sigma)$.
  A choice of a trivialization of this bundle  gives rise to the map
  \begin{equation}
    \label{Eq_BackTransportedParametrizedCauchyRiemann}
    \begin{split}
      W^{1,p}\Gamma(\Sigma,\nu;u^*TX)\times\Delta &\to L^p\Omega^{0,1}(\Sigma,u^*TX) \\
      (\xi,\sigma) &\mapsto \sF_{u,\jmath(\sigma),\nu;J}(\xi).
    \end{split}
  \end{equation}
  Define $\rd_{u,j}\delbar_{\nu;J} \co  W^{1,p}\Gamma(\Sigma,\nu;u^*TX) \oplus T_0 \Delta \to L^p\Omega^{0,1}(\Sigma,u^*TX)$ to be the derivative of the map \eqref{Eq_BackTransportedParametrizedCauchyRiemann} at $(0,0)$.
\end{definition}

\begin{definition}
  The \defined{index} of $u$ is 
  \begin{equation*}    
    \begin{split}
      \ind(u)
      &\coloneq
      \ind(\rd_{u,j}\delbar_{\nu;J}) - \dim\aut(\Sigma,j,\nu) + \#S \\
      &=
      2\Inner{[\Sigma],u^*c_1(X,J)} + 2(n-3)\paren*{1-p_a(\Sigma,\nu)}.
    \end{split}
  \end{equation*}
  The map $u\co (\Sigma,j,\nu) \to (X,J)$ is said to be \defined{unobstructed} if $\rd_{u,j}\delbar_{\nu;J}$ is surjective. 
\end{definition}

Henceforth,
to simplify notation,
we will often drop some or all of the subscripts $j$, $\nu$, $J$ from the maps defined above.

\subsection{Transversality for simple maps}

Throughout the remainder of this section,
$(X,\omega)$ is a compact symplectic manifold of dimension $2n \geq 6$ and we only consider pseudo-holomorphic maps from smooth Riemann surfaces. 

\begin{definition}
  \label{Def_JEmb}
  \label{Def_JEmbStar}
  Denote by $\sJ_\emb(X, \omega) \subset \sJ(X, \omega)$ the subspace of those almost complex structures compatible with $\omega$ for which the following hold:
  \begin{enumerate}
  \item
    there are no simple $J$--holomorphic maps with negative index,
  \item
    every simple $J$--holomorphic map with $\ind(u) < 2n-4$ is an embedding, and
  \item 
    every pair of simple $J$--holomorphic maps $u_1$, $u_2$ satisfying
    \begin{equation*}
      \ind(u_1) + \ind(u_2) < 2n-4
    \end{equation*}
    either have disjoint images or are related by a reparametrization.
  \end{enumerate}
  Denote by $\sJ_\emb^\star(X,\omega) \subset \sJ_\emb(X,\omega)$ the subset of those $J$ for which,
  moreover,
  \begin{enumerate}[resume]
  \item
    every simple $J$--holomorphic map is unobstructed.
    \qedhere
  \end{enumerate}
\end{definition}

\begin{definition}
  \label{Def_OneParameterFamilies}
  Given $J_0, J_1 \in \sJ(X,\omega)$,
  denote by $\sJ(X,\omega;J_0,J_1)$ the space of smooth paths $(J_t)_{t\in[0,1]}$ in $\sJ(X,\omega)$ from $J_0$ and $J_1$.
  Given $J_0, J_1\in\sJ_\emb^\star(X,\omega)$,
  denote by $\sJ_\emb^\star(X,\omega,J_0,J_1)$ the subset of those $(J_t)_{t\in[0,1]} \in \sJ(X,\omega;J_0,J_1)$ such that for every $t \in [0,1]$:
  \begin{enumerate}
  \item
    $J_t \in \sJ_\emb(X,\omega)$ and
  \item
    if $u \co (\Sigma,j) \to (X,J_t)$ is a simple $J_t$--holomorphic map,
    then either:
    \begin{enumerate}
    \item 
      $\coker \rd_{u,j}\delbar_{J_t}=\set{0}$ or
    \item
      $\dim\coker \rd_{u,j}\delbar_{J_t} = 1$
      and the map $\ker\rd_{u,j}\delbar_{J_t} \to \coker\rd_{u,j}\delbar_{J_t}$ defined by
      \begin{equation*}
        \xi
        \mapsto
        \pr\paren*{
          \left.\frac{\rd}{\rd s}\right|_{s=t}
          \rd_{u,j}\delbar_{J_s} \xi
        },
      \end{equation*}
      with $\pr \co \Omega^{0,1}(\Sigma,u^*TX) \to\coker\rd_{u,j}\delbar_{J_t}$ denoting the canonical projection,
      is surjective.
      \qedhere
    \end{enumerate}
  \end{enumerate}
\end{definition}

\begin{prop}
  \label{Prop_UnobstructedModuliSpaces}
  Let $A \in H_2(X,\Z)$ and $g \in \N_0$.
  \begin{enumerate}
  \item  
    For every $J\in\sJ_\emb^\star(X,\omega)$ the moduli space $\sM_{A,g}^\star(X,J)$ is an oriented smooth manifold of dimension
    \begin{equation*}
      2\Inner{c_1(X,\omega),A} + 2(n-3)(1-g).
    \end{equation*}
  \item
    For every pair $J_0, J_1 \in\sJ_\emb^\star(X,\omega)$ and $(J_t)_{t\in[0,1]}\in\sJ_\emb^\star(X,\omega;J_0,J_1)$ the moduli space
    \begin{align*}
      \sM_{A,g}^\star\paren*{X,(J_t)_{t\in[0,1]}}
      \coloneq
      \coprod_{t\in[0,1]}\sM_{A,g}^\star(X,J_t), 
    \end{align*}    
    is an oriented smooth manifold with boundary 
    \begin{equation*}
      \sM_{A,g}^\star(X,J_1) \amalg - \sM_{A,g}^\star(X,J_0).
    \end{equation*}
  \end{enumerate}
\end{prop}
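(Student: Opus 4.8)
The plan is to realise each of the two moduli spaces as the zero locus of a smooth Fredholm map and to invoke the implicit function theorem; the definitions of $\sJ_\emb^\star(X,\omega)$ (\autoref{Def_JEmbStar}) and of $\sJ_\emb^\star(X,\omega;J_0,J_1)$ (\autoref{Def_OneParameterFamilies}) have been arranged to be exactly the transversality hypotheses this requires. The analytic framework is that of \cite[Chapter 3]{McDuff2012}, and it is simplicity of the maps that makes the local charts glue to a manifold rather than to an orbifold.

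For the first assertion I would argue locally. Fix $J\in\sJ_\emb^\star(X,\omega)$ and a point of $\sM_{A,g}^\star(X,J)$ represented by a simple $u\co(\Sigma,j)\to(X,J)$ with $\Sigma$ smooth of genus $g$; choose a Teichmüller slice $\jmath\co\Delta\to\sJ(\Sigma)$ through $j$ and identify a neighbourhood of $[u]$ in $\sM_{A,g}^\star(X,J)$ with the quotient by $\Aut(\Sigma,j)$ of the zero set of the smooth map $(\xi,\sigma)\mapsto\sF_{u,\jmath(\sigma);J}(\xi)$, defined near the origin of $W^{1,p}\Gamma(\Sigma,u^*TX)\times\Delta$ and valued in $L^p\Omega^{0,1}(\Sigma,u^*TX)$, whose derivative at the origin is $\rd_{u,j}\delbar_J$ (the quadratic remainder is controlled by \autoref{Prop_QuadraticEstimate}). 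By definition of $\sJ_\emb^\star(X,\omega)$ the map $u$ is unobstructed, so $\rd_{u,j}\delbar_J$ is surjective and the zero set is a smooth manifold of dimension $\ind\rd_{u,j}\delbar_J$; and since $u$ is simple, $\Aut(\Sigma,j,u)=\set{\id}$, so $\Aut(\Sigma,j)$ acts freely on the zero set near $u$ and the quotient is smooth of dimension $\ind\rd_{u,j}\delbar_J-\dim\aut(\Sigma,j)$. This equals $2\Inner{c_1(X,\omega),A}+2(n-3)(1-g)$: indeed $\ind\rd_{u,j}\delbar_J=\ind\fd_{u,j;J}+\dim_\R\sT$, while $\dim_\R\aut(\Sigma,j)-\dim_\R\sT=6(1-g)$ by Riemann--Roch for $T\Sigma$ and $\ind\fd_{u,j;J}=2\Inner{c_1(X,\omega),A}+2n(1-g)$ by \autoref{Eq_IndexFormula}. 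The orientation I would read off from the canonical orientation of the determinant line bundle of the family of real Cauchy--Riemann operators $\rd_{u,j}\delbar_J$, obtained by homotoping each such operator through Fredholm operators to a complex--linear one; where $u$ is unobstructed this line is naturally $\Lambda^{\mathrm{top}}T_{[u]}\sM_{A,g}^\star(X,J)$, the complex group $\Aut(\Sigma,j)$ contributing no sign.

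For the second assertion I would run the same argument with an extra real parameter $t\in[0,1]$: near $(t_0,[u])$ the space $\sM_{A,g}^\star\paren*{X,(J_t)_{t\in[0,1]}}$ is the quotient by $\Aut(\Sigma,j)$ of the zero set of $(t,\xi,\sigma)\mapsto\sF_{u,\jmath(\sigma);J_t}(\xi)$, whose linearisation at $(t_0,0,0)$ is $\rd_{u,j}\delbar_{J_{t_0}}$ together with the rank--one term $s\mapsto s\cdot\left.\tfrac{\rd}{\rd t}\right|_{t=t_0}\delbar_{J_t}(u,j)$. The two conditions defining $\sJ_\emb^\star(X,\omega;J_0,J_1)$---that $\dim\coker\rd_{u,j}\delbar_{J_{t_0}}\le 1$, and that the crossing--form criterion of \autoref{Def_OneParameterFamilies} holds when this cokernel is one--dimensional---are exactly what makes the Kuranishi model of this parametrised zero set transverse, so $\sM_{A,g}^\star\paren*{X,(J_t)_{t\in[0,1]}}$ is a smooth manifold of dimension $2\Inner{c_1(X,\omega),A}+2(n-3)(1-g)+1$. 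Since $J_0,J_1\in\sJ_\emb^\star(X,\omega)$, at $t\in\set{0,1}$ the operator $\rd_{u,j}\delbar_{J_t}$ is already surjective, so projection to $[0,1]$ is a submersion there and the boundary consists exactly of the fibres over $0$ and $1$; orienting the total space by the determinant line of the parametrised operator ($\rd/\rd t$ first along the regular fibres), the Stokes convention produces the signed boundary $\sM_{A,g}^\star(X,J_1)\amalg-\sM_{A,g}^\star(X,J_0)$.

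I expect the orientation bookkeeping to be the main obstacle: both constructing the coherent determinant--line orientation and, above all, verifying that the Stokes convention at the two ends yields precisely $+\sM_{A,g}^\star(X,J_1)$ and $-\sM_{A,g}^\star(X,J_0)$, for which I would follow the determinant--line formalism of \cite[Chapter 3]{McDuff2012}. The remaining delicate points are the handling of automorphisms of the domain when $g\le 1$---hence the passage to a Teichmüller slice, together with the use of simplicity to make $\Aut(\Sigma,j)$ act freely on the local zero set---and the verification that the crossing--form hypothesis of \autoref{Def_OneParameterFamilies} does supply the parametrised transversality invoked above; the rest is a routine application of the standard Fredholm package to the prepared definitions.
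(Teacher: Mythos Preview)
Your proposal is correct and follows exactly the approach the paper takes: the paper's own proof consists only of the sentence ``This is a consequence of the Implicit Function Theorem; see \cite[Theorem 3.1.6 and Theorem 3.1.7]{McDuff2012}'' together with a reference to \cite[Proof of Theorem 3.1.6, Remark 3.2.5, Appendix A.2]{McDuff2012} for the determinant--line orientation. Your write-up is a faithful and considerably more detailed expansion of precisely this outline---local Fredholm charts via a Teichm\"uller slice, surjectivity of $\rd_{u,j}\delbar_J$ from the definition of $\sJ_\emb^\star$, free action of $\Aut(\Sigma,j)$ from simplicity, the index computation, and orientation by trivialising the determinant line---and you correctly flag the crossing--form verification and the orientation conventions as the points requiring care.
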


This is a consequence of the Implicit Function Theorem;
see \cite[Theorem 3.1.6 and Theorem 3.1.7]{McDuff2012}.
The orientation on the moduli spaces is obtained by trivalizing the determinant line bundle of the family of operators $\rd_{u,j}\delbar_J$;
see \cite[Proof of Theorem 3.1.6, Remark 3.2.5, Appendix A.2]{McDuff2012}.
If the moduli space is zero-dimensional, that is: a discrete set,
then every $[u] \in \sM_{A,g}^\star(X,J)$ is assigned a sign 
\begin{equation*}
  \sign[u] \in \set{+1,-1}.
\end{equation*}
The signed count of $\sM_{A,g}^\star(X,J)$ is then
\begin{equation*}
  \# \sM_{A,g}^\star(X,J)
  \coloneq
  \sum_{[u] \in \sM_{A,g}^\star(X,J)} \sign[u].
\end{equation*}

\begin{prop}
  \label{Prop_Transversality}
  ~
  \begin{enumerate}
  \item
    $\sJ_\emb^\star(X, \omega) \subset \sJ(X, \omega)$ is residual.
  \item
    For every pair $J_0, J_1 \in \sJ_\emb^\star(X,\omega)$,
    $\sJ_\emb^\star(X,\omega;J_0,J_1) \subset \sJ(X,\omega;J_0,J_1)$ is residual.
  \end{enumerate}
\end{prop}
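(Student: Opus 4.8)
The plan is the standard universal moduli space argument: one builds a Banach manifold parametrizing \emph{simple} pseudo-holomorphic maps together with the almost complex structure, shows that the forgetful projection to the space of almost complex structures is a Fredholm map, applies the Sard--Smale theorem, and intersects the countably many residual sets so obtained. To apply Sard--Smale one works with the Banach manifold $\sJ^\ell(X,\omega)$ of $C^\ell$ almost complex structures for $\ell$ large, or with Floer's $C^\varepsilon$ space of perturbations, and transports the residual sets back to the $C^\infty$ topology at the very end; see \cite[Chapter 3]{McDuff2012}, whose constructions the argument follows almost verbatim. Concretely, fix $A \in H_2(X,\Z)$ and $g \in \N_0$ and let $\sM^\star = \sM^\star_{A,g}$ be the set of equivalence classes of triples $(u,j,J)$ with $J \in \sJ^\ell(X,\omega)$, $(\Sigma,j)$ a smooth genus $g$ Riemann surface, and $u\co(\Sigma,j)\to(X,J)$ a simple $J$--holomorphic map with $u_*[\Sigma]=A$; via a Teichmüller slice and the Banach manifold of $W^{1,p}$ maps this is the zero set of a smooth section of a Banach vector bundle whose fibre over $(u,j,J)$ is $L^p\Omega^{0,1}(\Sigma,u^*TX)$. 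The crucial point is that at every such $(u,j,J)$ the full linearization
\begin{equation*}
  (\xi,\dot\jmath,Y)
  \longmapsto
  \fd_{u,j;J}\xi
  + \tfrac12\,J(u)\circ\rd u\circ\dot\jmath
  + \tfrac12\,Y(u)\circ\rd u\circ j,
\end{equation*}
in which $Y$ runs over infinitesimal deformations of $J$ inside $\sJ(X,\omega)$, is surjective. Since $\coker\fd_{u,j;J}$ is finite-dimensional and $\fd_{u,j;J}^*$ is an overdetermined elliptic first-order operator, it suffices to exclude a nonzero $\eta$ solving $\fd_{u,j;J}^*\eta=0$ that annihilates every variation $Y(u)\circ\rd u\circ j$: a simple map is somewhere injective, hence has an injective point $z_0$ with $\rd u(z_0)\neq0$; unique continuation forces $\eta$ to be nonzero near $z_0$; and the fibrewise space of $\omega$--compatible infinitesimal deformations of $J$ at $u(z_0)$ is rich enough that some $Y$ supported in a small ball about $u(z_0)$ pairs nontrivially with $\eta$ --- see \cite[\S2.3, \S3.2]{McDuff2012}. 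Thus $\sM^\star$ is a Banach manifold and $\pi\co\sM^\star\to\sJ^\ell(X,\omega)$ is a Fredholm map whose index equals $\ind(u)$.

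By the Sard--Smale theorem the set of regular values of $\pi$ is residual in $\sJ^\ell(X,\omega)$. Over a regular value $J$ the moduli space $\sM^\star_{A,g}(X,J)$ is a manifold of dimension $\ind(u)$ on which every map is unobstructed; indeed, granting surjectivity of the full linearization, a $J$ is a regular value of $\pi$ exactly when $\rd_{u,j}\delbar_J$ is surjective for every simple $J$--holomorphic map of class $A$ and genus $g$, so in particular $\sM^\star_{A,g}(X,J)$ is empty once $\ind(u)<0$. Intersecting over the countably many pairs $(A,g)$ produces a residual subset of $\sJ^\ell(X,\omega)$ on which conditions (i) and (iv) of \autoref{Def_JEmb} hold. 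Conditions (ii) and (iii) are handled by enlarging $\sM^\star$ with one, respectively two, marked points and the associated evaluation maps: after the same universal transversality argument, the locus of simple maps possessing a critical point or a self-intersection, and the locus of pairs of inequivalent simple maps with intersecting images, have codimension at least $2n-4$ in $\sM^\star$, so over a generic value of $\pi$ they are empty once $\ind(u)<2n-4$, respectively $\ind(u_1)+\ind(u_2)<2n-4$. A countable intersection of residual sets is residual, so all four conditions hold on a residual subset of $\sJ^\ell(X,\omega)$; transporting to the $C^\infty$ topology proves (1).

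For (2) one repeats the scheme over the path space $\sJ(X,\omega;J_0,J_1)$: a point of the universal space is a tuple $(t,u,j,(J_s)_{s\in[0,1]})$ with $u$ a simple $J_t$--holomorphic map, and since varying the path near the parameter $t$ is an unconstrained deformation, surjectivity of the full linearization goes through unchanged. The projection $\Pi$ to $\sJ(X,\omega;J_0,J_1)$ is Fredholm of index $\ind(u)+1$, so over a regular value $\sM^\star_{A,g}\paren*{X,(J_t)_{t\in[0,1]}}$ is a manifold of dimension $\ind(u)+1$ with boundary $\sM^\star_{A,g}(X,J_1)\amalg-\sM^\star_{A,g}(X,J_0)$, using $J_0,J_1\in\sJ_\emb^\star(X,\omega)$. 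Because $\ind(u)$ is an even integer, the inequalities $\ind(u)<2n-4$ and $\ind(u_1)+\ind(u_2)<2n-4$ are equivalent to $\ind(u)+1<2n-4$ and $\ind(u_1)+\ind(u_2)+1<2n-4$; hence the parametrized analogues of the loci in the previous paragraph, and the parametrized negative-index locus, still have negative expected dimension, so every $J_t$ on a generic path lies in $\sJ_\emb(X,\omega)$. Finally, condition (2) of \autoref{Def_OneParameterFamilies} --- that at a parameter $t$ where some simple $J_t$--holomorphic map $u$ is obstructed the cokernel of $\rd_{u,j}\delbar_{J_t}$ is one-dimensional and the crossing homomorphism $\ker\rd_{u,j}\delbar_{J_t}\to\coker\rd_{u,j}\delbar_{J_t}$ is surjective --- is precisely the conclusion of the one-parameter transversality theorem \cite[Theorem 3.1.7]{McDuff2012}. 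Intersecting over countably many pairs $(A,g)$ and pairs thereof, and transporting to $C^\infty$, proves (2).

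The one genuinely delicate step is the surjectivity of the full linearization in the first paragraph --- more precisely, verifying that the \emph{constrained} infinitesimal deformations of an $\omega$--compatible almost complex structure (the endomorphisms $Y$ of $TX$ with $YJ+JY=0$ for which $\omega(\cdot,Y\cdot)$ is symmetric) are still plentiful enough to hit an arbitrary one-dimensional cokernel class localized near an injective point. Granting this, everything else is the routine combination of the Sard--Smale theorem with the dimension counts of \cite[Chapters 2--3]{McDuff2012}.
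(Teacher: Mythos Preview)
Your proposal is correct and is precisely the standard Sard--Smale argument the paper has in mind; indeed, the paper's own proof consists of a single sentence referring the reader to \cite[Theorem 1.2]{Oh2009}, \cite[Proposition A.4]{Ionel2018}, and \cite[Sections 3.2 and 6.3]{McDuff2012}, and your write-up is a faithful expansion of exactly that material. One small caveat: your identification of condition (2.b) of \autoref{Def_OneParameterFamilies} with the bare conclusion of \cite[Theorem 3.1.7]{McDuff2012} is a bit loose---the condition there is phrased in terms of the $s$--derivative of the \emph{linearization} applied to kernel elements, which is a regular-crossing condition for the family of Fredholm operators rather than literally the parametrized surjectivity statement---but it is of the same nature and is obtained by the same Sard--Smale mechanism you describe.
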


The proof is a standard application of the Sard--Smale theorem;
cf. \cites[Theorem 1.2]{Oh2009}[Proposition A.4]{Ionel2018}[Sections 3.2 and 6.3]{McDuff2012}. 
Some details of the proof will be reviewed in the proof of \autoref{Prop_TransversalityPseudoCycles}.

\subsection{\texorpdfstring{$J$}{J}--holomorphic maps  with constraints}
\label{Sub_JHolomorphicMapsWithConstraints}

\begin{definition}
  Let $\Lambda \in \N$.
  A \defined{$J$--holomorphic map with $\Lambda$ marked points} is a $J$--holomorphic map $u\co (\Sigma,j) \to (X,J)$ together with $\Lambda$ distinct labeled points $z_1,\ldots,z_\Lambda \in \Sigma$.
  
  The \defined{reparametrization} of $(u;z_1,\ldots,z_\Lambda)$ by $\phi \in \Diff(\Sigma)$ is the $J$--holomorphic map with $\Lambda$ marked points $\phi_*(u;z_1,\ldots,z_\Lambda) \coloneq (u\circ\phi^{-1};\phi(z_1),\ldots,\phi(z_\Lambda))$.
  
  A $J$--holomorphic map $(u;z_1,\ldots,z_\Lambda)$ with $\Lambda$ marked points is said to be \defined{simple} if $u$ is simple.
\end{definition}

\begin{definition}
  \label{Def_ModuliOfPointedMaps}
  Given $A\in H_2(X,\Z)$, $g \in \N_0$, $\Lambda \in \N$, and $J\in\sJ(X,\omega)$,
  the \defined{moduli space of simple $J$--holomorphic maps with $\Lambda$ marked points} representing $A$ and of genus $g$ is the set
  \begin{equation*}
    \sM_{A,g,\Lambda}^\star(X,J)
  \end{equation*}
  of equivalence classes $J$--holomorphic maps $u\co (\Sigma,j) \to (X,J)$ with $\Lambda$ marked points $z_1,\ldots,z_\Lambda$ up to reparametrization with
  \begin{equation*}
    u_*[\Sigma] = A \qandq
    g(\Sigma) = g.
  \end{equation*}
  Define the \defined{evaluation map} $\ev \co \sM_{A,g,\Lambda}^\star(X,J) \to X^\Lambda$
  by
  \begin{equation*}
    \ev([u;z_1,\ldots,z_\Lambda])
    \coloneq
    (u(z_1),\ldots,u(z_\Lambda)).
    \qedhere
  \end{equation*} 
\end{definition}

\begin{remark}
  Given two maps $f \co X \to Z$ and $g \co Y \to Z$, the \defined{fiber product} is
  \begin{equation*}
    X \fibprod{f}{g} Y \coloneq (f\times g)^{-1}(\Delta)
  \end{equation*}
  with $\Delta \subset Z \times Z$ denoting the diagonal. 
  If $X$, $Y$, $Z$ are smooth manifolds and $f$ and $g$ are transverse smooth maps, then $X \fibprod{f}{g} Y$ is a submanifold of $X\times Y$ of dimension $\dim(X) + \dim(Y) - \dim(Z)$.
\end{remark}

Let $\paren*{f_\lambda\co V_\Lambda \to X}_{\lambda=1}^\Lambda$ be a $\Lambda$--tuple of pseudo-cycles in general position such that
\begin{equation*}
  \codim(f_\lambda) \coloneq \dim X - \dim V_\lambda
\end{equation*}
is even and positive for every $\lambda$. 
The following discussion assumes some familiarity with the notions of pseudo-cycle, pseudo-cycle cobordism, and pseudo-cycle transversality.
In particular,
we make use of the following facts,
which are discussed in \autoref{Sec_Pseudocycles}:
\begin{enumerate}[(a)]
\item
  For every $\lambda \in \set{1,\ldots,\Lambda}$,
  there is a manifold $V_\lambda^\del$ of dimension $\dim(V_\lambda)-2$ and a smooth map $f_\lambda^\del \co V_\lambda^\del \to X$ whose image contains the pseudo-cycle boundary $\bdry(f_\lambda)$.
\item
  A smooth map $g \co M \to X$ is said to be transverse to the pseudo-cycle $f_\lambda$ if it is transverse to both $f_\lambda$  and $f_\lambda^\del$ in the usual sense.
\item
  For every $I \subset \set{1,\ldots,\Lambda}$
  the product $\prod_{\lambda \in I} f_\lambda$ is a pseudo-cycle and $f_\lambda^\del$ induces in a natural way a map from a smooth manifold whose image contains $\bdry\paren*{\prod_{\lambda \in I} f_\lambda}$.
\end{enumerate}

In the following,
$f_\lambda^\bullet\co V_\lambda^\bullet \to X$ stands for either $f_\lambda\co V_\lambda \to X$ or $f_\lambda^\del\co V_\lambda^\del \to X$.

\begin{definition}
  Given $A \in H_2(X,\Z)$, $g \in \N_0$, and $J \in \sJ(X,\omega)$,
  set
  \begin{equation*}
    \sM_{A,g}^\star(X,J;f_1^\bullet,\ldots,f_\Lambda^\bullet)  
    \coloneq
    \sM_{A,g,\Lambda}^\star(X,J) \fibprod{\ev}{f_1^\bullet\times\cdots\times f_\Lambda^\bullet} V_1^\bullet\times\cdots\times V_\Lambda^\bullet.
  \end{equation*}
  The \defined{expected dimension} of $\sM_{A,g}^\star(X,J;f_1,\ldots,f_\Lambda)$ is defined to be
  \begin{equation*}
    \vdim \sM_{A,g}^\star(X,J;f_1,\ldots,f_\Lambda)
    \coloneq
    2\inner{c_1(X,\omega)}{A}
    + (n-3)(2-2g)
    + \sum_{\lambda=1}^\Lambda \paren*{2-\codim(f_\lambda)}.
    \qedhere
  \end{equation*}
\end{definition}

The following are analogues of \autoref{Def_JEmb} and \autoref{Def_OneParameterFamilies} in the setting of $J$--holomorphic maps with constraints. 

\begin{definition}
  \label{Def_JEmbPseudoCycles}
  Denote by $\sJ_\emb(X,\omega;f_1,\ldots,f_\Lambda) \subset \sJ(X,\omega)$ the subset of those almost complex structures $J$ compatible with $\omega$ for which the following conditions hold for every $A,A_1,A_2 \in H_2(X,\Z)$, $g,g_1,g_2 \in \N_0$, and $I,I_1,I_2 \subset \set{1,\ldots,\Lambda}$ with $I_1\cap I_2 = \emptyset$:
  \begin{enumerate}
  \item 
    \label{It_JEmbPseudoCyclesNonnegativeIndex}
    if $\vdim\sM_{A,g}^\star\paren[\big]{X,J;(f_\lambda^\bullet)_{\lambda\in I}} < 0$,
    then $\sM_{A,g}^\star\paren[\big]{X,J;(f_\lambda^\bullet)_{\lambda\in I}} = \emptyset$;
  \item 
      \label{It_JEmbPseudoCyclesEmbedded}
    if $\vdim\sM_{A,g}^\star\paren[\big]{X,J;(f_\lambda^\bullet)_{\lambda\in I}}  < 2n-4$,
    then every $J$--holomorphic map underlying an element of $\sM_{A,g}^\star\paren[\big]{X,J;(f_\lambda^\bullet)_{\lambda\in I}}$ is an embedding;
    and
  \item
     \label{It_JEmbPseudoCyclesDisjointImages}
    if $\vdim\sM_{A_1,g_1}^\star\paren[\big]{X,J;(f_\lambda^\bullet)_{\lambda\in I_1}} + \vdim\sM_{A_2,g_2}^\star\paren[\big]{X,J;(f_\lambda^\bullet)_{\lambda\in I_2}} < 2n-4$,
    then every pair of every $J$--holomorphic maps underlying elements of $\sM_{A_1,g_1}^\star\paren[\big]{X,J;(f_\lambda^\bullet)_{\lambda\in I_1}}$ and $\sM_{A_2,g_2}^\star\paren[\big]{X,J;(f_\lambda^\bullet)_{\lambda\in I_2}}$ either have disjoint images or are related by a reparametrization.
  \end{enumerate}
  Denote by $\sJ_\emb^\star(X,\omega;f_1,\ldots,f_\Lambda)$ the subset of those elements of $\sJ_\emb(X,\omega;f_1,\ldots,f_\Lambda)$ for which, moreover:
  \begin{enumerate}[resume]
  \item \label{It_JEmbPseudoCyclesUnobstructed}
    every simple $J$--holomorphic map is unobstructed, and
  \item
     \label{It_JEmbPseudoCyclesEvaluationMaps}
     for every $A \in H_2(X,\Z)$, $g \in \N$, and $I \subset \set{1,\ldots,\Lambda}$, the pseudo-cycle $\prod_{\lambda \in I} f_\lambda$ is transverse to $\ev \co \sM_{A,g,\abs{I}}^\star(X,J) \to X^{\abs{I}}$ in the sense of \autoref{Def_PseudocycleTransversality}.
     \qedhere
  \end{enumerate}
\end{definition}

\begin{definition}
  \label{Def_JEmbStarPseudoCycles}
  Given $J_0, J_1 \in \sJ_\emb^\star(X,\omega;f_1,\ldots,f_\Lambda)$, denote by $\sJ_\emb^\star(X,\omega;f_1,\ldots,f_\Lambda; J_0,J_1)$ the space of smooth paths $(J_t)_{t\in[0,1]}$ in $\sJ(X,\omega)$ from $J_0$ and $J_1$ such that for every $t\in[0,1]$:
  \begin{enumerate} 
  \item
    $J_t \in \sJ_\emb(X,\omega;f_1,\ldots,f_\Lambda)$, 
  \item if $u \co (\Sigma,j) \to (X,J_t)$ is a simple $J_t$--holomorphic map, then either:
    \begin{enumerate}
    \item
      $\coker\rd_{u,j}\delbar_{J_t} = \set{0}$ or
    \item
      $\dim\coker \rd_{u,j}\delbar_{J_t} = 1$ and the map $\ker\rd_{u,j}\delbar_{J_t} \to \coker\rd_{u,j}\delbar_{J_t}$ defined by
      \begin{equation*}
        \xi
        \mapsto
        \pr\paren*{
          \left.\frac{\rd}{\rd s}\right|_{s=t}
          \rd_{u,j}\delbar_{J_s} \xi
        },
      \end{equation*}
      with $\pr \co \Omega^{0,1}(\Sigma,u^*TX) \to\coker\rd_{u,j}\delbar_{J_t}$ denoting the canonical projection,
      is surjective;
      in particular, for every $A \in H_2(X,\Z)$, $g \in \N$, and $k\in \N$ the moduli space
      \begin{equation*}
        \sM_{A,g,k}^\star\paren*{X,(J_t)_{t\in[0,1]}}
        \coloneq
        \bigsqcup_{t\in[0,1]}\sM_{A,g,k}^\star(X,J_t) 
      \end{equation*}
      is an oriented smooth manifold with boundary $\sM_{A,g,k}^\star(X,J_1) \amalg -\sM_{A,g,k}^\star(X,J_0)$,
    \end{enumerate}
    and
  \item
    for every $A \in H_2(X,\Z)$, $g \in \N$, and $I \subset \set{1,\ldots,\Lambda}$
    the pseudo-cycle $\prod_{\lambda \in I} f_\lambda$ is transverse to the evaluation map $\ev \co \sM_{A,g,\abs{I}}^\star\paren*{X,A;(J_t)_{t\in[0,1]}} \to X^{\abs{I}}$ in the sense of \autoref{Def_PseudocycleTransversality}.
    \qedhere
  \end{enumerate} 
\end{definition}

The next two results are analogues of \autoref{Prop_UnobstructedModuliSpaces} and \autoref{Prop_Transversality}.

\begin{prop}
  Let $A \in H_2(X,\Z)$ and $g \in \N_0$.
  \label{Prop_UnobstructedModuliSpacesPseudoCycles}
  \begin{enumerate}
  \item  
    For every $J\in\sJ_\emb^\star(X,\omega;f_1,\ldots,f_\Lambda)$
    the moduli space $\sM_{A,g}^\star\paren[\big]{X,J;f_1^\bullet,\ldots,f_\Lambda^\bullet}$ is an oriented smooth manifold of dimension
    \begin{equation*}
      \vdim \sM_{A,g}^\star\paren[\big]{X,J;f_1^\bullet,\ldots,f_\Lambda^\bullet}.
    \end{equation*}
  \item
    For every pair $J_0, J_1 \in\sJ_\emb^\star(X,\omega;f_1,\ldots,f_\Lambda)$ and $(J_t)_{t\in[0,1]}\in\sJ_\emb^\star(X,\omega;f_1,\ldots,f_\Lambda;J_0,J_1)$ the moduli space
    \begin{equation*}     
      \sM_{A,g}^\star\paren*{X,(J_t)_{t\in[0,1]});f_1^\bullet,\ldots,f_\Lambda^\bullet}
      \coloneq
      \coprod_{t\in[0,1]}\sM_{A, g}^\star(X, J_t; f_1^\bullet,\ldots,f_\Lambda^\bullet)
    \end{equation*}
    is an oriented smooth manifold with boundary 
    \begin{equation*}
      \sM_{A,g}^\star\paren[\big]{X,J;f_1^\bullet,\ldots,f_\Lambda^\bullet}
      \amalg
      -\sM_{A,g}^\star\paren[\big]{X,J;f_1^\bullet,\ldots,f_\Lambda^\bullet}.
    \end{equation*} 
  \end{enumerate}
\end{prop}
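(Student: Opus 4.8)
The strategy is to deduce both parts from \autoref{Prop_UnobstructedModuliSpaces} by passing to moduli spaces of maps carrying $\Lambda$ marked points and then cutting down by a fibre product with the constraint pseudo-cycles, the transversality needed for the fibre product being exactly what is built into \autoref{Def_JEmbPseudoCycles} and \autoref{Def_JEmbStarPseudoCycles}. First I would record that for $J \in \sJ_\emb^\star(X,\omega;f_1,\ldots,f_\Lambda)$ every simple $J$--holomorphic map is unobstructed (condition~(4) of \autoref{Def_JEmbPseudoCycles}), so the Implicit Function Theorem argument underlying \autoref{Prop_UnobstructedModuliSpaces}---applied to the universal $\delbar_J$--section over the space of simple maps, with the complex structure varied through a Teichmüller slice and orientations taken from the determinant line, as in \cite[Theorem~3.1.6 and Appendix~A.2]{McDuff2012}---shows that $\sM_{A,g,\Lambda}^\star(X,J)$ is an oriented smooth manifold equipped with a smooth evaluation map $\ev\co\sM_{A,g,\Lambda}^\star(X,J)\to X^\Lambda$. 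Its dimension is $2\Inner{c_1(X,\omega),A}+(n-3)(2-2g)+2\Lambda$: the term $2\Lambda$ is the dimension of the space of $\Lambda$ ordered distinct marked points on a smooth domain, and no orbifold issue arises because the automorphism group of a simple map is trivial, a simple map being an immersion and injective away from a finite subset. Running the same argument over a path, and using condition~(2) of \autoref{Def_JEmbStarPseudoCycles}, makes $\sM_{A,g,\Lambda}^\star(X,(J_t)_{t\in[0,1]})$ an oriented smooth manifold with boundary $\sM_{A,g,\Lambda}^\star(X,J_1)\amalg-\sM_{A,g,\Lambda}^\star(X,J_0)$.

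For part~(1), condition~(6) of \autoref{Def_JEmbPseudoCycles} says exactly that the pseudo-cycle $\prod_\lambda f_\lambda$ is transverse to $\ev$; by the definition of transversality to a pseudo-cycle recalled in \autoref{Sec_Pseudocycles} together with \autoref{Prop_ProductOfPseudocyclesIsAPseudocycle}, $\ev$ is then transverse to every product $f_1^\bullet\times\cdots\times f_\Lambda^\bullet$, and the pseudo-cycle fibre-product construction makes
\[
  \sM_{A,g}^\star(X,J;f_1^\bullet,\ldots,f_\Lambda^\bullet)
  =
  \sM_{A,g,\Lambda}^\star(X,J)\fibprod{\ev}{f_1^\bullet\times\cdots\times f_\Lambda^\bullet}V_1^\bullet\times\cdots\times V_\Lambda^\bullet
\]
a smooth manifold, oriented by the standard fibre-product convention (the $V_\lambda^\bullet$ are oriented as part of the pseudo-cycle data and $X$ is oriented by $J$), of dimension $2\Inner{c_1(X,\omega),A}+(n-3)(2-2g)+\sum_\lambda(2-\codim f_\lambda^\bullet)$, which for $f_\lambda^\bullet = f_\lambda$ throughout equals $\vdim\sM_{A,g}^\star(X,J;f_1,\ldots,f_\Lambda)$. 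For part~(2) I would repeat this with the parametrised moduli space of the first paragraph in place of $\sM_{A,g,\Lambda}^\star(X,J)$ and condition~(3) of \autoref{Def_JEmbStarPseudoCycles} in place of condition~(6); because the $V_\lambda^\bullet$ are manifolds without boundary, the boundary of the fibre product is the fibre product of the boundary, which is $\sM_{A,g}^\star(X,J_1;f_1^\bullet,\ldots,f_\Lambda^\bullet)\amalg-\sM_{A,g}^\star(X,J_0;f_1^\bullet,\ldots,f_\Lambda^\bullet)$, with orientations dictated by compatibility of the fibre-product orientation with taking boundaries.

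Given \autoref{Prop_UnobstructedModuliSpaces} and the pseudo-cycle package, most of this is bookkeeping; the delicate point is the interface between the two. One has to check that transversality of $\ev$ to $\prod_\lambda f_\lambda$---including to the boundary strata carried by the maps $f_\lambda^\del$---not only yields a smooth fibre product of the expected dimension but also controls its pseudo-cycle boundary, so that $\sM_{A,g}^\star(X,J;f_1,\ldots,f_\Lambda)$ is itself a pseudo-cycle, as is needed for the cobordism arguments that follow. This is precisely what the pseudo-cycle formalism of \autoref{Sec_Pseudocycles} is designed to supply, and it is where I would expect to spend the most effort.
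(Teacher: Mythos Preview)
Your approach is correct and is exactly what the paper intends: the paper gives no proof of this proposition, presenting it simply as the analogue of \autoref{Prop_UnobstructedModuliSpaces} in the constrained setting, so the Implicit Function Theorem argument for the marked moduli space followed by a transverse fibre product with the pseudo-cycles is the natural and expected route. Two small remarks: the transversality condition you invoke is condition~(5) of \autoref{Def_JEmbPseudoCycles}, not~(6); and your final paragraph about controlling the pseudo-cycle boundary of $\sM_{A,g}^\star(X,J;f_1,\ldots,f_\Lambda)$ goes beyond what this proposition asserts---the statement only claims smoothness and orientation, not that the constrained moduli space is itself a pseudo-cycle.
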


\begin{prop}
  \label{Prop_TransversalityPseudoCycles}
  ~
  \begin{enumerate}
  \item
    $\sJ_\emb^\star(X,\omega;f_1,\ldots,f_\Lambda) \subset \sJ(X, \omega)$ is residual.
  \item
    For every pair $J_0, J_1 \in \sJ_\emb^\star(X,\omega;f_1,\ldots,f_\Lambda)$,  $\sJ_\emb^\star(X,\omega;f_1,\ldots,f_\Lambda;J_0,J_1) \subset \sJ(X,\omega;J_0,J_1)$ is residual.
  \end{enumerate}
\end{prop}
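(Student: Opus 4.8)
The plan is to deduce both statements from the Sard--Smale theorem in the standard way, along the lines of \cite[Chapter 3]{McDuff2012}, \cite[Theorem 1.2]{Oh2009} and \cite[Proposition A.4]{Ionel2018}, while keeping track of the pseudo--cycle constraints. The first observation is that each of the conditions defining $\sJ_\emb^\star(X,\omega;f_1,\ldots,f_\Lambda)$, and likewise those defining its one--parameter analogue, is imposed separately for each choice of discrete data: a homology class (or a pair of them) in $H_2(X,\Z)$, a genus (or a pair) in $\N_0$, and a subset (or two disjoint subsets) of $\set{1,\ldots,\Lambda}$. There are only countably many such choices, the spaces $\sJ(X,\omega)$ and $\sJ(X,\omega;J_0,J_1)$ are Baire spaces, and a countable intersection of residual subsets is residual; so it suffices to prove that each individual condition, for fixed discrete data, cuts out a residual subset. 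Each of these subsets will be exhibited as the set of regular values of the projection to $\sJ(X,\omega)$ (resp.\ to $\sJ(X,\omega;J_0,J_1)$) of a suitable universal moduli space.

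I would then carry out the usual universal moduli space construction. Working over the separable Banach manifold $\sJ^\ell(X,\omega)$ of $C^\ell$ almost complex structures compatible with $\omega$ for $\ell$ large (or over Floer's space of $C^\epsilon$ almost complex structures), and restricting throughout to \emph{simple} maps -- so that an injective point is available at which to localise a perturbation of $J$, as in \cite[Chapter 3]{McDuff2012} -- the spaces $\coprod_{J\in\sJ^\ell(X,\omega)}\sM_{A,g,k}^\star(X,J)$ are separable Banach manifolds whose projection to $\sJ^\ell(X,\omega)$ is Fredholm of index $\dim\sM_{A,g}^\star(X,J)+2k$, the dimension of the fibre over a regular value. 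Forming the honest fibre products with $f_1^\bullet\times\cdots\times f_\Lambda^\bullet$ -- once for each choice of the decorations $\bullet$, using \autoref{Prop_ProductOfPseudocyclesIsAPseudocycle} to handle the lower--dimensional boundary maps $f_\lambda^\del$ -- and, for the embedding and intersection conditions, with the diagonal $\Delta\subset X\times X$ and with the vanishing locus of $\rd u$ at an added marked point, produces further separable Banach manifolds whose projections to $\sJ^\ell(X,\omega)$ are again Fredholm, of index the appropriate expected dimension (lowered by $2$ for each boundary decoration $f_\lambda^\del$, by $2n$ for each fibre product with $\Delta$, and by $2n$ for each constraint $\rd u=0$). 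By Sard--Smale the regular values form a residual subset of $\sJ^\ell(X,\omega)$; over a regular value each of these moduli spaces is a manifold of dimension its index, in particular \emph{empty whenever that index is negative}. The passage from $C^\ell$ to $C^\infty$ is the standard one: intersect over $\ell\in\N$ using that $\sJ(X,\omega)$ is dense in $\sJ^\ell(X,\omega)$, or invoke Taubes's argument as in \cite[Chapter 3]{McDuff2012}.

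With this in place, the conditions of \autoref{Def_JEmbPseudoCycles} are matched with moduli spaces as follows. Condition~1 is the emptiness, for every decoration $\bullet$, of $\sM_{A,g}^\star(X,J;(f_\lambda^\bullet)_{\lambda\in I})$ when $\vdim\sM_{A,g}^\star(X,J;(f_\lambda)_{\lambda\in I})<0$. Condition~2 holds because a simple, non--embedded $J$--holomorphic map carries either a critical point or a double point, and the moduli spaces of maps meeting $(f_\lambda^\bullet)_{\lambda\in I}$ and carrying such a marked feature have virtual dimension at most $\vdim\sM_{A,g}^\star(X,J;(f_\lambda)_{\lambda\in I})-(2n-4)$, hence negative once $\vdim<2n-4$. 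Condition~3 is the analogous statement for the evaluation map of a pair of simple maps into $X\times X$ together with the diagonal: a pair with distinct but intersecting images lies in a stratum of dimension at most $\vdim_1+\vdim_2-(2n-4)$, and a pair with equal images must be reparametrizations of one another. Condition~4, unobstructedness of every simple $J$--holomorphic map, is precisely the assertion that the universal moduli space of simple maps is cut out transversally, which is part of the construction above. Condition~5, transversality of $\prod_{\lambda\in I}f_\lambda$ to $\ev$, is the same as asking that the fibre product defining $\sM_{A,g}^\star(X,J;(f_\lambda)_{\lambda\in I})$ be a manifold of its expected dimension -- a regular--value statement, applied also to the $f_\lambda^\del$. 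Part~(2) is proved the same way over $\sJ(X,\omega;J_0,J_1)$: the relevant universal moduli spaces acquire one extra dimension from the path parameter, so the loci above -- which by parity have virtual dimension at most $-2$ in the ranges that matter -- still have negative virtual dimension over the space of paths and are avoided along a generic path, as is the failure of condition (iii) of \autoref{Def_JEmbStarPseudoCycles}, which is again a regular--value matter; the one phenomenon that does occur along a generic path is a codimension--one jump of $\coker\rd_{u,j}\delbar_{J_t}$ to dimension one, at finitely many times, and condition (ii) of \autoref{Def_JEmbStarPseudoCycles} is exactly the requirement that each such crossing be clean -- once more a transversality property of the corresponding universal moduli space.

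The only genuinely non--formal point is the treatment of the pseudo--cycle constraints. A pseudo--cycle $f_\lambda$ is a smooth map from a possibly non--compact manifold, not a submanifold, and its image may fail to be closed, the defect being covered by the lower--dimensional map $f_\lambda^\del\co V_\lambda^\del\to X$; so one cannot simply intersect moduli spaces with $f_\lambda$. Instead one runs Sard--Smale on the honest fibre products with every decorated map $f_\lambda^\bullet$, and on the products of these over subsets of $\set{1,\ldots,\Lambda}$, invoking \autoref{Prop_ProductOfPseudocyclesIsAPseudocycle} and the general--position hypothesis; the drop $\dim V_\lambda^\del=\dim V_\lambda-2$ is what keeps the $f_\lambda^\del$--decorated moduli spaces negative--dimensional in the ranges that matter, which is also what prevents a Gromov limit from slipping into a pseudo--cycle boundary. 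Carrying this out carefully is where the facts assembled in \autoref{Sec_Pseudocycles} are needed; the remainder is the routine bookkeeping of \cite[Chapter 3]{McDuff2012}.
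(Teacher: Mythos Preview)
Your proposal is correct and follows the same Sard--Smale strategy as the paper. The paper, too, says the proof is ``a standard application of the Sard--Smale theorem'' and then spells out only the one point it considers non--routine: how the general--position hypothesis on the pseudo--cycles enters.

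The difference is where that hypothesis is actually used. You invoke it at the end, somewhat diffusely, in connection with \autoref{Prop_ProductOfPseudocyclesIsAPseudocycle} and the boundary maps $f_\lambda^\del$. The paper pinpoints it elsewhere: in the universal construction the marked points are parametrised by $\prod_{\lambda\in I}(\Sigma\times V_\lambda)$, so several $z_\lambda$ may coincide. At such a point the variation $\xi\mapsto(\xi(z_\lambda))_{\lambda}$ lands in the small diagonal and cannot by itself achieve transversality to $\prod_\lambda\Delta$; the paper then uses that $\prod_{\mu\in I_\lambda}f_\mu$ is transverse to the diagonal in $X^{\abs{I_\lambda}}$ --- exactly the general--position assumption --- to supply the missing directions via the $x_\mu$--variations. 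Your sketch, working implicitly with the moduli space $\sM_{A,g,k}^\star(X,J)$ of \emph{distinct} marked points, sidesteps this issue (the evaluations at distinct points are independent), so your argument is not wrong; but it is worth being aware that this is the step the paper isolates as the genuine content beyond \cite[Chapter 3]{McDuff2012}.
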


\begin{proof}
  We will prove the first part; the proof of the second part is similar.
  It follows from \autoref{Prop_TransversalityEvaluationMaps}, proved in \autoref{Sec_TransversalityEvaluationMaps}, that the set of $J\in\sJ(X,\omega)$ satisfying conditions \autoref{It_JEmbPseudoCyclesUnobstructed} and  \autoref{It_JEmbPseudoCyclesEvaluationMaps} from \autoref{Def_JEmbPseudoCycles} is residual. 
  Note that condition \autoref{It_JEmbPseudoCyclesUnobstructed} implies condition \autoref{It_JEmbPseudoCyclesNonnegativeIndex}.
  To prove that condition \autoref{It_JEmbPseudoCyclesEmbedded} is also satisfied by a generic $J$, consider the evaluation map
  \begin{equation*}
      \ev \co \sM_{A,g,2}^\star\paren[\big]{X,J;(f_\lambda^\bullet)_{\lambda\in I}} \to X^2.
  \end{equation*}
  If $\ev$ is transverse to the diagonal $X =\Delta\subset X^2$, then $\ev^{-1}(\Delta)$ is a submanifold of codimension 
  \begin{equation*}
   \dim\sM_{A,g,2}^\star\paren[\big]{X,J;(f_\lambda^\bullet)_{\lambda\in I}} - 2n = \dim\sM_{A,g}^\star\paren[\big]{X,J;(f_\lambda^\bullet)_{\lambda\in I}} - (2n-4).
  \end{equation*} 
  Therefore, if $\dim\sM_{A,g}^\star\paren[\big]{X,J;(f_\lambda^\bullet)_{\lambda\in I}}  < 2n-4$, then $\ev^{-1}(\Delta)$ is empty and two distinct maps in $\sM_{A,g}^\star\paren[\big]{X,J;(f_\lambda^\bullet)_{\lambda\in I}}$ have disjoint images. 
  By \autoref{Prop_TransversalityEvaluationMaps}, the set of $J$ for which the map $\ev$ is transverse to the diagonal $X \hookrightarrow X^2$ is residual.  
  This shows that the set of $J$ satisfying condition \autoref{It_JEmbPseudoCyclesEmbedded} from \autoref{Def_JEmbPseudoCycles}  is residual. 
  In the same way we conclude that the set of $J$ satisfying condition \autoref{It_JEmbPseudoCyclesDisjointImages} is residual. 
\end{proof}

The following will be important for relating moduli spaces defined using cobordant pseudocycles. 
Let $F \co W \to X$ be a cobordism between two pseudo-cycles $f_1^0$ and $f_1^1$ in $X$, and let $F^\del \co W^\del \to X$ be such that $\bdry(F)$ is contained in the image of $F^\del$;  see \autoref{Def_Pseudocycle} for the notation and the definition of a pseudo-cycle cobordism. 
In what follows, $F^\bullet$ denotes either $F$ or $F^\del$.
Let $f_2, \ldots, f_\Lambda$ be pseudo-cycles in $X$ such that $F, f_2, \ldots, f_\Lambda$ are in general position, as in \autoref{Def_PseudocyclesInGeneralPosition}.

Given $J \in \sJ(X,\omega)$ and a subset $I \subset \set{2,\ldots,\Lambda}$, set
\begin{equation*}
  \sM_{A,g}^\star(X,J; F^\bullet, (f_\lambda^\bullet)_{\lambda\in  I}) \coloneq \sM_{A,g,\abs{I}+1}^\star(X,J) \fibprod{\ev}{F^\bullet \times \prod_{\lambda \in I} f_\lambda^\bullet} W^\bullet \times \prod_{\lambda \in I} V_\lambda^\bullet.
\end{equation*}

\begin{definition}
  \label{Def_JEmbPseudoCycleCobordism}
  Let
  \begin{equation*}
    \sJ_\emb^\star(X,\omega;F,f_2,\ldots,f_\Lambda) \subset \sJ_\emb^\star(X,\omega;f_1^0,f_2,\ldots,f_\Lambda) \cap \sJ_\emb^\star(X,\omega;f_1^1,f_2,\ldots,f_\Lambda)
  \end{equation*}
  be the subset of those $J$ for which the following conditions hold for every $A,A_1,A_2 \in H_2(X,\Z)$, $g,g_1,g_2 \in \N_0$, and $I,I_1,I_2 \subset \set{2,\ldots,\Lambda}$ with $I_1\cap I_2 = \emptyset$:
  \begin{enumerate}
  \item
    if $\vdim\sM_{A,g}^\star\paren[\big]{X,J;F^\bullet,(f_\lambda^\bullet)_{\lambda\in I}}  < 2n-4$,
    then every $J$--holomorphic map underlying an element of $\sM_{A,g}^\star\paren[\big]{X,J;F^\bullet,(f_\lambda^\bullet)_{\lambda\in I}}$ is an embedding;
  \item
    if $\vdim\sM_{A_1,g_1}^\star\paren[\big]{X,J;F^\bullet,(f_\lambda^\bullet)_{\lambda\in I_1}} + \vdim\sM_{A_2,g_2}^\star\paren[\big]{X,J;(f_\lambda^\bullet)_{\lambda\in I_2}} < 2n-4$,
    then every pair of $J$--holomorphic maps underlying elements of 
    \begin{equation*}
      \sM_{A_1,g_1}^\star\paren[\big]{X,J;F^\bullet,(f_\lambda^\bullet)_{\lambda\in I_1}}
      \qandq
      \sM_{A_2,g_2}^\star\paren[\big]{X,J;F^\bullet, (f_\lambda^\bullet)_{\lambda\in I_2}}
    \end{equation*}
    either have disjoint images or are related by a reparametrization;
    and
  \item
    for every $A \in H_2(X,\Z)$, $g \in \N$, and $I \subset \set{2,\ldots,\Lambda}$, the pseudo-cycle $F \times \prod_{\lambda \in I} f_\lambda$ is transverse as pseudo-cycle with boundary to $\ev \co \sM_{A,g,\abs{I}+1}^\star(X,J) \to X^{\abs{I}+1}$ in the sense of \autoref{Def_PseudocycleTransversality}.
    \qedhere
  \end{enumerate}
\end{definition}

It follows from this definition that for every $J \in \sJ_\emb^\star(X,\omega;F,f_2,\ldots,f_\Lambda)$,  $F \times f_2 \times \ldots \times f_\Lambda$ is transverse as pseudo-cycle cobordism to $\ev \co \sM_{A,g,\Lambda}^\star(X,J) \to X^\Lambda$.   
In this case,  $\sM_{A,g}^\star(X,J; F,f_2,\ldots,f_\Lambda)$ is an oriented cobordism from $\sM_{A,g}^\star(X,J; f_1^0,f_2,\ldots,f_\Lambda)$ to $\sM_{A,g}^\star(X,J; f_1^1, f_2,\ldots,f_\Lambda) $.

\begin{prop}
  \label{Prop_TransversalityPseudoCycleCobordism}
  $\sJ_\emb^\star(X,\omega;F,f_2,\ldots,f_\Lambda)$ is residual in $\sJ(X,\omega)$.   
\end{prop}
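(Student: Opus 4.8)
The plan is to reduce everything to the proof of \autoref{Prop_TransversalityPseudoCycles}. That argument already produces, for a tuple of pseudo-cycles in general position, a residual set of compatible almost complex structures for which the constrained moduli spaces $\sM_{A,g}^\star(X,J;(f_\lambda^\bullet))$ are cut out transversally, carry the expected dimension, consist of simple maps, and --- once the expected dimension drops below $2n-4$ --- consist of embeddings and of pairwise disjoint or reparametrization-equivalent maps. The only genuinely new feature here is that the first constraint is a pseudo-cycle \emph{cobordism} $F\co W\to X$ rather than a pseudo-cycle, so its domain $W$ is a manifold with boundary and ``transversality'' must be read in the sense of \autoref{Def_PseudocycleTransversality}: transversality to $F$ along the interior of $W$, to $F^\del$, and to $F$ restricted to $\del W$. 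Since $\sJ_\emb^\star(X,\omega;F,f_2,\ldots,f_\Lambda)$ is, by definition, contained in the residual set $\sJ_\emb^\star(X,\omega;f_1^0,f_2,\ldots,f_\Lambda)\cap\sJ_\emb^\star(X,\omega;f_1^1,f_2,\ldots,f_\Lambda)$, it suffices to show that each of conditions (1), (2), (3) of \autoref{Def_JEmbPseudoCycleCobordism} holds for $J$ in a residual set; the intersection over the countably many choices of $A,A_1,A_2\in H_2(X,\Z)$, $g,g_1,g_2\in\N_0$, and $I,I_1,I_2\subset\set{2,\ldots,\Lambda}$ is then residual as well.

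For condition (3) I would rerun the universal moduli space construction from the proof of \autoref{Prop_TransversalityPseudoCycles} with the factor $\prod_\lambda V_\lambda$ enlarged to $W^\bullet\times\prod_{\lambda\in I}V_\lambda^\bullet$ and with the component of the universal section evaluating at the extra marked point $z_0$ compared against $F^\bullet(w)$. The decisive linear-algebra input is unchanged: the evaluation map $W^{1,p}(\Sigma,u^*TX)\to T_{u(z)}X$ is surjective at each marked point, so the $u$-variations already cover the diagonal directions at any marked point not shared by a second constraint, and at a shared marked point one invokes, exactly as before, the general-position hypothesis on $F,f_2,\ldots,f_\Lambda$. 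One applies this argument in turn to the interior of $W$, to $\del W$, and to $F^\del$ --- the hypothesis that the pseudo-cycle cobordism $F$ is in general position with $f_2,\ldots,f_\Lambda$ supplies precisely the general position of these boundary and $F^\del$ strata that is needed --- and then the Sard--Smale theorem yields, for each Teichmüller chart and each tuple of indices, the desired residual set; one intersects over the countably many charts.

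Conditions (1) and (2) I would deduce word for word as the corresponding conditions defining $\sJ_\emb(X,\omega;f_1,\ldots,f_\Lambda)$ are deduced in \autoref{Prop_TransversalityPseudoCycles}: for $J$ in the residual set just constructed, $\sM_{A,g}^\star(X,J;F^\bullet,(f_\lambda^\bullet)_{\lambda\in I})$ fibers over $\sM_{A,g,\abs{I}+1}^\star(X,J)$ with the expected dimension, and the standard dimension count for simple $J$--holomorphic curves forces embeddedness and the disjointness-or-reparametrization alternative once that dimension falls below $2n-4$; enlarging the residual set to impose these for all relevant $A,A_1,A_2,g,g_1,g_2,I,I_1,I_2$ keeps it residual.

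The main obstacle is bookkeeping rather than a new idea: one must unwind \autoref{Def_PseudocycleTransversality} to see that transversality ``as a pseudo-cycle cobordism'' amounts to the finite list of transversality conditions just named --- interior of $W$, $\del W$, and the $F^\del$-stratum, each in combination with the $f_\lambda^\bullet$ --- and one must confirm that \autoref{Def_PseudocyclesInGeneralPosition}, applied to $F,f_2,\ldots,f_\Lambda$, really does encode general position of all these boundary strata. Once this is in place, the proof is a verbatim adaptation of \autoref{Prop_TransversalityPseudoCycles}.
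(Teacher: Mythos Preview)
Your proposal is correct and matches the paper's approach exactly: the paper simply states that the proof of \autoref{Prop_TransversalityPseudoCycles} ``can be easily adapted'' and gives no further details, so your outline of the adaptation---running the universal moduli space argument with $W^\bullet\times\prod_{\lambda\in I}V_\lambda^\bullet$ in place of $\prod_\lambda V_\lambda^\bullet$, handling the interior, boundary, and $F^\del$-strata separately via \autoref{Def_PseudocycleTransversality}, and then intersecting countably many residual sets---is precisely what the paper intends.
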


The proof is almost identical to that of \autoref{Prop_TransversalityPseudoCycles}.


\section{Gromov compactness}
\label{Sec_GromovCompactness}

\subsection{Deformations of nodal Riemann surfaces}

\begin{definition}
  Let $\sX$ and $A$ be complex manifolds and let $\pi \co \sX \to A$ be a holomorphic map.
  Set $n \coloneq \dim_\C A$ and suppose that $\dim_\C \sX = n + 1$.
  A critical point $x \in \sX$ of $\pi$ is called \defined{nodal} if there are holomorphic coordinates at $x$ and holomorphic coordinates at $\pi(x)$ with respect to which
  \begin{equation*}
    \pi(z,w,t_2,\ldots,t_n) = (zw,t_2,\ldots,t_n).
  \end{equation*}
  A \defined{nodal family} is a surjective, proper, holomorphic map $\pi\co \sX \to A$ between complex manifolds of dimension $\dim_\C \sX = \dim_\C A + 1$ such that every critical point of $\pi$ is nodal.
  The \defined{fiber} over $a \in A$ is the nodal Riemann surface $(\Sigma,j,\nu)$ associated with the nodal curve $\pi^{-1}(a)$.
  \emph{Henceforth, we engage in the abuse of notation of identifying $\pi^{-1}(a)$ with $(\Sigma,j,\nu)$.}
\end{definition}

\begin{definition}
  Let $(\Sigma,j,\nu)$ be a nodal Riemann surface.
  A \defined{deformation} of $(\Sigma,j,\nu)$ is a nodal family $\pi\co \sX \to A$, together with a base-point $\star \in A$, and a nodal, biholomorphic map $\iota\co (\Sigma,j,\nu) \to \pi^{-1}(\star)$.  
\end{definition}

\begin{definition}
  Let $(\Sigma,j,\nu)$ be a nodal Riemann surface and let $(\pi\co \sX \to A,\star,\iota)$ and $(\rho\co \sY\to B,\dagger,\kappa)$ be two deformations of $(\Sigma,j,\nu)$.
  A pair of holomorphic maps  $\Phi\co \sX\to \sY$ and $\phi\co A \to B$ forms a \defined{morphism} $(\Phi,\phi)\co (\rho,\star,\iota) \to (\sY,\dagger,\kappa)$ of deformations if
  \begin{equation*}
    \phi(\star) = \dagger, \quad
    \rho\circ\Phi = \phi\circ\pi, \quad
    \Phi\circ\iota = \kappa
  \end{equation*}
  and for every $a \in A$ the restriction $\Phi\co \pi^{-1}(a) \to \rho^{-1}(\phi(a))$ induces a nodal, biholomorphic map.
\end{definition}

\begin{definition}
  \label{Def_VersalDeformation}
  A deformation $(\rho\co \sY\to B,\dagger,\kappa)$ of $(\Sigma,j,\nu)$ is \defined{(uni)versal} if for every deformation ($\pi\co \sX \to A,\star,\iota)$ of $(\Sigma,j,\nu)$ there exists an open neighborhood $U$ of $\star \in A$ and a (unique) morphism of deformations $(\pi\co \pi^{-1}(U) \to U,\star,\iota) \to (\rho,\dagger,\kappa)$.
\end{definition}

A nodal Riemann surface $(\Sigma,j,\nu)$ admits a universal deformation if and only if it is stable \cites{Deligne1969}[Chapter XI Theorem 4.3]{Arbarello2011}[Theorem A]{Robbin2006}.
However, every nodal Riemann surface $(\Sigma,j,\nu)$ admits a versal deformation.
This will be discussed in detail in \autoref{Sec_VersalDeformations}.

\begin{definition}
  \label{Def_Framing}
  Let $(\pi\co \sX\to A,\star,\iota)$ be a deformation of a nodal Riemann surface $(\Sigma,j,\nu)$.
  Denote by $S$ the nodal set of $\nu$.
  A \defined{framing} of $(\pi,\star,\iota)$ is a smooth embedding $\Psi\co \paren*{\Sigma\setminus S}\times A \to \sX$ such that
  \begin{equation*}
    \pi\circ\Psi = \pr_A \qandq
    \Psi(\cdot,\star) = \iota.
    \qedhere
  \end{equation*}
\end{definition}

\subsection{The Gromov topology}
\label{Sub_GromovTopology}

Let $X$ be a manifold and denote by $\sH(X)$ the set of almost Hermitian structures $(J,h)$ on $X$ equipped with the $C^\infty$ topology.
The following defines a topology on
\begin{equation*}
  \overline\sM_{A,g}(X)
  \coloneq
  \coprod_{(J,h) \in \sH(X)} \overline\sM_{A,g}(X,J).
\end{equation*}

\begin{definition}
  Let $(X,J,h)$ be an almost Hermitian manifold.
  Let $(\Sigma,j,\nu)$ be a closed, nodal Riemann surface.
  The \defined{energy} of a nodal map $u \co (\Sigma,\nu) \to X$ is 
  \begin{equation*}
    E(u) \coloneq \frac{1}{2}\int_\Sigma \abs{\rd u}^2 \,\vol.
  \end{equation*}
  Implicit in this definition is a choice of Riemannian metric in the conformal class determined by $j$.
  The right-hand side, however, is independent of this choice.
\end{definition}

\begin{definition}
  \label{Def_GromovTopology}
  Let $(J_0,h_0) \in \sH(X)$.
  Let $[u_0\co (\Sigma_0,j_0,\nu_0) \to (X,J_0)] \in \overline\sM_{A,g}(X,J_0)$,
  let $(\pi \co \sX \to A, \star,\iota)$ be a versal deformation of $(\Sigma_0,j_0,\nu_0)$,
  let $\Psi$ be a framing of $(\pi,\star,\iota)$,
  let $\epsilon > 0$.
  let $U_0 \subset C^\infty(\Sigma_0\setminus S,X)$ be an open neighborhood of $u_\infty|_{\Sigma_0\setminus S}$ in the $C_\loc^\infty$ topology, and
  let $U_\sH$ be an open neighborhood of $(J_0,h_0)$ in $\sH(X)$.
  Define
  \begin{equation*}
    \sU(u_0,\epsilon,U_0,U_\sH)
    \subset
    \overline\sM_{A,g}(X)
  \end{equation*}
  to be the subset of the equivalences classes of nodal $J$--holomorphic maps $u\co (\Sigma,j,\nu) \to (X,J)$ satisfying the following:
  \begin{enumerate}
  \item
    $(J,h) \in U_\sH$,
  \item
    $\abs{E(u) - E(u_0)} < \epsilon$,
  \item
    $(\Sigma,j,\nu) = \pi^{-1}(a)$ for some $a\in A$, and
  \item   
    $\tilde u \coloneq u\circ\Psi(\cdot,a) \in U_0$,
  \end{enumerate}
  The \defined{Gromov topology} on $\overline\sM_{A,g}(X)$ is the coarsest topology with respect to which every subset of the form $\sU(u_0,\epsilon,U_0,U_\sH)$ is open.
\end{definition}

In practice, it is more convenient to use the notion of \defined{Gromov convergence} defined on the level of nodal maps.

\begin{definition}
  \label{Def_GromovConvergence}
  Let $(X,J_\infty,h_\infty)$ be an almost Hermitian manifold and let $(J_k,h_k)_{k \in \N}$ be a sequence of almost Hermitian structures on $X$ converging to $(J_\infty,h_\infty)$ in the $C^\infty$ topology.
  For every $k \in \N\cup\set{\infty}$ let $u_k \co (\Sigma_k,j_k,\nu_k) \to (X,J_k)$ be a nodal $J_k$--holomorphic map.
  Denote by $S$ the nodal set of $(\Sigma_\infty,\nu_\infty)$.
  The sequence $(u_k,j_k)_{k\in\N}$ \defined{Gromov converges} to $(u_\infty,j_\infty)$ if
  \begin{enumerate}
  \item
    \label{Def_GromovConvergence_Energy}
    $\lim_{k\to \infty} E(u_k) = E(u_\infty)$
    and
  \item
    \label{Def_GromovConvergence_Maps}
    there are:
    \begin{enumerate}
    \item a deformation $(\pi\co \sX\to A,a_\infty,\iota_\infty)$ of $(\Sigma_\infty,j_\infty,\nu_\infty)$ together with a framing $\Psi$,
    \item
      a sequence $(a_k)_{k\in\N}$ in $A$ converging to $a_\infty$, and
    \item
     a nodal, biholomorphic map $\iota_k\co (\Sigma_k,j_k,\nu_k) \to \pi^{-1}(a_k)$  for every sufficiently large $k\in\N$,
    \end{enumerate}
    such that the sequence of maps
    \begin{equation*}
      \tilde u_k \coloneq u_k \circ \iota_k^{-1}\circ \Psi(\cdot,a_k)\circ\iota_\infty  \co \Sigma_\infty\setminus S \to X
    \end{equation*}
    converges to $u_\infty|_{\Sigma_\infty\setminus S}$ in the $C_\loc^\infty$ topology.
    \qedhere
  \end{enumerate}
\end{definition}

\begin{remark}  
  If $(\pi,\star,\iota)$ is a versal deformation of $(\Sigma_\infty,j_\infty,\nu_\infty)$ and $\Psi$ is a framing of this deformation,
  then for every sequence $(u_k,j_k)_{k\in\N}$ which Gromov converges to $(u_\infty,j_\infty)$ the deformation in \autoref{Def_GromovConvergence} can be assumed to be $(\pi,\star,\iota)$ and the framing can be assumed to be $\Psi$.
  This is an immediate consequence of the definition of a versal deformation.
\end{remark}

\begin{theorem}[{\citet{Gromov1985}; see also \cites{Parker1993}{Ye1994}{Hummel1997}[Chapters 4 and 5]{McDuff2012}}]
  \label{Thm_GromovCompactness}
  Let $(X,J_\infty,h_\infty)$ be a closed almost Hermitian manifold and let $(J_k,h_k)_{k \in \N}$ be a sequence of almost Hermitian structures on $X$ converging to $(J_\infty,h_\infty)$ in the $C^\infty$ topology.
  For every $k \in \N$ let 
  \begin{equation*}
    u_k \co (\Sigma_k,j_k,\nu_k) \to (X,J_k)
  \end{equation*}
  be a stable nodal $J$--holomorphic map.
  Denote by $\#\pi_0(\Sigma_k)$ the number of connected components of $\Sigma_k$. 
  If
  \begin{equation*}
    \limsup_{k\to\infty} \#\pi_0(\Sigma_k)  < \infty, \quad
    \limsup_{k\to\infty} p_a(\Sigma_k,\nu_k) < \infty, \qandq
    \limsup_{k\to\infty} E(u_k) < \infty,
  \end{equation*}
  then there exists a stable nodal $J_\infty$--holomorphic map $u_\infty \co (\Sigma_\infty,j_\infty,\nu_\infty) \to (X,J_\infty)$ and a subsequence of $(u_k,j_k)_{k\in\N}$ which Gromov converges to $(u_\infty,j_\infty)$.
  The limit $(u_\infty,j_\infty,\mu_\infty)$ is unique up to automorphism.
\end{theorem}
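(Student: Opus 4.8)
This is the classical Gromov compactness theorem for nodal pseudo-holomorphic maps, and the plan is to run the now-standard analytic argument (as in, e.g., \cite{McDuff2012}, Chapters 4--5), taking care of the two features present here: the domains are already nodal, and the almost complex structures vary. The three analytic inputs I would isolate first are: \emph{(i)} energy quantization --- there is $\hbar > 0$, uniform for almost complex structures near $J_\infty$, below which no non-constant pseudo-holomorphic sphere exists --- together with the $\epsilon$-regularity estimate giving interior $C^\infty$ bounds on a pseudo-holomorphic map over a ball on which its energy is $< \hbar$; \emph{(ii)} the removable singularity theorem, so that a finite-energy pseudo-holomorphic map on a punctured disk extends across the puncture; \emph{(iii)} elliptic bootstrapping, so that $C^0$ plus energy bounds upgrade to $C^\infty_\loc$ convergence on regions of bounded gradient. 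Since $X$ is compact and $(J_k,h_k) \to (J_\infty,h_\infty)$ in $C^\infty$, the constants in these estimates can be taken independent of $k$.

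Next I would reduce the domains to stable marked curves. After passing to a subsequence the combinatorial type of $(\Sigma_k,\nu_k)$ is constant; adding a bounded number of marked points (allowed since the genus and component count are bounded) makes each $(\Sigma_k,j_k,\nu_k)$ stable. By Deligne--Mumford compactness --- equivalently, via the versal deformations set up earlier in this section --- a further subsequence of the domains converges to a stable nodal curve $(\Sigma_\infty',j_\infty',\nu_\infty')$, where new nodes may appear in collapsing necks. On the complement of the finitely many points where $\abs{\rd u_k} \to \infty$ and away from the collapsing necks, Arzel\`a--Ascoli and \emph{(iii)} yield a $C^\infty_\loc$-convergent subsequence with $J_\infty$-holomorphic limit. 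At each blow-up point, a rescaling argument produces a non-constant $J_\infty$-holomorphic plane which, by \emph{(ii)}, extends to a sphere bubble carrying energy $\geq \hbar$; since the total energy is bounded and --- crucially --- the number of components of any limiting configuration is controlled by the genus and $\#\pi_0$ bounds, this soft-rescaling process terminates after finitely many steps, producing a finite tree of sphere components attached to the $C^\infty_\loc$ limit.

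It remains to glue these pieces into a single nodal $J_\infty$-holomorphic map and to verify the conclusions. The heart of the matter is the analysis of the necks: the long annuli on which map bubbles form, and the collapsing annuli coming from the degeneration of the complex structure of the domain. Using the isoperimetric inequality for pseudo-holomorphic annuli and the resulting exponential energy decay along long necks, one shows there is no energy concentration in the neck regions and that the images of the two boundary circles of each neck converge to a common point of $X$. This identifies all the pieces as one nodal map $u_\infty \co (\Sigma_\infty,j_\infty,\nu_\infty) \to (X,J_\infty)$ whose domain is obtained from the Deligne--Mumford limit by attaching the bubble trees and then collapsing any unstable components. With this in hand: $E(u_k) \to E(u_\infty)$ follows from ``no energy lost in the necks'' plus $C^\infty_\loc$ convergence elsewhere; $(u_k)_*[\Sigma_k] = (u_\infty)_*[\Sigma_\infty]$ because the homology class is recovered as the sum of the classes of the limit components; the $\limsup$ bounds on $p_a$ and $\#\pi_0$ pass to the limit; and stability of $u_\infty$ holds by the collapsing step. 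Choosing the deformation and framing of $(\Sigma_\infty,j_\infty,\nu_\infty)$ accordingly, this matches \autoref{Def_GromovConvergence}.

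The main obstacle is precisely the neck analysis in the previous paragraph: simultaneously controlling map-bubbling annuli and domain-degeneration annuli, ruling out energy in the necks, and showing the limit map extends continuously --- hence, by elliptic regularity, smoothly --- across the newly created nodes; the bound on the number of components is what makes the combined bubbling-plus-degeneration bookkeeping terminate. By contrast, the fact that the $J_k$ vary is harmless: every estimate used depends only on uniform $C^\ell$ bounds on the $J_k$, which $C^\infty$ convergence supplies.
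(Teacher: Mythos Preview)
The paper does not supply its own proof of this theorem; it is stated with attribution to \citet{Gromov1985} and the listed references and then used as a black box. Your outline is a correct sketch of the standard argument as presented in those references (notably \cite[Chapters 4--5]{McDuff2012}), so there is nothing to compare against here.
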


\begin{remark}
  The Gromov topology  on $\overline\sM_{A,g}(X)$ is metrizable, which can be seen as follows. 
  \autoref{Thm_GromovCompactness} implies that it is Hausdorff and the projection map $\overline\sM_{A,g}(X) \to \sH(X)\times\R$ is is proper and closed. 
  This implies, in particular, that $\overline\sM_{A,g}(X)$ is a regular topological space. 
   (In general, if $A$ is a Hausdorff space,  $B$ is a regular space, and $f \co A \to B$ is a proper, closed map, then $A$ is a regular space.)
  Urysohn's metrization theorem says that a second countable, Hausdorff, regular space is metrizable.
\end{remark}
%
%
%
Henceforth,
let $(X,\omega)$ be a symplectic manifold.
The set $\sJ(X,\omega)$ of almost complex structures compatible with $\omega$ injects into $\sH(X)$.

\begin{prop}[{\citet{Gromov1985}; see also \cite[Lemma 2.2.1]{McDuff2012}}]
 Let $(X,\omega)$ be a symplectic manifold and $J\in\sJ(X,\omega)$. 
  Let $(\Sigma,\nu,j)$ be a closed, nodal Riemann surface.
  For every nodal map $u\co(\Sigma,\nu)\to X$ 
  \begin{equation*}
    E(u) \geq \inner{u^*[\omega]}{[\Sigma]},
  \end{equation*}
  and the equality holds if and only if $u$ is $J$--holomorphic. 
  Here $E$ is to be understood with respect to the Riemannian metric $h = \omega(\cdot,J\cdot)$ on $X$. 
\end{prop}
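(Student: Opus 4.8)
The plan is to reduce everything to the \emph{energy identity}
\[
  \tfrac12\abs{\rd u}^2\,\vol = u^*\omega + \abs{\delbar_J(u,j)}^2\,\vol
\]
on the underlying smooth Riemann surface and then integrate; this is the pointwise computation behind \cite[Lemma 2.2.1]{McDuff2012}, which is already cited in the statement. First I would dispose of the nodal structure: the energy $E(u)$, the integral $\int_\Sigma u^*\omega$, and the homological pairing $\inner{u^*[\omega]}{[\Sigma]}$ are all additive over the connected components of $\Sigma$, none of them involves $\nu$, and $u$ is a $J$--holomorphic nodal map precisely when $\delbar_J(u,j)$ vanishes on $\Sigma$, hence on each component. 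So it suffices to prove the claim, with its equality clause, for an arbitrary smooth map $u$ from a connected closed Riemann surface $(\Sigma,j)$.

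For the pointwise identity I would fix $p\in\Sigma$, choose a metric in the conformal class of $j$ and an oriented orthonormal basis $e_1, e_2 = j e_1$ of $T_p\Sigma$, and set $\xi \coloneqq \rd u(e_1)$, $\eta \coloneqq \rd u(e_2)$. Compatibility of $J$ with $\omega$ means $g = \omega(\cdot,J\cdot)$, whence $g(\xi,J\eta) = \omega(\xi,J^2\eta) = -\omega(\xi,\eta)$ and therefore
\[
  \abs{(\delbar_J(u,j))(e_1)}^2
  = \tfrac14\abs{\xi + J\eta}^2
  = \tfrac14\paren*{\abs{\xi}^2 + \abs{\eta}^2} - \tfrac12\omega(\xi,\eta).
\]
Since $\delbar_J(u,j)$ is a $(0,1)$--form and $J$ is a $g$--isometry, $\abs{(\delbar_J(u,j))(e_2)}^2 = \abs{(\delbar_J(u,j))(e_1)}^2$, so $\abs{\delbar_J(u,j)}^2 = \tfrac12\paren*{\abs{\xi}^2+\abs{\eta}^2} - \omega(\xi,\eta)$. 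On the other hand $\abs{\rd u}^2 = \abs{\xi}^2+\abs{\eta}^2$, while $u^*\omega = \omega(\xi,\eta)\,\vol$. Combining these three computations yields the energy identity at $p$; since $\abs{\rd u}^2\vol$, $u^*\omega$, and $\abs{\delbar_J(u,j)}^2\vol$ are all conformally invariant $2$--forms, the identity holds globally and is independent of the auxiliary metric.

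Integrating over $\Sigma$ and using $\int_\Sigma u^*\omega = \inner{u^*[\omega]}{[\Sigma]}$ then gives
\[
  E(u) = \inner{u^*[\omega]}{[\Sigma]} + \Abs{\delbar_J(u,j)}_{L^2}^2 \geq \inner{u^*[\omega]}{[\Sigma]},
\]
with equality if and only if $\delbar_J(u,j)\equiv 0$, i.e. if and only if $u$ is $J$--holomorphic. I do not expect a genuine obstacle here; the argument is essentially bookkeeping, and the only points demanding a little care are fixing the normalization of the norm on $(0,1)$--forms so that $\abs{\delbar_J(u,j)}^2 = \abs{(\delbar_J(u,j))(e_1)}^2 + \abs{(\delbar_J(u,j))(e_2)}^2$, and checking that each integrand above is conformally invariant — which is exactly what makes $E(u)$ independent of the choice of metric in the conformal class of $j$.
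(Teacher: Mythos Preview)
Your proof is correct and is exactly the standard argument from \cite[Lemma 2.2.1]{McDuff2012}; the paper does not give its own proof of this proposition but simply cites that reference, so your write-up supplies precisely what the citation points to. The reduction from the nodal to the smooth case is handled correctly, and the pointwise energy identity and its integration are carried out without error.
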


Set
\begin{equation*}
  \overline\sM_{A,g}(X,\omega)
  \coloneq
  \coprod_{J \in \sJ(X,\omega)} \overline\sM_{A,g}(X,J).
\end{equation*}
By the above energy identity,
in the symplectic context,
\autoref{Thm_GromovCompactness} is equivalent to the map
\begin{equation*}
  \pi\co
  \overline\sM_{A,g}(X,\omega)
  \to \sJ(X,\omega)
\end{equation*}
being proper.

\subsection{Behavior near the vanishing cycles}
\label{Sec_BehaviorNearTheVanishingCycles}

The results of this subsection will be important for proving the surjectivity of the gluing construction in \autoref{Sec_KuranishiModel}. 
Assume the situation of \autoref{Def_GromovConvergence}.
By condition \autoref{Def_GromovConvergence_Energy} for every $\delta > 0$ there are $K \in \N_0$ and $r > 0$ such that for every $k \geq K$
\begin{equation*}
  E\paren[\big]{u_k|_{N_k^r}} \leq \delta
\end{equation*}
with
\begin{equation}
  \label{Eq_NeckRegion}
  N_k^r \coloneq \Sigma_k \setminus \set*{ \Psi(z,a_k) : z \in \Sigma_0 \text{ with } d(z,S) \geq r }.
\end{equation}
The subset $N_k^r$ can be partitioned into regions $N_{k,n}^r$ corresponding to the nodes $n \in S$.
If $n$ is not smoothed out in $\Sigma_k$,
then the corresponding region is biholomorphic to
\begin{equation*}
  B_1(0) \amalg B_1(0)
\end{equation*}
with $\nu_k$ identifying the origins.
If $n$ is smoothed out in $\Sigma_k$,
then the corresponding region is biholomorphic to
\begin{equation*}
  S^1\times(-L_k,L_k)
\end{equation*}
with $\lim_{k\to \infty} L_k = \infty$.

The behavior of $J$--holomorphic maps from such domains and with small energy can be understood through the following two results.

\begin{lemma}[{\cite[Lemma 4.3.1]{McDuff2012}}]
  \label{Lem_EpsilonRegularity}
  Let $(X,J,h)$ be an almost Hermitian manifold.
  There is a constant $\delta = \delta(X,J,h) > 0$, depending continuously on $(J,h)$, such that for every $r > 0$ the following holds.
  If $u\co (B_{2r}(0),i) \to (X,J)$ is a $J$--holomorphic map with
  \begin{equation*}
    E(u) \leq \delta,
  \end{equation*}
  then
  \begin{equation*}
    \Abs{\rd u}_{L^\infty(B_r(0))} \leq cr^{-1}E(u)^{1/2}.
  \end{equation*}
\end{lemma}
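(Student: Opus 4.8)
The plan is to reduce to the unit disc using the conformal invariance of the energy, and then combine a Bochner-type differential inequality for the energy density with a two-dimensional mean-value inequality for subsolutions of the Laplacian. First I would rescale: for $\lambda > 0$ set $u_\lambda(z) \coloneq u(\lambda z)$, so that $u_\lambda \co (B_{2r/\lambda}(0),i) \to (X,J)$ is again $J$--holomorphic, $E(u_\lambda) = E(u)$ since energy is conformally invariant in real dimension two, and $\Abs{\rd u_\lambda}_{L^\infty(B_{r/\lambda}(0))} = \lambda\Abs{\rd u}_{L^\infty(B_r(0))}$. Taking $\lambda = r$ reduces the assertion to: there exist $\delta > 0$ and $c > 0$, depending only on $(X,g,J)$, such that every $J$--holomorphic $u \co (B_2(0),i) \to (X,J)$ with $E(u) \leq \delta$ satisfies $\Abs{\rd u}_{L^\infty(B_1(0))}^2 \leq c\,E(u)$.

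The analytic core is a pointwise differential inequality for the energy density $e \coloneq \tfrac12\abs{\rd u}^2$. Differentiating the equation $\del_s u + J(u)\,\del_t u = 0$ once shows, after cancelling mixed partials, that $\Delta u$ (with respect to the Euclidean Laplacian on the domain and a fixed torsion-free connection on $TX$) is quadratic in $\rd u$ with coefficients controlled by $\nabla J$; differentiating once more and applying the Bochner formula for maps yields
\begin{equation*}
  \Delta e \;\geq\; -c_1\, e\,(1 + e)
\end{equation*}
on $B_2(0)$, where $c_1$ depends only on $\Abs{J}_{C^1}$, $\Abs{g}_{C^1}$, and a bound on the curvature of the connection. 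The only subtle point is that the term $\abs{\rd u}^2\abs{\nabla\rd u}$ appearing in this computation is reabsorbed into the nonnegative term $\abs{\nabla\rd u}^2$ by Young's inequality, which is what produces the quadratic term $e^2$. (This is \cite[Lemma 4.3.2]{McDuff2012}.)

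It then remains to invoke the mean-value inequality: if $f \geq 0$ is of class $C^2$ on $B_2(0) \subset \R^2$ with $\Delta f \geq -c_1 f(1+f)$, and if $\int_{B_2(0)} f$ is small enough in terms of $c_1$, then $\sup_{B_1(0)} f \leq C\int_{B_2(0)} f$ for some $C = C(c_1)$. Applying this with $f = e$ and using $\int_{B_2(0)} e = E(u|_{B_2(0)}) \leq E(u) \leq \delta$ gives $\sup_{B_1(0)} e \leq C\,E(u)$, hence $\Abs{\rd u}_{L^\infty(B_1(0))}^2 \leq 2C\,E(u)$, which is the desired estimate once $\delta$ is chosen small. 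The mean-value inequality itself is the classical Heinz trick: one first treats the pure quadratic inequality $\Delta f \geq -c_1 f^2$, using the sub-mean-value property of $\Delta$--subsolutions on $\R^2$ together with a continuity argument in the radius so that the quadratic term is reabsorbed precisely because $\int f$ is small; the linear term $-c_1 f$ is then harmless, since once an a priori sup bound is available $\Delta f \geq -c_1(1+\sup f)\,f$ is a linear differential inequality on a disc of bounded area, handled by the same argument after a further rescaling.

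I expect the Bochner inequality of the second paragraph to be the main obstacle: it demands a somewhat delicate computation with the almost complex structure and the auxiliary connection, and one must verify that every error term is genuinely of the form $c\,e$, $c\,e^2$, or absorbable into $\abs{\nabla\rd u}^2$, rather than, say, a term involving $\abs{\nabla e}$ that would spoil the maximum-principle argument. By contrast, the rescaling reduction is immediate, and the two-dimensional mean-value inequality for subsolutions is a standard tool in geometric analysis.
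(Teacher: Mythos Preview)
The paper does not give its own proof of this lemma; it is simply quoted from \cite[Lemma 4.3.1]{McDuff2012}. Your proposal is correct and is precisely the argument given in that reference: rescale to the unit disc, establish the Bochner inequality $\Delta e \geq -c_1 e(1+e)$ for the energy density (this is \cite[Lemma 4.3.2]{McDuff2012}), and conclude via the Heinz mean-value inequality for subsolutions (\cite[Lemma 4.3.3]{McDuff2012}).
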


\begin{lemma}[{\cite[Lemma 4.7.3]{McDuff2012}}]
  \label{Lem_EnergyDecayOnCylinders}
  Let $(X,J,h)$ be an almost Hermitian manifold.
  For every $\mu \in (0,1)$ there are constants: $\delta = \delta(X,J,h,\mu) > 0$,  depending continuously on $(J,h)$,  and $c = c(\mu) > 0$ such that for every $L > 0$ the following holds.
  If $u\co (S^1\times(-L,L),j_\cyl) \to (X,J)$ is a $J$--holomorphic map with
  \begin{equation*}
    E(u) \leq \delta,
  \end{equation*}
  then for every $\ell \in (0,L)$
  \begin{equation*}
    E\paren*{u|_{S^1\times(L+\ell,L-\ell)}} \leq ce^{-2\mu\paren{L-\ell}}E(u).
  \end{equation*}
  and for every $\theta \in S^1$ and $\ell \in [-L+1,L-1]$  
  \begin{equation*}
    \abs{\rd u}(\theta,\ell)
    \leq
    ce^{-\mu\paren*{L-\abs{\ell}}}E(u)^{1/2}.
  \end{equation*}
\end{lemma}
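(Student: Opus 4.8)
\emph{Overview and reduction to a chart.}
The plan is to derive exponential energy decay from a second--order differential inequality for the slice energy and then to bootstrap to the pointwise gradient bound using $\epsilon$--regularity; write points of the cylinder as $(\theta,s)$ with $s$ ranging over $(-L,L)$.
First, shrink $\delta$ below the threshold of \autoref{Lem_EpsilonRegularity}.
Covering each unit sub-cylinder $S^1\times(s-1,s+1)$, $s\in(-L+2,L-2)$, by finitely many balls of a fixed radius and applying \autoref{Lem_EpsilonRegularity} gives
\begin{equation*}
  \Abs{\rd u}_{L^\infty(S^1\times(s-1,s+1))}
  \leq
  c\,E\paren[\big]{u|_{S^1\times(s-2,s+2)}}^{1/2}
  \leq
  c\,\delta^{1/2},
\end{equation*}
and, by elliptic regularity for $\delbar_J(u,j_\cyl)=0$, the same kind of bound on $\abs{\nabla \rd u}$.
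Taking $\delta$ small we may assume $u\paren[\big]{S^1\times(-L+2,L-2)}$ lies in a single geodesically convex coordinate ball, in which we write $u=(u^1,\dots,u^{2n})\co S^1\times(-L+2,L-2)\to\R^{2n}$ and regard $J$ as a matrix--valued map with $\abs{J(x)-J_0}\leq c\,\abs{x-u(0,0)}$, $J_0\coloneq J(u(0,0))$ constant.
Since the oscillation of $u$ over each length--$4$ piece is $\leq c\,\delta^{1/2}$, putting $\mathfrak e\coloneq -\paren*{J(u)-J_0}\del_\theta u$ turns the equation into $\del_s u+J_0\,\del_\theta u=\mathfrak e$ with $\abs{\mathfrak e}\leq c\,\delta^{1/2}\abs{\rd u}$, i.e. $u$ is an approximately $J_0$--holomorphic cylinder.

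\emph{The differential inequality.}
Compatibility of $J$ with $\omega$ gives $\abs{\del_s u}=\abs{\del_\theta u}$ along $J$--holomorphic maps, so
\begin{equation*}
  g(s)\coloneq\frac12\int_{S^1}\abs{\rd u}^2(\theta,s)\,\rd\theta=\int_{S^1}\abs{\del_s u}^2(\theta,s)\,\rd\theta
  \qandq
  \int_{-L}^{L}g(s)\,\rd s=E(u).
\end{equation*}
For a genuine $J_0$--holomorphic map $v$, expanding in Fourier modes on $S^1$ gives $g_v(s)=2\pi\sum_{n\in\Z} n^2\abs{a_n}^2e^{2ns}$, whence $g_v''\geq 4g_v$ because $n^4\geq n^2$.
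Differentiating $g$ twice, substituting $\del_s u=-J_0\del_\theta u+\mathfrak e$, integrating by parts on $S^1$, and absorbing every term carrying a factor of $\mathfrak e$, of $\nabla J$, or of the Nijenhuis tensor of $J$ by means of the $C^1$ bounds from the previous step, one obtains --- after a final shrinking of $\delta$ depending on $\mu$ ---
\begin{equation*}
  g''(s)\geq (2\mu)^2\,g(s)\qquad\text{for }s\in(-L+2,L-2).
\end{equation*}

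\emph{Integration and bootstrap.}
Comparing the nonnegative subsolution $g$ of $y''=(2\mu)^2y$ on $[-L+2,L-2]$ with the solution of this equation matching its endpoint values, and using $g(\pm(L-2))\leq c\,E(u)$ (again from \autoref{Lem_EpsilonRegularity}), one finds $g(s)\leq c\,E(u)\,e^{-2\mu(L-2)}\cosh(2\mu s)$.
Integrating over $(-\ell,\ell)$ gives
\begin{equation*}
  E\paren[\big]{u|_{S^1\times(-\ell,\ell)}}\leq c\,e^{-2\mu(L-\ell)}E(u)\qquad\text{for }\ell\in(0,L-2],
\end{equation*}
and $\ell\in(L-2,L)$ is trivial since then $e^{-2\mu(L-\ell)}$ is bounded below while the energy is at most $E(u)$; this is the first estimate.
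For the pointwise one, apply \autoref{Lem_EpsilonRegularity} again around $(\theta,\ell)$ and feed in the energy decay just obtained, with half-width $\abs{\ell}+1$:
\begin{equation*}
  \abs{\rd u}(\theta,\ell)\leq c\,E\paren[\big]{u|_{S^1\times(\ell-1,\ell+1)}}^{1/2}\leq c\,e^{-\mu(L-\abs{\ell})}E(u)^{1/2}.
\end{equation*}

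\emph{Main obstacle.}
The only nonformal step is the differential inequality: one has to run the Bochner--type computation of $g''$ carefully enough to verify that every error term is genuinely quadratic in $\abs{\rd u}$ (times a bound on $J$ and its first derivatives), so that smallness of $E(u)$ --- equivalently of $\sup\abs{\rd u}$ --- recovers a decay rate $2\mu$ as close to the integrable rate $2$ as one likes.
This is exactly where the hypothesis $\mu<1$ and the dependence $\delta=\delta(X,g,J,\mu)$ originate.
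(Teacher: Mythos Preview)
Your argument is correct. The paper's own proof is two lines: it cites \cite[Lemma 4.7.3]{McDuff2012} for the energy decay (first assertion) and then observes that the pointwise bound follows from it via \autoref{Lem_EpsilonRegularity} --- exactly your final bootstrap step. So on the second assertion your approach and the paper's coincide.

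For the first assertion you go further than the paper by actually sketching the proof rather than citing it. Your route --- $\epsilon$--regularity to localize in a chart, a Bochner/Fourier computation yielding $g''\geq(2\mu)^2 g$ for the slice energy (with the loss from $1$ to $\mu$ absorbed into the smallness of $\delta$), and the maximum--principle comparison with $\cosh(2\mu s)$ --- is essentially the argument in McDuff--Salamon. One cosmetic point: in your comparison step the boundary values enter through $1/\cosh\paren[\big]{2\mu(L-2)}$ rather than $e^{-2\mu(L-2)}$, but these are of the same order and the conclusion is unaffected.
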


\begin{proof}
  The first assertion is \cite[Lemma 4.7.3]{McDuff2012}.
  The second assertion follows from the first by \autoref{Lem_EpsilonRegularity}. 
\end{proof}

The following is an important consequence of the previous two lemmas. 

\begin{prop}
  \label{Prop_SmallEnergyNeck}
  Let $\paren*{u_k \co (\Sigma_k,j_k,\nu_k) \to (X,J_k)}_{k\in\N}$ be a sequence of nodal pseudo-holomorphic maps which Gromov converges to $u_\infty \co (\Sigma_\infty,j_\infty,\nu_\infty) \to (X,J_\infty)$. 
  Denote by $S$ the nodal set of $(\Sigma_\infty,\nu_\infty)$ and let $N^r_k$ be as in \autoref{Eq_NeckRegion}. 
  For every $\delta > 0$ there are $r>0$ and $K\in\N$ such that for every $k\geq K$ and $n \in S$
  \begin{equation*}
    u_k(N^r_{k,n}) \subset B_\delta(u_\infty(n));
  \end{equation*}
  in particular, provided $\delta$ is sufficiently small,
  \begin{equation*}
    (u_k)_*[\Sigma_k] = (u_\infty)_*[\Sigma_\infty].
  \end{equation*}
\end{prop}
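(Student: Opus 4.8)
The plan is to reduce the statement to the two local estimates for $J$--holomorphic maps of small energy already recorded: the $\epsilon$--regularity of \autoref{Lem_EpsilonRegularity} and the exponential energy decay of \autoref{Lem_EnergyDecayOnCylinders}. Both will be used with constants taken uniform in $k$; this is legitimate because $J_k \to J_\infty$ in $C^1$ and $X$ is compact. Fix $\mu \coloneq 1/2$ and let $\delta_0 > 0$ be smaller than the energy thresholds in both lemmas (for $(X,g_k,J_k)$, $k$ large) and small enough that $C_0\,\delta_0^{1/2} < \delta/8$, where $C_0$ is the constant produced in the diameter estimate below.

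\emph{Choosing the scales.} Since $u_\infty$ is smooth on $\Sigma_\infty$ and $S$ is finite, one first picks $r > 0$ and a fixed large multiple $r_3 = Cr$, the constant $C$ chosen (via the conformal modulus of the fixed bulk annuli in $\Sigma_\infty$) so large that on every $\Sigma_k$ the neck region $N^{r_3}_{k,n}$ contains $N^{2r}_{k,n}$ together with a definite cylindrical collar beyond each of its ends, and with $r$ small enough that $\diam u_\infty\paren[\big]{\set[\big]{z : d(z,m) \le r_3}} < \delta/8$ for every $m \in S$. Shrinking $r$ further and using the small--energy estimate for the neck regions from \autoref{Sec_BehaviorNearTheVanishingCycles}, one may in addition arrange $E\paren[\big]{u_k|_{N^{r_3}_k}} \le \delta_0$ for all $k \ge K_1$. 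Finally, since $\tilde u_k \to u_\infty|_{\Sigma_\infty\setminus S}$ in $C^0_{\loc}$ and $\set{2r \le d(z,S) \le r_3}$ is compact in $\Sigma_\infty \setminus S$, there is $K \ge K_1$ with $d\paren[\big]{\tilde u_k(z),u_\infty(z)} < \delta/8$ on that set for $k \ge K$; combined with $u_\infty(\nu_\infty(n)) = u_\infty(n)$, this shows that for $k \ge K$ and $n \in S$ the map $u_k$ sends $\partial N^{2r}_{k,n}$ into $B_{\delta/4}(u_\infty(n))$.

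\emph{The diameter estimate.} Fix $k \ge K$ and $n \in S$; since $N^r_{k,n} \subset N^{2r}_{k,n}$, it is enough to show $u_k\paren[\big]{N^{2r}_{k,n}} \subset B_\delta(u_\infty(n))$. By the description of the neck regions around \autoref{Eq_NeckRegion}, each connected component $Z$ of $N^{2r}_{k,n}$ is conformally either a finite cylinder $S^1 \times (-L,L)$ (node smoothed in $\Sigma_k$) or a disc --- equivalently, a half--cylinder $S^1 \times (a,\infty)$ away from its centre (node not smoothed); moreover $u_k$ extends $J_k$--holomorphically over a definite collar beyond each end of $Z$, with energy still $\le \delta_0$, because $N^{2r}_{k,n} \subset N^{r_3}_{k,n}$. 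On the part of $Z$ at conformal distance $\ge 1$ from the ends, \autoref{Lem_EnergyDecayOnCylinders} (applied to an exhaustion by finite sub--cylinders in the half--cylinder case) gives $\abs{\rd u_k}(\theta,s) \le c\,e^{-\mu(L-\abs{s})}\,\delta_0^{1/2}$ (and the analogous bound $c\,e^{-\mu(s-a)}\,\delta_0^{1/2}$ on the half--cylinder), and on the two unit end collars \autoref{Lem_EpsilonRegularity} gives $\abs{\rd u_k} \le c\,\delta_0^{1/2}$. Integrating $\abs{\rd u_k}$ along the meridian joining an arbitrary $x \in Z$ to the nearest end circle then yields $d\paren[\big]{u_k(x),u_k(y)} \le C_0\,\delta_0^{1/2} < \delta/8$ for the corresponding boundary point $y$; in the half--cylinder case the same computation shows that $u_k$ extends continuously over the centre, i.e.\ over the node, with value within $\delta/8$ of any boundary value. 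Together with the end of the previous paragraph, $u_k(Z) \subset B_{\delta/2}(u_\infty(n))$. When the node is not smoothed, $N^{2r}_{k,n}$ has two such components whose images share the point $u_k(\text{node})$, since $u_k\circ\nu_k = u_k$; hence $u_k\paren[\big]{N^{2r}_{k,n}} \subset B_\delta(u_\infty(n))$ in all cases, which is the first assertion.

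\emph{The homology statement, and the main obstacle.} For the last claim one takes $\delta$ smaller than a radius $\delta_* > 0$ such that every $g_\infty$--ball of radius $\delta_*$ is geodesically convex, hence contractible. Via $\Psi(\cdot,a_k)$ the surface $\Sigma_k \setminus N^r_k$ is identified with $\set{z \in \Sigma_\infty : d(z,S) \ge r}$, and on this set $u_k$ is $C^0$--close to $u_\infty$ for $k$ large, hence homotopic to it through maps into $X$. Decomposing $\Sigma_k$ and $\Sigma_\infty$ into this common bulk together with the necks $N^r_{k,n}$, resp.\ the $r$--discs $D_n \amalg D_{\nu_\infty(n)}$, one sees that $(u_k)_*[\Sigma_k] - (u_\infty)_*[\Sigma_\infty]$ is homologous to a finite sum of $2$--cycles, the $n$--th of which is supported in $B_\delta(u_\infty(n))$ by the first part and the choice of $r$ (the $C^0$--small discrepancy along the separating circles being removed by a homotopy inside a slightly larger ball); a $2$--cycle in a contractible ball bounds, so $(u_k)_*[\Sigma_k] = (u_\infty)_*[\Sigma_\infty]$. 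I expect the main obstacle to be the uniform diameter bound in the third paragraph: one must control $\diam u_k(Z)$ by $O(\delta_0^{1/2})$ with no dependence on the conformal modulus of the neck (which diverges) and, in the unsmoothed case, no lower bound on the disc radius. This is precisely where the \emph{exponential} decay of \autoref{Lem_EnergyDecayOnCylinders} is indispensable --- a naive summation of $\epsilon$--regularity over dyadic annuli would only bound the diameter by $\sum_j E_j^{1/2}$, which need not be controlled by $\paren[\big]{\sum_j E_j}^{1/2} \le \delta_0^{1/2}$ --- together with the care needed at the end collars, on which the pointwise decay bound degenerates and one must instead use that $u_k$ lives on a genuinely longer cylinder before invoking $\epsilon$--regularity.
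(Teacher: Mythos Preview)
Your proof is correct and follows exactly the approach the paper indicates: the paper does not actually give a proof of this proposition, only the sentence ``The following is an important consequence of the previous two lemmas,'' and you have supplied the details using precisely \autoref{Lem_EpsilonRegularity} and \autoref{Lem_EnergyDecayOnCylinders} as intended.

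A few minor comments. The introduction of the auxiliary scale $r_3 = Cr$ is slightly over-engineered: since the conformal width of the annulus $\set{2r \le d(z,n) \le r_3}$ is $\log(C/2)$ independently of $r$, a single fixed $C \ge 2e$ suffices, and then the energy-decay lemma applied on $N^{r_3}_{k,n}$ already gives the pointwise bound on all of $N^{2r}_{k,n}$, making the separate $\epsilon$--regularity step on the end collars redundant (though not wrong). Your homology argument is sketched rather than written out, but the idea---homotope $u_k$ on the bulk to $u_\infty$ via geodesics, leaving a difference cycle supported in contractible balls---is standard and correct. Finally, your closing paragraph correctly isolates the genuine content: the exponential decay is what makes the diameter bound uniform in the neck modulus; a dyadic $\epsilon$--regularity argument would indeed fail here.
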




\section{Versal deformations of nodal Riemann surfaces}
\label{Sec_VersalDeformations}

The purpose of this section is to construct a versal deformation of a nodal Riemann surface in a rather explicit manner.

\subsection{Deformations of nodal curves}

Let us briefly review parts of the deformation theory of nodal curves in the complex analytic category.
For further details and proofs we refer the reader to \cite[Chapter XI Section 3]{Arbarello2011}.
A thorough discussion of deformation theory in the algebraic category can be found in \cite{Hartshorne2010}. 

\begin{definition}
  \label{Def_DeformationOfNodalCurve}
  Let $C$ be a nodal curve.
  A \defined{deformation} of $C$ consists of
  \begin{enumerate}
    \item
     a proper flat\footnote{%
     A morphism $f \co A \to B$ between two analytic spaces is \defined{flat} if it makes the stalk $\sO_{A,a}$ into a flat $\sO_{B,f(a)}$--module for every $a$, that is: tensoring by $\sO_{A,a}$ preserves short exact sequences of $\sO_{B,f(a)}$--modules.}
     morphism $\pi \co \sX \to A$ between analytic spaces such that every fiber of $\pi$ is a nodal curve,
    \item
    a base-point $\star \in A$, and
    \item
      an isomorphism $\iota\co C \to \pi^{-1}(\star)$.
      \qedhere
  \end{enumerate}
\end{definition}

\begin{prop}
  Every nodal family $\pi\co \sX \to A$ is flat.
  In particular,
  a deformation of a nodal Riemann surface $(\Sigma,j,\nu)$ is also a deformation of the associated nodal curve $C$.
\end{prop}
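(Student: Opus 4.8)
The ``in particular'' clause is purely formal, so the plan is to reduce everything to the flatness statement and then observe that the rest is bookkeeping. Given a deformation $(\pi\co\sX\to A,\star,\iota)$ of the nodal Riemann surface $(\Sigma,j,\nu)$ in the sense defined above, the map $\pi$ is by definition a surjective proper holomorphic map between complex manifolds, hence a proper morphism of complex analytic spaces, and every fibre $\pi^{-1}(a)$ is a nodal curve. Under the equivalence between closed nodal Riemann surfaces and complete nodal curves recorded in \autoref{Sec_ModuliSpaces}, the nodal biholomorphism $\iota\co(\Sigma,j,\nu)\to\pi^{-1}(\star)$ corresponds to an isomorphism $C\to\pi^{-1}(\star)$ of nodal curves, where $C$ is the nodal curve associated with $(\Sigma,j,\nu)$. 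Hence, once $\pi$ is known to be flat, the triple $(\pi,\star,\iota)$ satisfies every clause in the definition of a deformation of the nodal curve $C$, and nothing further is needed. All the content of the proposition is therefore in its first sentence.

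To prove that $\pi\co\sX\to A$ is flat I would check flatness stalkwise, that is, show that $\cO_{\sX,x}$ is a flat $\cO_{A,\pi(x)}$--module for every $x\in\sX$. Fix such an $x$, put $a\coloneq\pi(x)$ and $n\coloneq\dim_\C A$. Since $\sX$ is a complex manifold, $\cO_{\sX,x}$ is a regular, hence Cohen--Macaulay, local ring of dimension $n+1$; since $A$ is a complex manifold, $\cO_{A,a}$ is a regular local ring of dimension $n$. By the ``miracle flatness'' criterion --- a local homomorphism from a regular local ring to a Cohen--Macaulay local ring is flat whenever the closed fibre has the expected dimension, namely the difference of the two dimensions --- it suffices to check that $\dim_x\pi^{-1}(a)=1$. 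If $x$ is not a critical point of $\pi$, then $\pi$ is a submersion near $x$ and $\pi^{-1}(a)$ is a one--dimensional submanifold there. If $x$ is a nodal critical point, then in the normal-form coordinates of the definition of a nodal family one has $\pi(z,w,t_2,\dots,t_n)=(zw,t_2,\dots,t_n)$, so near $x$ the fibre $\pi^{-1}(a)$ is the hypersurface $\{zw=0\}$ in a coordinate polydisc, again one--dimensional at $x$. In both cases $\dim_x\pi^{-1}(a)=1=(n+1)-n$, so $\pi$ is flat at $x$.

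As an alternative to miracle flatness at a node, one can argue by hand: in the normal-form coordinates the claim amounts to flatness of $\C\{z,w,t_2,\dots,t_n\}$ over $\C\{s_1,t_2,\dots,t_n\}$ via $s_1\mapsto zw$, $t_i\mapsto t_i$. Identifying $\C\{s_1\}$ with the subring $\C\{zw\}$ of $\C\{z,w\}$, the latter is torsion--free over the discrete valuation ring $\C\{zw\}$, since $\C\{z,w\}$ is an integral domain and $zw\neq0$, hence flat over it; adjoining the remaining variables $t_2,\dots,t_n$ to both sides preserves flatness.

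I do not anticipate a genuine obstacle. The two points requiring a little care are confirming that the fibre dimension is uniformly $1$, which is immediate from the two local models (a submersion and $\{zw=0\}$), and quoting the complex-analytic form of miracle flatness, or equivalently the local criterion for flatness; for this I would point to \cite[Chapter~XI Section~3]{Arbarello2011} in the analytic setting and to \cite{Hartshorne2010} in the algebraic one. With flatness in hand, the ``in particular'' assertion is exactly the identification of definitions carried out in the first paragraph.
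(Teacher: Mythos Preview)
The paper does not actually supply a proof of this proposition: it is stated in the review section on deformations of nodal curves, with the blanket remark that ``for further details and proofs we refer the reader to \cite[Chapter~XI Section~3]{Arbarello2011}.'' Your argument via miracle flatness is correct and is exactly the standard one: both $\cO_{\sX,x}$ and $\cO_{A,a}$ are regular local rings (since $\sX$ and $A$ are complex manifolds), and the fibre dimension is visibly $1$ in both local models. Your alternative bare-hands check at a node, reducing to flatness of $\C\{z,w\}$ over the DVR $\C\{zw\}$, is also fine; the step ``adjoining the remaining variables preserves flatness'' is most cleanly handled by simply invoking miracle flatness in the full $(n{+}1)$-dimensional chart rather than separating out the $t_i$, which you already do in your primary argument. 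The reduction of the ``in particular'' clause to the flatness statement is straightforward and correctly explained.
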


\begin{definition}
  Let $C$ be a nodal Riemann surface and let $(\pi\co \sX \to A,\star,\iota)$ and $(\rho\co \sY\to B,\dagger,\kappa)$ be two deformations of $C$.
  A pair of analytic maps $\Phi\co \sX\to \sY$ and $\phi\co A \to B$ forms a \defined{morphism} $(\Phi,\phi)\co (\rho,\star,\iota) \to (\sY,\dagger,\kappa)$ of deformations if
  \begin{equation*}
    \phi(\star) = \dagger, \quad
    \rho\circ\Phi = \phi\circ\pi, \quad
    \Phi\circ\iota = \kappa,
  \end{equation*}
  and for every $a \in A$ the restriction $\Phi\co \pi^{-1}(a) \to \rho^{-1}(\phi(a))$ induces an analytic isomorphism.
\end{definition}

\begin{definition}
  A deformation $(\rho\co \sY \to B,\dagger,\kappa)$ of $C$ is \defined{(uni)versal} if for every deformation ($\pi\co \sX \to A,\star,\iota)$ of $(\Sigma,j,\nu)$ there exists an open neighborhood $U$ of $\star \in A$ and a (unique) morphism of deformations $(\pi\co \pi^{-1}(U) \to A,\star,\iota) \to (\rho,\dagger,\kappa)$.
\end{definition}

\begin{definition}
  Denote by $\C[\epsilon]/\epsilon^2$ the ring of dual numbers and set $D \coloneq \Spec\paren*{\C[\epsilon]/\epsilon^2}$.
  A \defined{first order deformation} is a deformation over $D$.
\end{definition}

Let $C$ be a nodal curve.
Every first order deformation $(\pi\co \sX \to D,0,\iota)$ of $C$ induces a short exact sequence
\begin{equation*}
  0 \to \sO_C \iso \pi^*\Omega_D^1 \to \Omega_{\sX}^1 \otimes \sO_C \xrightarrow{\iota^*} \Omega_C^1 \to 0.
\end{equation*}
The map $\pi^*\Omega_D^1 \to \Omega_{\sX}^1 \otimes \sO_C$ is given by pulling-back forms  from $D$ to $\sX$ and $\Omega_{\sX}^1 \otimes \sO_C \to \Omega_C^1$ is given by restricting forms on $\sX$ to $C$. 
The extension class $\delta \in \Ext^1(\Omega_C^1,\sO_C)$ of this sequence depends on the first order deformation only up to isomorphism of deformations.
Indeed,
two first order deformation of $C$ are isomorphic if and only if they yield the same extension class $\delta$.

\begin{definition}
  Let $C$ be a nodal curve and let $(\pi\co \sX\to A,\star,\iota)$ be a deformation of $C$.
  Every $v \in T_\star A$ corresponds to an analytic map $\phi\co D \to A$ mapping $0$ to $\star$.
  The pullback of $(\pi,\star,\iota)$ via $\phi$ is a first order deformation.
  Denote by $\delta(v) \in \Ext^1(\Omega_C^1,\sO_C)$ the corresponding extension class.
  The map $\delta\co T_\star A \to \Ext^1(\Omega_C^1,\sO_C)$ thus defined is called the \defined{Kodaira--Spencer map}.
\end{definition}

It is instructive to analyze $\Ext^1(\Omega_C^1,\sO_C)$ more closely.
The local-to-global $\Ext$ spectral sequence yields a short exact sequence
\begin{equation*}
  0 \to H^1(C,\sHom(\Omega_C^1,\sO_C)) \to \Ext^1(\Omega_C^1,\sO_C) \to H^0(C,\sExt^1(\Omega_C^1,\sO_C)) \to 0.
\end{equation*}
This can be interpreted in terms of the normalization $\pi\co \tilde C \to C$ as follows.
Denote by $S$ the set of nodes of $C$ and set $\tilde S \coloneq \pi^{-1}(S)$.
It can be shown that
\begin{equation*}
  \sHom(\Omega_C^1,\sO_C) = \pi_*\sT_{\tilde C}(-\tilde S);
  \quad\text{hence:}\quad
  H^1(C,\sHom(\Omega_C^1,\sO_C)) = H^1(\tilde C,\sT_{\tilde C}(-\tilde S)).
\end{equation*}
The space $H^1(\tilde C,\sT_{\tilde C}(-\tilde S))$ parametrizes the deformations of the marked curve $(\tilde C,\tilde S)$;
that is: deformations of $\tilde C$ which fix $\tilde S$ point-wise.
The sheaf $\sExt^1(\Omega_C^1,\sO_C)$ is supported on the nodes of $C$:
\begin{equation*}
  \sExt^1(\Omega_C^1,\sO_C)
  =
  \bigoplus_{n \in S} \Ext^1(\Omega_{C,n}^1,\sO_{C,n}).
\end{equation*}
For every $n \in \tilde S$ and $\set{n_1,n_2} = \pi^{-1}(n)$
\begin{equation*}
  \Ext^1(\Omega_{C,n}^1,\sO_{C,n})
  = T_{n_1}\tilde C\otimes T_{n_2}\tilde C.
\end{equation*}
By considering the deformation $\set{ zw = \epsilon} $ of the node $\set{ zw = 0 }$,
the space $\Ext^1(\Omega_{C,n}^1,\sO_{C,n})$ can be seen to parametrize smoothings of the node $n$.
The above discussion show that to first order all deformations of $C$ arise from smoothing nodes and deforming its normalization while fixing the points mapping to the nodes.
In the following we construct a deformation of $C$ which induces all of these deformations to first order.

\subsection{Smoothing nodal Riemann surfaces}
\label{Sec_SmoothingNodalRiemannSurfaces}

Let $(\Sigma_0,j_0,\nu_0)$ be a closed, nodal Riemann surface with nodal set $S$.
Let $g_0$ be a Riemannian metric on $\Sigma_0$ in the conformal class determined by $j_0$ and such that there is a constant $R_0 > 0$ such that for every $n \in S$ the restriction of $g_0$ to $B_{4R_0}(n)$ is flat and for every $n_1,n_2 \in S$ the balls $B_{4R_0}(n_1)$ and $B_{4R_0}(n_2)$ are disjoint.
For every $n \in S$ define the holomorphic charts $\phi_n\co B_{4R_0}(n)\subset T_n\Sigma_0 \to \Sigma_0$ by
\begin{equation*}
  \phi_n(v) \coloneq \exp_n(v)
\end{equation*}
and define $r_n \co \Sigma_0 \to [0,\infty)$ by
\begin{equation*}
  r_n(z)
  \coloneq
  \max\set{d(n,z),4R_0}.
\end{equation*}

Given a pair of complex vector spaces $V$ and $W$,
denote by $\sigma \co V\otimes W \to W\otimes V$ the isomorphism defined by $\sigma(v\otimes w) \coloneq w\otimes v$.

\begin{definition}
 \label{Def_SmoothingParameter}
 A \defined{smoothing parameter} for $(\Sigma_0,j_0,\nu_0)$ is an element
 \begin{equation*}
    \tau
    =
    (\tau_n)_{n \in S}
    \in
    \prod_{n \in S} T_n\Sigma_0\otimes_\C T_{\nu(n)}\Sigma_0
  \end{equation*}
  such that for every $n \in S$
  \begin{equation*}
    \tau_{\nu(n)} = \sigma(\tau_n)
    \qandq
    \abs{\tau_n} < R_0^2.
  \end{equation*}
  Given a smoothing parameter $\tau$,
  for every $n \in S$
  set
  \begin{equation*}
    \epsilon_n \coloneq \abs{\tau_n} \qandq
    \hat\tau_n \coloneq \tau_n/\abs{\tau_n} \qifq \tau_n \neq 0;
  \end{equation*}
  furthermore,
  set
  \begin{equation*}
    \epsilon \coloneq \max\set{ \epsilon_n : n \in S }.
    \qedhere
  \end{equation*}
\end{definition}

Henceforth,
let $\tau = (\tau_n)_{n\in S}$ be a smoothing parameter for $(\Sigma_0,j_0,\nu_0)$.

\begin{definition}
  \label{Def_IotaTau}
  Set
  \begin{equation*}
    A_\tau
    \coloneq
    \set*{
      w \in \Sigma_0
      :
      \epsilon_n/R_0 < r_n(w) < R_0
      \text{ for some }
      n \in S
      \text{ with }
      \epsilon_n \neq 0
    }
  \end{equation*}
  and denote by $\iota_\tau\co A_\tau \to A_\tau$ the biholomorphic map characterized by
  \begin{equation*}
    \phi_{\nu(n)}^{-1}\circ \iota_\tau\circ\phi_{n}(v) \otimes v
    =
    \tau_n
  \end{equation*}
  for every $n \in S$ and $v \in T_n\Sigma_0$ with $\epsilon_n/R_0 < \abs{v} < R_0$.
\end{definition}

\begin{definition}
  \label{Def_SmoothingNodalRiemannSurface}
  Consider the Riemann surface with boundary
  \begin{align*}
    \Sigma_\tau^\circ
    \coloneq
    \set[\big]{
      z \in \Sigma_0
      :
      r_n(z) \geq \epsilon_n^{1/2}
      \text{ for every }
      n \in S
    }.
  \end{align*}
  Denote by $\sim_\tau$ the equivalence relation on $\Sigma_\tau^\circ$ generated by
  identifying the boundary components via $\iota_\tau$.
  The quotient
  \begin{equation*}
    \Sigma_\tau \coloneq \Sigma_\tau^\circ/\sim_\tau
  \end{equation*}
  is a closed surface.
  The restrictions of the complex structure $j_0$ and the nodal structure $\nu_0$ to $\Sigma_\tau^\circ$ descends to a complex structure $j_\tau$ and a nodal structure $\nu_\tau$ on $\Sigma_\tau$.
  The nodal Riemann surface $(\Sigma_\tau,j_\tau,\nu_\tau)$ is called the \defined{partial smoothing} of $(\Sigma_0,j_0,\nu_0)$ associated with $\tau$.
\end{definition}

\begin{remark}
  The above construction smooths out every node with $\epsilon_n > 0$.
  In particular,
  if all of the  $\epsilon_n$ are positive,
  then $\nu_\tau$ is the trivial nodal structure and $(\Sigma_\tau,j_\tau,\nu_\tau)$ is simply the Riemann surface $(\Sigma_\tau,j_\tau)$.
\end{remark}

\begin{definition}
  Denote by $\Delta$ the space of smoothing parameters for $(\Sigma_0,j_0,\nu_0)$.
  Set
  \begin{equation*}
    \sX
    \coloneq
    \set*{
      (z,\tau) \in \Sigma_0\times\Delta
      :
      z \in \Sigma_\tau^\circ
    }/\sim
  \end{equation*}
  with $(z_1,\tau_1)\sim(z_2,\tau_2)$ if and only if $\tau_1 = \tau_2$ and $z_1 \sim_{\tau_1} {z_2}$ or $z_1,z_2 \in S$, $\nu(z_1) = z_2$, and $\epsilon_{z_1} = \epsilon_{z_2} = 0$.
  Denote by $\pi\co \sX \to \Delta$ the canonical projection.
\end{definition}

The following example will be important in the proof of \autoref{Thm_LimitConstraints} in \autoref{Sec_ProofOfLimitConstraints}. 

\begin{example}
  \label{Ex_SphericalBubble}
  Let $(\Sigma_1,j_1,\nu_1)$ and $(\Sigma_2,j_2,\nu_2)$ be two nodal Riemann surfaces with nodal sets $S_1$ and $S_2$.
  Given $x_i \in \Sigma_i \setminus S_i$ for $i=1,2$, we define a new nodal Riemann surface $(\Sigma_\clubsuit,j_\clubsuit,\nu_\clubsuit)$ by setting $\Sigma_\clubsuit = \Sigma_1\amalg\Sigma_2$ and $\nu_\clubsuit(x_1) = x_2$ (and otherwise agreeing with $\nu_1$ and $\nu_2$).
  The nodal set of $(\Sigma_\clubsuit,j_\clubsuit,\nu_\clubsuit)$ is
  \begin{equation*}
    S_\clubsuit =  \set{x_1,x_2} \amalg S_1 \amalg S_2.
  \end{equation*}
  Accordingly, the space of smoothing parameters for $(\Sigma_\clubsuit,j_\clubsuit,\nu_\clubsuit)$ is
  \begin{equation*}
    \Delta_\clubsuit = \Delta_0 \times \Delta_1 \times \Delta_2,
  \end{equation*} 
  where $\Delta_0$ is an open neighborhood of zero in $T_{x_1} \Sigma_1 \otimes_\C T_{x_2}\Sigma_2$. 
  
  Suppose now that $(\Sigma_1,j_1,\nu_1)$ is a tree of spheres. 
  It is easy to see that for every smoothing parameter $\tau_\clubsuit = (\tau_0,\tau_1,\tau_2)$ such that $\tau_0 \neq 0$ and $\tau_{1,n} \neq 0$ for every node $n\in S_1$,  there is a biholomorphism
  \begin{equation*}
   (\Sigma_{\clubsuit, \tau_\clubsuit}, j_{\tau_\clubsuit}, \nu_{\tau_\clubsuit} ) \iso (\Sigma_{2,\tau_2}, j_{\tau_2}, \nu_{\tau_2}).
  \end{equation*}
 In particular, if $\tau_0 \neq 0$, $\tau_{1,n} \neq 0$ for every $n\in S_1$, and $\tau_2 = 0$, there is a biholomorphism
   \begin{equation*}
   (\Sigma_{\clubsuit, \tau_\clubsuit}, j_{\tau_\clubsuit}, \nu_{\tau_\clubsuit} ) \iso (\Sigma_2, j_2, \nu_2). \qedhere
  \end{equation*}
\end{example}

\begin{prop}
  \label{Prop_PartialSmoothingsFormNodalFamily}
  $\sX$ is a smooth manifold and the complex structure on $\Sigma_0\times\Delta$ induces a complex structure on $\sX$ such that $\pi$ is a nodal family and for every $\tau \in \Delta$ the canonical map $\Sigma_\tau \to \pi^{-1}(\tau)$ induces a nodal, biholomorphic map $\iota_\tau\co (\Sigma_\tau,j_\tau,\nu_\tau) \to \pi^{-1}(\tau)$.
\end{prop}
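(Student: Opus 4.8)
The plan is to construct an explicit holomorphic atlas on $\sX$ and then read off every assertion from the shape of the charts. Two kinds of charts suffice. \emph{Product charts}: if $U \subset \Sigma_0$ is an open set on which $d(\cdot,S) > 2R_0$, then $\epsilon_n^{1/2} < R_0$ forces $U \subset \Sigma_\tau^\circ$ for every $\tau \in \Delta$, and $U$ does not meet the region where $\sim_\tau$ identifies anything, so $U \times \Delta$ embeds as an open subset of $\sX$ carrying the complex structure inherited from $\Sigma_0 \times \Delta$, on which $\pi$ is the projection to $\Delta$. \emph{Neck charts}: fix $n \in S$, put $m \coloneq \nu(n)$, and choose linear isomorphisms $T_n\Sigma_0 \cong \C \cong T_m\Sigma_0$; these induce $T_n\Sigma_0 \otimes_\C T_m\Sigma_0 \cong \C$, and I write $t_n$ for the resulting coordinate and $\tau'$ for the remaining smoothing parameters, so $\tau \leftrightarrow (t_n,\tau')$. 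Using $\phi_n$ and $\phi_m$ to supply coordinates $z$ near $n$ and $w$ near $m$ (there is room since these charts live on $B_{4R_0}$), I claim the part of $\sX$ over points within $2R_0$ of $n$ or of $m$ is identified with the standard plumbing model
\begin{equation*}
  \cY_n
  \coloneq
  \set*{(z,w,t_n,\tau') : zw = t_n,\ \abs{z} < 2R_0,\ \abs{w} < 2R_0,\ (t_n,\tau') \in \Delta},
\end{equation*}
the identification being $z \mapsto z$ on the $n$-side $\set*{\epsilon_n^{1/2} \leq \abs{z}}$ of $\Sigma_\tau^\circ$ and $w \mapsto z = t_n/w$ on the $m$-side $\set*{\epsilon_n^{1/2} \leq \abs{w}}$, these agreeing along the glued circle $\set*{\abs{z} = \abs{w} = \epsilon_n^{1/2}}$ precisely because there $zw = \tau_n$, which is the defining relation of $\iota_\tau$. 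Since the differential of $zw - t_n$ never vanishes, $\cY_n$ is a complex submanifold of $\C^2 \times \Delta$, and the projection $(z,w,t_n,\tau') \mapsto (t_n,\tau')$ is $\pi$ in the normal form $(z,w,\tau') \mapsto (zw,\tau')$.

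I would then check that these charts cover $\sX$ with holomorphic transition maps. Every point of $\sX$ is represented by some $z \in \Sigma_0$, which either lies within $2R_0$ of a node — so the point is in a neck chart — or has $d(z,S) > 2R_0$, in which case a small ball around it stays in $\set*{d(\cdot,S) > 3R_0/2}$ and the point is in a product chart. Transitions between product charts are holomorphic by holomorphy of the charts of $\Sigma_0$, and similarly for transitions between neck charts (of the same or of different nodes), which only involve the coordinates on $\Sigma_0$ and the $t_n$'s. The one case to examine is a product chart overlapping a neck chart $\cY_n$: a point $z$ with $\epsilon_n^{1/2} < r_n(z) < 2R_0$ has neck coordinates $(z, t_n/z, t_n, \tau')$ — and, where it lies on the $m$-side, alternatively $(t_n/w, w, t_n, \tau')$ — so the coordinate change is a composition of $\phi_n$, $\phi_m$, the biholomorphism $z \mapsto t_n/z$ of $\C^*$, and projections. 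The same local picture shows $\sX$ is Hausdorff and second countable: away from the necks these properties descend from $\Sigma_0 \times \Delta$, and near each node $\cY_n$ is visibly Hausdorff and second countable, with the $z$-side and the $w$-side meeting only along $\set*{zw = t_n \neq 0}$ and, when $t_n = 0$, at the single node $\set*{z = w = 0}$.

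With the atlas in place the remaining claims are formal. The map $\pi$ is holomorphic, being a projection in every chart; surjective, since $\Sigma_\tau^\circ \neq \emptyset$ for every $\tau$; and proper, since for compact $K \subset \Delta$ the set $\pi^{-1}(K)$ is the image under the continuous quotient map $\set*{(z,\tau) : z \in \Sigma_\tau^\circ} \to \sX$ of the closed, hence compact, subset $\set*{(z,\tau) : \tau \in K,\ z \in \Sigma_\tau^\circ}$ of $\Sigma_0 \times K$. One has $\dim_\C \sX = \dim_\C \Delta + 1$, and the neck charts exhibit every critical point of $\pi$ as one of the points $(0,0,0,\tau') \in \cY_n$, at which $\pi$ is in nodal normal form; thus $\pi$ is a nodal family. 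Finally, for fixed $\tau$ the fibre $\pi^{-1}(\tau)$ is by construction $\set*{(z,\tau) : z \in \Sigma_\tau^\circ}/\!\!\sim_\tau$, which as a space is $\Sigma_\tau$; the product and neck charts restrict on $\pi^{-1}(\tau)$ to exactly the charts defining $j_\tau$ and $\nu_\tau$ — a node persists precisely when $\epsilon_n = 0$, matching the point $\set*{z = w = 0}$ of $\cY_n$ over $t_n = 0$ — so the canonical map $\iota_\tau \co (\Sigma_\tau,j_\tau,\nu_\tau) \to \pi^{-1}(\tau)$ is a nodal biholomorphism.

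The main obstacle is the middle step: matching, near each node, the three descriptions of $\sX$ — the $n$-side and $m$-side of $\Sigma_\tau^\circ$ and the plumbing model $\cY_n$ — and verifying that the $\tau$-dependent gluing $\iota_\tau$ is compatible with the holomorphic relation $zw = t_n$, so that the quotient is an honest complex manifold (in particular Hausdorff and without boundary) rather than merely a topological space. This is a re-derivation of the classical plumbing family in the coordinates at hand; once it is done, holomorphy and properness of $\pi$, surjectivity, the nodal normal form, and the fibrewise identification follow at once.
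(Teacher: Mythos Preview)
Your proof is correct and follows essentially the same approach as the paper: both reduce to the local plumbing model $\set{zw = \tau}$ near each node and verify that the gluing construction $\Sigma_\tau$ matches this model via $z \mapsto (z,\tau/z,\tau)$. The paper's proof is a one-paragraph sketch doing only the local model, whereas you spell out the full atlas (product charts, neck charts, transitions, Hausdorffness, properness), but the essential content is the same.
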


\begin{proof}
  It suffices to consider the local model of a node $C_0 \coloneq \set{ (z,w) \in \C^2 : zw = 0 }$.
  $\tilde \sX\coloneq \set{ (z,w,\tau) \in \C^2\times \C : zw = \tau }$ is a complex manifold.
  The map $\tilde\pi\co \tilde \sX \to \C$ defined by $\tilde \pi(z,w,\tau) \coloneq t$ has only nodal critical points and its fiber over $0$ is $C_0$.
  The nodal Riemann surface associated with $C_0$ is $\Sigma_0 = \C\amalg\C$ with the complex structure $i$ on both components and the nodal structure which interchanges the origins of the components.
  The partial smoothing defined in \autoref{Def_SmoothingNodalRiemannSurface} is
  \begin{equation*}
    \Sigma_\tau
    =
    \paren[\big]{
      \set[\big]{ z \in \C : \abs{z} \geq \abs{\tau}^{1/2} }
      \amalg
      \set[\big]{ w \in \C : \abs{{w}} \geq \abs{\tau}^{1/2} }
    }\big/\!\sim_\tau.
  \end{equation*}
  The map $\Phi\co \sX \to \tilde \sX$ defined by $\Phi([z],\tau) \coloneq (z,\tau/z,\tau)$ and $\Phi([w],\tau) \coloneq (\tau/z,z,\tau)$ is biholomorphic.
  This implies the assertion.
\end{proof}

\subsection{Construction of a versal deformation}
\label{Sec_VersalDeformations_Construction}

Let $(\Sigma_0,j_0,\nu_0)$ be a nodal Riemann surface with nodal set $S$.
Denote by $\sJ(\Sigma_0)$ the space of almost complex structures on $\Sigma_0$ and by $\Diff_0(\Sigma_0,\nu_0)$ the group of diffeomorphism of $\Sigma_0$ which are isotopic to the identity and commute with $\nu_0$.
Denote by
\begin{equation*}
  \sT \coloneq \sJ(\Sigma_0)/\Diff_0(\Sigma_0,\nu_0)
\end{equation*}
the corresponding Teichmüller space.
This is a complex manifold and there is an open neighborhood $\Delta_1$ of $0 \in \C^{\dim_\C \sT}$ together with a map $\jmath\co \Delta_1 \to \sJ(\Sigma_0)$ such that:
\begin{enumerate}
\item
  $\jmath(0) = j_0$,
\item
  for every $\sigma \in \Delta_1$
  the almost complex structure $\jmath(\sigma)$ agrees with $j_0$ in some neighborhood $U$ of $S$, and
\item
  the map $[\jmath] \co \Delta_1 \to \sT$ is an embedding.
\end{enumerate}
For every $\sigma \in \Delta_1$
set
\begin{equation*}
  \Sigma_{\sigma,0} \coloneq \Sigma_0, \quad
  j_{\sigma,0} \coloneq \jmath(\sigma), \qandq
  \nu_{\sigma,0} \coloneq \nu_0.
\end{equation*}
Choose a family of metrics $(g_{\sigma,0})_{\sigma \in \Delta_1}$ whose restriction to the neighborhood $U$ of $S$ is independent of $\sigma$ and such that $g_{\sigma,0}$ is in the conformal class determined by $j_{\sigma,0}$ for every $\sigma \in \Delta_1$.
Let $R_0 > 0$ be such that the conditions at the beginning of \autoref{Sec_SmoothingNodalRiemannSurfaces} hold for every $\sigma \in \Delta_1$ and $B_{4R_0}(S) \subset U$.

Denote by $\Delta_2$ the space of elements
\begin{equation*}
  \tau
  =
  (\tau_n)_{n \in S}
  \in
  \prod_{n \in S} T_n\Sigma_0\otimes_\C T_{\nu(n)}\Sigma_0
\end{equation*}
such that for every $n \in S$
\begin{equation*}
  \tau_{\nu(n)} = \sigma(\tau_n)
  \qandq
  \abs{\tau_n} < R_0^2.
\end{equation*}

\begin{definition}
  Set $\Delta \coloneq \Delta_1\times \Delta_2$.
  Set
  \begin{equation*}
    \sX
    \coloneq
    \set*{
      (z;\sigma,\tau) \in \Sigma_0\times\Delta_1\times\Delta_2
      :
      z \in \Sigma_{\sigma,\tau}^\circ
    }/\sim
  \end{equation*}
  with $(z_1;\sigma_1,\tau_1)\sim(z_2;\sigma_2,\tau_2)$ if and only if $\sigma_1 = \sigma_2$, $\tau_1 = \tau_2$ and $z_1 \sim_{\tau_1} {z_2}$ or $\tau_1 = \tau_2$ and $z_1 \sim_{\tau_1} {z_2}$ or $z_1,z_2 \in S$, $\nu(z_1) = z_2$, and $\epsilon_{z_1} = \epsilon_{z_2} = 0$.
  Denote by $\pi\co \sX \to \Delta$ the canonical projection.
\end{definition}

\begin{prop}
  \label{Prop_PartialSmoothingsInFamiliesFormNodalFamily}
  $\sX$ is a smooth manifold and the complex structure on $\Sigma_0\times\Delta$ induces a complex structure on $\sX$ such that $\pi$ is a nodal family and for every $(\sigma,\tau) \in \Delta$ the canonical map $\Sigma_{\sigma,\tau} \to \pi^{-1}(\sigma,\tau)$ induces a nodal, biholomorphic map $\iota_{\sigma,\tau}\co (\Sigma_{\sigma,\tau},j_{\sigma,\tau},\nu_{\sigma,\tau}) \to \pi^{-1}(\sigma,\tau)$.
\end{prop}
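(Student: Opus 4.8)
The plan is to run the proof of \autoref{Prop_PartialSmoothingsFormNodalFamily} with the single complex structure $j_0$ replaced by the family $j(\sigma)$, taking advantage of the fact that $j(\sigma)$ coincides with $j_0$ on a fixed neighbourhood $U$ of $S$ for every $\sigma \in \Delta_1$. First I would record the two structures that are being glued. On the one hand, since the restriction of $j(\sigma)$ to $U$ is independent of $\sigma$, the family $\Sigma_0 \times \Delta_1$ equipped with the fibrewise complex structures $j(\sigma)$ is --- after choosing the slice $j\co \Delta_1 \to \sJ(\Sigma_0)$ appropriately, which is possible by the standard construction of the Teichm\"uller/Kuranishi family, cf.\ \cite[Chapter XI]{Arbarello2011} --- a holomorphic family $\pi_1\co \sX_1 \to \Delta_1$ of open Riemann surfaces which is holomorphically the trivial product $U \times \Delta_1$ over $U$. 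On the other hand, for each node $n \in S$ the smoothing construction of \autoref{Def_SmoothingNodalRiemannSurface}, which modifies $\Sigma_0$ only inside $B_{R_0}(n) \cup B_{R_0}(\nu(n)) \subset U$, is exactly the one already treated in the proof of \autoref{Prop_PartialSmoothingsFormNodalFamily}.

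Next I would assemble the complex structure on $\sX$ from two kinds of charts. Over the open subset of $\sX$ where no identification $\sim$ takes effect --- i.e.\ where $z$ stays in the complement of the annuli $A_\tau$ of \autoref{Def_IotaTau} --- the family $\sX$ is canonically a product of an open subset of $\sX_1$ with $\Delta_2$, hence a complex manifold on which $\pi$ (the product of $\pi_1$ with $\id_{\Delta_2}$) is holomorphic and submersive. Near a node $n \in S$, all of the identifications $\sim$ and the entire dependence on $\tau_n$ take place inside $B_{R_0}(n) \cup B_{R_0}(\nu(n))$, where $j(\sigma) \equiv j_0$; hence a neighbourhood of the corresponding point of the central fibre $\pi^{-1}(0,0)$ is canonically the product of the local model from the proof of \autoref{Prop_PartialSmoothingsFormNodalFamily} --- which, after using $\phi_n,\phi_{\nu(n)}$ to identify $T_n\Sigma_0 \otimes_\C T_{\nu(n)}\Sigma_0 \cong \C$, is biholomorphic to $\set*{(z,w,\tau_n) \in \C^2\times\C : zw = \tau_n}$ --- with an open neighbourhood of $0$ in $\C^{N-1}$, $N \coloneq \dim_\C\Delta$, parametrising $\sigma$ together with the remaining smoothing parameters. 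On the overlap of such a node chart with the previous charts --- over the annuli $A_\tau$ --- the transition map is, in the $(z,w,\tau_n)$ variables, precisely the map verified to be holomorphic in \autoref{Prop_PartialSmoothingsFormNodalFamily}, and it is the identity in the remaining $N-1$ variables; hence it is holomorphic. Thus $\sX$ is a complex manifold with $\dim_\C\sX = N+1$ and $\pi$ holomorphic; its surjectivity and properness follow, as in \autoref{Prop_PartialSmoothingsFormNodalFamily}, from the compactness of $\Sigma_0$ (so each fibre $\Sigma_{\sigma,\tau}$ is a closed surface) and from $j(\sigma)$ being defined throughout $\Delta_1$.

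It remains to see that $\pi$ is a nodal family and to identify the fibres. Over the locus where no identification takes effect $\pi$ is a submersion, so every critical point of $\pi$ lies in some node chart, where in the coordinates above $\pi$ is the map $(z,w,s) \mapsto (zw,s)$ with $s \in \C^{N-1}$; its unique critical point $z = w = 0$ (attained only when $\tau_n = 0$) has exactly the local form required in \autoref{Def_VersalDeformation}, hence is nodal. Therefore $\pi$ is a nodal family. Finally, by construction $\pi^{-1}(\sigma,\tau)$ is the surface $\Sigma_{\sigma,\tau}^\circ$ with its boundary glued by $\iota_\tau$, that is, precisely the partial smoothing $(\Sigma_{\sigma,\tau},j_{\sigma,\tau},\nu_{\sigma,\tau})$ of \autoref{Def_SmoothingNodalRiemannSurface}; the canonical map $\iota_{\sigma,\tau}\co \Sigma_{\sigma,\tau} \to \pi^{-1}(\sigma,\tau)$ is a bijection which is holomorphic --- the complex structure on $\Sigma_{\sigma,\tau}^\circ$ is $j(\sigma)$ by the definition of the charts --- and intertwines $\nu_{\sigma,\tau}$ with the nodal structure of the fibre, hence is a nodal biholomorphism.

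The only non-formal ingredient is the first one: that the Teichm\"uller slice $j$ can be chosen so that $\pi_1\co\sX_1\to\Delta_1$ is an honest holomorphic family (so that ``the complex structure on $\Sigma_0\times\Delta$'' in the statement makes sense); this is standard but should be pinned down with a precise reference. Everything else is the bookkeeping needed to see that the $\Delta_1$--deformation and the $\Delta_2$--smoothing decouple over $U$ --- which is exactly what the condition that $j(\sigma)$ agree with $j_0$ near $S$ provides --- so that no phenomenon beyond \autoref{Prop_PartialSmoothingsFormNodalFamily} arises. I expect this decoupling-and-patching bookkeeping to be the main, if routine, obstacle.
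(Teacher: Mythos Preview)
The paper does not give a proof of \autoref{Prop_PartialSmoothingsInFamiliesFormNodalFamily}; it is stated and immediately followed by \autoref{Thm_VersalFamily}. The evident intention is that it follows from \autoref{Prop_PartialSmoothingsFormNodalFamily} by exactly the decoupling argument you describe: since $j(\sigma)$ agrees with $j_0$ on the fixed neighbourhood $U \supset B_{4R_0}(S)$, the local model at each node is the one already analysed in \autoref{Prop_PartialSmoothingsFormNodalFamily} crossed with a ball in $\C^{N-1}$, and away from the nodes the family is a submersion. Your write-up of this is correct and is precisely the omitted proof.

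Your one flagged concern---that the slice $j\co \Delta_1 \to \sJ(\Sigma_0)$ must be chosen so that the fibrewise complex structures assemble into an honest complex structure on $\Sigma_0 \times \Delta_1$---is legitimate and is indeed an implicit assumption of the paper (it speaks of ``the complex structure on $\Sigma_0\times\Delta$'' in the statement). This is the standard construction of the Kuranishi family for a marked Riemann surface, and a suitable reference is \cite[Chapter XI, \S4]{Arbarello2011} or \cite{Robbin2006}, which the paper already cites for the universal deformation of a stable nodal curve.
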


\begin{theorem}[{cf. \cite[Chapter XI Theorem 3.17 and Section 4]{Arbarello2011}}]
  \label{Thm_VersalFamily}
  Set $\star \coloneq (0,0)$ and $\iota \coloneq \iota_{0,0}$.
  The deformation $\paren*{\pi,\star,\iota}$ of $\paren*{\Sigma_0,j_0,\nu_0}$ is versal.
\end{theorem}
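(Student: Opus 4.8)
The plan is to deduce the theorem from the standard infinitesimal criterion for versality: a deformation of a nodal curve over a base that is smooth at the base point and whose Kodaira--Spencer map there is surjective is versal; see \cite[Chapter XI Theorem 3.17]{Arbarello2011}. First I would pass from nodal Riemann surfaces to nodal curves. Since nodal families are flat, $(\pi,\star,\iota)$ is also a deformation of the nodal curve $C$ associated with $(\Sigma_0,j_0,\nu_0)$, and since all the total spaces in sight are complex manifolds, a morphism of deformations of nodal curves between such families is exactly the same datum as a morphism of deformations of nodal Riemann surfaces. Hence versality of $(\pi,\star,\iota)$ in the sense of \autoref{Def_VersalDeformation} is equivalent to versality as a deformation of $C$. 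The base $\Delta = \Delta_1\times\Delta_2$ is an open subset of a finite-dimensional complex vector space and $\pi$ is a nodal family by \autoref{Prop_PartialSmoothingsInFamiliesFormNodalFamily}, so it remains only to compute the Kodaira--Spencer map $\delta\co T_\star\Delta \to \Ext^1(\Omega_C^1,\sO_C)$ and to show it is an isomorphism (which will in fact show that $(\pi,\star,\iota)$ is semi-universal).

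Next I would match up source and target. On the target side, recall from \autoref{Sec_VersalDeformations} the short exact sequence
\[
  0 \to H^1(\tilde C,\sT_{\tilde C}(-\tilde S)) \to \Ext^1(\Omega_C^1,\sO_C) \xrightarrow{\ \rho\ } \bigoplus_{n}\paren*{T_{n_1}\tilde C\otimes T_{n_2}\tilde C} \to 0,
\]
where the sum runs over the nodes of $C$ and $\{n_1,n_2\}$ is the preimage of a node under the normalization $\tilde C \to C$. On the source side, $T_\star\Delta = T_0\Delta_1 \oplus T_0\Delta_2$: the constraint $\tau_{\nu(n)} = \sigma(\tau_n)$ identifies $T_0\Delta_2$ canonically with $\bigoplus_n (T_{n_1}\tilde C\otimes T_{n_2}\tilde C)$, while $T_0\Delta_1$ is identified, via the differential of the embedding $[j]\co\Delta_1\hookrightarrow\sT$, with $T_{[j_0]}\sT$, which by the discussion in \autoref{Sec_VersalDeformations} parametrizes deformations of the marked normalization $(\tilde C,\tilde S)$ and so is canonically $H^1(\tilde C,\sT_{\tilde C}(-\tilde S))$. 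The plan is to show that $\delta$ respects these decompositions: that $\delta$ carries $T_0\Delta_1$ into $\ker\rho = H^1(\tilde C,\sT_{\tilde C}(-\tilde S))$ and restricts there to the identity, and that $\rho\circ\delta$ restricts to an isomorphism on $T_0\Delta_2$; the short five lemma then forces $\delta$ to be an isomorphism.

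For the $\Delta_1$--directions I would use that the restriction of $\sX$ over $\Delta_1\times\{0\}$ is the family $\sigma \mapsto (\Sigma_0,j(\sigma),\nu_0)$, in which $j(\sigma)$ agrees with $j_0$ near $S$; hence each node is carried along rigidly, its local ring undergoes no first-order deformation, and the corresponding Kodaira--Spencer classes lie in $\ker\rho$, where they reproduce the differential of the embedding $[j]$, an isomorphism onto $H^1(\tilde C,\sT_{\tilde C}(-\tilde S))$ since $\dim_\C\Delta_1 = \dim_\C\sT$. For the $\Delta_2$--directions I would fix $\sigma=0$ and vary a single $\tau_n$; near the node $n$ this yields exactly the standard smoothing $\{zw = \tau_n\}$ of $\{zw=0\}$ --- this is the local content of \autoref{Sec_SmoothingNodalRiemannSurfaces}, made explicit by the biholomorphism $\Phi$ exhibited in the proof of \autoref{Prop_PartialSmoothingsFormNodalFamily} --- which by the computation of $\Ext^1(\Omega_{C,n}^1,\sO_{C,n}) = T_{n_1}\tilde C\otimes T_{n_2}\tilde C$ recalled in \autoref{Sec_VersalDeformations} represents the canonical generator of the $n$-th summand, so $\rho\circ\delta$ is indeed an isomorphism on $T_0\Delta_2$.

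The genuinely delicate step is the pair of identifications in the last paragraph: I expect the main obstacle to be bookkeeping, namely verifying that the annular gluing $\iota_\tau$ together with the cutoff at $r_n \geq \epsilon_n^{1/2}$ really reproduces the local model $\{zw = \tau_n\}$ to first order in $\tau_n$, and that the $\Delta_1$--family genuinely fixes the nodes to all orders --- rather than any further deformation-theoretic input. Once these are in hand, $\delta$ is bijective, $\Delta$ is smooth at $\star$, and the criterion of \cite[Chapter XI]{Arbarello2011} shows that $(\pi,\star,\iota)$ is versal, completing the proof.
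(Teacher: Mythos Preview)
Your proposal is correct and follows the same high-level strategy as the paper: show that the Kodaira--Spencer map $\delta\co T_\star\Delta \to \Ext^1(\Omega_C^1,\sO_C)$ is an isomorphism and conclude versality. The difference is one of packaging. You unpack the computation of $\delta$ yourself, using the local-to-global $\Ext$ sequence to split into the $\Delta_1$-- and $\Delta_2$--directions and matching each with the normalization and node-smoothing pieces, then invoke the infinitesimal criterion (smooth base and surjective Kodaira--Spencer map implies versal) directly. The paper instead simply cites \cite[Chapter XI Theorem~3.17]{Arbarello2011} for the fact that $\delta$ is an isomorphism, and then deduces versality by a comparison argument: take an abstract versal deformation $(\rho,\dagger,\kappa)$ of $C$ whose Kodaira--Spencer map is also an isomorphism, map $(\pi,\star,\iota)$ into it by versality of $(\rho,\dagger,\kappa)$, observe that the induced base map is a local biholomorphism since both Kodaira--Spencer maps are isomorphisms, and conclude that the two deformations are isomorphic after shrinking. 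Your route is more self-contained and makes explicit why the construction in \autoref{Sec_SmoothingNodalRiemannSurfaces} hits every first-order deformation; the paper's route is shorter but leans on the existence of an abstract versal deformation as an auxiliary input.
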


\begin{proof}  
  Denote by $C$ the nodal curve associated with $(\Sigma_0,j_0,\nu_0)$.
  It is proved in \cite[Chapter XI Theorem 3.17]{Arbarello2011} that the Kodaira--Spencer map $\delta\co T_0\Delta_1\times T_0\Delta_2 \to \Ext^1(\Omega_C^1,\sO_C)$ is an isomorphism.
  This implies that the deformation is versal.
  Indeed,
  $C$ has some versal family $(\rho\co \sY \to B,\dagger,\kappa)$ for which the Kodaira--Spencer map is an isomorphism.
  Therefore,
  after possibly shrinking $\Delta$,
  there exists a morphism of deformations $(\Phi,\phi)\co (\pi,\star,\iota) \to (\rho,\dagger,\kappa)$.
  Since both Kodaira--Spencer maps are isomorphism,
  after possibly shrinking $\Delta$,
  $\phi$ is a holomorphic embedding.
  Therefore,
  after possibly shrinking both $\Delta$ and $A$,
  both deformations become isomorphic.
\end{proof}

To define a framing of the deformation $(\pi,\star,\iota)$,
choose an increasing, smooth function $\eta\co [0,2] \to [1,2]$ such that
\begin{equation*}
  \eta(0) = 1 \qandq
  \eta(r) = r \qforeveryq 3/2 \leq r \leq 2.
\end{equation*}

\begin{definition}
  \label{Def_ConcreteFraming}
  Define the framing $\Psi\co \Sigma_0\setminus S\times \Delta \to \sX$ of $(\pi,\star,\iota)$ by
  \begin{equation*}
    \Psi(z;\sigma,\tau)
    \coloneq
    \begin{cases}
     \psi_n(z) & \text{if } r_n(z) \leq 2\epsilon_n^{1/2} \text{ for some } n \in S \\
      (z;\sigma,\tau) & \text{otherwise},
    \end{cases}
  \end{equation*}
  with $\psi_n(z)$ defined by
  \begin{equation*}
    \psi_n(z) = \phi_n\paren*{\eta\paren*{r_n(z)/\epsilon_n^{1/2}}\cdot \frac{\phi_n^{-1}(z)}{r_n(z)/\epsilon_n^{1/2}}}.
  \end{equation*}  
  Observe that $\psi_n$ is defined so that:
  \begin{enumerate}
  \item $r_n(\psi_n(z)) \geq \epsilon_n^{1/2}$, so that indeed $\psi_n(z)$ corresponds to a point in $\sX$,
  \item $\psi_n$ defines an embedding from a punctured neighborhood of $n$ in $\Sigma_0$ to $\sX$,
  \item $\psi_n(z) = z$ when $r_n(z) \geq 3/2 \epsilon_n^{1/2}$, so that $\Psi$ is continuous.
    \qedhere
  \end{enumerate} 

\end{definition}

\begin{remark}
  \label{Rmk_DecompositionOfSurface}
  Let $(\sigma,\tau) \in \Delta$ and $r \in (2\epsilon^{1/2},R_0)$.
  Set
  \begin{equation*}
    \Sigma_0^r \coloneq \set{ z \in \Sigma_0 : r_n(z) \geq r \text{ for every } n \in S }.
  \end{equation*}
  Denote by
  \begin{equation*}
    N_{\sigma,\tau}^r
    \coloneq
    \Sigma_{\sigma,\tau} \setminus \Psi\paren*{\Sigma_0^r\times\set{(\sigma,\tau)}}
  \end{equation*}
  the part of $\Sigma_{\sigma,\tau}$ not covered by $\Sigma_0^r$ under the framing $\Psi$,
  cf. \autoref{Sec_BehaviorNearTheVanishingCycles}.
  By construction,
  \begin{equation*}
    N_{\sigma,\tau}^r
    =
    \bigcup_{n \in S} N_{\sigma,\tau;n}^r
  \end{equation*}
  with
  \begin{equation*}
    N_{\sigma,\tau;n}^r
    =
    N_{\sigma,\tau;\nu(n)}^r
    \coloneq
    \set[\big]{
      z \in \Sigma_{\sigma,\tau}^\circ
      :
      r_n(z) < r \text{ or } r_{\nu(n)}(z) < r
    }/\sim_\tau.
  \end{equation*}
  If $\epsilon_n = 0$,
  then $N_{\sigma,\tau;n}^r$ is biholomorphic to
  \begin{equation*}
    B_r(0) \amalg B_r(0)
  \end{equation*}
  and the nodal structure $\nu_{\sigma,\tau}$ identifies the two origins.
  If $\epsilon_n \neq 0$,
  then $N_{\sigma,\tau;n}^r$ is biholomorphic to
  \begin{equation*}
    \set{ z \in \C : \epsilon_n/r < \abs{z} < r }
    \iso
    S^1 \times \paren[\big]{-\log\paren{r\epsilon_n^{-1/2}}, \log\paren{r\epsilon_n^{-1/2}}}.
    \qedhere
  \end{equation*}
\end{remark}



\section{Smoothing nodal \texorpdfstring{$J$}{J}--holomorphic maps}
\label{Sec_SmoothingNodalMaps}

The purpose of this section is to prove  \autoref{Thm_LimitConstraints}.
The strategy is to construct a Kuranishi model for a Gromov neighborhood of $u_\infty$ and analyze the obstruction map.
This idea goes back to \citet{Ionel1998} and has been used by \citet{Zinger2009,Niu2016} to give a sharp compactness results for genus one and two pseudo-holomorphic maps.

Throughout this section,
fix a smooth function $\chi \co [0,\infty) \to [0,1]$ with
\begin{equation}
  \label{Eq_CutoffFunction}
  \chi|_{[0,1]} = 1 \qandq
  \chi|_{[2,\infty)} = 0
\end{equation}
and, moreover, fix $p \in (2,\infty)$.

\subsection{Riemannian metrics on smoothings}

Let $(\Sigma_0,j_0,\nu_0)$ be a nodal Riemann surface with nodal set $S$.
  Denote by $g_0$ a Riemannian metric on $\Sigma_0$ as at the beginning of \autoref{Sec_SmoothingNodalRiemannSurfaces}.
In \autoref{Sec_SmoothingNodalRiemannSurfaces} we discussed the construction of a smoothing $\Sigma_\tau$ of $\Sigma_0$ for every smoothing parameter $\tau$. 
In this section we construct a Riemannian metric $g_\tau$ on $\Sigma_\tau$ which is uniformly equivalent to the metric $g_0$ on $\Sigma_0$ in the smoothing region. 
This property will be useful for proving estimates in the construction of a smoothing of a nodal pseudo-holomorphic map from $\Sigma_0$. 

\begin{definition}
  \label{Def_GTau}
  Given a smoothing parameter $\tau$,
  let $\Sigma_\tau^\circ$ be as in \autoref{Def_SmoothingNodalRiemannSurface}.
  Recall that for every node $n\in S$ we have the corresponding  number $\epsilon_n = \abs{\tau_n}$, the size of the smoothing parameter at $n$, and local radial coordinate $r_n \co \Sigma_0 \to [0,\infty)$. 

  Define the Riemannian metric $g_\tau^\circ$ on $\Sigma_\tau^\circ$ by
  \begin{equation*}
    g_\tau^\circ
    \coloneq
    g_0
    +
    \sum_{n \in S}
    \chi\paren*{\frac{r_n}{2\epsilon_n^{1/2}}}
    \cdot
    \paren[\big]{
      \epsilon_n\cdot(\phi_n)_*\paren*{r^{-2}\rd r\otimes \rd r + \theta\otimes\theta}
      -
      g_0
    }
  \end{equation*}
  with $r$ denoting the distance from origin in $T_n\Sigma_0 \iso \C$ and $\theta = - \rd r \circ j_0$. 
  Since the Riemannian metric $r^{-2}\rd r\otimes \rd r + \theta\otimes\theta$ on $\C^*$ is invariant under the involution $z \mapsto \epsilon/z$,
  $g_\tau^\circ$ descends to a Riemannian metric $g_\tau$ on $\Sigma_\tau$.
\end{definition}

\begin{prop}
  \label{Prop_MetricsAreUniformlyComparable}
  There is a constant $c > 1$ such that for every nodal Riemann surface and every smoothing parameter $\tau$
  \begin{equation*}
    c^{-1}g_0 < g_\tau^\circ < cg_0.
  \end{equation*}
\end{prop}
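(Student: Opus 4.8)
The plan is to unwind \autoref{Def_GTau} and exploit the fact that the cutoff functions confine the modification of $g_0$ to thin annuli around the nodes, on which the rescaled cylindrical model metric is uniformly equivalent to $g_0$; the constant will come out universal, e.g.\ $c = 17$.

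First I would pin down the support of the correction terms. Since $\epsilon_n < R_0^2$ we have $\epsilon_n^{1/2} < R_0$, and since $\chi$ vanishes on $[2,\infty)$ the $n$-th summand in the defining formula for $g_\tau^\circ$ is supported in $\set*{r_n < 4\epsilon_n^{1/2}} \subset B_{4R_0}(n)$. Because the balls $B_{4R_0}(n)$, $n \in S$, are pairwise disjoint, at every point of $\Sigma_\tau^\circ$ at most one summand is nonzero, and $g_\tau^\circ = g_0$ off $\bigcup_{n\in S}\set*{r_n < 4\epsilon_n^{1/2}}$. It therefore suffices to bound $g_\tau^\circ$ near a single node $n$ with $\epsilon_n > 0$.

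Next, fix such an $n$. Since $g_0$ is flat on $B_{4R_0}(n)$ and $\phi_n = \exp_n$ is the associated normal chart, $\phi_n^* g_0$ is the Euclidean metric on $B_{4R_0}(0)\subset T_n\Sigma_0$, which in the polar notation of \autoref{Def_GTau} is $\rd r\otimes \rd r + \theta\otimes\theta$; hence the model metric $h_n \coloneq \epsilon_n\,(\phi_n)_*\bigl(r^{-2}(\rd r\otimes \rd r + \theta\otimes\theta)\bigr)$ equals $\epsilon_n r_n^{-2}\,g_0$ throughout $B_{4R_0}(n)$. On the support of $\chi(r_n/2\epsilon_n^{1/2})$ one has $r_n \leq 4\epsilon_n^{1/2}$, while $r_n \geq \epsilon_n^{1/2}$ on $\Sigma_\tau^\circ$ by \autoref{Def_SmoothingNodalRiemannSurface}; so $\epsilon_n^{1/2} \leq r_n \leq 4\epsilon_n^{1/2}$ there, giving $\tfrac1{16} \leq \epsilon_n r_n^{-2} \leq 1$, i.e.\ $\tfrac1{16}g_0 \leq h_n \leq g_0$ on this set.

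To finish, I would write $g_\tau^\circ = (1-\chi_n)\,g_0 + \chi_n\,h_n$ with $\chi_n \coloneq \chi(r_n/2\epsilon_n^{1/2}) \in [0,1]$, which presents $g_\tau^\circ$ pointwise as a convex combination of $g_0$ and $h_n$, both lying between $\tfrac1{16}g_0$ and $g_0$; hence $\tfrac1{16}g_0 \leq g_\tau^\circ \leq g_0$ on each annulus, and trivially (as $g_\tau^\circ = g_0$) elsewhere on $\Sigma_\tau^\circ$. So $c = 17$ works, and the argument used nothing about the nodal Riemann surface, the metric $g_0$, or $\tau$ beyond the normalizations already imposed at the start of \autoref{Sec_SmoothingNodalRiemannSurfaces}. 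I do not expect any genuine obstacle here: the only things requiring care are that the chart $\phi_n$ is defined on the full region where the $n$-th cutoff is active (true since $4\epsilon_n^{1/2} < 4R_0$), and that the summands have pairwise disjoint supports so that no constant degrades or accumulates as $\#S$ grows.
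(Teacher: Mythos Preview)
Your argument is correct and is essentially the paper's own proof: both reduce to the observation that on the annulus $\epsilon_n^{1/2}\le r_n\le 4\epsilon_n^{1/2}$ the interpolated metric is a conformal multiple $F_{\epsilon_n}(r_n)\,g_0$ with $F_{\epsilon_n}(r)=1+\chi(r/2\epsilon_n^{1/2})(\epsilon_n r^{-2}-1)$, and that $\epsilon_n r^{-2}\in[1/16,1]$ there. Your convex-combination phrasing, the explicit constant $c=17$, and the remark that the supports of the summands are disjoint (so the bound is uniform in $\#S$) are just a slightly more detailed presentation of the same computation.
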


\begin{proof}
  Let $n \in S$ and let $r$ and $\theta$ be as in \autoref{Def_GTau}.
  On the annulus $\set[\big]{ z \in \Sigma_0: \epsilon_n^{1/2} \leq r_n(z) \leq 4\epsilon_n^{1/2} }$,
  \begin{align*}
    \phi_n^*g_0
    &=
      \rd r\otimes\rd r + r^2\theta\otimes\theta
  \end{align*}
  and, therefore,
  \begin{equation*}
    g_\tau^\circ
    =
    \paren*{F_{\epsilon_n}\circ r_n} \cdot g_0
    \qwithq
    F_{\epsilon_n}(r)
    \coloneq
    1+
    \chi\paren*{\frac{r}{2\epsilon_n^{1/2}}}\cdot \paren[\big]{\epsilon_n r^{-2}-1}.
  \end{equation*}
  This implies the assertion because $c^{-1} < F_{\epsilon_n}(r) < c$ for $\epsilon_n^{1/2} \leq r \leq 4\epsilon_n^{1/2}$.
\end{proof}

Henceforth,
the $L^p$ and $W^{1,p}$ norms of all sections and differential forms on $\Sigma_\tau$ are understood with respect to the metric $g_\tau$.
The above proposition will often be implicitly used to bound these norms by estimating various expressions with respect to $g_0$ over the corresponding region in $\Sigma_0$.

\subsection{Approximate smoothing of nodal \texorpdfstring{$J$}{J}--holomorphic maps}

Throughout the next four sections,
let $(X,J,h)$ be an almost Hermitian manifold,
let $c_u > 0$,
let $u_0\co (\Sigma_0,j_0,\nu_0) \to (X,J)$ be a nodal map,
and let $\tau$ be a smoothing parameter.
Furthermore, choose $g_0$ and $R_0$ as at the beginning of \autoref{Sec_SmoothingNodalRiemannSurfaces}.

\begin{definition}
  \label{Def_ExpInverse}
  For every point $x \in X$,
  denote by $\tilde U_x \subset T_x X$ the segment/injectivity domain and set $U_x \coloneq \exp_x(\tilde U_x)$ and $\frac12 U_x \coloneq \exp_x(\frac12\tilde U_x)$.
  The map $\exp_x \co \tilde U_x \to U_x$ is a diffeomorphism and its inverse is denoted by $\exp_x^{-1}\co U_x \to \tilde U_x$.
\end{definition}

Furthermore, we assume the following.

\begin{hypothesis}
  \label{Hyp_UTau}
  The map $u_0$ and $R_0 > 0$ satisfy
  \begin{equation*}
    \Abs{u_0}_{C^2} \leq c_u \qandq   
    u_0(B_{4R_0}(n)) \subset U_{u_0(n)}
    \qforeveryq n \in S.
  \end{equation*}
\end{hypothesis}

\begin{convention}
  Henceforth,
  constants may depend on $p$, $(\Sigma_0,j_0,\nu_0)$, $(X,J,h)$, $c_u$, and $R_0$, 
  but not on $\tau$.
\end{convention}

\begin{definition}
  \label{Def_TildeUTau}
  For $n \in S$ define $\chi_\tau^n\co \Sigma_\tau \to [0,1]$ by
  \begin{equation*}
    \chi_\tau^n(z)
    \coloneq
    \chi\paren*{\frac{r_n(z)}{R_0}}.
  \end{equation*}
  Let $\iota_\tau$ be the map defined in \autoref{Def_IotaTau}.
  Define $\tilde u_\tau^\circ\co \Sigma_\tau^\circ \to X$ by
  \begin{equation*}
    \tilde u_\tau^\circ(z)
    \coloneq
    \begin{cases}
      \exp_{u_0(n)}\paren[\big]{{\exp_{u_0(n)}^{-1}}\circ u_0(z) + \chi_\tau^n(z)\cdot {\exp_{u_0(n)}^{-1}}\circ u_0(\iota_\tau(z))}
      & \text{if } r_n(z) \leq 2R_0 \\
      u_0(z)
      & \text{otherwise}.
    \end{cases}
  \end{equation*}
  Since $u_0(\nu_0(n)) = u_0(n)$, the restriction of $\tilde u_\tau^\circ$ to
  \begin{equation*}
    \set[\big]{ z \in \Sigma_\tau^\circ : r_n(z) \leq R \text{ for some } n \in S } 
  \end{equation*}
  is invariant under $\iota_\tau$.
  Therefore, $\tilde u_\tau^\circ$ descends to a smooth map
  \begin{equation*}
    \tilde u_\tau\co \Sigma_\tau \to X.
  \end{equation*}
  This map is called the \defined{approximate smoothing} of $u$ associated with $\tau$.  
\end{definition}

\begin{remark}
  This construction differs from that found, for example, in \cites[Section 10.2]{McDuff2012}[Section B.3]{Pardon2016} in which the approximate smoothing is constant in the middle of the neck region.
  The above construction is very similar to that in \cite[Section 2.1]{Gournay2009}.
  It leads to a smaller error term and significantly simplifies the discussions in \autoref{Sec_LeadingOrderTermOfObstructionGhostComponents}.
  Morally, this section analyzes how the interaction between the different components of $u_0$ affects whether $u_0$ can be smoothed or not.
  The constructions in \cites[Section 10.2]{McDuff2012}[Section B.3]{Pardon2016} make it difficult to see such interactions.
\end{remark}

\begin{prop}
  \label{Prop_ErrorEstimate}
  For every nodal map $u_0 \co \Sigma_0 \to X$ (not necessarily $J$--holomorphic)  the map $\tilde u_\tau$ satisfies
  \begin{equation}
    \Abs{\delbar_J(\tilde u_\tau,j_\tau)}_{L^p}
    \leq
    c\Abs{\delbar_J(u_0,j_0)}_{L^p}
    +
    c\epsilon^{\frac12+\frac1p},
  \end{equation}
  with $\epsilon=\max\set{ \epsilon_n : n\in S}$ as in \autoref{Def_SmoothingParameter}. 
\end{prop}

For the proof of this result and for future reference let us observe that for every $k \geq 1$
\begin{equation}
  \label{Eq_AlmostDivergentIntegralEstimate}
  \paren*{
    \int_{\epsilon_n^{1/2} \leq r_n \leq 2R_0}
    r_n^{-kp}
  }^{\frac1p}
  \leq
  \paren*{\frac{2\pi}{kp-2}}^{\frac1p}
  \epsilon_n^{\frac1p-\frac{k}2}.
\end{equation}

\begin{proof}[Proof of \autoref{Prop_ErrorEstimate}]
  The map $\tilde u_\tau^\circ$ agrees with $u_0$ in the region where $r_n \geq 2R_0$ for every $n \in S$.
  Therefore, it suffices to consider the regions where $r_n \leq 2R_0$ for some $n \in S$.
  To simplify notation, identify $U_x$ with $\tilde U_x$ via $\exp_x$ for $x \coloneq u(n)$.
  Here $U_x$ and $\tilde U_x$ are as in \autoref{Def_ExpInverse}.
  Having made this identification,
  in such a region,
  $\tilde u_\tau^\circ$ is given by
  \begin{equation*}
    \tilde u_\tau^\circ = u_0 + \chi_\tau^n \cdot u_0\circ \iota_\tau.
  \end{equation*}
  Note that addition is well-defined since, with respect to the above identification, $u_0$ takes values in an sufficiently small open subset of $T_xX$. 
  Therefore,
  \begin{align*}
    \delbar_J (\tilde u_\tau^\circ,j_\tau)
    &=
      \frac12\paren*{\rd \tilde u_\tau^\circ + J(\tilde u_\tau^\circ)\circ \rd \tilde u_\tau^\circ \circ j} \\
    &=
      \underbrace{\delbar_J (u_0,j_0) + \chi_\tau^n \cdot \delbar_J (u_0\circ\iota_\tau,j_0)}_{\eqcolon \rI} \\
    &\quad
      + 
      \underbrace{\frac12 \paren*{J(\tilde u_\tau^\circ)-J(u_0)}\circ \rd u_0\circ j_0}_{\eqcolon \rII_1}
      +
      \underbrace{\chi_\tau^n\cdot \frac12 \paren*{J(\tilde u_\tau^\circ)-J(u_0\circ \iota_\tau)}\circ \rd (u_0\circ\iota_\tau)\circ j_0}_{\eqcolon \rII_2} \\
    &\quad
      + \underbrace{\delbar \chi_\tau^n \cdot u_0\circ\iota_\tau}_{\eqcolon \rIII}.
  \end{align*}
  
  The $L^p$ norm of the term $\rI$ is controlled by the $L^p$ norm of $\delbar_J(u_0,j_0)$ over the regions of $\Sigma_0$ where $\epsilon_n/2R_0 \leq r_n \leq 2R_0$ for some $n \in S$.
  By Taylor expansion
  \begin{align*}
    \abs{\rII_1}
    \leq
    c \Abs{J}_{C^1}\cdot\abs{u_0\circ \iota_\tau}\cdot\abs{\rd u_0}
    \leq
    c \epsilon_n/r_n
  \end{align*}
  and
  \begin{align*}
    \abs{\rII_2}
    &\leq
      c \Abs{J}_{C^1}\cdot\paren*{\abs{u_0} + (1-\chi_\tau^n)\cdot\abs{u_0\circ \iota}}\cdot\abs{\rd(u_0\circ\iota_\tau)} \\
    &\leq
      c\cdot\paren*{r_n + (1-\chi_\tau^n)\epsilon_n/r_n}\cdot \epsilon_n/r_n^2
      \leq
      c \epsilon_n/r_n.
  \end{align*}
  On $\Sigma_\tau^\circ$, by definition, $r_n \geq \epsilon_n^{1/2}$.
  Therefore and by \autoref{Eq_AlmostDivergentIntegralEstimate},
  \begin{equation*}
    \Abs{\rII_1+\rII_2}_{L^p} \leq c\epsilon_n^{\frac12+\frac1p}.
  \end{equation*}
  
  The term $\rIII$ is supported in the region where $R_0 \leq r_n \leq 2R_0$, whose area is independent of $\epsilon_n$, and satisfies
  \begin{equation*}
    \abs{\rIII} \leq c \abs{\rd \chi_\tau^n } \abs{u_0\circ\iota_\tau} \leq c r_n \cdot (\epsilon_n/r_n) = c\epsilon_n, 
  \end{equation*}
  where in the last inequality we use that, with respect to the identifications introduced earlier, $u_0(0)= 0$ so $\abs{u_0(z)} \leq c\abs{z}$ in a neighborhood of $0$. 
 Therefore,
    \begin{equation*}
    \Abs{\rIII}_{L^p} \leq c \epsilon_n. \qedhere
  \end{equation*}
\end{proof}

\subsection{Fusing nodal vector fields}

The purpose of this section is to introduce the fusing operator.
This operator assigns to every vector field $\xi$ along $u_0$ a vector field $\fuse_\tau(\xi)$ along $\tilde u_\tau$, which agrees with $\xi$ outside the gluing region. 
The construction of the fusing operator makes use of the following local trivializations of $TX$.

\begin{definition}
  \label{Def_Phi}
  For every $x \in X$ and $y \in U_x$ define an isomorphism $\Phi_y = \Phi_y^x \co T_xX \to T_yX$ by
  \begin{equation*}
    \Phi_y^x(v) \coloneq \rd_{\exp_x^{-1}(y)} \exp_x(v)
  \end{equation*}
  As $y$ varies in $U_x$,
  these maps define a trivialization $\Phi = \Phi^x\co U_x \times T_xX \to TX|_{U_x}$.
\end{definition}

The definition of the fusing operator uses a different cutoff function than the definition of the approximate smoothing $u_\tau$.

\begin{definition}
  \label{Def_FuseCutoffFunction}
  For $n \in S$ define $\rho_\tau^n\co \Sigma_\tau \to [0,1]$ by
  \begin{equation*}
    \rho_\tau^n(z)
    \coloneq
    \chi\paren*{\frac{r_n(z)}{\epsilon_n^{1/4}}}.
  \end{equation*}
  Here $\chi$ is the cutoff function  \autoref{Eq_CutoffFunction}; that is: $\rho_\tau^n = 1$ in the region where $r_n \leq \epsilon_n^{1/4}$ and $\rho_\tau^n = 0$ in the region where $r_n \geq 2\epsilon_n^{1/4}$.
\end{definition}

\begin{definition}
  \label{Def_Fuse}
  Define $\fuse_\tau^\circ \co W^{1,p}\Gamma(\Sigma_0,\nu_0;u_0^*TX) \to W^{1,p}\Gamma(\Sigma_\tau^\circ,\nu_0;(\tilde u_\tau^\circ)^*TX)$ by
  \begin{equation*}
    \fuse_\tau^\circ(\xi)(z)
    \coloneq
    \begin{cases}
      \Phi_{\tilde u_\tau([z])}\paren*{
        \Phi_{u_0(z)}^{-1}\xi(z) + \rho_\tau^n(z)\cdot \paren{\Phi_{u_0\circ\iota_\tau (z)}^{-1} \xi(\iota_\tau(z)) - \Phi_{u_0}^{-1}\xi(n)}
      }
      & \text{if } r_{n}(z) \leq 2\epsilon_n^{1/4} \\
      \xi(z)
      & \text{otherwise}.
    \end{cases}
  \end{equation*}  
  In the above formula, $\Phi = \Phi^x$ with $x = u_0(n)$.
  For every $n \in S$ the restriction of $\fuse_\tau^\circ(\xi)$ to
  \begin{equation*}
    \set*{ z\in\Sigma_\tau^\circ : r_n(z) \leq \epsilon_n^{1/4} \text{ for some } n \in S } 
  \end{equation*}
  is invariant under $\iota_\tau$.
  Therefore,
  $\fuse_\tau^\circ$ induces a map
  \begin{equation*}
    \fuse_\tau \co W^{1,p}\Gamma(\Sigma_0,\nu_0;u_0^*TX) \to W^{1,p}\Gamma(\Sigma_\tau,\nu_\tau;\tilde u_\tau^*TX). \qedhere
  \end{equation*}
\end{definition}

The following is a counterpart of \autoref{Prop_ErrorEstimate}.

\begin{prop}
  \label{Prop_DFuse}
  For every $\xi \in W^{1,p}\Gamma(\Sigma_0,\nu_0;u_0^*TX)$
  \begin{equation*}
    \Abs{\fd_{\tilde u_\tau}\fuse_\tau(\xi)}_{L^p}
    \leq
    c\Abs{\fd_{u_0}\xi}_{L^p} +
    c
    \sum_{n\in S} \paren*{\epsilon_n^{\frac{1}{2p}}+\epsilon_n^{\frac{1}{2}-\frac1p}}    
    \Abs{\xi}_{W^{1,p}}.
  \end{equation*}
\end{prop}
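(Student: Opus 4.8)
The plan is to run the argument of the proof of \autoref{Prop_ErrorEstimate}, with $\fd_{u_0}\xi$ playing the role of $\delbar_J(u_0,j_0)$. On the part of $\Sigma_\tau$ where $r_n \geq 2R_0$ for every $n \in S$ one has $\fuse_\tau(\xi) = \xi$, $\tilde u_\tau = u_0$, and $j_\tau = j_0$, hence $\fd_{\tilde u_\tau}\fuse_\tau(\xi) = \fd_{u_0}\xi$ pointwise; this accounts for the term $c\Abs{\fd_{u_0}\xi}_{L^p}$. It remains to bound, for each $n \in S$ separately, the $L^p$ norm of $\fd_{\tilde u_\tau}\fuse_\tau(\xi)$ over the neck $\set{r_n \leq 2R_0}$ by $c\Abs{\fd_{u_0}\xi}_{L^p} + c\paren*{\epsilon_n^{1/p} + \epsilon_n^{1-2/p}}\Abs{\xi}_{W^{1,p}}$.

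Fix $n \in S$ and trivialize $TX$ over $U_{u_0(n)}$ by $\Phi = \Phi^{u_0(n)}$ as in \autoref{Def_Phi}, normalizing $u_0(n) = 0$. In this trivialization $\tilde u_\tau$ reads $u_0 + \chi_\tau^n\cdot(u_0\circ\iota_\tau)$ and $\fuse_\tau(\xi)$ reads $\Phi_{\tilde u_\tau}\paren*{\eta_1 + \chi_\tau^n\eta_2}$ with $\eta_1 \coloneq \Phi_{u_0}^{-1}\xi$ and $\eta_2 \coloneq \eta_1\circ\iota_\tau - \xi(\nu_0(n))$; moreover $\fd_{\tilde u_\tau}$, conjugated by $\Phi_{\tilde u_\tau}$, has the form $\eta \mapsto \tfrac12\paren*{\rd\eta + \hat J(\tilde u_\tau)\,\rd\eta\circ j_0} + b(\tilde u_\tau,\rd\tilde u_\tau)\eta$, where $\hat J$ and $b$ depend on their arguments with $\tau$-independent $C^1$ bounds and $\abs{b(v,w)} \leq c\paren*{\abs v + \abs w}$, and $\fd_{u_0}\xi$ reads $\tfrac12\paren*{\rd\eta_1 + \hat J(u_0)\,\rd\eta_1\circ j_0} + b(u_0,\rd u_0)\eta_1$. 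I would expand $\fd_{\tilde u_\tau}\fuse_\tau(\xi) = \fd_{\tilde u_\tau}\Phi_{\tilde u_\tau}(\eta_1) + \fd_{\tilde u_\tau}\Phi_{\tilde u_\tau}(\chi_\tau^n\eta_2)$ and sort the resulting terms into \emph{leading} and \emph{error} terms. There are exactly two leading terms: $\fd_{u_0}\xi$ (from $\eta_1$, after replacing $\hat J(\tilde u_\tau), b(\tilde u_\tau,\rd\tilde u_\tau)$ by $\hat J(u_0), b(u_0,\rd u_0)$), and $\chi_\tau^n\cdot\iota_\tau^*(\fd_{u_0}\xi)$ (from the summand $\chi_\tau^n\,\tfrac12(\rd(\eta_1\circ\iota_\tau) + \hat J(u_0\circ\iota_\tau)\,\rd(\eta_1\circ\iota_\tau)\circ j_0)$, using that $\iota_\tau$ is biholomorphic, so it intertwines $\delbar$, while $\rd\iota_\tau$ commutes with $j_0$). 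The second leading term is harmless: since $\iota_\tau$ is conformal for $g_0$ with conformal factor $\abs{\rd\iota_\tau} = \epsilon_n/r_n^2 \leq 1$ on the neck, maps the neck into a small annulus around $\nu_0(n)$, and $g_\tau$ is uniformly comparable to $g_0$ by \autoref{Prop_MetricsAreUniformlyComparable}, its $L^p(g_\tau)$ norm over the neck is $\leq c\Abs{\fd_{u_0}\xi}_{L^p(\Sigma_0,g_0)}$.

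Every remaining term is an error of one of two types. The first type comes from replacing the coefficients $\hat J(u_0), b(u_0,\rd u_0)$ (respectively $\hat J(u_0\circ\iota_\tau)$) by $\hat J(\tilde u_\tau), b(\tilde u_\tau,\rd\tilde u_\tau)$, and from $\rd$ landing on the cut-off $\chi_\tau^n$ in front of $\eta_1$. Here I would use---as for $\rII_1, \rII_2$ in \autoref{Prop_ErrorEstimate}---the Taylor bounds $\abs{\tilde u_\tau - u_0} \leq c\,\epsilon_n/r_n$, $\abs{\rd\tilde u_\tau - \rd u_0} \leq c\,\epsilon_n/r_n^2$, and $\abs{\rd\iota_\tau} \leq c\,\epsilon_n/r_n^2$, together with $\abs{\rd\eta_1} \leq c(\abs{\nabla\xi} + \abs\xi)$ and Morrey's embedding $W^{1,p} \into C^0$; invoking \autoref{Eq_AlmostDivergentIntegralEstimate} and the bound $r_n \geq \epsilon_n^{1/2}$ on $\Sigma_\tau^\circ$ bounds these by $c\,\epsilon_n^{1/p}\Abs{\xi}_{W^{1,p}}$, \emph{except} for the terms containing the pulled-back factor $\nabla\xi\circ\iota_\tau$: for those the substitution $w = \iota_\tau(z)$ (Jacobian $\abs{\rd\iota_\tau}^2$, image a small annulus about $\nu_0(n)$) turns the relevant integral into a constant times $\epsilon_n^2\int r_{\nu_0(n)}(w)^{p-4}\abs{\nabla\xi(w)}^p\,\rd A$, which is $\leq c\,\epsilon_n^{p/2}\Abs{\xi}_{W^{1,p}}^p$ for $p \geq 4$ and $\leq c\,\epsilon_n^{p-2}\Abs{\xi}_{W^{1,p}}^p$ for $p < 4$---this is the source of the two exponents. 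The second type of error is $\tfrac12\del\chi_\tau^n\cdot\eta_2$ (with its $\hat J$-companion) together with $\chi_\tau^n\,b(\tilde u_\tau,\rd\tilde u_\tau)\eta_2$. Because of the subtraction of $\xi(\nu_0(n))$ and because $\iota_\tau$ maps the neck into a ball of radius $\leq c\,\epsilon_n/r_n$ around $\nu_0(n)$, Morrey's embedding $W^{1,p} \into C^{0,1-2/p}$ gives $\abs{\eta_2(z)} \leq c\,(\epsilon_n/r_n)^{1-2/p}\Abs{\xi}_{W^{1,p}}$; since $\del\chi_\tau^n$ is bounded and supported where $r_n \sim R_0$, the term $\del\chi_\tau^n\cdot\eta_2$ is $\leq c\,\epsilon_n^{1-2/p}\Abs{\xi}_{W^{1,p}}$ in $L^p$, and $\chi_\tau^n\,b(\tilde u_\tau,\rd\tilde u_\tau)\eta_2$ follows from this pointwise bound, $\abs{b(\tilde u_\tau,\rd\tilde u_\tau)} \leq c(1 + \epsilon_n/r_n^2)$, and the same neck integrals.

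The step I expect to be the main obstacle is the bookkeeping of the expansion of $\fd_{\tilde u_\tau}\fuse_\tau(\xi)$: one must organize it so that \emph{both} leading pieces---the genuine $\fd_{u_0}\xi$ and its pull-back $\iota_\tau^*(\fd_{u_0}\xi)$ across the neck---are recognized as such and absorbed into $c\Abs{\fd_{u_0}\xi}_{L^p(\Sigma_0,g_0)}$ rather than counted as errors. This rests on the holomorphicity of $\iota_\tau$ (so that it intertwines the two $\delbar$-operators modulo their zeroth-order coefficients) and on \autoref{Prop_MetricsAreUniformlyComparable} (so that $L^p$ norms may be passed between $g_\tau$ and $g_0$, and between the neck and its $\iota_\tau$-image). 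Once the leading terms are removed, every error term reduces---as in \autoref{Prop_ErrorEstimate}---to the weighted integral \autoref{Eq_AlmostDivergentIntegralEstimate}; the only genuinely new inputs are Morrey's embedding, used to control $\eta_2$ in $C^0$ and the pulled-back quantities in $C^{0,1-2/p}$, and the conformal change of variables $w = \iota_\tau(z)$.
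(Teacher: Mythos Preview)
Your proposal is correct and follows essentially the same route as the paper. Both arguments localize to a single neck, trivialize via $\Phi^{u_0(n)}$, isolate the two leading pieces $\fd_{u_0}\xi$ and $\chi_\tau^n\,\iota_\tau^*(\fd_{u_0}\xi)$, and control the remaining errors with the Taylor bounds on $\tilde u_\tau - u_0$, Morrey's embedding for the $\delbar\chi_\tau^n\cdot\eta_2$ term, and the weighted integral \eqref{Eq_AlmostDivergentIntegralEstimate}.

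The differences are organizational rather than substantive. The paper isolates the coefficient-replacement step as a standalone lemma (\autoref{Prop_CompareDU1WithDU2}) and the Morrey step as \autoref{Prop_CutOffEstimate}, then applies them to the four pieces $\rI,\rII,\rIII_1,\rIII_2$ of the obvious decomposition of $\fd_{\tilde u_\tau^\circ}\fuse_\tau^\circ(\xi)$. You instead expand $\fd_{\tilde u_\tau}$ directly as $\tfrac12(\rd + \hat J(\tilde u_\tau)\rd\cdot j_0) + b(\tilde u_\tau,\rd\tilde u_\tau)$ and sort terms by hand. Your explicit change-of-variables treatment of the terms containing $\nabla\xi\circ\iota_\tau$, with the case split $p\gtrless 4$, is more detailed than the paper's rather terse handling of $\rIII_1$ via \autoref{Prop_CompareDU1WithDU2}; in fact your computation makes transparent why both exponents $\epsilon_n^{1/p}$ and $\epsilon_n^{1-2/p}$ are genuinely needed.

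One small slip: you mention an error term ``from $\rd$ landing on the cut-off $\chi_\tau^n$ in front of $\eta_1$'', but there is no such cut-off in front of $\eta_1$ in your own expansion $\eta_1 + \chi_\tau^n\eta_2$; this term simply does not occur.
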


The proof requires the following results as a preparation.

\begin{prop}
  \label{Prop_CutOffEstimate}
  For every $n \in S$ and $\xi \in W^{1,p}\Gamma(\Sigma_0,\nu;u_0^*TX)$
  \begin{equation*}
    \Abs{\rd \rho_\tau^n \cdot (\xi\circ \iota_\tau - \xi(n))}_{L^p}
    \leq
    c\epsilon_n^{\frac12 - \frac1p}\Abs{\xi}_{W^{1,p}}.
  \end{equation*}
\end{prop}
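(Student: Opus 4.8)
I would prove this by an elementary localization estimate; the whole point is that $\rd\chi_\tau^n$ is supported in an annulus around $n$ that is \emph{independent of $\tau$}, that $\iota_\tau$ contracts this annulus into a ball of radius $O(\epsilon_n)$ around $\nu(n)$, and that $\xi$, being $W^{1,p}$ with $p>2$, is Hölder continuous of exponent $1-2/p$ by Morrey's embedding. (When $\epsilon_n=0$ both sides of the claimed inequality vanish, so we may assume $\epsilon_n>0$.) First I would record these facts. Since $\chi_\tau^n(z)=\chi(r_n(z)/R_0)$, the form $\rd\chi_\tau^n$ is supported in the fixed annulus $A_n\coloneq\set{z : R_0\le r_n(z)\le 2R_0}$ around $n$ — a subset of $\Sigma_0$ whose area is bounded independently of $\tau$, and on which $g_\tau$ is comparable to $g_0$ by \autoref{Prop_MetricsAreUniformlyComparable} — so there $\abs{\rd\chi_\tau^n}\le c\Abs{\chi'}_{C^0}/R_0=c$. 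Moreover the relation $\phi_{\nu(n)}^{-1}\circ\iota_\tau\circ\phi_n(v)\otimes v=\tau_n$ defining $\iota_\tau$ extends over $A_n$ and gives, for $z\in A_n$,
\begin{equation*}
  r_{\nu(n)}\paren*{\iota_\tau(z)}=\frac{\epsilon_n}{r_n(z)}\le\frac{\epsilon_n}{R_0}\le c\,\epsilon_n ,
\end{equation*}
so that $\iota_\tau(A_n)\subset B_{c\epsilon_n}(\nu(n))$.

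Next I would estimate the integrand pointwise on $A_n$. Fixing a smooth local trivialization of $u_0^*TX$ over $B_{4R_0}(\nu(n))$, the section $\xi$ restricted to that ball becomes a $W^{1,p}$ map into a fixed Euclidean space, so Morrey's embedding on the $2$--disc gives
\begin{equation*}
  \abs{\xi(y)-\xi(\nu(n))}\le c\,d(y,\nu(n))^{1-2/p}\,\Abs{\xi}_{W^{1,p}}
  \qforeveryq y\in B_{4R_0}(\nu(n)),
\end{equation*}
with $c$ depending only on the data permitted by the Convention. Since $\xi\in W^{1,p}\Gamma(\Sigma_0,\nu_0;u_0^*TX)$ forces $\xi(n)=\xi(\nu(n))$, combining this with $\iota_\tau(A_n)\subset B_{c\epsilon_n}(\nu(n))$ yields
\begin{equation*}
  \abs{\xi\circ\iota_\tau(z)-\xi(n)}\le c\,\epsilon_n^{1-2/p}\,\Abs{\xi}_{W^{1,p}}\qforeveryq z\in A_n ,
\end{equation*}
where $\xi\circ\iota_\tau-\xi(n)$ is understood, as in the definition of $\fuse_\tau$, via the trivialization $\Phi$ of \autoref{Def_Phi} over $U_{u_0(n)}$ — a fibrewise linear isomorphism that is bi-Lipschitz with constants independent of $\tau$, hence harmless here.

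Finally I would integrate. Since $\rd\chi_\tau^n$ is supported in $A_n$, $\abs{\rd\chi_\tau^n}\le c$, and $\vol_{g_\tau}(A_n)\le c$,
\begin{equation*}
  \Abs{\rd\chi_\tau^n\cdot(\xi\circ\iota_\tau-\xi(n))}_{L^p}^p
  \le c^p\int_{A_n}\abs{\xi\circ\iota_\tau-\xi(n)}^p\,\vol_{g_\tau}
  \le c\,\epsilon_n^{\,p-2}\,\Abs{\xi}_{W^{1,p}}^p ,
\end{equation*}
and taking $p$--th roots proves the claim since $p-2=p(1-2/p)$. There is no genuine analytic obstacle; the only thing to be careful about is that each constant above — the $L^\infty$ bound on $\rd\chi_\tau^n$, the area of $A_n$, the Morrey constant over $B_{4R_0}(\nu(n))$, and the bi-Lipschitz constants of $\Phi$ — is manifestly independent of $\tau$, which is exactly what the Convention on the admissible dependence of constants provides.
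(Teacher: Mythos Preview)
Your proof is correct and follows essentially the same approach as the paper: the paper's proof also observes that $\rd\chi_\tau^n$ is supported on $\set{R_0\le r_n\le 2R_0}$ with $\abs{\rd\chi_\tau^n}\le c$, and then applies Morrey's embedding to get $\abs{\xi\circ\iota_\tau(z)-\xi(n)}\le c(\epsilon_n/r_n(z))^{1-2/p}\Abs{\xi}_{W^{1,p}}$. You spell out a few more details (the nodal identity $\xi(n)=\xi(\nu(n))$, the trivialization $\Phi$, the metric comparison), but the argument is the same.
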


\begin{proof}
  Morrey's embedding theorem asserts that $W^{1,p} \hookrightarrow C^{0,1-2/p}$.
  Hence,
  \begin{align*}
    \abs{\xi\circ \iota_\tau(z) - \xi(n)}
    \leq
    c (\epsilon_n/r_n(z))^{1-2/p}\Abs{\xi}_{W^{1,p}}.
  \end{align*}
   The term $\rd\rho_\tau^n$ is supported in the annulus $P_\tau^n = \set{\epsilon_n^{1/4} \leq r_n \leq 2\epsilon_n^{1/4}}$ 
   and satisfies 
  \begin{equation*}
   \abs{\rd\rho_\tau^n} \leq c\epsilon_n^{-1/4}.
   \end{equation*}
   Since the area of $P_\tau^n$ is proportional to $\epsilon_n^{1/2}$,  \begin{equation*}
      \Abs{\rd \rho_\tau^n \cdot (\xi\circ \iota_\tau - \xi(n))}_{L^p} \leq c \epsilon_n^{-1/4} \epsilon_n^{3/4 (1-2/p)} \epsilon_n^{1/2p} \Abs{\xi}_{W^{1,p}} = c\epsilon_n^{\frac12-\frac1p}  \Abs{\xi}_{W^{1,p}}. \qedhere
    \end{equation*}
\end{proof}

\begin{prop}
  \label{Prop_CompareDU1WithDU2}
  Let $U \subset \Sigma_0$ be an open subset.
  Let $u_1,u_2\co U \to U_x$ and set
  \begin{equation*}
    v \coloneq {\exp_x^{-1}}\circ u_2 - {\exp_x^{-1}}\circ u_1.
  \end{equation*}
  For every $\xi \in C^\infty(U,T_xX)$
  \begin{equation*}
    \abs*{\paren*{\Phi_{u_1}\circ\fd_{u_1}\circ\Phi_{u_1}^{-1} - \Phi_{u_2}\circ \fd_{u_2}\circ\Phi_{u_2}^{-1}}\xi}
    \leq
    c\paren*{\abs{v}\abs{\rd\xi}+\abs{\rd v}\abs{\xi}+\abs{\rd u_1}\abs{\xi}\abs{v}}.
  \end{equation*}
\end{prop}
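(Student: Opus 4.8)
The estimate is pointwise in $z \in U$, so it suffices to fix $x \in X$ and work in the trivialization $\Phi = \Phi^x$ of $TX|_{U_x}$ from \autoref{Def_Phi}. Set $w_i \coloneq \exp_x^{-1}\circ u_i$, so that $w_2 - w_1 = v$ and, by \autoref{Def_ExpInverse}, $w_1,w_2$ take values in the bounded region $\tilde U_x$; in particular $\abs v$ is bounded by a constant depending only on $(X,g,J)$. The plan is to write each operator $\Phi_{u_i}\circ\fd_{u_i}\circ\Phi_{u_i}^{-1}$ explicitly in this trivialization, isolate its leading part, and Taylor-expand the coefficients in $w_i$.

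First, unwinding \autoref{Def_JHolomorphicLinearization}: pulling back the connection $1$--form of $\nabla$ through $\Phi$ yields an $\End(T_xX)$--valued $1$--form on $U_x$, while $J$ and $\nabla J$ become smooth bundle maps on $U_x$; writing $\Gamma_w$, $\widehat{J}_w$, $\widehat{\nabla J}_w$ for these evaluated at $\exp_x(w)$, and using $\Phi_{u_i}^{-1}\circ\rd u_i = \rd w_i$, one finds that for $\xi \in C^\infty(U,T_xX)$
\begin{equation*}
  \begin{aligned}
    \Phi_{u_i}\fd_{u_i}\Phi_{u_i}^{-1}\xi
    ={}&
    \underbrace{\tfrac12\paren*{\rd\xi + \widehat{J}_{w_i}\circ\rd\xi\circ j_0}}_{\eqcolon\, L(w_i)\xi} \\
    &{}+
    \underbrace{\tfrac12\paren*{\Gamma_{w_i}(\rd u_i)\xi + \widehat{J}_{w_i}\,\Gamma_{w_i}(\rd u_i)\xi\circ j_0 + \widehat{\nabla J}_{w_i}\bigl(\xi,\,\rd w_i\circ j_0\bigr)}}_{\eqcolon\, N(w_i,\rd u_i)\xi},
  \end{aligned}
\end{equation*}
where we regard $\Phi_{u_i}\fd_{u_i}\Phi_{u_i}^{-1}$ as a map $C^\infty(U,T_xX) \to \Omega^1(U,T_xX)$ and compute pointwise norms using $g_0$ and $g$. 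The decisive structural point is that the first--order operator $L(w_i)$ has coefficients depending on $u_i$ only through $w_i$, whereas $N(w_i,\rd u_i)$ is tensorial in $\xi$, linear in $\rd u_i$ (recall $\rd w_i = \Phi_{u_i}^{-1}\rd u_i$), with coefficients smooth in $w_i$.

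Then, subtracting and splitting,
\begin{equation*}
  \Phi_{u_1}\fd_{u_1}\Phi_{u_1}^{-1} - \Phi_{u_2}\fd_{u_2}\Phi_{u_2}^{-1}
  =
  \bigl(L(w_1) - L(w_2)\bigr)
  +
  N(w_1,\,\rd u_1 - \rd u_2)
  +
  \bigl(N(w_1,\cdot) - N(w_2,\cdot)\bigr)(\rd u_2).
\end{equation*}
For the first term, smoothness of $\widehat{J}\circ\exp_x$ on $\tilde U_x$ gives $\abs{\widehat{J}_{w_1} - \widehat{J}_{w_2}} \leq c\abs v$, hence $\abs{(L(w_1)-L(w_2))\xi} \leq c\abs v\abs{\rd\xi}$. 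For the last term, smoothness of the coefficients of $N$ gives $\abs{(N(w_1,\cdot)-N(w_2,\cdot))(\rd u_2)\xi} \leq c\abs v\abs{\rd u_2}\abs\xi$, and for the middle term $\abs{N(w_1,\rd u_1 - \rd u_2)\xi} \leq c\abs{\rd u_1 - \rd u_2}\abs\xi$. To finish, differentiate $u_i = \exp_x(w_i)$: since $\rd u_i = (\rd\exp_x)_{w_i}\rd w_i$ and $(\rd\exp_x)$ is smooth with bounded inverse on $\tilde U_x$, one gets $\abs{\rd u_2 - \rd u_1} \leq c\abs{\rd v} + c\abs v\abs{\rd u_1}$, and therefore also $\abs{\rd u_2} \leq c\abs{\rd u_1} + c\abs{\rd v}$ (here $\abs v$ bounded absorbs $\abs v\abs{\rd u_1}$ into $\abs{\rd u_1}$). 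Substituting these bounds and once more using boundedness of $\abs v$ to absorb the leftover $\abs v\abs{\rd v}\abs\xi$ and $\abs v^2\abs{\rd u_1}\abs\xi$ contributions into $\abs{\rd v}\abs\xi$ and $\abs v\abs{\rd u_1}\abs\xi$ respectively yields exactly $c\paren*{\abs v\abs{\rd\xi} + \abs{\rd v}\abs\xi + \abs{\rd u_1}\abs\xi\abs v}$.

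The argument is essentially bookkeeping and Taylor expansion; the only genuinely important points are (i) verifying that the first--order part $L(w)$ of $\Phi_u\fd_u\Phi_u^{-1}$ depends on $u$ through $w = \exp_x^{-1}\circ u$ alone and not through $\rd u$ --- this is what makes $L(w_1)-L(w_2)$ cost the factor $\abs v$ demanded by the statement rather than a crude $\abs{w_1}+\abs{w_2}$ --- and (ii) keeping all constants uniform, which is precisely what the Convention in \autoref{Sec_SmoothingNodalMaps} together with the boundedness of $\tilde U_x$ provides.
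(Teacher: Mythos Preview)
Your argument is correct and follows essentially the same approach as the paper's proof: both work in the trivialization $\Phi^x$ and bound the difference term by term via the Lipschitz dependence of $J$ and $\nabla J$ on the base point. The paper's version is simply more compact because it identifies $U_x$ with $\tilde U_x$ via $\exp_x$ at the outset (so $\Phi$ becomes the identity, $v = u_2 - u_1$, and no separate $w_i$ are needed), which collapses your $L/N$ decomposition and the subsequent conversions between $\rd u_i$ and $\rd w_i$ into a single three-line subtraction of the formula in \autoref{Def_JHolomorphicLinearization}.
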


\begin{proof}
  To simplify notation, identify $U_x$ with $\tilde U_x$ via $\exp_x$.
  Having made this identification, $\Phi$ becomes the identity map and $v = u_2 - u_1$.
  Therefore,
  \begin{align*}
    \fd_{u_1}\xi - \fd_{u_2}\xi
    &=      
      \frac12\paren*{J(u_1) - J(u_2)} \circ\nabla \xi \circ j \\
    &\quad
      +
      \frac12\paren*{(\nabla_\xi J)(u_1) - (\nabla_\xi J)(u_2)} \circ \rd u_1 \circ j\\
    &\quad
      +      
      \frac12(\nabla_\xi J)(u_2)\circ(\rd u_1-\rd u_2)\circ j.
  \end{align*}
  This implies the asserted inequality.
\end{proof}

\begin{proof}[Proof of \autoref{Prop_DFuse}]
  Outside the regions where $r_n \leq 2R_0$ for some $n \in S$ the operators $\fd_{u_0}$ and $\fd_{\tilde u_\tau}$ agree.
  Within such a region and with the usual identifications
  \begin{equation*}
    \fd_{\tilde u_\tau^\circ}\fuse_\tau^\circ(\xi) 
    =
    \fd_{u_0}\xi
    +
    \underbrace{(\fd_{\tilde u_\tau^\circ}-\fd_{u_0})\xi}_{\eqcolon \rI}
    +
    \underbrace{\delbar \rho_\tau^n \cdot \paren*{\xi\circ\iota_\tau - \xi(n)}}_{\eqcolon \rII}
    +
    \underbrace{\rho_\tau^n\cdot\fd_{\tilde u_\tau^\circ}(\xi\circ\iota_\tau)}_{\eqcolon \rIII}
    -
    \underbrace{\rho_\tau^n\cdot\fd_{\tilde u_\tau^\circ}\xi(n)}_{\eqcolon \rIV}.
  \end{equation*}
  The difference $v \coloneq \tilde u_\tau^\circ - u_0 = \chi_\tau^n\cdot u_0\circ\iota_\tau$
  satisfies
  \begin{align*}
    \abs{v}
    &\leq
      c\epsilon_n/r_n
      \leq
      c \epsilon_n^{1/2} \qand \\
    \abs{\rd v}
    &\leq
      \abs{\rd\chi_\tau^n\cdot u_0\circ\iota_\tau} + \abs{\chi_\tau^n\rd(u_0\circ\iota_\tau)} 
      \leq
      c \epsilon_n/r_n^2.
  \end{align*}
  Therefore, by \autoref{Prop_CompareDU1WithDU2} and \autoref{Eq_AlmostDivergentIntegralEstimate},
  \begin{equation*}
    \Abs{\rI}_{L^p} \leq c\epsilon_n^{\frac1p}\Abs{\xi}_{W^{1,p}}.
  \end{equation*}
  By \autoref{Prop_CutOffEstimate},
  \begin{equation*}
    \Abs{\rII}_{L^p} \leq c \epsilon_n^{\frac12-\frac1p} \Abs{\xi}_{W^{1,p}}.
  \end{equation*}
  The term $\rIII$ can be written as
  \begin{equation*}
    \rIII
    =
    \rho_\tau^n \cdot \paren*{\fd_{\tilde u_\tau^\circ}\xi}\circ \iota_\tau
    +
    \rho_\tau^n\cdot\paren*{\fd_{\tilde u_\tau^\circ}-\fd_{\tilde u_\tau^\circ\circ\iota_\tau}}(\xi\circ\iota_\tau).
  \end{equation*} 
   The first term in this sum satisfies
  \begin{equation*}
  \Abs{\rho_\tau^n \cdot \paren*{\fd_{\tilde u_\tau^\circ}\xi}\circ \iota_\tau}_{L^p} \leq   \Abs{\fd_{u_0}\xi}_{L^p} +  \Abs{\rI}_{L^p}.
  \end{equation*}
  To estimate the second term, consider the difference
  \begin{equation*}
    w \coloneq \tilde u_\tau^\circ - \tilde u_\tau^\circ\circ\iota_\tau = (1-\chi_\tau^n)(u_0 - u_0\circ\iota_\tau).
  \end{equation*}
  It satisfies
  \begin{align*}
    \abs{w} &\leq c r_n \qand \\
    \abs{\rd w} &\leq c.
  \end{align*}
  Since $\rho_\tau^n$ is supported in the region where $\epsilon_n^{1/4} \leq r_n \leq 2\epsilon_n^{1/4}$,  \autoref{Prop_CompareDU1WithDU2}  implies that 
  \begin{equation*}
   \Abs{ \rho_\tau^n\cdot\paren*{\fd_{\tilde u_\tau^\circ}-\fd_{\tilde u_\tau^\circ\circ\iota_\tau}}(\xi\circ\iota_\tau)}_{L^p} \leq  c \epsilon_n^{1/2p}\Abs{\xi}_{W^{1,p}}.
  \end{equation*}
  Therefore,
  \begin{equation*}
    \Abs{\rIII}_{L^p} \leq  \Abs{\fd_{u_0}\xi}_{L^p} +  c \epsilon_n^{1/2p} \Abs{\xi}_{W^{1,p}}.
  \end{equation*}
  To estimate the term $\rIV$, write it in the form
  \begin{equation*}
    \rIV
    =
    \rho_\tau^n\cdot\paren*{\fd_{\tilde u_\tau^\circ}-\fd_{u(n)}}\xi(n),
  \end{equation*}
  with $\fd_{u(n)}$ denoting the operator associated with the constant map with value $u(n)$.  
  Since the difference $u_\tau^\circ - u(n)$ and its derivative are bounded, again from \autoref{Prop_CompareDU1WithDU2} we conclude that 
  \begin{equation*}
    \Abs{\rIV}_{L^p}
    \leq
    c\epsilon_n^{1/2p}\Abs{\xi}_{W^{1,p}}.
    \qedhere
  \end{equation*}
\end{proof}

\subsection{Construction of right inverses}
\label{Sec_ConstructionOfRightInverses}

Throughout this subsection,
let $\sO \subset L^p\Omega^{0,1}(\Sigma_0,u_0^*TX)$ be a finite dimensional subspace such that
\begin{equation}
  \label{Eq_OFillsCokernel}
  \im \fd_{u_0} + \sO = L^p\Omega^{0,1}(\Sigma_0,u_0^*TX).
\end{equation}
In particular, $\sO$ surjects onto $\coker\fd_{u_0}$.

\begin{definition}
  \label{Def_PullTau}
  Define $\pull_\tau \co L^p\Omega^{0,1}(\Sigma_0,u_0^*TX) \to L^p\Omega^{0,1}(\Sigma_\tau,\tilde u_\tau^*TX)$ by
  \begin{equation*}
    \pull_\tau(\eta)([z])
    \coloneq
    \begin{cases}
      \Phi_{\tilde u_\tau([z])}\Phi_{u_0(z)}^{-1}\eta(z) & \text{if } \epsilon_n^{1/2} \leq r_{n}(z) \leq 2R_0 \\
      \eta(z) & \text{otherwise}.
    \end{cases}
    \qedhere
  \end{equation*}
\end{definition}

Recall that $\Sigma_\tau$ is defined in \autoref{Def_SmoothingNodalRiemannSurface} by identifying the boundary components of $\set{r_n \geq \epsilon_n^{1/2}}$ and $\set{ r_{\nu(n)} \geq \epsilon_n^{1/2}}$ using $\iota_\tau$. 
The operator $\pull_\tau$ is obtained by simply restricting $(0,1)$--forms to these regions. 
The resulting $(0,1)$--form on $\Sigma_\tau$ is typically not continuous but it is still in $L^p$.
In particular, the ambiguity at $r_n = \epsilon_n^{1/2}$ in \autoref{Def_PullTau} is immaterial.
The reader should contrast \autoref{Def_PullTau} with the definition of $\fuse_\tau$, cf. \autoref{Def_Fuse}, which produces sections of class $W^{1,p}$, and therefore continuous. 

\begin{definition}
  \label{Def_BarD}
  Define $\overline\fd_{u_0} \co W^{1,p}\Gamma(\Sigma_0,\nu_0;u_0^*TX)\oplus \sO \to L^p\Omega^{0,1}(\Sigma_0,u_0^*TX)$ by
  \begin{equation*}
    \overline\fd_{u_0}(\xi,o) \coloneq \fd_{u_0}\xi + o.
  \end{equation*}
  Define $\overline\fd_{\tilde u_\tau} \co W^{1,p}\Gamma(\Sigma_\tau,\nu_\tau;\tilde u_\tau^*TX)\oplus \sO \to L^p\Omega^{0,1}(\Sigma_\tau,\tilde u_\tau^*TX)$ by
  \begin{equation*}
    \overline\fd_{\tilde u_\tau}(\xi,o) \coloneq \fd_{\tilde u_\tau}\xi + \pull_\tau(o).
    \qedhere
  \end{equation*}
\end{definition}

By construction,
$\overline\fd_{u_0}$ is surjective and, hence, has a right inverse $\fr_{u_0} \co L^p\Omega^{0,1}(\Sigma_0,u_0^*TX) \to W^{1,p}\Gamma(\Sigma_0,\nu_0;u_0^*TX)\oplus \sO$ of $\overline \fd_{u_0}$.
Henceforth, fix a choice of $\fr_{u_0}$.
The purpose of this subsection is to construct a right inverse $\fr_{\tilde u_\tau}$ to $\overline\fd_{\tilde u_\tau}$ for sufficiently small $\epsilon$.

\begin{definition}
  Define $\push_\tau\co L^p\Omega^{0,1}(\Sigma_\tau,\tilde u_\tau^*TX) \to L^p\Omega^{0,1}(\Sigma_0,u_0^*TX)$ by
  \begin{equation*}
    \push_\tau(\eta)(z)
    \coloneq
    \begin{cases}
      0 & \text{if } r_{n}(z) < \epsilon_n^{1/2} \\
      \Phi_{u_0(z)}\Phi_{\tilde u_\tau([z])}^{-1}\eta([z]) & \text{if } \epsilon_n^{1/2} \leq r_{n}(z) \leq 2R_0 \\
      \eta([z]) & \text{otherwise}.
    \end{cases}
    \qedhere
  \end{equation*}
\end{definition}

\begin{definition}
  Define $\tilde\fr_{\tilde u_\tau} \co L^p\Omega^{0,1}(\Sigma_\tau,\tilde u_\tau^*TX) \to W^{1,p}\Gamma(\Sigma_\tau,\nu_\tau;\tilde u_\tau^*TX)\oplus \sO$ by
  \begin{equation*}
    \tilde\fr_{\tilde u_\tau}
    \coloneq
    (\fuse_\tau \oplus \id_\sO) \circ \fr_{u_0} \circ \push_\tau.
    \qedhere
  \end{equation*}
\end{definition}

\begin{prop}
  \label{Prop_ApproximateRightInverse}
  The linear operator $\tilde\fr_{\tilde u_\tau}$ satisfies
  \begin{align}
    \label{Prop_ApproximateRightInverse_Estimate}
    \Abs[\big]{\overline\fd_{\tilde u_\tau}\circ \tilde\fr_{\tilde u_\tau} - \id}
    &\leq
      c\sum_{n \in S} \paren*{\epsilon_n^{\frac{1}{2p}}+\epsilon_n^{\frac12-\frac1p}}\Abs{\fr_{u_0}}
      \qand \\
    \notag
    \Abs{\tilde\fr_{\tilde u_\tau}}
    &\leq
      c\Abs{\fr_{u_0}}.
  \end{align}
\end{prop}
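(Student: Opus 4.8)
The plan is to verify the two estimates in \autoref{Prop_ApproximateRightInverse} by unwinding the definition of $\tilde\fr_{\tilde u_\tau} = (\fuse_\tau\oplus\id_\sO)\circ\fr_{u_0}\circ\push_\tau$ and reducing everything to the error estimates already proved in \autoref{Prop_ErrorEstimate} and \autoref{Prop_DFuse}, together with the uniform equivalence of metrics in \autoref{Prop_MetricsAreUniformlyComparable}. First I would record the bound $\Abs{\push_\tau}\le c$, which is immediate from the definition of $\push_\tau$ since $\Phi_{u_0(z)}\Phi_{\tilde u_\tau([z])}^{-1}$ is uniformly bounded (it is close to the identity, as $\tilde u_\tau$ is $C^0$-close to $u_0$ on the relevant region by \autoref{Hyp_UTau} and the construction in \autoref{Def_TildeUTau}), and the $L^p$ norm over $\Sigma_\tau$ is comparable to the $L^p$ norm over the corresponding region of $\Sigma_0^\circ$. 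Combined with $\Abs{\fuse_\tau\oplus\id_\sO}\le c$—which follows from \autoref{Prop_DFuse} at the level of the $W^{1,p}$ norm, or more directly from the pointwise formula for $\fuse_\tau^\circ$ together with \autoref{Prop_CutOffEstimate} to handle the $\chi_\tau^n\cdot(\xi\circ\iota_\tau-\xi(n))$ term—this gives the second inequality $\Abs{\tilde\fr_{\tilde u_\tau}}\le c\Abs{\fr_{u_0}}$ at once.

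For the first, more delicate estimate, I would write, for $\eta\in L^p\Omega^{0,1}(\Sigma_\tau,\tilde u_\tau^*TX)$, $(\xi,o)\coloneq \fr_{u_0}\push_\tau(\eta)$, so that $\fd_{u_0}\xi + o = \push_\tau(\eta)$ by definition of the right inverse $\fr_{u_0}$. Then
\begin{equation*}
  \overline\fd_{\tilde u_\tau}\tilde\fr_{\tilde u_\tau}(\eta)
  =
  \fd_{\tilde u_\tau}\fuse_\tau(\xi) + \pull_\tau(o),
\end{equation*}
and I want to compare this with $\eta$. The idea is that $\pull_\tau$ and $\push_\tau$ are approximate inverses of each other (they differ from the identity only by the parallel-transport isomorphisms $\Phi_{\tilde u_\tau}\Phi_{u_0}^{-1}$, which are close to the identity, and by the cut-off of the small-$r_n$ region where $\push_\tau$ kills and $\pull_\tau$ restores nothing of size), and that $\fd_{\tilde u_\tau}\fuse_\tau(\xi)$ is well approximated by $\pull_\tau(\fd_{u_0}\xi)$. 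Quantitatively: the key subclaim is an estimate of the form $\Abs{\fd_{\tilde u_\tau}\fuse_\tau(\xi) - \pull_\tau(\fd_{u_0}\xi)}_{L^p}\le c\sum_{n\in S}(\epsilon_n^{1/p}+\epsilon_n^{1-2/p})\Abs{\xi}_{W^{1,p}}$, which is exactly the content of the proof of \autoref{Prop_DFuse} read more carefully—that proof already bounds the difference $\fd_{\tilde u_\tau^\circ}\fuse_\tau^\circ(\xi) - \fd_{u_0}\xi$ term by term (the terms $\rI$, $\rII$, $\rIII_1$, $\rIII_2$ there) by precisely $c\sum_n(\epsilon_n^{1/p}+\epsilon_n^{1-2/p})\Abs{\xi}_{W^{1,p}}$ plus a copy of $\fd_{u_0}\xi$ coming from $\rIII_1+\rIII_2$ which matches up with $\pull_\tau(\fd_{u_0}\xi)$. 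Then $\pull_\tau(\fd_{u_0}\xi) + \pull_\tau(o) = \pull_\tau(\fd_{u_0}\xi+o) = \pull_\tau\push_\tau(\eta)$, and a final lemma $\Abs{\pull_\tau\push_\tau(\eta)-\eta}_{L^p}\le c\sum_n\epsilon_n^{?}\Abs{\eta}_{L^p}$—proved by noting that on the region $\epsilon_n^{1/2}\le r_n\le 2R_0$ the composite is $\Phi_{\tilde u_\tau}\Phi_{u_0}^{-1}\Phi_{u_0}\Phi_{\tilde u_\tau}^{-1} = \id$ exactly, while on $r_n<\epsilon_n^{1/2}$ one loses only the $L^p$ mass of $\eta$ over a region of $g_\tau$-area $O(\epsilon_n\log(1/\epsilon_n))$, hence controlled by $\epsilon_n^{1/p-\delta}\Abs{\eta}_{L^p}$—completes the chain. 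Bounding $\Abs{\xi}_{W^{1,p}}\le \Abs{\fr_{u_0}}\Abs{\push_\tau(\eta)}_{L^p}\le c\Abs{\fr_{u_0}}\Abs{\eta}_{L^p}$ and absorbing the slightly-better exponent into $\epsilon_n^{1/p}$ yields the stated inequality.

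The main obstacle I anticipate is bookkeeping rather than conceptual: one must carefully track the interplay between the region $\{r_n<\epsilon_n^{1/2}\}$ where $\push_\tau\equiv 0$, the region $\{\epsilon_n^{1/2}\le r_n\le 2R_0\}$ where the transport isomorphisms appear, and the region $\{r_n>2R_0\}$ where all operators are literally the identity, and check that at every seam the various cut-off functions $\chi_\tau^n$ and $\chi(r_n/R_0)$ patch correctly so that the $\iota_\tau$-invariance needed to descend from $\Sigma_\tau^\circ$ to $\Sigma_\tau$ is preserved. The genuinely analytic input—the almost-divergent integral bound \autoref{Eq_AlmostDivergentIntegralEstimate} and Morrey's inequality $W^{1,p}\hookrightarrow C^{0,1-2/p}$—has already been packaged into \autoref{Prop_CutOffEstimate}, \autoref{Prop_CompareDU1WithDU2}, and the proof of \autoref{Prop_DFuse}, so no new estimate of that type should be required; the proof is essentially an assembly of those pieces.
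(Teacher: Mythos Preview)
Your overall architecture matches the paper's proof, and the second estimate is fine. The gap is in your ``key subclaim''
\begin{equation*}
  \Abs{\fd_{\tilde u_\tau}\fuse_\tau(\xi) - \pull_\tau(\fd_{u_0}\xi)}_{L^p}
  \leq
  c\sum_{n\in S}\paren*{\epsilon_n^{1/p}+\epsilon_n^{1-2/p}}\Abs{\xi}_{W^{1,p}}.
\end{equation*}
This is \emph{false} for general $\xi$, and it is \emph{not} what the proof of \autoref{Prop_DFuse} gives. In that proof the term $\rIII_1 = \chi_\tau^n\cdot\fd_{\tilde u_\tau^\circ}(\xi\circ\iota_\tau)$ is only bounded by $\Abs{\fd_{u_0}\xi}_{L^p} + c\epsilon_n^{1/p}\Abs{\xi}_{W^{1,p}}$; the leading piece $\chi_\tau^n\cdot\iota_\tau^*(\fd_{u_0}\xi)$ does \emph{not} match up with $\pull_\tau(\fd_{u_0}\xi)$ (which in the trivialization is simply $\fd_{u_0}\xi$ on $\Sigma_\tau^\circ$, with no $\iota_\tau$-pullback). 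If $\nabla\xi$ concentrates in the annulus $\epsilon_n/2R_0 \leq r_{\nu(n)} \leq \epsilon_n^{1/2}$, this term is of order $\Abs{\xi}_{W^{1,p}}$ with no $\epsilon$-gain.

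The missing idea is the one the paper supplies: one must use the \emph{specific} identity $\fd_{u_0}\xi = \push_\tau(\eta) - o$. The key observation is that $\push_\tau(\eta)$ vanishes by construction on the region $r_n < \epsilon_n^{1/2}$, and $\iota_\tau$ maps the support of $\chi_\tau^n$ (namely $r_n \leq 2R_0$) into that region; hence $\iota_\tau^*(\push_\tau(\eta)) = 0$, and the dangerous term $\chi_\tau^n\cdot\iota_\tau^*(\fd_{u_0}\xi)$ reduces to $-\chi_\tau^n\cdot\iota_\tau^*o$, which is genuinely small since $o$ is smooth and the pullback introduces a factor $\epsilon_n/r_n^2$. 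This is precisely the paper's Term $\rII$ computation. Without this observation your argument does not close.

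A minor side remark: there is no region $r_n < \epsilon_n^{1/2}$ on $\Sigma_\tau$ at all (by construction $\Sigma_\tau^\circ = \set{r_n \geq \epsilon_n^{1/2}}$), so in fact $\pull_\tau\circ\push_\tau = \id$ exactly and your ``final lemma'' is trivial.
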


\begin{proof}
  The map $\push_\tau$ is bounded by a constant independent of $\tau$ and,
  by \autoref{Prop_CutOffEstimate},
  so is $\fuse_\tau$.
  This implies the estimate on $\Abs{\tilde\fr_{\tilde u_\tau}}$.
  
  Let $\eta \in L^p\Omega^{0,1}(\Sigma_\tau,\tilde u_\tau^*TX)$.
  To prove \autoref{Prop_ApproximateRightInverse_Estimate},
  we estimate $\Abs[\big]{\overline\fd_{\tilde u_\tau}\tilde\fr_{\tilde u_\tau}\eta - \eta}_{L^p}$ as follows.
  Set
  \begin{equation*}
    (\xi,o) \coloneq \fr_{u_0} \circ \push_\tau(\eta),
  \end{equation*}
  so that
  \begin{equation*}
    \fd_{u_0}\xi + o = \push_\tau(\eta).
  \end{equation*}
  By \autoref{Prop_CompareDU1WithDU2} applied to $\tilde u_\tau^\circ$ and $u_0$ and using \autoref{Eq_AlmostDivergentIntegralEstimate} and the fact that on $\Sigma_\tau^\circ$ we have $\pull_\tau(o)=o$, 
  \begin{align*}
    \Abs{\fd_{\tilde u_\tau^\circ}\xi +\pull_\tau(o) - \eta}_{L^p}
    &\leq
      c\epsilon^{\frac1p}\Abs{\xi}_{W^{1,p}} \\
    &\leq
      c\epsilon^{\frac1p}\Abs{\fr_{u_0}}\Abs{\eta}_{L^p}.
  \end{align*}
  Therefore,
  it remains to estimate
  \begin{equation}
    \label{Eq_DelFuseEstimate}
    \fd_{\tilde u_\tau^\circ} \paren*{\rho_\tau^n\cdot \paren*{\xi\circ\iota_\tau - \xi(n)}}
    =
    \underbrace{\delbar \rho_\tau^n \cdot \paren*{\xi\circ\iota_\tau - \xi(n)}}_{\eqcolon \rI} 
    +
    \underbrace{\rho_\tau^n\cdot \fd_{\tilde u_\tau^\circ}(\xi\circ\iota_\tau)}_{\eqcolon \rII}
    -
    \underbrace{\rho_\tau^n\cdot \fd_{\tilde u_\tau^\circ}\xi(n)}_{\eqcolon \rIII}.
  \end{equation}
  By \autoref{Prop_CutOffEstimate},
  \begin{equation*}
    \Abs{\rI}_{L^p}
    \leq
    c\epsilon^{\frac12-\frac1p}\Abs{\xi}_{W^{1,p}} \leq c\epsilon^{\frac12-\frac1p}\Abs{\fr_{u_0}}\Abs{\eta}_{L^p}.    
  \end{equation*}
  To estimate the second term, observe that 
  in the region where $r_n \geq \epsilon_n^{1/2}$, 
  \begin{equation*}
    \fd_{u_0 \circ \iota_\tau} (\xi\circ \iota_\tau)
    =
    \iota_\tau^*( \fd_{u_0} \xi) 
    = 
    \iota_\tau^*(\push_\tau(\eta))
    =
    0.
  \end{equation*}
  To understand the last identity,
  observe that $r_{n}(\iota_\tau(z))  = \epsilon_n r_{\nu(n)}^{-1}(z)$
  and $\push_\tau(\eta)$ is defined to vanish in the region of $\Sigma_0$ where $r_{\nu(n)} \leq \epsilon_n^{1/2}$. 
  Thus, by \autoref{Prop_CompareDU1WithDU2} applied to $\tilde u_\tau$ and $u_0  \circ \iota_\tau$, and using the fact that $\rho_\tau^n$ is supported in the region where $\epsilon_n^{1/4} \leq r_n \leq 2\epsilon_n^{1/4}$ whose area is proportional to $\epsilon_n^{1/2}$, 
  \begin{equation*}
    \Abs{\rII}_{L^p}
    \leq
    c\epsilon^{\frac{1}{2p}}\Abs{\xi}_{W^{1,p}}
    \leq 
    c\epsilon^{\frac{1}{2p}}\Abs{\fr_{u_0}}\Abs{\eta}_{L^p}.
  \end{equation*}  
  The vector field $\xi(n)$ is constant with respect the chosen trivialization.
  Since the operator $\fd_{u_0(n)}$ associated with the constant map agrees with the standard $\delbar$--operator,
  \begin{equation*}
    \fd_{u_0(n)}\xi(n) = 0.
  \end{equation*}   
  Therefore,
  using \autoref{Prop_CompareDU1WithDU2} applied to $\tilde u_\tau$ and the constant map $u(n)$, and the estimate on the area of the support of $\rho_\tau^n$, we arrive at
  \begin{equation*}
    \Abs{\rIII}_{L^p}
    \leq
    c\epsilon^{\frac{1}{2p}}\Abs{\fr_{u_0}}\Abs{\eta}_{L^p}.
    \qedhere
  \end{equation*}
\end{proof}

Throughout the remainder of this subsection,
suppose the following.

\begin{hypothesis}
  \label{Hyp_TauRightInverse}
  The smoothing parameter $\tau$ is such that the right-hand side of \autoref{Prop_ApproximateRightInverse_Estimate} is at most $1/2$.
\end{hypothesis}

\begin{definition}
  \label{Def_RightInverse}
  Define the \defined{right inverse} $\fr_{\tilde u_\tau} \co L^p\Omega^{0,1}(\Sigma_\tau,\tilde u_\tau^*TX) \to W^{1,p}\Gamma(\Sigma_\tau,\nu_\tau;\tilde u_\tau^*TX)\oplus \sO$ associated with $\fr_{u_0}$ by
  \begin{equation*}
    \fr_{\tilde u_\tau}
    \coloneq
    \tilde\fr_{\tilde u_\tau} \paren*{\overline\fd_{\tilde u_\tau} \tilde\fr_{\tilde u_\tau}}^{-1}
    =
    \tilde\fr_{\tilde u_\tau} \sum_{k=0}^\infty \paren*{\id - \overline\fd_{\tilde u_\tau} \tilde\fr_{\tilde u_\tau}}^k.
    \qedhere
  \end{equation*}
\end{definition}

The following is an immediate consequence of the definition.

\begin{prop}
  \label{Prop_RightInverse}
  The right inverse $\fr_{\tilde u_\tau} \co L^p\Omega^{0,1}(\Sigma_\tau,\tilde u_\tau^*TX) \to W^{1,p}\Gamma(\Sigma_\tau,\nu_\tau;\tilde u_\tau^*TX)\oplus \sO$ satisfies
  \begin{align*}
    \overline\fd_{u_\tau}\fr_{\tilde u_\tau} &= \id \qand \\
    \Abs{\fr_{\tilde u_\tau}} &\leq c \Abs{\fr_{u_0}};
  \end{align*}
  furthermore,
  \begin{equation*}
    \im \fr_{\tilde u_\tau} = \im \tilde\fr_{\tilde u_\tau}.
  \end{equation*}
\end{prop}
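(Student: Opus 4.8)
The plan is to read off all three assertions from the Neumann-series formula in \autoref{Def_RightInverse} together with the two estimates of \autoref{Prop_ApproximateRightInverse}; there is essentially no analysis left to do beyond bookkeeping with operator norms.

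First I would record that \autoref{Hyp_TauRightInverse} is exactly the statement that $\Abs{\id - \overline\fd_{\tilde u_\tau}\tilde\fr_{\tilde u_\tau}} \leq 1/2$ as an operator on $L^p\Omega^{0,1}(\Sigma_\tau,\tilde u_\tau^*TX)$. Consequently the operator $\overline\fd_{\tilde u_\tau}\tilde\fr_{\tilde u_\tau}$ is invertible, its inverse is the absolutely convergent Neumann series $\sum_{k=0}^\infty\paren*{\id - \overline\fd_{\tilde u_\tau}\tilde\fr_{\tilde u_\tau}}^k$, and this inverse has norm at most $\sum_{k=0}^\infty 2^{-k} = 2$. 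In particular $\fr_{\tilde u_\tau}$ is well defined. The first conclusion is then immediate: $\overline\fd_{\tilde u_\tau}\fr_{\tilde u_\tau} = \overline\fd_{\tilde u_\tau}\tilde\fr_{\tilde u_\tau}\paren*{\overline\fd_{\tilde u_\tau}\tilde\fr_{\tilde u_\tau}}^{-1} = \id$ (this is the identity written with $\overline\fd_{u_\tau}$ in the statement).

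For the norm bound I would use submultiplicativity of the operator norm together with the bound just obtained and the second estimate of \autoref{Prop_ApproximateRightInverse}:
\begin{equation*}
  \Abs{\fr_{\tilde u_\tau}}
  \leq
  \Abs{\tilde\fr_{\tilde u_\tau}}\cdot\Abs[\big]{\paren*{\overline\fd_{\tilde u_\tau}\tilde\fr_{\tilde u_\tau}}^{-1}}
  \leq
  2\Abs{\tilde\fr_{\tilde u_\tau}}
  \leq
  c\Abs{\fr_{u_0}},
\end{equation*}
where the factor $2$ has been absorbed into $c$; this is legitimate since $2$ is universal and $c$ is allowed to depend on the data fixed in the running convention but not on $\tau$. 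For the statement about images, I would note that $\paren*{\overline\fd_{\tilde u_\tau}\tilde\fr_{\tilde u_\tau}}^{-1}$ is a bijection of $L^p\Omega^{0,1}(\Sigma_\tau,\tilde u_\tau^*TX)$, and since $\fr_{\tilde u_\tau} = \tilde\fr_{\tilde u_\tau}\circ\paren*{\overline\fd_{\tilde u_\tau}\tilde\fr_{\tilde u_\tau}}^{-1}$ is $\tilde\fr_{\tilde u_\tau}$ precomposed with a surjection, we get $\im\fr_{\tilde u_\tau} = \im\tilde\fr_{\tilde u_\tau}$.

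There is no genuine obstacle here: the only point requiring care is that \autoref{Hyp_TauRightInverse} is precisely calibrated to make the Neumann series converge (and to give the clean constant $2$), and that invoking \autoref{Prop_ApproximateRightInverse_Estimate} for the convergence and \autoref{Prop_ApproximateRightInverse} for the size bound is exactly what is available. The genuine work — constructing $\tilde\fr_{\tilde u_\tau}$ and estimating $\overline\fd_{\tilde u_\tau}\tilde\fr_{\tilde u_\tau}-\id$ — has already been done in the preceding subsection.
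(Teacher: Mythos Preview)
Your proposal is correct and is exactly the argument the paper has in mind: the paper states \autoref{Prop_RightInverse} without proof, treating all three assertions as immediate consequences of the Neumann-series formula in \autoref{Def_RightInverse} together with \autoref{Prop_ApproximateRightInverse} and \autoref{Hyp_TauRightInverse}. Your write-up simply makes this explicit.
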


\subsection{Complements of the image of \texorpdfstring{$\fr_{\tilde u_\tau}$}{the right inverse}}

\begin{prop}
  \label{Prop_FuseKComplementsImR}
  Given $c_f > 0$ there is a constant $\delta = \delta(c_f) > 0$ such that the following holds.
  If $\tau$ satisfies $\epsilon < \delta$ and $K \subset W^{1,p}\Gamma(\Sigma_0,\nu_0;u_0^*TX)$ is a subspace with $\dim K = \dim \ker \overline\fd_{u_0}$ and such that for every $\kappa \in K$
  \begin{equation*}
    \Abs{\fd_{u_0}\kappa}_{L^p}
    \leq
    \delta\Abs{\kappa}_{W^{1,p}} \qandq
    \Abs{\kappa}_{W^{1,p}}
    \leq      
    c_f\Abs{\fuse_\tau(\kappa)}_{W^{1,p}},
  \end{equation*}
  then every $(\xi,o) \in W^{1,p}\Gamma(\Sigma_\tau,\nu_\tau;\tilde u_\tau^*TX) \oplus \sO$ can be uniquely written as
  \begin{equation*}
    (\xi,o) = \fr_{\tilde u_\tau}\eta + (\kappa,0)
  \end{equation*}
  with $\eta \in L^p\Omega^{0,1}(\Sigma_\tau,\tilde u_\tau^*TX)$ and $\kappa \in K$;
  moreover,
  \begin{equation*}
    \Abs{\eta}_{L^p} + \Abs{\kappa}_{W^{1,p}}
    \leq
    c(c_f)\paren*{\Abs{\xi}_{W^{1,p}} + \abs{o}}.
  \end{equation*}
  Here $\abs{o} = \Abs{o}_{L^p}$ is the norm of $o$ induced by the inclusion $\sO \subset L^p\Omega^{0,1}(\Sigma_0,u_0^*TX)$.
\end{prop}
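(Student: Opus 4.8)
The plan is to treat this as a perturbation-of-a-splitting statement: the operator $\overline\fd_{\tilde u_\tau}$ has a right inverse $\fr_{\tilde u_\tau}$ constructed in \autoref{Sec_ConstructionOfRightInverses}, so the space $W^{1,p}\Gamma(\Sigma_\tau,\nu_\tau;\tilde u_\tau^*TX)\oplus\sO$ splits as $\im\fr_{\tilde u_\tau}\oplus\ker\overline\fd_{\tilde u_\tau}$, and we only need to show that the candidate complement $\fuse_\tau(K)\oplus\set{0}$ is, for $\epsilon$ small, a valid replacement for $\ker\overline\fd_{\tilde u_\tau}$. First I would record the index count: $\dim\ker\overline\fd_{\tilde u_\tau} = \dim\ker\overline\fd_{u_0} = \dim K$, the first equality because $\overline\fd$ has fixed index on a fixed domain–codomain pair regardless of the smoothing parameter and is surjective by \autoref{Prop_RightInverse}, the second by hypothesis. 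Since $\fuse_\tau$ is injective (it is essentially a cut-and-paste that is the identity away from the neck, cf.\ \autoref{Prop_CutOffEstimate}) the map $K\to W^{1,p}\Gamma(\Sigma_\tau,\nu_\tau;\tilde u_\tau^*TX)$, $\kappa\mapsto\fuse_\tau(\kappa)$ is injective, so $\dim\fuse_\tau(K) = \dim K$ and the dimension bookkeeping matches.

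**Key steps.** By elementary linear algebra, to prove that $(\xi,o)\mapsto$ its two components is a well-defined isomorphism it suffices to show that the composite
\begin{equation*}
  K \longrightarrow W^{1,p}\Gamma(\Sigma_\tau,\nu_\tau;\tilde u_\tau^*TX)\oplus\sO
  \xrightarrow{\;\id - \fr_{\tilde u_\tau}\overline\fd_{\tilde u_\tau}\;}
  \ker\overline\fd_{\tilde u_\tau}
\end{equation*}
sending $\kappa\mapsto(\fuse_\tau(\kappa),0) - \fr_{\tilde u_\tau}\overline\fd_{\tilde u_\tau}(\fuse_\tau(\kappa),0)$ is an isomorphism onto $\ker\overline\fd_{\tilde u_\tau}$; equivalently, that $\fuse_\tau(K)\oplus\set{0}$ projects isomorphically onto $\ker\overline\fd_{\tilde u_\tau}$ along $\im\fr_{\tilde u_\tau}$. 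Since the two subspaces have complementary dimensions, it is enough to check $\im\fr_{\tilde u_\tau}\cap\paren*{\fuse_\tau(K)\oplus\set{0}} = \set{0}$ with a quantitative lower bound. Suppose $\fr_{\tilde u_\tau}\eta = (\fuse_\tau(\kappa),0)$ for some $\eta$ and $\kappa\in K$. Apply $\overline\fd_{\tilde u_\tau}$: the left side is $\eta$ by \autoref{Prop_RightInverse}, and the right side is $\fd_{\tilde u_\tau}\fuse_\tau(\kappa)$, which by \autoref{Prop_DFuse} together with the hypothesis $\Abs{\fd_{u_0}\kappa}_{L^p}\leq\delta\Abs{\kappa}_{W^{1,p}}$ is bounded by $c(\delta + \sum_n(\epsilon_n^{1/p}+\epsilon_n^{1-2/p}))\Abs{\kappa}_{W^{1,p}}$. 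On the other hand $\Abs{\fr_{\tilde u_\tau}\eta}_{W^{1,p}}\leq c\Abs{\fr_{u_0}}\Abs{\eta}_{L^p}$, while $\Abs{\fuse_\tau(\kappa)}_{W^{1,p}}\geq c_f^{-1}\Abs{\kappa}_{W^{1,p}}$ by the second hypothesis on $K$; combining gives $\Abs{\kappa}_{W^{1,p}}\leq c(\delta+o_\epsilon(1))\Abs{\kappa}_{W^{1,p}}$, which forces $\kappa = 0$ once $\delta$ and $\epsilon$ are small enough, hence $\eta = 0$. This establishes the direct-sum decomposition; existence and uniqueness of the splitting of an arbitrary $(\xi,o)$ follow immediately.

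**The estimate.** It remains to prove the norm bound $\Abs{\eta}_{L^p}+\Abs{\kappa}_{W^{1,p}}\leq c(c_f)(\Abs{\xi}_{W^{1,p}}+\abs{o})$. Write $P_\tau\coloneq\fr_{\tilde u_\tau}\overline\fd_{\tilde u_\tau}$, the projection onto $\im\fr_{\tilde u_\tau}$ along $\ker\overline\fd_{\tilde u_\tau}$; then $\eta = \overline\fd_{\tilde u_\tau}(\xi,o)$, so $\Abs{\eta}_{L^p}\leq\Abs{\overline\fd_{\tilde u_\tau}}(\Abs{\xi}_{W^{1,p}}+\abs{o})$, and $\Abs{\overline\fd_{\tilde u_\tau}}$ is bounded uniformly in $\tau$ since $\pull_\tau$ is. For $\kappa$: the component of $(\xi,o)$ in $\fuse_\tau(K)\oplus\set{0}$ is $(\xi,o) - \fr_{\tilde u_\tau}\eta$, so $\Abs{\fuse_\tau(\kappa)}_{W^{1,p}}\leq\Abs{\xi}_{W^{1,p}}+\abs{o}+c\Abs{\fr_{u_0}}\Abs{\eta}_{L^p}\leq c(\Abs{\xi}_{W^{1,p}}+\abs{o})$, and then $\Abs{\kappa}_{W^{1,p}}\leq c_f\Abs{\fuse_\tau(\kappa)}_{W^{1,p}}$ closes the loop. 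The main obstacle is the uniformity: one must be careful that the constant $\delta(c_f)$ below which the argument works depends only on $c_f$ (and the fixed data $p$, $(\Sigma_0,j_0,\nu_0)$, $(X,g,J)$, $c_u$, $R_0$) and not on $\tau$ — this is exactly why the hypothesis is phrased with the two normalizing bounds on $K$ and why \autoref{Prop_DFuse} and \autoref{Prop_RightInverse} were proved with $\tau$-independent constants; once those are in hand the non-degeneracy computation above is a mechanical application of the triangle inequality. A minor subtlety is that $\fr_{u_0}$ is fixed in advance, so $\Abs{\fr_{u_0}}$ is one of the allowed constants and need not be tracked.
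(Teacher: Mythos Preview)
Your approach is the same as the paper's: reduce to showing $\im\fr_{\tilde u_\tau}$ and $\fuse_\tau(K)\oplus\set{0}$ have trivial intersection and complementary dimension, then verify trivial intersection by applying $\overline\fd_{\tilde u_\tau}$ and invoking \autoref{Prop_DFuse}. The core computation is correct. Two points need attention.

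First, your index justification is wrong: the domain and codomain of $\overline\fd_{\tilde u_\tau}$ genuinely change with $\tau$, so ``fixed index on a fixed domain--codomain pair'' is not the reason. The paper computes directly via the index formula \autoref{Eq_IndexFormula}: since $p_a(\Sigma_\tau,\nu_\tau)=p_a(\Sigma_0,\nu_0)$ by \autoref{Rmk_ArithmeticGenus_Smoothing} and $(\tilde u_\tau)_*[\Sigma_\tau]=(u_0)_*[\Sigma_0]$ by \autoref{Prop_SmallEnergyNeck}, one gets $\ind\fd_{\tilde u_\tau}=\ind\fd_{u_0}$, hence $\codim\im\fr_{\tilde u_\tau}=\ind\overline\fd_{\tilde u_\tau}=\ind\overline\fd_{u_0}=\dim\ker\overline\fd_{u_0}=\dim K$.

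Second, and more substantively, your estimate step contains an error: you write $\eta=\overline\fd_{\tilde u_\tau}(\xi,o)$, but this is false. In the decomposition $(\xi,o)=\fr_{\tilde u_\tau}\eta+(\fuse_\tau(\kappa),0)$ the second summand is \emph{not} in $\ker\overline\fd_{\tilde u_\tau}$ (that is precisely the content of the proposition), so applying $\overline\fd_{\tilde u_\tau}$ yields $\eta+\fd_{\tilde u_\tau}\fuse_\tau(\kappa)$, not $\eta$. The fix is straightforward: from $\Abs{\overline\fd_{\tilde u_\tau}(\xi,o)}_{L^p}\leq c(\Abs{\xi}_{W^{1,p}}+\abs{o})$ and $\Abs{\fd_{\tilde u_\tau}\fuse_\tau(\kappa)}_{L^p}\leq c(\delta+o_\epsilon(1))\Abs{\kappa}_{W^{1,p}}\leq cc_f(\delta+o_\epsilon(1))\bigl(\Abs{\xi}_{W^{1,p}}+\abs{o}+c\Abs{\eta}_{L^p}\bigr)$ you can absorb the $\Abs{\eta}_{L^p}$ term for $\delta$ small and conclude. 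The paper itself does not spell out the estimate, simply noting that it follows from injectivity of $\fr_{\tilde u_\tau}$ and $\fuse_\tau|_K$ together with the hypothesis $\Abs{\kappa}_{W^{1,p}}\leq c_f\Abs{\fuse_\tau(\kappa)}_{W^{1,p}}$.
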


\begin{proof}
  Because $\fr_{\tilde u_\tau}$ and $\fuse_\tau|_K$ are injective and given the hypothesis on $\fuse_\tau|_K$,
  it suffices to show that $W^{1,p}\Gamma(\Sigma_\tau,\nu_\tau;\tilde u_\tau^*TX) \oplus \sO$ is the direct sum of $\im(\fr_{\tilde u_\tau})$ and $\im(\fuse_\tau|_K)\oplus 0$.
  
  By the index formula \autoref{Eq_IndexFormula}, \autoref{Rmk_ArithmeticGenus_Smoothing}, and \autoref{Prop_SmallEnergyNeck},
  \begin{align*}
    \ind\fd_{u_0}
    &=
      2\inner{(u_0^*c_1(X,J)}{[\Sigma_0]} + 2n\paren*{1-p_a(\Sigma_0,\nu_0)} \\
    &=
      2\inner{(\tilde u_\tau)^*c_1(X,J)}{[\Sigma_\tau]} + 2n\paren*{1-p_a(\Sigma_\tau,\nu_\tau)} \\
    &=
      \ind\fd_{\tilde u_\tau}.
  \end{align*}
  Therefore and because $\overline \fd_{u_0}$ is surjective and $\fr_{\tilde u_\tau}$ is injective,
  \begin{equation*}
    \codim \im(\fr_{\tilde u_\tau})
    =
    \ind\overline \fd_{\tilde u_\tau}
    =
    \ind\overline \fd_{u_0}
    =
    \dim\ker\overline\fd_{u_0}.
  \end{equation*}
  Hence, it remains to prove that $\im(\fr_{\tilde u_\tau})$ and $\im(\fuse_\tau|_K)\oplus 0$ intersect trivially.
  
  Suppose that $\eta \in L^p\Omega^{0,1}(\Sigma_{\sigma,\tau},\tilde u_\tau^*TX)$ and $\kappa \in K$ satisfy
  \begin{equation*}
    \fr_{\tilde u_\tau}(\eta) = (\fuse_\tau(\kappa),0).
  \end{equation*}
  By \autoref{Prop_DFuse} as well as the hypothesis on $\fuse_\tau$ and for sufficiently small $\delta$,
  \begin{align*}
    \Abs{\eta}_{L^p}
    &=
      \Abs{\fd_{\tilde u_\tau}\fuse_\tau(\kappa)}_{L^p} \\
    &\leq
      c\sqparen*{
      \delta +  \#S\cdot\paren*{\delta^{\frac{1}{2p}}+\delta^{\frac12-\frac1p}}
      }
      \Abs{\kappa}_{W^{1,p}} \\
    &\leq
      cc_f\sqparen*{
      \delta +  \#S\cdot\paren*{\delta^{\frac{1}{2p}}+\delta^{\frac12-\frac1p}}
      }
      \Abs{\eta}_{L^p} \\
    &\leq
      \frac12
      \Abs{\eta}_{L^p}.
  \end{align*}
  Therefore, $\eta$ vanishes.
\end{proof}

\subsection{Kuranishi model for a neighborhood of nodal maps}
\label{Sec_KuranishiModel}

Throughout,
let $(\Sigma_0,j_0,\nu_0)$ be a nodal Riemann surface with nodal set $S$,
let $(X,J_0,h)$ be an almost Hermitian manifold, and 
let $u_0 \co (\Sigma_0,j_0,\nu_0) \to (X,J_0)$ be a nodal $J_0$--holomorphic map.
Let $(\pi\co \sX \to \Delta, \star = (0,0), \iota)$ be the versal deformation of $(\Sigma_0,j_0,\nu_0)$ constructed in \autoref{Sec_VersalDeformations_Construction} with fibers
\begin{equation*}
  (\Sigma_{\sigma,\tau},j_{\sigma,\tau},\nu_{\sigma,\tau}) = \pi^{-1}(\sigma,\tau).
\end{equation*}
Let $\delta_\sJ > 0$ and let
\begin{equation*}
  \sU
  \subset
  \set*{
    J \in \sJ(X)
    :
    \Abs{J-J_0}_{C^1} < \delta_\sJ
  }
\end{equation*}
be such that for every $k \in \N$
\begin{equation*}
  \sup_{J \in \sU} \Abs{J-J_0}_{C^k} < \infty.
\end{equation*}
In the upcoming discussion we may implicitly shrink $\Delta$ and $\delta_\sJ$,
in order to ensure that \autoref{Hyp_UTau} and \autoref{Hyp_TauRightInverse} hold and various expressions involving $\abs{\sigma}$, $\epsilon \coloneq \max\set{\epsilon_n : n \in S}$ with $\epsilon_n \coloneq \abs{\tau_n}$, and $\Abs{J-J_0}_{C^1}$ are sufficiently small.

The purpose of this subsection is to analyze whether $u_0$ can be slightly deformed to a $J$--holomorphic map $u_{\sigma,\tau} \co (\Sigma_{\sigma,\tau},j_{\sigma,\tau},\nu_{\sigma,\tau}) \to (X,J)$ with $J \in \sU$.
More precisely, we show that  a Gromov neighborhood of $u_0$ in the space of nodal $J$--holomorphic maps with $J \in \sU$ is homeomorphic to the zero set of a continuous map
\begin{equation*}
  \ob\co \Delta\times\sU\times\sI \to \sO,
\end{equation*}  
where $\sI$ an open subset of the deformation space $\ker\fd_{u_0,J_0}$  and $\sO \iso \coker\fd_{u_0,J_0}$ is the obstruction space. 
This is a local Kuranishi model at $u_0$ for the universal moduli space of pseudo-holomorphic nodal maps. 

\begin{remark}
  Since we are not interested here in the global properties of the universal moduli space, we do not require that $u_0$ is a stable map. 
  A local Kuranishi model can be constructed around any pseudo-holomorphic map. 
  However, the Gromov limit, as defined in \autoref{Def_GromovConvergence}, is not necessarily unique for unstable maps, so the universal moduli space of all nodal pseudo-holomorphic maps is not a Hausdorff space.  
\end{remark}

To facilitate the discussion in \autoref{Sec_LeadingOrderTermOfObstructionGhostComponents} (and although it makes the present discussion somewhat more awkward than it needs to be) the construction of the Kuranishi model proceeds in two steps.
Choose a partition
\begin{equation*}
  S = S_1 \amalg S_2 \qwithq \nu_0(S_1) = S_1 \qandq \nu_0(S_2) = S_2
\end{equation*}
and write every smoothing parameter $\tau$ as
\begin{equation*}
  \tau = (\tau_1,\tau_2)
  \qwithq
  \tau_1 = (\tau_{1,n})_{n \in S_1}
  \qandq
  \tau_2 = (\tau_{2,n})_{n \in S_2}.
\end{equation*}
The first step of our construction varies $\sigma$ and $\tau_1$ but $\tau_2 = 0$ is fixed.
The second step holds $\sigma$ and $\tau_1$ fixed and varies $\tau_2$.

Denote by $u_{\sigma,0} \co \Sigma_{\sigma,0} \to X$ the smooth map underlying $u_0$.
Denote by
\begin{equation*}
  \fd_{u_0;J_0} \co W^{1,p}\Gamma(\Sigma_0,\nu_0;u_0^*TX) \to L^p\Omega^{0,1}(\Sigma_0,u_0^*TX)
\end{equation*}
the linear operator associated with $u_0$ defined in \autoref{Def_JHolomorphicLinearization}.
Let
\begin{equation*}
  \sO \subset \Omega^{0,1}(\Sigma_0,u_0^*TX) \subset L^p\Omega^{0,1}(\Sigma_0,u_0^*TX)
\end{equation*}
be a lift of $\coker\fd_{u_0;J_0}$;
that is: $\dim \sO = \dim\coker\fd_{u_0}$ and \autoref{Eq_OFillsCokernel} holds.
We will assume that all $1$--forms in $\sO$ are smooth on each component of $\Sigma_0$. 
(The canonical choice is $\sO = \ker \fd_{u_0;J_0}^*$, but this choice is not always the most convenient.)
Let $\Delta_1$ parametrize complex structures on $(\Sigma_0,\nu_0)$  as at the beginning of \autoref{Sec_VersalDeformations_Construction}, and let $\sU$ be an open neighborhood of $J_0$ as above. 
Trivialize the bundle over $\Delta_1\times \sU$ whose fiber over $(\sigma,J) \in \Delta_1 \times \sU$ is $\Omega^{0,1}(\Sigma_{\sigma,0},u_{\sigma,0}^*TX)$ with the $(0,1)$--part taken with respect to $j_{\sigma,0}$ and $J$. 
This identifies $\Omega^{0,1}(\Sigma_0,u_0^*TX)$ and $\Omega^{0,1}(\Sigma_{\sigma,0},u_{\sigma,0}^*TX)$ and thus exhibits $\sO$ as a subset of $L^p\Omega^{0,1}(\Sigma_{\sigma,0},u_{\sigma,0}^*TX)$ for which \autoref{Eq_OFillsCokernel} holds for $\fd_{\tilde u_{\sigma,0};J}$ instead of $\fd_{u_0}$.
Define
\begin{equation*}
  \overline \fd_{\tilde u_{\sigma,\tau_1,0};J}
  \co
  W^{1,p}\Gamma(\Sigma_{\sigma,\tau_1,0},\nu_{\sigma,0};\tilde u_{\sigma,\tau_1,0}^*TX) \oplus \sO
  \to
  L^p\Omega^{0,1}(\Sigma_{\sigma,\tau_1,0},\tilde u_{\sigma,\tau_1,0}^*TX)
\end{equation*}
as in \autoref{Def_BarD}.
The construction in \autoref{Sec_ConstructionOfRightInverses} yields a right inverse
\begin{equation*}
  \fr_{\tilde u_{\sigma,\tau_1,0};J}
  \co
  L^p\Omega^{0,1}(\Sigma_{\sigma,\tau_1,0},\tilde u_{\sigma,\tau_1,0}^*TX)
  \to
  W^{1,p}\Gamma(\Sigma_{\sigma,\tau_1,0},\nu_{\sigma,\tau_1,0};\tilde u_{\sigma,\tau_1,0}^*TX) \oplus \sO.
\end{equation*}
of $\overline\fd_{\tilde u_{\sigma,\tau_1,0};J}$.

The next two propositions build a Kuranishi model for a neighborhood of $u_0$ in the Gromov compactification of the moduli space of pseudo-holomorphic maps.
In essence, they assert that for every smoothing parameter $\tau$ and an infinitesimal deformation $\kappa$ the pseudo-holomorphic map equation can be solved modulo obstructions.
The construction of the Kuranishi model proceeds in two steps, described by \autoref{Prop_KuranishiModel_Step1} and \autoref{Prop_KuranishiModel_Step2}, in order to obtain better control of the obstruction map. 
The first step is to smooth the nodes in $S_1$. 

\begin{prop}
  \label{Prop_KuranishiModel_Step1}
  There are constants $\delta_\kappa,\Lambda > 0$ such that for every $(\sigma,\tau_1,0) \in \Delta$ and $\kappa \in \ker\fd_{u_0}$ with $\abs{\kappa} < \delta_\kappa$ there exists a unique pair
  \begin{equation*}
    \paren*{\xi(\sigma,\tau_1;J;\kappa),o(\sigma,\tau_1;J;\kappa)}
    \in
    \im \fr_{\tilde u_{\sigma,\tau_1,0};J} \subset W^{1,p}\Gamma(\Sigma_{\sigma,\tau_1,0},\nu_{\sigma,\tau_1,0};\tilde u_{\sigma,\tau_1,0}^*TX)\oplus \sO
  \end{equation*}
  with
  \begin{equation*}
    \Abs{\xi(\sigma,\tau_1;J;\kappa)}_{W^{1,p}} + \abs{o(\sigma,\tau_1;J;\kappa)} \leq \Lambda
  \end{equation*}
  satisfying
  \begin{equation}
    \label{Eq_KuranishiModel_Step1}
    \fF_{\tilde u_{\sigma,\tau_1,0};J}(\fuse_{\tau_1,0}\kappa+\xi(\sigma,\tau_1;J;\kappa)) + \pull_{\tau_1,0}(o(\sigma,\tau_1;J;\kappa)) = 0,
  \end{equation}
  with $\fF$ as in \autoref{Def_FU}.
  Furthermore,
  \begin{equation}
    \label{Eq_KuranishiModel_Step1_Estimate}
    \Abs{\xi(\sigma,\tau_1;J;\kappa)}_{W^{1,p}} + \abs{o(\sigma,\tau_1;J;\kappa)}
    \leq
    c\paren[\big]{\abs{\sigma}+\abs{\tau_1}^{\frac12+\frac1p}+\Abs{J-J_0}_{C^0}+\abs{\kappa}}.
  \end{equation}
\end{prop}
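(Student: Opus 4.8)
The plan is to solve \autoref{Eq_KuranishiModel_Step1} by a quantitative implicit function theorem (Newton--Picard iteration), taking $\tilde u_{\sigma,\tau_1,0}$ as the approximate solution and using the uniformly bounded right inverse $\fr_{\tilde u_{\sigma,\tau_1,0};J}$ provided by \autoref{Prop_RightInverse}. To lighten notation abbreviate $\tilde u \coloneq \tilde u_{\sigma,\tau_1,0}$, $j \coloneq j_{\sigma,\tau_1,0}$, $\fd \coloneq \fd_{\tilde u;J}$, $\overline\fd \coloneq \overline\fd_{\tilde u;J}$, $\fr \coloneq \fr_{\tilde u;J}$, $\fuse \coloneq \fuse_{\tau_1,0}$, $\pull \coloneq \pull_{\tau_1,0}$, and $\fn \coloneq \fn_{\tilde u;J}$; recall from \autoref{Def_FU} and \autoref{Def_JHolomorphicLinearization} that $\fF_{\tilde u;J}(\zeta) = \delbar_J(\tilde u,j) + \fd\zeta + \fn(\zeta)$ with $\fn(0)=0$. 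Since we seek $(\xi,o)\in\im\fr$, write $(\xi,o) = \fr\eta$ with $\eta\in L^p\Omega^{0,1}(\Sigma_{\sigma,\tau_1,0},\tilde u^*TX)$; using $\overline\fd\fr = \id$, \autoref{Eq_KuranishiModel_Step1} becomes the fixed-point equation $\eta = T(\eta)$, where
\begin{equation*}
  T(\eta) \coloneq -\delbar_J(\tilde u,j) - \fd\,\fuse\kappa - \fn\paren*{\fuse\kappa + \fr\eta}.
\end{equation*}

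The next step is to bound the inhomogeneous term $w \coloneq -\delbar_J(\tilde u,j) - \fd\,\fuse\kappa$ by the right-hand side of \autoref{Eq_KuranishiModel_Step1_Estimate}. Splitting $\delbar_J(\tilde u,j) = \delbar_{J_0}(\tilde u,j) + \tfrac12(J-J_0)(\tilde u)\circ\rd\tilde u\circ j$, the second term is $\leq c\Abs{J-J_0}_{C^0}\Abs{\rd\tilde u}_{L^p}$ with $\Abs{\rd\tilde u}_{L^p}$ bounded uniformly in $\tau$ (from the neck estimate \autoref{Eq_AlmostDivergentIntegralEstimate} and \autoref{Prop_MetricsAreUniformlyComparable}); and $\delbar_{J_0}(\tilde u,j)$ is handled by localization — near $S$, where $j$ agrees with $j_0$, it reduces to the error of the approximate smoothing of \autoref{Def_TildeUTau} and so is $\leq c\abs{\tau_1}^{\frac12+\frac1p}$ by \autoref{Prop_ErrorEstimate}, while away from $S$, where $\tilde u = u_0$ and $\delbar_{J_0}(u_0,j_0)=0$, it equals $\tfrac12 J_0(u_0)\circ\rd u_0\circ(j_{\sigma,0}-j_0) = O(\abs\sigma)$ on a region of bounded area. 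For $\fd\,\fuse\kappa$, \autoref{Prop_DFuse} together with the explicit formula for $\fd$ gives
\begin{equation*}
  \Abs{\fd\,\fuse\kappa}_{L^p}
  \leq
  c\Abs{\fd_{u_0;J_0}\kappa}_{L^p}
  + c\sum_{n\in S}\paren*{\epsilon_n^{\frac1p}+\epsilon_n^{1-\frac2p}}\Abs{\kappa}_{W^{1,p}}
  + c\Abs{J-J_0}_{C^1}\Abs{\kappa}_{W^{1,p}},
\end{equation*}
and $\fd_{u_0;J_0}\kappa = 0$ because $\kappa\in\ker\fd_{u_0}$. Altogether $\Abs{w}_{L^p} \leq c\paren[\big]{\abs\sigma + \abs{\tau_1}^{\frac12+\frac1p} + \Abs{J-J_0}_{C^0} + \abs\kappa}$ on the (possibly shrunk) domain, after absorbing the $C^1$--contribution of $J-J_0$ using the uniform higher-derivative bounds on $\sU$.

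Now the iteration. Let $c_\fr$ be a uniform bound for $\Abs{\fr}$ (\autoref{Prop_RightInverse}) and $c_*$ one for $\Abs{\overline\fd}$. Since $\fn(0)=0$, \autoref{Prop_QuadraticEstimate} — applied with the Sobolev constant of $W^{1,p}(\Sigma_{\sigma,\tau_1,0})\hookrightarrow C^{0,1-2/p}$ and $\Abs{\rd\tilde u}_{L^p}$ both uniformly bounded in $\tau$ — produces a fixed $\rho_0 > 0$ such that, after shrinking $\delta_\kappa$ so that $\Abs{\fuse\kappa}_{W^{1,p}}\leq c_\fr\rho_0$, the map $\eta\mapsto -\fn(\fuse\kappa + \fr\eta)$ has Lipschitz constant $\leq\tfrac12$ on the ball $B_{\rho_0}\subset L^p\Omega^{0,1}$ and sends $0$ to an element of norm $\leq c\Abs{\fuse\kappa}_{W^{1,p}}^2 \leq c\abs\kappa^2$. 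Shrinking $\Delta$, $\delta_\sJ$, $\delta_\kappa$ further so that $\Abs{w}_{L^p}+c\abs\kappa^2$ is small enough relative to $\rho_0$, $c_\fr$, $c_*$, the map $T$ carries $B_{\rho_0}$ into itself and is a contraction; let $\eta_*$ be its unique fixed point, so $\Abs{\eta_*}_{L^p}\leq 2(\Abs{w}_{L^p}+c\abs\kappa^2)$. Then $(\xi,o)\coloneq\fr\eta_*$ solves \autoref{Eq_KuranishiModel_Step1}, lies in $\im\fr$, satisfies $\Abs{\xi}_{W^{1,p}}+\abs o \leq c_\fr\Abs{\eta_*}_{L^p} \leq \Lambda$ for the fixed constant $\Lambda \coloneq \rho_0/(2c_*)$, and the bound on $\Abs{\eta_*}_{L^p}$ together with the estimate on $\Abs{w}_{L^p}$ gives \autoref{Eq_KuranishiModel_Step1_Estimate} (the $\abs\kappa^2$ term being harmless as $\abs\kappa<\delta_\kappa$). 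Uniqueness transfers from the fixed-point equation: if $(\xi',o')\in\im\fr$ also solves \autoref{Eq_KuranishiModel_Step1} with $\Abs{\xi'}_{W^{1,p}}+\abs{o'}\leq\Lambda$, then $\eta'\coloneq\overline\fd(\xi',o')$ satisfies $\fr\eta' = (\xi',o')$ (as $\fr\overline\fd = \id$ on $\im\fr$), obeys $\eta'=T(\eta')$, and has $\Abs{\eta'}_{L^p}\leq c_*\Lambda = \rho_0/2 < \rho_0$, hence $\eta'=\eta_*$ and $(\xi',o')=(\xi,o)$.

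The main obstacle is the estimate of $w$: one must disentangle the four independent perturbations — smoothing the nodes in $S_1$ (controlled by $\abs{\tau_1}$), deforming the domain complex structure (by $\abs\sigma$), deforming the target almost complex structure (by $\Abs{J-J_0}$), and inserting $\fuse\kappa$ (by $\abs\kappa$, crucially using $\fd_{u_0;J_0}\kappa=0$) — and, above all, verify that every constant entering the contraction argument ($\Abs{\fr}$, $\Abs{\overline\fd}$, the Sobolev constant on $\Sigma_{\sigma,\tau_1,0}$, $\Abs{\rd\tilde u}_{L^p}$, and hence the quadratic-estimate constant) is independent of the smoothing parameter $\tau$, so that $\delta_\kappa$, $\Lambda$ and the shrunk $\Delta$, $\delta_\sJ$ can be chosen uniformly. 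Granting this, the remainder is the standard Newton--Picard scheme.
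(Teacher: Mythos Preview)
Your proof is correct and follows essentially the same approach as the paper: rewrite \autoref{Eq_KuranishiModel_Step1} as a fixed-point equation for $\eta$ with $(\xi,o)=\fr\eta$, bound the zeroth-order term via \autoref{Prop_ErrorEstimate} and \autoref{Prop_DFuse}, and close up with the quadratic estimate \autoref{Prop_QuadraticEstimate} and the Banach fixed-point theorem. Your decomposition of the inhomogeneous term $w$ into the four contributions and your explicit uniqueness argument are more detailed than the paper's write-up, but the strategy is identical; the only notational slip is that $\fn$ takes a vector field, so $\fn(\fuse\kappa+\fr\eta)$ should read $\fn(\fuse\kappa+\pr_1\fr\eta)$, as the paper writes.
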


\begin{proof}
  Since $\fr_{\tilde u_{\sigma,\tau_1,0};J}$ is injective,
  \autoref{Eq_KuranishiModel_Step1} is equivalent to the fixed-point equation
  \begin{equation*}
    \eta
    =
    \bF(\eta)
    \coloneq
    \eta - \fF_{\tilde u_{\sigma,\tau_1,0};J}\paren[\big]{\fuse_{\tau_1,0}\kappa + \pr_1\fr_{\tilde u_{\sigma,\tau_1,0};J}\eta} - \pull_{\tau_1,0}(\pr_2\fr_{\tilde u_{\sigma,\tau_1,0};J}\eta).
  \end{equation*}
  Here $\pr_1$ and $\pr_2$ denote the projections to the first and second summand of
  \begin{equation*}
    W^{1,p}\Gamma(\Sigma_{\sigma,\tau_1,0},\nu_{\sigma,\tau_1,0};\tilde u_{\sigma,\tau_1,0}^*TX)\oplus \sO
  \end{equation*}
  respectively.
  By \autoref{Prop_QuadraticEstimate},
  \begin{equation*}
    \bF(\eta)
    =
    - \delbar_J(\tilde u_{\sigma,\tau_1,0},j_{\sigma,\tau_1,0})
    - \fd_{\tilde u_{\sigma,\tau_1,0};J} \fuse_{\tau_1,0}\kappa
    - \fn_{\tilde u_{\sigma,\tau_1,0};J}\paren[\big]{\fuse_{\tau_1,0}\kappa + \pr_1 \circ \fr_{\tilde u_{\sigma,\tau_1,0};J} \eta}.
  \end{equation*}

  By \autoref{Prop_ErrorEstimate} and \autoref{Prop_QuadraticEstimate},
  \begin{equation*}
    \Abs{\bF(0)}_{L^p}
    \leq
    c\paren*{
      \abs{\sigma}
      + \Abs{J-J_0}_{C^0}
      + \abs{\tau_1}^{\frac12+\frac1p}
      + \abs{\kappa}
      + \abs{\kappa}^2
    }.
  \end{equation*}
  Moreover,
  by \autoref{Prop_RightInverse} and \autoref{Prop_QuadraticEstimate},
  \begin{equation*}
    \Abs{\bF(\eta_1)-\bF(\eta_2)}_{L^p}
    \leq    
    c\paren*{
      \abs{\kappa} + \Abs{\eta_1}_{L^p} + \Abs{\eta_2}_{L^p}
    }
    \Abs{\eta_1-\eta_2}_{L^p}.
  \end{equation*}
  Therefore,
  provided $\delta_\kappa$ is sufficiently small,
  there is an $R > 0$ such that $\Abs{\bF(0)}_{L^p} \leq R/2$ and for every $\eta_1, \eta_2 \in \bar B_R(0) \subset L^P\Omega^{0,1}(\Sigma_{\sigma,\tau},\tilde u_{\sigma,\tau}^*TX)$
  \begin{equation*}
    \Abs{\bF(\eta_1)-\bF(\eta_2)}_{L^p} \leq \frac12\Abs{\eta_1-\eta_2}_{L^p}.
  \end{equation*}
  This shows that $\bF$ maps $\bar B_R(0)$ into $\bar B_R(0)$ and $\bF\co \bar B_R(0) \to \bar B_R(0)$ is a contraction.
  Thus, the first assertion follows from Banach's fixed-point theorem. 
  The second follows from the above and \autoref{Prop_ErrorEstimate}.
\end{proof}

This completes the first step.
In the second step, we smooth the nodes in $S_2$.
This step is analogous to the first one, with $u_0$ being replaced by the maps obtained from \autoref{Prop_KuranishiModel_Step1}.
For $(\sigma,\tau) \in \Delta$ and $\kappa \in \ker\fd_{u_0}$ with $\Abs{\kappa}_{W^{1,p}} < \delta_\kappa$ set
\begin{equation*}
  u_{\sigma,\tau_1,0;J;\kappa}
  \coloneq
  \exp_{\tilde u_{\sigma,\tau_1,0}}(\fuse_{\tau_1,0}\kappa + \xi(\sigma,\tau_1;J;\kappa)) \qandq
  \tilde u_{\sigma,\tau;J;\kappa}
  \coloneq
  \widetilde{\paren*{u_{\sigma,\tau_1,0;J;\kappa}}}_\tau;
\end{equation*}
that is: $\tilde u_{\sigma,\tau;J;\kappa}$ is obtained from $u_{\sigma,\tau_1,0;J;\kappa}$ by the construction in \autoref{Def_TildeUTau}.

\begin{definition}
  Define $\pull_{\sigma,\tau_1,0;J;\kappa}\co L^p\Omega^{0,1}(\Sigma_0,u_0^*TX) \to L^p\Omega^{0,1}(\Sigma_{\sigma,\tau_1,0},u_{\sigma,\tau_1,0;J;\kappa}^*TX)$ to be the composition of $\pull_{\tau_1,0}$ with the map induced by parallel transport along the geodesics
  \begin{equation*}
    t\mapsto \exp_{\tilde u_{\sigma,\tau_1,0}}\paren*{t\paren{\fuse_{\tau_1,0}\kappa + \xi(\sigma,\tau_1;J;\kappa)}}.
  \end{equation*}
  Furthermore,
  denote by $\pull_{\sigma,\tau;J;\kappa}\co L^p\Omega^{0,1}(\Sigma_0,u_0^*TX) \to L^p\Omega^{0,1}(\Sigma_{\sigma,\tau},\tilde u_{\sigma,\tau;J;\kappa}^*TX)$ the composition of $\pull_{\sigma,\tau_1,0;J;\kappa}$ with $\pull_{\tau_2}\co L^p\Omega^{0,1}(\Sigma_{\sigma,\tau_1,0},u_{\sigma,\tau_1,0;J;\kappa}^*TX) \to L^p\Omega^{0,1}(\Sigma_{\sigma,\tau},\tilde u_{\sigma,\tau;J;\kappa}^*TX)$ defined in \autoref{Def_PullTau}.  
\end{definition}

The subspace $\pull_{\sigma,\tau_1,0;J;\kappa}(\sO)$ satisfies \autoref{Eq_OFillsCokernel} for $u_{\sigma,\tau_1,0;J;\kappa}$ instead of $u_0$.
Define
\begin{equation*}
  \overline \fd_{\tilde u_{\sigma,\tau;J;\kappa}}
  \co
  W^{1,p}\Gamma(\Sigma_{\sigma,\tau},\nu_{\sigma,\tau};\tilde u_{\sigma,\tau;J;\kappa}^*TX) \oplus \sO
  \to
  L^p\Omega^{0,1}(\Sigma_{\sigma,\tau},\tilde u_{\sigma,\tau;J;\kappa}^*TX)
\end{equation*}
as in \autoref{Def_BarD}.
The construction in \autoref{Sec_ConstructionOfRightInverses} yields a right inverse
\begin{equation*}
  \fr_{\tilde u_{\sigma,\tau;J;\kappa}}
  \co
  L^p\Omega^{0,1}(\Sigma_{\sigma,\tau},\tilde u_{\sigma,\tau;J;\kappa}^*TX)
  \to
  W^{1,p}\Gamma(\Sigma_{\sigma,\tau},\nu_{\sigma,\tau};\tilde u_{\sigma,\tau;J;\kappa}^*TX) \oplus \sO
\end{equation*}
of $\overline\fd_{\tilde u_{\sigma,\tau;J;\kappa}}$.

\begin{prop}
  \label{Prop_KuranishiModel_Step2}
  There are constants $\delta_\kappa,\Lambda > 0$ such that for every $(\sigma,\tau;J) \in \Delta \times \sU$ and $\kappa \in \ker\fd_{u_0}$ with $\Abs{\kappa}_{W^{1,p}} < \delta_\kappa$ there exists a unique pair
  \begin{equation*}
    \paren{\hat\xi(\sigma,\tau;J;\kappa),\hat o(\sigma,\tau;J;\kappa)}
    \in \im \fr_{\tilde u_{\sigma,\tau;\kappa};J} \subset W^{1,p}\Gamma(\Sigma_{\sigma,\tau},\nu_{\sigma,\tau};\tilde u_{\sigma,\tau;\kappa}^*TX)\oplus \sO
  \end{equation*}
  with
  \begin{equation*}
    \Abs{\hat\xi(\sigma,\tau;J;\kappa)}_{W^{1,p}} + \abs{\hat o(\sigma,\tau;J;\kappa)} \leq \Lambda
  \end{equation*}
  satisfying
  \begin{equation}
    \label{Eq_KuranishiModel_Step2}
    \fF_{\tilde u_{\sigma,\tau;J;\kappa}}(\hat\xi(\sigma,\tau;J;\kappa)) + \pull_{\sigma,\tau;J;\kappa}(o(\sigma,\tau_1;J;\kappa) + \hat o(\sigma,\tau;J;\kappa)) = 0.
  \end{equation}
  Furthermore,
  \begin{equation}
    \label{Eq_KuranishiModel_Step2_Estimate}
    \Abs{\hat\xi(\sigma,\tau;J;\kappa)}_{W^{1,p}} + \abs{\hat o(\sigma,\tau;J;\kappa)}
    \leq
    c\Abs{\delbar_J(\tilde u_{\sigma,\tau;J;\kappa},j_{\sigma,\tau}) + \pull_{\sigma,\tau;J;\kappa}(o(\sigma,\tau_1;J;\kappa))}_{L^p}.
  \end{equation}
\end{prop}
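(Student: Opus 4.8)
The plan is to rerun the Newton iteration used to prove \autoref{Prop_KuranishiModel_Step1}, now with $u_0$ replaced by the map $u_{\sigma,\tau_1,0;J;\kappa}$ produced in the first step and with the remaining smoothing parameter $\tau_2$ (the $S_2$--part of $\tau$) playing the role of the deformation direction. Since $\fr_{\tilde u_{\sigma,\tau;J;\kappa}}$ is a right inverse of $\overline\fd_{\tilde u_{\sigma,\tau;J;\kappa}}$ by \autoref{Prop_RightInverse} and is injective, finding a pair $(\hat\xi,\hat o)\in\im\fr_{\tilde u_{\sigma,\tau;J;\kappa}}$ solving \eqref{Eq_KuranishiModel_Step2} is equivalent---after using the definition of $\fn_{\tilde u_{\sigma,\tau;J;\kappa}}$ to peel off the quadratic part of $\fF_{\tilde u_{\sigma,\tau;J;\kappa}}$---to finding a fixed point in $L^p\Omega^{0,1}(\Sigma_{\sigma,\tau},\tilde u_{\sigma,\tau;J;\kappa}^*TX)$ of
\begin{equation*}
  \bF(\eta)
  \coloneq
  -\delbar_J(\tilde u_{\sigma,\tau;J;\kappa},j_{\sigma,\tau})
  - \pull_{\sigma,\tau;J;\kappa}\paren[\big]{o(\sigma,\tau_1;J;\kappa)}
  - \fn_{\tilde u_{\sigma,\tau;J;\kappa}}\paren[\big]{\pr_1\fr_{\tilde u_{\sigma,\tau;J;\kappa}}\eta},
\end{equation*}
and then setting $(\hat\xi,\hat o)\coloneq\fr_{\tilde u_{\sigma,\tau;J;\kappa}}\eta$.

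Next I would estimate the two ingredients of the Banach fixed point argument. Since $\fn_{\tilde u_{\sigma,\tau;J;\kappa}}(0)=0$, one has $\bF(0)=-\delbar_J(\tilde u_{\sigma,\tau;J;\kappa},j_{\sigma,\tau})-\pull_{\sigma,\tau;J;\kappa}(o(\sigma,\tau_1;J;\kappa))$, so $\Abs{\bF(0)}_{L^p}$ equals, up to a uniform constant, the right-hand side of \eqref{Eq_KuranishiModel_Step2_Estimate}; this makes that estimate automatic once a fixed point has been found. The smallness of $\Abs{\bF(0)}_{L^p}$ needed to run the iteration is obtained by shrinking $\Delta$ and $\delta_\sJ$: applying \autoref{Prop_ErrorEstimate} (whose proof does not use that the smoothed map is $J$--holomorphic) to $\tilde u_{\sigma,\tau;J;\kappa}$, viewed as the approximate smoothing of $u_{\sigma,\tau_1,0;J;\kappa}$ with parameter $\tau_2$, gives
\begin{equation*}
  \Abs{\delbar_J(\tilde u_{\sigma,\tau;J;\kappa},j_{\sigma,\tau})}_{L^p}
  \leq
  c\Abs{\delbar_J(u_{\sigma,\tau_1,0;J;\kappa},j_{\sigma,\tau_1,0})}_{L^p}
  + c\,\epsilon^{\frac12+\frac1p},
\end{equation*}
and by \eqref{Eq_KuranishiModel_Step1} the first summand equals $\Abs{\pull_{\tau_1,0}(o(\sigma,\tau_1;J;\kappa))}_{L^p}$, which is controlled by $\abs{o(\sigma,\tau_1;J;\kappa)}$, hence by \eqref{Eq_KuranishiModel_Step1_Estimate}. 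For the contraction property, \autoref{Prop_QuadraticEstimate} together with the bound $\Abs{\fr_{\tilde u_{\sigma,\tau;J;\kappa}}}\leq c$ from \autoref{Prop_RightInverse} yields
\begin{equation*}
  \Abs{\bF(\eta_1)-\bF(\eta_2)}_{L^p}
  \leq
  c\paren[\big]{\Abs{\eta_1}_{L^p}+\Abs{\eta_2}_{L^p}}\Abs{\eta_1-\eta_2}_{L^p}
\end{equation*}
for $\eta_1,\eta_2$ in a fixed small ball about $0$.

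With these estimates, I would fix $R>0$ so small that $cR\leq\tfrac12$ and then shrink $\Delta$, $\delta_\sJ$, $\delta_\kappa$ so that $\Abs{\bF(0)}_{L^p}\leq R/2$ for all admissible $(\sigma,\tau,J,\kappa)$; the Banach fixed point theorem then produces a unique fixed point $\eta\in\bar B_R(0)$ with $\Abs{\eta}_{L^p}\leq 2\Abs{\bF(0)}_{L^p}$. Setting $(\hat\xi,\hat o)=\fr_{\tilde u_{\sigma,\tau;J;\kappa}}\eta$ solves \eqref{Eq_KuranishiModel_Step2} and satisfies $\Abs{\hat\xi}_{W^{1,p}}+\abs{\hat o}\leq c\Abs{\eta}_{L^p}$, which is \eqref{Eq_KuranishiModel_Step2_Estimate}, for a suitable fixed $\Lambda$; uniqueness among all $(\hat\xi,\hat o)\in\im\fr_{\tilde u_{\sigma,\tau;J;\kappa}}$ of norm at most $\Lambda$ holds because such a pair corresponds, via $\eta=\overline\fd_{\tilde u_{\sigma,\tau;J;\kappa}}(\hat\xi,\hat o)$, to an element of $\bar B_R(0)$. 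The bookkeeping with the constants $R$, $\Lambda$, $\delta_\kappa$ is identical to that in the proof of \autoref{Prop_KuranishiModel_Step1}.

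The step I expect to be the main obstacle---the only genuinely new point compared with the first step---is the uniformity of all constants in $(\sigma,\tau,J,\kappa)$, and in particular the bound $\Abs{\fr_{\tilde u_{\sigma,\tau;J;\kappa}}}\leq c$ just invoked. This rests on two auxiliary facts. First, after shrinking $\Delta$ and $\delta_\sJ$ the maps $u_{\sigma,\tau_1,0;J;\kappa}$, and hence $\tilde u_{\sigma,\tau;J;\kappa}$, satisfy the analogues of \autoref{Hyp_UTau} and \autoref{Hyp_TauRightInverse} under which the construction of \autoref{Sec_ConstructionOfRightInverses} applies with $u_0$ replaced by $u_{\sigma,\tau_1,0;J;\kappa}$. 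Second, $\overline\fd_{u_{\sigma,\tau_1,0;J;\kappa};J}$ admits a right inverse whose norm is bounded independently of the parameters; this follows by a perturbation argument from the surjectivity of $\overline\fd_{u_0;J_0}$, once one knows from \eqref{Eq_KuranishiModel_Step1_Estimate} and Morrey's embedding $W^{1,p}\hookrightarrow C^{0,1-2/p}$ that $u_{\sigma,\tau_1,0;J;\kappa}$ is uniformly $W^{1,p}$--close to $u_0$ while $J$ is $C^1$--close to $J_0$, so that (cf. \autoref{Prop_CompareDU1WithDU2}) $\overline\fd_{u_{\sigma,\tau_1,0;J;\kappa};J}$ differs from $\overline\fd_{u_0;J_0}$ by an operator of small norm. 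Granting these, the remainder is a verbatim repetition of the proof of \autoref{Prop_KuranishiModel_Step1}.
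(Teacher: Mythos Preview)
Your proposal is correct and takes essentially the same approach as the paper, which simply says ``This is similar to the proof of \autoref{Prop_KuranishiModel_Step1}.'' You have spelled out in detail what that similarity entails---the fixed-point map $\bF$, the identification of $\bF(0)$ with the right-hand side of \eqref{Eq_KuranishiModel_Step2_Estimate}, and the uniformity-of-constants discussion---all of which the paper leaves implicit (the uniformity being covered by the blanket remark preceding \autoref{Prop_KuranishiModel_Step1} that $\Delta$ and $\delta_\sJ$ may be shrunk so that \autoref{Hyp_UTau} and \autoref{Hyp_TauRightInverse} hold).
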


\begin{proof}
  This is similar to the proof of \autoref{Prop_KuranishiModel_Step1}.
\end{proof}

\begin{definition}
  \label{Def_ObstructionMap}
  Set $\sI \coloneq B_{\delta_\kappa}(0) \subset \ker \fd_{u_0}$.
  The \defined{Kuranishi map} $\ob\co \Delta\times\sU\times\sI \to \sO$ is defined by
  \begin{equation*}
    \ob(\sigma,\tau;J;\kappa)
    \coloneq
    o(\sigma,\tau_1;J;\kappa) + \hat o(\sigma,\tau;J;\kappa),
  \end{equation*}
  with $o$ and $\hat o$ as in \autoref{Prop_KuranishiModel_Step1} and \autoref{Prop_KuranishiModel_Step2}.
\end{definition}

The upshot of the preceding discussion is that $u_0$ can be slightly deformed to a $J$--holomorphic map $u_{\sigma,\tau} \co (\Sigma_{\sigma,\tau},j_{\sigma,\tau},\nu_{\sigma,\tau}) \to (X,J)$; if and only if there is a $\kappa \in \sI$ with $\ob(\sigma,\tau;J;\kappa) = 0$.
The following shows that this Kuranishi model indeed describes a Gromov neighborhood of $u_0\co (\Sigma_0,j_0,\nu_0) \to (X,J_0)$.

\begin{prop}
  \label{Prop_KuranishiModelCoversGromovNeighborhood}
  Let $(\sigma_k,\tau_k)_{k\in\N}$ be a sequence in $\Delta$ converging to $(0,0)$ and let $(J_k)_{k \in \N}$ be a sequence in $\sU$ converging to $J_0$. 
  If
  \begin{equation*}
    \paren*{u_k \co (\Sigma_{\sigma_k,\tau_k},j_{\sigma_k,\tau_k},\nu_{\sigma_k,\tau_k}) \to (X,J_k)}_{k\in\N}
  \end{equation*}
  is a sequence of nodal pseudo-holomorphic maps which Gromov converges to $u_0\co (\Sigma_0,j_0,\nu_0) \to (X,J_0)$
  then there is a $K \in \N$ such that for every $k \geq K$ there are $\kappa_k \in \ker\fd_{u_0}$ and $(\xi_k,0) \in \im \fr_{\tilde u_{\sigma_k,\tau_k;\kappa_k};J_k}$ with
  \begin{equation*}
    u_k = \exp_{\tilde u_{\sigma_k,\tau_k;\kappa_k}}(\xi_k);
  \end{equation*}
  moreover,
  \begin{align*}
    \lim_{k\to\infty} \abs{\kappa_k} = 0 \qandq \lim_{k\to\infty} \Abs{\xi_k}_{W^{1,p}} = 0.
  \end{align*}
  In particular,
  \begin{equation*}
    \ob(\sigma_k,\tau_k;J_k;\kappa_k) = 0.
  \end{equation*}
\end{prop}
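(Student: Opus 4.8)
The assertion is the surjectivity half of the Kuranishi construction of this section, and the plan is standard: upgrade Gromov convergence to a quantitative $W^{1,p}$--statement, and then read off the gluing parameter $\kappa_k$ from the splitting of \autoref{Prop_FuseKComplementsImR}, using the uniqueness in \autoref{Prop_KuranishiModel_Step1} and \autoref{Prop_KuranishiModel_Step2} to deduce that the obstruction vanishes. First I would show that, for $k$ large, there is a unique $\zeta_k \in W^{1,p}\Gamma(\Sigma_{\sigma_k,\tau_k},\nu_{\sigma_k,\tau_k};\tilde u_{\sigma_k,\tau_k}^*TX)$ with $u_k = \exp_{\tilde u_{\sigma_k,\tau_k}}(\zeta_k)$ and $\Abs{\zeta_k}_{W^{1,p}} \to 0$, where $\tilde u_{\sigma_k,\tau_k}$ is the approximate smoothing of $u_0$ from \autoref{Def_TildeUTau}. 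Since $(\pi,\star,\iota)$ is versal (\autoref{Thm_VersalFamily}), by the Remark following \autoref{Def_GromovConvergence} we may use $(\pi,\star,\iota)$ and the framing $\Psi$ of \autoref{Def_ConcreteFraming} in the definition of Gromov convergence; then on $\Sigma_0^r$ (away from the nodes, \autoref{Rmk_DecompositionOfSurface}) the maps $u_k\circ\Psi(\cdot,(\sigma_k,\tau_k))$ converge to $u_0$ in $C^\infty_\loc$, which controls $\zeta_k$ there, while on each neck region $N_{\sigma_k,\tau_k;n}^r$ --- biholomorphic, by \autoref{Rmk_DecompositionOfSurface}, to a cylinder $S^1\times(-L_k,L_k)$ with $L_k\to\infty$ --- I would combine \autoref{Prop_SmallEnergyNeck}, \autoref{Lem_EpsilonRegularity}, and \autoref{Lem_EnergyDecayOnCylinders} to obtain, for $k$ large, $u_k(N_{\sigma_k,\tau_k;n}^r)\subset B_\delta(u_0(n))$ and $\abs{\rd u_k}(\theta,\ell)\leq ce^{-\mu(L_k-\abs{\ell})}E(u_k)^{1/2}$, with $\tilde u_{\sigma_k,\tau_k}$ obeying the analogous bounds on the same cylinder by construction. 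Because $\int_{-L_k}^{L_k}e^{-\mu p(L_k-\abs{\ell})}\,\rd\ell$ stays bounded as $L_k\to\infty$, these pointwise estimates integrate to $\Abs{\zeta_k}_{W^{1,p}(N_{\sigma_k,\tau_k;n}^r)}\to0$; this is precisely where the ``not constant in the middle of the neck'' construction of \autoref{Def_TildeUTau} is needed, as it keeps $\tilde u_{\sigma_k,\tau_k}$ close to $u_k$ along the whole neck.

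Next I would produce $\kappa_k$. Since $\epsilon_k\to0$, \autoref{Prop_FuseKComplementsImR} applies with $K=\ker\fd_{u_0}$ (whose hypotheses hold because $\fd_{u_0}$ vanishes on $K$ and $\fuse_{\tau_k}$ is uniformly injective on $K$ for $\epsilon$ small), and decomposes $(\zeta_k,0)$ uniquely as $\fr_{\tilde u_{\sigma_k,\tau_k}}\eta_k+(\fuse_{\tau_k}\kappa_k,0)$ with $\Abs{\eta_k}_{L^p}+\Abs{\kappa_k}_{W^{1,p}}\leq c\Abs{\zeta_k}_{W^{1,p}}\to0$; comparing $\sO$--components shows $\fr_{\tilde u_{\sigma_k,\tau_k}}\eta_k=(\xi_k,0)$ with $\xi_k\coloneq\pr_1\fr_{\tilde u_{\sigma_k,\tau_k}}\eta_k$, so $u_k=\exp_{\tilde u_{\sigma_k,\tau_k}}(\fuse_{\tau_k}\kappa_k+\xi_k)$ with $(\xi_k,0)\in\im\fr_{\tilde u_{\sigma_k,\tau_k}}$ and $\Abs{\xi_k}_{W^{1,p}}\to0$. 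As $u_k$ is $J_k$--holomorphic, $\fF_{\tilde u_{\sigma_k,\tau_k};J_k}(\fuse_{\tau_k}\kappa_k+\xi_k)=0$, so $(\xi_k,0)$ solves the Kuranishi fixed-point problem for the parameter $\kappa_k$ with vanishing obstruction component; by the uniqueness in \autoref{Prop_KuranishiModel_Step1} and \autoref{Prop_KuranishiModel_Step2} it then coincides with the pair constructed there, forcing $o(\sigma_k,\tau_{1,k};J_k;\kappa_k)=\hat o(\sigma_k,\tau_k;J_k;\kappa_k)=0$ and $u_k=\exp_{\tilde u_{\sigma_k,\tau_k;\kappa_k}}(\hat\xi(\sigma_k,\tau_k;J_k;\kappa_k))$; hence $\ob(\sigma_k,\tau_k;J_k;\kappa_k)=0$. (When $S=S_1\amalg S_2$ as in \autoref{Sec_KuranishiModel} this comparison is run through the two stages, matching the $\tau_1$-- and then the $\tau_2$--data, using injectivity of $\pull$ where needed; and if $(\Sigma_0,j_0,\nu_0)$ has nontrivial automorphisms one first reparametrizes $u_k$ as in \autoref{Def_GromovConvergence}.)

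The main obstacle is the first step. Gromov convergence delivers only $C^0$ and $C^\infty_\loc$ control, whereas the conclusion asks for a $W^{1,p}$ bound over a domain whose neck has $g_{\tau_k}$--area tending to infinity; bridging this gap rests on the exponential energy decay of \autoref{Lem_EnergyDecayOnCylinders} and on the matching exponential decay built into the approximate smoothing of \autoref{Def_TildeUTau}. Once the $W^{1,p}$--closeness is in hand, the extraction of $\kappa_k$ and the vanishing of $\ob$ are a routine consequence of \autoref{Prop_FuseKComplementsImR} together with the uniqueness statements in the Kuranishi construction.
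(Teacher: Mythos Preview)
Your neck analysis is essentially the content of the paper's \autoref{Prop_GromovNeighborhoodGraph}, and that part is correct: the exponential decay from \autoref{Lem_EnergyDecayOnCylinders} together with the matching decay of the approximate smoothing does yield $W^{1,p}$--closeness on the necks. The gap is in how you extract $\kappa_k$.

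You decompose $\zeta_k$ once, over the \emph{basic} approximate smoothing $\tilde u_{\sigma_k,\tau_k}$, using a right inverse that does not depend on $\kappa$. But the proposition requires $(\xi_k,0)\in\im\fr_{\tilde u_{\sigma_k,\tau_k;\kappa_k};J_k}$ and $u_k=\exp_{\tilde u_{\sigma_k,\tau_k;\kappa_k}}(\xi_k)$, where both the base map $\tilde u_{\sigma_k,\tau_k;J_k;\kappa_k}$ and the right inverse come from the \emph{two-step} construction of \autoref{Sec_KuranishiModel} and depend on $\kappa_k$ itself. Your appeal to the uniqueness in \autoref{Prop_KuranishiModel_Step1} and \autoref{Prop_KuranishiModel_Step2} does not apply: those uniqueness statements concern solutions lying in $\im\fr_{\tilde u_{\sigma,\tau_1,0};J}$ and $\im\fr_{\tilde u_{\sigma,\tau;J;\kappa}}$ respectively, not in $\im\fr_{\tilde u_{\sigma_k,\tau_k}}$. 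Your parenthetical about ``running through the two stages'' does not resolve this circularity, because the second-stage right inverse already depends on $\kappa$; you cannot read off the correct $\kappa_k$ from a single $\kappa$--independent decomposition and then claim it matches the two-step output.

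The paper resolves this by treating $\kappa$ as a free parameter throughout. For \emph{each} small $\kappa$ it writes $u_k=\exp_{\tilde u_{\sigma_k,\tau_k;J_k;\kappa}}(\zeta_{k;\kappa})$ with $\limsup_k\Abs{\zeta_{k;\kappa}}_{W^{1,p}}\leq c\abs{\kappa}$ (this is \autoref{Prop_GromovNeighborhoodGraph}, proved with the neck estimates you outline but for the $\kappa$--dependent base), decomposes $\zeta_{k;\kappa}=\fuse_{\tau_k}(\lambda_{k;\kappa})+\fr_{\tilde u_{\sigma_k,\tau_k;J_k;\kappa}}\eta_{k;\kappa}$ via \autoref{Prop_FuseKComplementsImR}, and then finds the correct $\kappa_k$ as the fixed point of $\kappa\mapsto-\pi(\fe_{k;\kappa})$ by the Banach fixed-point theorem, using a quadratic estimate on the $\kappa$--dependence of the error term. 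That fixed-point step is precisely the missing ingredient in your proposal.
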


The proof of this proposition relies on the following result.

\begin{prop}
  \label{Prop_GromovNeighborhoodGraph}
  Assume the situation of \autoref{Prop_KuranishiModelCoversGromovNeighborhood}.  
  There are $K \in \N$, $\delta_\kappa > 0$, and $c > 0$ such that for every $k \geq K$ and $\kappa \in \ker\fd_{u_0}$ with $\Abs{\kappa}_{W^{1,p}} < \delta_\kappa$ there is a $\zeta_{k;\kappa} \in \Gamma(\Sigma_{\sigma_k,\tau_k},\tilde u_{\sigma_k,\tau_k;J_k;\kappa}^*TX)$ with
  \begin{equation*}
    u_k = \exp_{\tilde u_{\sigma_k,\tau_k;J_k;J_k;\kappa}}(\zeta_{k;\kappa});
  \end{equation*}
  moreover,
  \begin{align*}
    \limsup_{k\to \infty}\Abs{\zeta_{k;\kappa}}_{W^{1,p}}
    \leq
    c\abs{\kappa}.
  \end{align*}
\end{prop}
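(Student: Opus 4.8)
The plan is first to show that $u_k$ and $\tilde u_{\sigma_k,\tau_k;J_k;\kappa}$ are $C^0$-close, so that $\zeta_{k;\kappa}\coloneq\exp_{\tilde u_{\sigma_k,\tau_k;J_k;\kappa}}^{-1}\circ u_k$ is a well-defined section, and then to bound $\Abs{\zeta_{k;\kappa}}_{W^{1,p}}$ by decomposing the domain into a thick part and the neck regions. Fix a small $r>0$ and, for $k$ large (so that all the relevant smoothing parameters are small), write $\Sigma_{\sigma_k,\tau_k}=\Psi(\Sigma_0^r\times\set{(\sigma_k,\tau_k)})\sqcup N^r_{\sigma_k,\tau_k}$ as in \autoref{Rmk_DecompositionOfSurface}, with $\Sigma_0^r=\set{z : r_n(z)\geq r\text{ for all }n\in S}$. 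On $\Sigma_0^r$ the framing is the identity inclusion, and Gromov convergence (\autoref{Def_GromovConvergence}) gives $u_k\to u_0$ in $C^\infty(\Sigma_0^r)$.

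To see that $\zeta_{k;\kappa}$ is defined: on $\Sigma_0^r$, the construction of the approximate smoothing together with \autoref{Eq_KuranishiModel_Step1_Estimate} shows that $\tilde u_{\sigma_k,\tau_k;J_k;\kappa}$ is within $c\abs{\kappa}+o_k(1)$ of $u_0$ in $C^0$ (the $\fuse_{\tau_{k,1},0}\kappa$ term contributes $O(\abs{\kappa})$, the correction $\xi(\sigma_k,\tau_{k,1};J_k;\kappa)$ contributes $o_k(1)+O(\abs{\kappa})$, and the further $\tau_{k,2}$-smoothing contributes $o_k(1)$ on $\Sigma_0^r$), so $\mathrm{dist}_X(u_k,\tilde u_{\sigma_k,\tau_k;J_k;\kappa})\leq c\abs{\kappa}+o_k(1)$ there. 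On a neck $N^r_{\sigma_k,\tau_k;n}$, \autoref{Prop_SmallEnergyNeck} gives $u_k(N^r_{\sigma_k,\tau_k;n})\subset B_\delta(u_0(n))$ with $\delta$ as small as we like after shrinking $r$ and enlarging $k$, while the explicit formulas in \autoref{Def_TildeUTau} and for the fusing operator give $\tilde u_{\sigma_k,\tau_k;J_k;\kappa}(N^r_{\sigma_k,\tau_k;n})\subset B_{c(r+\abs{\kappa})+o_k(1)}(u_0(n))$ — every point of $N^r_{\sigma_k,\tau_k;n}$ lies within distance $r$ of $n$ or $\nu(n)$ in $\Sigma_0$ and $u_0(\nu(n))=u_0(n)$. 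Hence the distance between $u_k$ and $\tilde u_{\sigma_k,\tau_k;J_k;\kappa}$ is below the injectivity radius once $r$ and $\delta_\kappa$ are small and $k\geq K$ (with $K$ independent of $\kappa$), so $\zeta_{k;\kappa}$ is a smooth section as claimed.

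For the $W^{1,p}$ estimate on $\Sigma_0^r$, write $u_k=\exp_{u_0}(b_k)$ and $\tilde u_{\sigma_k,\tau_k;J_k;\kappa}=\exp_{u_0}(a_k)$ with $a_k=\fuse_{\tau_{k,1},0}\kappa+\xi(\sigma_k,\tau_{k,1};J_k;\kappa)$; a geodesic-triangle estimate for $\exp$ (valid in $W^{1,p}$ since $p>2$ embeds into $C^0$, using uniform bounds on the metric data) gives $\Abs{\zeta_{k;\kappa}}_{W^{1,p}(\Sigma_0^r)}\leq c\Abs{a_k}_{W^{1,p}}+c\Abs{b_k}_{W^{1,p}(\Sigma_0^r)}$, where $\Abs{b_k}_{W^{1,p}(\Sigma_0^r)}\to0$ by $C^\infty$ convergence and $\limsup_{k\to\infty}\Abs{a_k}_{W^{1,p}}\leq c\abs{\kappa}$ by \autoref{Eq_KuranishiModel_Step1_Estimate} together with $\Abs{\fuse_{\tau_{k,1},0}\kappa-\kappa}_{W^{1,p}}\to0$ (from \autoref{Prop_CutOffEstimate} and \autoref{Eq_AlmostDivergentIntegralEstimate}). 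The main obstacle is the estimate on the necks, where $\zeta_{k;\kappa}$ is only $C^0$-small and its derivative must be controlled directly. The $L^p$ part is harmless: $\Abs{\zeta_{k;\kappa}}_{L^p(N^r)}\leq\Abs{\zeta_{k;\kappa}}_{C^0}\cdot\vol_{g_{\sigma_k,\tau_k}}(N^r)^{1/p}\leq c(r+\abs{\kappa})r^{2/p}$, using $\vol_{g_{\sigma_k,\tau_k}}(N^r)\leq cr^2$ from \autoref{Prop_MetricsAreUniformlyComparable}. For the derivative one bounds $\abs{\nabla\zeta_{k;\kappa}}\leq c(\abs{\rd u_k}+\abs{\rd\tilde u_{\sigma_k,\tau_k;J_k;\kappa}})$ pointwise on $N^r$ and checks that $\limsup_{k\to\infty}$ of each $L^p$ norm is $o_r(1)$: for $u_k$ by combining \autoref{Lem_EpsilonRegularity} and \autoref{Lem_EnergyDecayOnCylinders} with the fact that $E(u_k|_{N^r_{\sigma_k,\tau_k}})$ converges to $E(u_0|_{\set{d(\cdot,S)<r}})=O(r^2)$ (by the energy identity in Gromov convergence and $C^\infty$ convergence on $\Sigma_0^r$), giving $\Abs{\rd u_k}_{L^p(N^r)}\leq c\,E(u_k|_{N^r})^{1/2}\,r^{2/p-1}+o_k(1)=O(r^{2/p})+o_k(1)$; for the approximate smoothing directly from the formulas in \autoref{Def_TildeUTau} and for $\fuse$, using \autoref{Eq_AlmostDivergentIntegralEstimate} and \autoref{Prop_CutOffEstimate}. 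The delicate point is precisely that $\abs{\rd u_k}$ is \emph{not} pointwise small on the neck, but its energy there is comparable to the neck's ($O(r^2)$-small) area, which exactly offsets the $r^{2/p-1}$ loss from $\epsilon$-regularity; making this quantitative requires care about which of the uniformly comparable metrics $g_0$, $g_{\sigma_k,\tau_k}$ the norms are taken with respect to.

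Combining the three estimates gives $\limsup_{k\to\infty}\Abs{\zeta_{k;\kappa}}_{W^{1,p}}\leq c\abs{\kappa}+\omega(r)$ with $\omega(r)\to0$ as $r\to0$, and letting $r\to0$ yields $\limsup_{k\to\infty}\Abs{\zeta_{k;\kappa}}_{W^{1,p}}\leq c\abs{\kappa}$.
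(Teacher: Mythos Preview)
Your proof is correct and follows essentially the same approach as the paper: split into the thick region $\Sigma_0^r$ (where $C^\infty_{\loc}$ convergence handles everything) and the necks $N^r$ (where the key input is the exponential energy decay of \autoref{Lem_EnergyDecayOnCylinders}, which converts $E(u_k|_{N^r})=O(r^2)$ into an $L^p$ derivative bound that vanishes as $r\to 0$). The paper streamlines the bookkeeping by first reducing to $\kappa=0$ via the estimate $d(\tilde u_{\sigma_k,\tau_k;J_k;\kappa},\tilde u_{\sigma_k,\tau_k;J_k;0})\leq c\abs{\kappa}+o_k(1)$ and then using two scales (fix an outer $r$, shrink an inner $s$), but your single-scale argument with $r\to 0$ achieves the same end.
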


\begin{proof}
  The proof has two steps:
  the construction of $\zeta_{k;\kappa}$ and the proof of the estimate.
  Recall the definition of $U_x$ from \autoref{Def_ExpInverse}.

  \setcounter{step}{0}
  \begin{step}
    There are $K \in \N$ and $\delta_\kappa > 0$ such that for every $k \geq K$, $\kappa \in \ker\fd_{u_0}$ with $\Abs{\kappa}_{W^{1,p}} < \delta_\kappa$, and $z \in \Sigma_{\sigma_k,\tau_k}$
    \begin{equation*}
      u_k(z) \in U_{\tilde u_{\sigma_k,\tau_k;J_k;\kappa}(z)};
    \end{equation*}
    in particular,
    there is a section $\zeta_{k;\kappa} \in \Gamma(\Sigma_{\sigma_k,\tau_k},\tilde u_{\sigma_k,\tau_k;J_k}^*TX)$ given by
    \begin{equation*}
      \zeta_{k;\kappa} \coloneq {\exp^{-1}_{\tilde u_{\sigma_k,\tau_k;J_k;\kappa}}}\circ u_k.
    \end{equation*}
  \end{step}

  By \autoref{Eq_KuranishiModel_Step1_Estimate}, \autoref{Eq_KuranishiModel_Step2_Estimate}, and \autoref{Prop_ErrorEstimate},
  \begin{equation}
    \label{Eq_TildeUSigmaTauKappaTildeUSigmaTau0}
    d(\tilde u_{\sigma_k,\tau_k;J_k;\kappa},\tilde u_{\sigma_k,\tau_k;J_k;0})
    \leq
    c\paren[\big]{\abs{\sigma_k} + \epsilon_k^{\frac12+\frac1p} + \Abs{J_k-J_0}_{C^0} + \abs{\kappa}}.
  \end{equation}
  Therefore, it suffices to consider $\kappa=0$ and prove that there exists a $K \in \N$ such that for every $k \geq K$
  \begin{equation*}
    u_k(z) \in \frac12 U_{\tilde u_{\sigma_k,\tau_k;J_k;0}(z)}.
  \end{equation*}
  
  Using the framing $\Psi$ from \autoref{Def_ConcreteFraming}, define $v_k\co \Sigma_0\setminus S \to X$ and $\tilde v_{\kappa,k}\co \Sigma_0\setminus S \to X$ by
  \begin{align*}
    v_k
    &\coloneq
      u_k \circ \iota_k^{-1}\circ \Psi(\,\cdot\,;\sigma_k,\tau_k)\circ\iota_0  \qandq \\
    \tilde v_k
    &\coloneq
      \tilde u_{\sigma_k,\tau_k;J_k;0} \circ \iota_k^{-1}\circ \Psi(\,\cdot\,;\sigma_k,\tau_k)\circ\iota_0,
  \end{align*}
  cf. \autoref{Def_GromovConvergence}.
  Both of the sequences $(v_k)_{k\in \N}$ and $(\tilde v_k)_{k \in \N}$ converge to $u_0\co \Sigma_0\setminus S \to X$ in the $C^\infty_\loc$ topology---the former by \autoref{Def_GromovConvergence} and the latter by construction.

  With the notation of \autoref{Rmk_DecompositionOfSurface}
  for $r > 0$ and $n \in S$ set
  \begin{equation*}
    N_{k,n}^r \coloneq N_{\sigma_k,\tau_k;n}^r.    
  \end{equation*}  
  Choose $r > 0$ as in \autoref{Prop_SmallEnergyNeck} with $\delta \coloneq \frac18\inj_g(X)$.
  By the preceding paragraph,
  the assertion holds for sufficiently large $k$ and $z \notin N_{k,n}^r$.
  By \autoref{Prop_SmallEnergyNeck} and by construction of $\tilde u_{\sigma,\tau}$,
  for sufficiently large $k$
  \begin{equation*}
    u_k(N_{k,n}^r) \subset B_\delta(u_0(n)) \qandq
    \tilde u_{\sigma_k,\tau_k;J_k;0}(N_{k,n}^r) \subset B_\delta(u_0(n));
  \end{equation*}
  hence,
  for every $z \in N_{k,n}^r$
  \begin{equation*}
    u_k(z) \in \frac12U_{\tilde u_{\sigma_k,\tau_k;J_k;0}(z)}.
  \end{equation*}
  
  \begin{step}
    There is a constant $c > 0$ such that the sections $\zeta_{k;\kappa}$ defined in the preceding step satisfy
    \begin{equation*}
      \limsup_{k\to \infty}\Abs{\zeta_{k;\kappa}}_{W^{1,p}} \leq c\abs{\kappa}.
    \end{equation*}
  \end{step}

  By \autoref{Eq_KuranishiModel_Step1_Estimate}, \autoref{Eq_KuranishiModel_Step2_Estimate}, and \autoref{Prop_ErrorEstimate},
  we can restrict to $\kappa = 0$.
  Furthermore, it suffices to prove that for every $n \in S$
  \begin{equation}
    \label{Eq_ZetaBoundedByKappa}
    \lim_{s\downarrow 0}\limsup_{k\to\infty} \Abs{\zeta_{k;0}}_{W^{1,p}(N_{k,n}^s)} = 0.
  \end{equation}
  The case when $n$ is not smoothed is straightforward.
  The framing extends to identify a neighborhood of $n$ in $\Sigma_{\sigma_k,\tau_k}$ with a neighborhood of $n$ in $\Sigma_0$.
  It follows from \autoref{Lem_EpsilonRegularity} and elliptic regularity that, on this subset, the maps $u_k$ converge to $u_0$ in the $C_\loc^\infty$ topology.
  Let us therefore assume that $n$ is smoothed out;
  that is: $\epsilon_{k;n} \neq 0$ for sufficiently large $k$.

  Define $\rho_k \in C^\infty(N_{k;n}^r,T_{u_0(n)}X)$ and $\tilde \rho_k \in C^\infty(N_{k;n}^r,T_{u_0(n)}X)$ by
  \begin{equation*}
    \rho_k \coloneq {\exp_{u_0(n)}^{-1}}\circ u_k \qandq
    \tilde \rho_{k} \coloneq {\exp_{u_0(n)}^{-1}}\circ \tilde u_{\sigma_k,\tau_k;J_k;0}.
  \end{equation*}
  By construction,
  \begin{equation*}
    \lim_{k\to\infty} \Abs{\tilde \rho_k}_{W^{1,p}} = 0.
  \end{equation*}
  Therefore, it suffices to prove that
  \begin{equation*}
    \lim_{s\downarrow 0}\limsup_{k\to\infty} \Abs{\rho_k}_{W^{1,p}(N_{k,n}^s)} = 0
  \end{equation*}
  
  As explained in \autoref{Rmk_DecompositionOfSurface},
  the subset $N_{k;n}^r$ is biholomorphic to the cylinder
  \begin{equation*}
    S^1\times (-L_k,L_k) \qwithq L_k \coloneq \log(r\epsilon_{n;k}^{-1/2}).
  \end{equation*}
  Hence, $\rho_k$ can be thought of as a map $\rho_k^\cyl \co S^1 \times (-L_k,L_k) \to T_{u_0(n)}X$.
  More concretely, the canonical chart $\phi_n$ defines a holomorphic embedding
  \begin{equation*}
    \phi_n \co \set[\big]{ v \in T_n\Sigma_0 : \epsilon_{n;k}^{1/2} \leq \abs{v} < r } \to N_{k;n}^r
  \end{equation*}
  which glues via $\iota_\tau$ with the embedding $\phi_{\nu(n)}$ to a biholomorphic map
  \begin{equation*}
    B_r(0) \setminus \bar B_{\epsilon_{n;k}/r}(0) \iso N_{k;n}^r.
  \end{equation*}
  Choose identifications $T_n\Sigma_0 \iso \C \iso T_{\nu(n)}\Sigma_0$ such that $\iota_\tau(z) = \epsilon_n/z$.
  The map $\rho_k^\cyl$ is then defined by
  \begin{equation*}
    \rho_k^\cyl(\theta,\ell)
    \coloneq
    \begin{cases}
      \rho_k\circ \phi_n\paren[\big]{\epsilon_{n;k}^{1/2}e^{\ell+i\theta}} & \text{if } t \geq 0 \\
      \rho_k\circ \phi_{\nu(n)}\paren[\big]{\epsilon_{n;k}^{1/2}e^{-\ell-i\theta}} & \text{if } t \leq 0.
    \end{cases}
  \end{equation*}
  
  Since $u_k$ is $J_k$--holomorphic,
  $\rho_k^\cyl$ is $\exp_{u_0(n)}^*(J_k)$--holomorphic.
  Since the energy is conformally invariant, 
  \begin{equation*}
    E\paren[\big]{\rho_k^\cyl} = E\paren[\big]{u_{k|N_{k;n}^r}}.
  \end{equation*}
  Choose $\mu \in (1-2/p,1)$.
  By \autoref{Lem_EnergyDecayOnCylinders},
  \begin{equation*}
    \abs{\nabla\rho_k^\cyl(\theta,\ell)}
    \leq
    c
    e^{-\mu\paren{L_k-\abs{\ell}}}
    E\paren[\big]{u_k|_{N_{k;n}^r}}^{1/2}.
  \end{equation*}
  By the above and \autoref{Prop_MetricsAreUniformlyComparable},
  for $z \in \Sigma_{\sigma,\tau}^\circ$ with $r_n(z) < r$
  \begin{equation*}
    \abs{\nabla\rho_k(z)}
    \leq
    c r^{-\mu}r_n(z)^{\mu-1}
    E\paren[\big]{u_k|_{N_{k;n}^r}}^{1/2}.
  \end{equation*}
  There is a corresponding estimate with $n$ replaced with $\nu(n)$.
  Hence,
  \begin{equation*}
    \Abs{\nabla\rho_k(z)}_{L^p(N_{k;n}^s)}^p
    \leq
    \frac{cr^{-\mu p}}{(\mu-1)p+2}
    s^{(\mu-1)p+2}
    E\paren[\big]{u_k|_{N_{k;n}^r}}^{p/2}.
  \end{equation*}
  Since $(\mu-1)p + 2 > 0$,
  the right-hand converges to zero as $s$ converges to zero.
\end{proof}

\begin{proof}[Proof of \autoref{Prop_KuranishiModelCoversGromovNeighborhood}]
  Let $k \geq K$ and $\kappa \in \ker\fd_{u_0}$ with $\abs{\kappa} < \delta_\kappa$.
  Let $\zeta_{k;\kappa}$ be as in \autoref{Prop_GromovNeighborhoodGraph}.
  By \autoref{Prop_FuseKComplementsImR} the latter can be uniquely written as  
  \begin{equation*}
    \zeta_{k,\kappa} = \fuse_{\tau_k}(\lambda_{k;\kappa}) + \pr_1\circ\fr_{\tilde u_{\sigma_k,\tau_k;J_k;\kappa}}\eta_{k;\kappa}
    \qwithq \lambda_{k;\kappa} \in \ker\fd_{u_0}.
  \end{equation*}
  It remains to be proved that after possibly increasing $K$ for every $k \geq K$ there exists a $\kappa \in \ker \fd_{u_0}$ with $\abs{\kappa} < \delta_\kappa$ and $\lambda_{k;\kappa} = 0$.
  The following statement is a consequence of \autoref{Eq_TildeUSigmaTauKappaTildeUSigmaTau0},  \autoref{Eq_ZetaBoundedByKappa}, and the fact that $\fr_{\tilde u_{\sigma_k,\tau_k;J_k;\kappa}}$ depends smoothly on $\kappa$ when interpreted as a family of operators on a fixed Banach space $L^p\Omega^{0,1}(\Sigma_{\sigma_k,\tau_k}, \tilde u_{\sigma_k,\tau_k;J_k;0}^*TX)\oplus\sO$ using parallel transport along geodesics.
  If $\delta_\kappa$ is sufficiently small, then for every $\kappa$, $u_k$ can be written in the form
  \begin{equation*}
    u_k = \exp_{\tilde u_{\sigma_k,\tau_k;J_k,0}}\paren[\big]{\fuse_{\tau_k}(\kappa + \lambda_{k;\kappa}) + \fr_{\tilde u_{\sigma_k,\tau_k;J_k;0}}\hat\eta_{k;\kappa} + \fe_{k;\kappa}},
  \end{equation*}
  with $\fe_{k;\kappa}$ satisfying $\limsup_{k\to\infty}\Abs{\fe_{k;0}}_{W^{1,p}} = 0$ and a quadratic estimate
  \begin{equation}
    \label{Eq_ErrorTermIsQuadraticInKappa}
    \Abs{\fe_{k;\kappa_1} - \fe_{k;\kappa_2}}_{W^{1,p}} \leq c \paren*{\abs{\kappa_1} + \abs{\kappa_2}}\abs{\kappa_1-\kappa_2}
  \end{equation}
  It follows from \autoref{Prop_FuseKComplementsImR}  that for $\abs{\kappa} \leq \delta_\kappa$, 
  \begin{equation*}
    \kappa + \lambda_{k;\kappa} + \pi(\fe_{k;\kappa}) = 0,
  \end{equation*}
  where $\pi$ denotes the projection on $\fuse_{\tau_k}(\ker\fd_{u_0})$ precomposed $\fuse_{\tau_k}^{-1}$;
  the latter is defined because $\fuse_{\tau_k}$ is injective on $\ker\fd_{u_0}$ provided $k$ is sufficiently large.
  Thus, the existence of a unique small $\kappa$ such that $\lambda_{k;\kappa} = 0$ is a consequence of \autoref{Eq_ErrorTermIsQuadraticInKappa} and the Banach fixed point theorem applied to the map $\kappa \mapsto -\pi(\fe_{k;\kappa})$. 
\end{proof}

\subsection{The leading order term of the obstruction on ghost components}
\label{Sec_LeadingOrderTermOfObstructionGhostComponents}

Assume the situation of \autoref{Sec_KuranishiModel}.
The purpose of this subsection is to analyze the leading order term of part of the obstruction map $\ob$ constructed in \autoref{Sec_KuranishiModel}.
This construction requires a choice of partition of $S$ and a choice of lift $\sO \subset L^p\Omega^{0,1}(\Sigma_0,u_0^*TX)$ of $\coker\fd_{u_0}$. 
The following paragraphs introduce a particular choice tailored to the upcoming discussion.

Let $C \subset \Sigma_0$  be a ghost component of $u_0$; see \autoref{Sub_GhostComponents} for the definitions of a ghost component and related notation.
Denote by
\begin{equation*}
  x_0 \in X
\end{equation*}
the constant value which $u_0$ takes on $C$.

To simplify the upcoming discussion, we will make the following assumption, which will be satisfied in the situation considered in the proof of \autoref{Thm_LimitConstraints}.

\begin{hypothesis}
  \label{Hyp_BubbleAtOnePoint}
  $S_{C}^\rext$ consists of one point, that is: 
  $C$ and $\Sigma_0\setminus C$ meet at one node.
\end{hypothesis}

Denote by $B \subset C$ the base-locus of the dualizing sheaf of $\check C$, cf.~\autoref{Prop_BaseLocusOfDualizingSheaf}.
If $B$ does not contain the node at which $C$ and $\Sigma_0\setminus C$ meet,
then set $B_0 \coloneq \emptyset$;
otherwise,
denote by $B_0$ the connected component of $B$ containing the node.
Set
\begin{equation*}
  C_\basefree \coloneq C \setminus B_0 \qandq \Sigma_\clubsuit \coloneq \Sigma_0 \setminus C_\basefree
\end{equation*}
and abbreviate
\begin{equation*}
  \nu_\basefree
  \coloneq
  \nu_{C_\basefree}
  \qandq
  \nu_\clubsuit
  \coloneq
  \nu_{\Sigma_\clubsuit}.
\end{equation*}
The significance of this construction is as follows.
By \autoref{Prop_BaseLocusOfDualizingSheaf},
every connected component of $C_\basefree$ attaches to $\Sigma_\clubsuit$ at a unique node;
moreover: these nodes are not contained in the base-locus of the dualizing sheaf of $C_\basefree$.

The partition of the set of nodes we choose is,
with the notation from \autoref{Sub_GhostComponents},
\begin{equation}
  \label{Eq_SetsOfNodes}
  S = S_1 \amalg S_2
  \qwithq
  S_1
  \coloneq
  S_{\Sigma_\clubsuit}^\rint
  \amalg
  S_{C_\basefree}^\rint
  \qandq
  S_2
  \coloneq
  S_{\Sigma_\clubsuit}^\rext
  \amalg
  S_{C_\basefree}^\rext,
\end{equation}
that is: first, we will smooth the interior nodes of $\Sigma_\clubsuit$ and $C_\bullet$ and then the exterior nodes connecting them.

Next we  discuss the choice of the obstruction space $\sO$.
Set $\fd_{u_0,\clubsuit} \coloneq \fd_{u_0|_{\Sigma_\clubsuit}}$ and let
\begin{equation*}
  \sO_\clubsuit
  \subset
  \Omega^{0,1}(\Sigma_\clubsuit,u_0^*TX)
  \subset
  L^p\Omega^{0,1}(\Sigma_\clubsuit,u_0^*TX)
\end{equation*}
be a lift of $\coker\fd_{u_0,\clubsuit}$ such that every $o \in \sO_\clubsuit$ vanishes in a neighborhood of $S_{\Sigma_\clubsuit}^\rext$ if $B_0$ is empty, and over all of $B_0$ if $B_0$ is non-empty. 
Furthermore,
let
\begin{equation*}
  \sO_\basefree
  \subset
  \Omega^{0,1}(C_\basefree,\C)\otimes_\C T_{x_0}X
  \subset
  L^p\Omega^{0,1}(C_\basefree,u_0^*TX)
\end{equation*}
be a lift of $\coker\paren{\delbar\otimes_\C\one}$.

Every $\xi \in  W^{1,p}\Gamma(\Sigma_\clubsuit,\nu_\clubsuit;u_0^*TX)$ can be extended to $\Sigma_0$ in the following way.
Given $n \in S_{\Sigma_\clubsuit}^\rext$, extend $\xi$ to a constant section taking value $\xi(n)$ over the connected component of the nodal curve $(C_\basefree,\nu_\basefree)$ containing $\nu(n)$. 
This defines an inclusion
\begin{equation}
  \label{Eq_ExtendW1PStar}
  W^{1,p}\Gamma(\Sigma_\clubsuit,\nu_\clubsuit;u_0^*TX) \subset W^{1,p}\Gamma(\Sigma_0,\nu_0;u_0^*TX).
\end{equation}
Furthermore,
extension by zero defines inclusions
\begin{equation*}
  L^p\Omega^{0,1}(\Sigma_\clubsuit,u_0^*TX) \subset L^p\Omega^{0,1}(\Sigma_0,u_0^*TX) \qandq
  L^p\Omega^{0,1}(C_\basefree,u_0^*TX) \subset L^p\Omega^{0,1}(\Sigma_0,u_0^*TX).
\end{equation*}
Set
\begin{equation*}
  \sO \coloneq \sO_\clubsuit \oplus \sO_\basefree.
\end{equation*}

\begin{prop}
  \label{Prop_DescriptionOfKernelAndCokernel}
   The map \autoref{Eq_ExtendW1PStar} induces an isomorphism $\ker\fd_{u_0,\clubsuit} \iso \ker \fd_{u_0}$ and $\sO$ is a lift of $\coker\fd_{u_0}$.
\end{prop}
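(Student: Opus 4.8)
The plan is to exploit that $C_\clubsuit$ and $C_\ghost$ are unions of connected components of $\Sigma_0$, so that every function space and operator in sight splits into a $C_\clubsuit$-- and a $C_\ghost$--part. Writing a nodal vector field along $u_0$ as a pair $\xi=(\xi_\clubsuit,\xi_\ghost)$ with $\xi_\clubsuit=\xi|_{C_\clubsuit}$ and $\xi_\ghost=\xi|_{C_\ghost}$, the space $W^{1,p}\Gamma(\Sigma_0,\nu_0;u_0^*TX)$ is exactly the subspace of $W^{1,p}\Gamma(C_\clubsuit,\nu_\clubsuit;u_0^*TX)\oplus W^{1,p}\Gamma(C_\ghost,\nu_\ghost;u_0^*TX)$ cut out by the matching conditions $\xi_\clubsuit(n)=\xi_\ghost(\nu_0(n))$ for $n\in S_{C_\clubsuit}^\rext$ — the point evaluations making sense since $W^{1,p}\into C^0$ — and $\fd_{u_0}$ is the restriction to this subspace of the direct sum operator $\fd_{u_0|_{C_\clubsuit}}\oplus\fd_{u_0|_{C_\ghost}}$ (cf. \autoref{Rmk_NodalKernelAndCokernel}). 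Since $u_0$ is constant on $C_\ghost$, one has $\rd(u_0|_{C_\ghost})=0$ and $u_0^*TX|_{C_\ghost}$ is the trivial bundle with fibre $T_{x_0}X$, so by \autoref{Def_JHolomorphicLinearization} the operator $\fd_{u_0|_{C_\ghost}}$ equals $\delbar\otimes_\C\one$; as $\check C_\ghost$ is compact and connected, $\ker(\delbar\otimes_\C\one)$ consists of the constant $T_{x_0}X$--valued sections. (We tacitly assume $S_{C_\clubsuit}^\rext\neq\emptyset$, which holds whenever $\Sigma_0$ is connected — the only case relevant to \autoref{Thm_LimitConstraints} — and is needed even to make sense of the extension map \eqref{Eq_ExtendW1PStar}.)

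For the statement about kernels: if $\xi=(\xi_\clubsuit,\xi_\ghost)\in\ker\fd_{u_0}$, then $(\delbar\otimes_\C\one)\xi_\ghost=0$ forces $\xi_\ghost\equiv c$ for some $c\in T_{x_0}X$, and the matching conditions give $\xi_\clubsuit(n)=c$ for all $n\in S_{C_\clubsuit}^\rext$, so $\xi_\clubsuit\in W_\star^{1,p}\Gamma(C_\clubsuit,\nu_\clubsuit;u_0^*TX)$ with $\fd_{u_0,\star}\xi_\clubsuit=\fd_{u_0|_{C_\clubsuit}}\xi_\clubsuit=0$ by \eqref{Eq_DStar}. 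Conversely, the map \eqref{Eq_ExtendW1PStar} sends $\xi_\clubsuit\in\ker\fd_{u_0,\star}$ to $(\xi_\clubsuit,c)$, with $c$ the common value of $\xi_\clubsuit$ on $S_{C_\clubsuit}^\rext$, and $\fd_{u_0}(\xi_\clubsuit,c)=(\fd_{u_0|_{C_\clubsuit}}\xi_\clubsuit,(\delbar\otimes_\C\one)c)=0$. These two constructions are mutually inverse, and \eqref{Eq_ExtendW1PStar} is manifestly injective, so it restricts to an isomorphism $\ker\fd_{u_0,\star}\iso\ker\fd_{u_0}$.

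For the statement about $\sO$, I will show that $\sO=\sO_\star\oplus\sO_\ghost$ is a linear complement of $\im\fd_{u_0}$ inside $L^p\Omega^{0,1}(\Sigma_0,u_0^*TX)=L^p\Omega^{0,1}(C_\clubsuit,u_0^*TX)\oplus L^p\Omega^{0,1}(C_\ghost,u_0^*TX)$; since $\sO_\star\iso\coker\fd_{u_0,\star}$ and $\sO_\ghost\iso\coker(\delbar\otimes_\C\one)$ are finite dimensional, this yields at once the dimension equality and \eqref{Eq_OFillsCokernel}. Disjointness: if $o_\star+o_\ghost=\fd_{u_0}\xi$ with $\xi=(\xi_\clubsuit,\xi_\ghost)$, comparing $C_\ghost$--parts gives $o_\ghost=(\delbar\otimes_\C\one)\xi_\ghost\in\sO_\ghost\cap\im(\delbar\otimes_\C\one)=\set{0}$; then $\xi_\ghost$ is constant, the matching conditions put $\xi_\clubsuit$ in $W_\star^{1,p}\Gamma(C_\clubsuit,\nu_\clubsuit;u_0^*TX)$, and comparing $C_\clubsuit$--parts gives $o_\star=\fd_{u_0,\star}\xi_\clubsuit\in\sO_\star\cap\im\fd_{u_0,\star}=\set{0}$. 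Spanning: given $\eta=(\eta_\clubsuit,\eta_\ghost)$, first write $\eta_\ghost=(\delbar\otimes_\C\one)\psi+o_\ghost$ using that $\sO_\ghost$ lifts $\coker(\delbar\otimes_\C\one)$; choose $\chi_\clubsuit\in W^{1,p}\Gamma(C_\clubsuit,\nu_\clubsuit;u_0^*TX)$ supported in disjoint neighbourhoods of the points of $S_{C_\clubsuit}^\rext$ (hence $\nu_\clubsuit$--invariant and vanishing near $S_{C_\clubsuit}^\rint$) with $\chi_\clubsuit(n)=\psi(\nu_0(n))$ for $n\in S_{C_\clubsuit}^\rext$; and write $\eta_\clubsuit-\fd_{u_0|_{C_\clubsuit}}\chi_\clubsuit=\fd_{u_0,\star}\xi_\clubsuit'+o_\star$ using that $\sO_\star$ lifts $\coker\fd_{u_0,\star}$, where $\xi_\clubsuit'\equiv b$ on $S_{C_\clubsuit}^\rext$. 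Then $\xi_\clubsuit\coloneq\xi_\clubsuit'+\chi_\clubsuit$ and $\xi_\ghost\coloneq\psi+b$ satisfy $\xi_\clubsuit(n)=b+\psi(\nu_0(n))=\xi_\ghost(\nu_0(n))$, so $\xi\coloneq(\xi_\clubsuit,\xi_\ghost)$ lies in $W^{1,p}\Gamma(\Sigma_0,\nu_0;u_0^*TX)$ and $\fd_{u_0}\xi+o_\star+o_\ghost=\eta$.

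The splitting in the first step and the two inclusion arguments are routine once one keeps careful track of which nodal structure is in play on which component. The one genuinely delicate point is the spanning argument: because $\sO_\star$ is attached to the \emph{constrained} operator $\fd_{u_0,\star}$ of \eqref{Eq_DStar} rather than to $\fd_{u_0|_{C_\clubsuit}}$, one cannot directly prescribe the (generally non-constant) boundary data on $S_{C_\clubsuit}^\rext$ needed to glue the $C_\clubsuit$-- and $C_\ghost$--solutions together, and the cut-off field $\chi_\clubsuit$ is exactly the device that transfers the boundary values of $\psi$ onto $C_\clubsuit$ before $\sO_\star$ is invoked. As a consistency check, once the kernel isomorphism is known the dimension equality $\dim\sO=\dim\coker\fd_{u_0}$ can alternatively be read off from the index formula \eqref{Eq_IndexFormula}, using $\langle u_0^*c_1(X,J),[C_\ghost]\rangle=0$ and the behaviour of $p_a$ under $\Sigma_0=C_\clubsuit\amalg C_\ghost$; but the complement argument above avoids this computation.
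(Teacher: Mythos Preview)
Your proof is correct. The kernel argument matches the paper's conclusion directly, and your spanning/disjointness argument for $\sO$ is a valid elementary verification that $\sO$ complements $\im\fd_{u_0}$.

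The route differs from the paper's, however. The paper introduces the intermediate nodal structure $\nu_\amalg$ that simply disconnects $C_\clubsuit$ from $C_\ghost$, then applies the Snake-Lemma machinery of \autoref{Rmk_NodalKernelAndCokernel} twice: once to pass from $\fd_{u_0,\amalg}=\fd_{u_0,\clubsuit}\oplus(\delbar\otimes_\C\one)$ to $\fd_{u_0}$ via the difference map $\diff$, and once to pass from $\fd_{u_0,\clubsuit}$ to $\fd_{u_0,\star}$ via the evaluation map $\ev$. This yields a short exact sequence
\begin{equation*}
0 \to \coker\fd_{u_0,\star} \to \coker\fd_{u_0} \to \coker(\delbar\otimes_\C\one) \to 0,
\end{equation*}
from which the lifting property of $\sO=\sO_\star\oplus\sO_\ghost$ is immediate. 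Your approach bypasses the exact sequences entirely and establishes the complement property by bare hands; the cut-off field $\chi_\clubsuit$ is precisely the concrete analogue of the surjectivity of the connecting map hidden in the paper's diagram chase. Your argument is shorter and more self-contained, while the paper's makes the filtration structure of $\coker\fd_{u_0}$ explicit --- a structure that is conceptually helpful for the later analysis in \autoref{Sec_LeadingOrderTermOfObstructionGhostComponents} but not logically required for the proposition itself.
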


\begin{proof}
  Denote by $\nu_\amalg$ the nodal structure on $\Sigma_0$ which agrees with $\nu_0$ on the complement of $S_2$ and is the identity on $S_2$.
  This nodal structure disconnects $\Sigma_\clubsuit$ and $C_\basefree$.
  Denote by
  \begin{equation*}
    \fd_{u_0,\amalg} \co W^{1,p}\Gamma(\Sigma_0,\nu_\amalg;u_0^*TX) \to L^p\Omega^{0,1}(\Sigma_0,u^*TX)
  \end{equation*}
  the operator induced by $\fd_{u_0}$.
  Define $V_-$ and $\diff\co \ker\fd_{u_0,\amalg} \to V_-$ as in \autoref{Rmk_NodalKernelAndCokernel} with $S_2$ instead of $S$.
  As is explained in \autoref{Rmk_NodalKernelAndCokernel},
  \begin{equation*}
    \ker\fd_{u_0} = \ker\diff
  \end{equation*}
  and there is a short exact sequence
  \begin{equation*}
    \begin{tikzcd}[column sep=small]
      0 \ar[r] & \coker\diff  \ar[r] & \coker\fd_{u_0} \ar[r] & \coker\fd_{u_0,\amalg} \ar[r] & 0.
    \end{tikzcd}
  \end{equation*}

  The domain and codomain of $\fd_{u_0,\amalg}$ decompose as
  \begin{align*}
    W^{1,p}\Gamma(\Sigma_0,\nu_\amalg;u_0^*TX)
    &=
      W^{1,p}\Gamma(\Sigma_\clubsuit,\nu_\clubsuit;u_0^*TX) \oplus W^{1,p}\Gamma(C_\basefree,\nu_\basefree;\C)\otimes_\C T_{x_0} X \qand
    \\
    L^p\Omega^{0,1}(\Sigma_0,u_0^*TX)
    &=
      L^p\Omega^{0,1}(\Sigma_\clubsuit,u_0^*TX) \oplus L^p\Omega^{0,1}(C_\basefree,\C)\otimes_\C T_{x_0} X.
  \end{align*}
  With respect to these decompositions
  \begin{equation*}
    \fd_{u_0,\amalg}
    =
    \begin{pmatrix}
      \fd_{u_0,\clubsuit} & 0 \\
      0 & \delbar\otimes_\C\one
    \end{pmatrix}
  \end{equation*}
  with $\fd_{u_0,\clubsuit}=\fd_{u_0|_{\Sigma_\clubsuit}}$ and $\delbar\otimes_\C\one = \fd_{u_0|_{C_\basefree}}$ is the standard Cauchy--Riemann operator since $u_0$ is constant on $C_\basefree$. 
  Therefore,
  \begin{equation*}
    \ker \fd_{u_0,\amalg} = \ker \fd_{u_0,\clubsuit} \oplus \ker(\delbar\otimes_\C\one )\qandq
    \coker \fd_{u_0,\amalg} = \coker \fd_{u_0,\clubsuit} \oplus \coker\paren{\delbar\otimes_\C \one}.  
  \end{equation*}
  The task at hand is to understand $\ker\fd_{u_0}$ and $\coker \fd_{u_0}$ in terms of the above.

  Since elements of $\ker(\delbar\otimes_\C\one)$ are locally constant,
  $\ker(\delbar\otimes_\C\one)$ has a direct summand $T_{x_0} X$ for every connected component of $(C_\basefree,\nu_\basefree)$. 
  \autoref{Hyp_BubbleAtOnePoint} and \autoref{Prop_BaseLocusOfDualizingSheaf} imply that there is one connected component for each node in $S_{\Sigma_\clubsuit}^\rext$.
  Therefore,
  \begin{equation*}
    \ker(\delbar\otimes_\C\one) = V_- = \Map(S_{\Sigma_\clubsuit}^\rext,T_{x_0}X).
  \end{equation*}
  With respect to this identification the map $\diff\co \ker \fd_{u_0,\clubsuit}\oplus  \ker(\delbar\otimes_\C\one) \to V_-$ is given by
  \begin{equation*}
    \diff(\kappa,v)(n) = \kappa(n) - v(n).
  \end{equation*}
  Therefore, $\ker\diff \iso \ker \fd_{u_0,\clubsuit}$ and  $\coker\diff = \set{0}$, which, by \autoref{Rmk_NodalKernelAndCokernel}, completes the proof of the proposition. 
\end{proof}

Construct the Kuranishi model as in \autoref{Sec_KuranishiModel} for the above choices of $S = S_1 \amalg S_2$ and $\sO$.

As a final piece of preparation,
let us make the following observation,
which by \autoref{Rmk_NodalKernelAndCokernel}, in particular,
gives an explicit description of $\sO_\basefree^* = \coker\paren{\delbar\otimes_\C\one}^*$.

\begin{prop}
  \label{Prop_HGamma=SectionsOfDualizingSheaf}
  Let $(C,\nu)$ be a nodal Riemann surface with nodal set $S$.
  Denote the corresponding nodal curve by $\check C$ and its dualizing sheaf by $\omega_{\check C}$.
  Let $q \in (1,2)$ be such that $1/p+1/q = 1$.
  Define 
  \begin{equation*}
    \sH \subset L^q\Omega^{0,1}(C,\C)
  \end{equation*}
  to be the subspace of solutions  $\bar\zeta$ of the distributional equation
  \begin{equation}
    \label{Eq_Delbar*Eta=Deltas}
    \delbar^*\bar\zeta = \sum_{n \in S} f(n)\delta(n)
  \end{equation}
  for some weight function $f \co S \to \C$ with $f\circ \nu = -f$.
  Here $\delta(n)$ is the Dirac delta distribution at $n$.
  The subspace $\sH$ satisfies
  \begin{equation*}    
    \overline \sH= H^0(\check C,\omega_{\check  C}).
  \end{equation*}
\end{prop}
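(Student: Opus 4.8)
The plan is to identify $\overline{\sH}$ with the space of meromorphic $1$--forms on $C$ (the normalization of $\check C$) having at worst simple poles along $S$ and satisfying the residue condition \eqref{Eq_DualizingSheafResidueCondition}. Recall from the proof of \autoref{Prop_SemiStableImpliesDualizingSheafBasePointFree} that the dualizing sheaf is constructed as $\omega_{\check C} = \pi_*\tilde\omega_{\check C}$, where $\pi\colon C \to \check C$ is the normalization and $\tilde\omega_{\check C}\subset K_C(S)$ is the subsheaf of meromorphic $1$--forms with at worst simple poles along $S$ whose residues satisfy $\Res_n\zeta + \Res_{\nu(n)}\zeta = 0$ for every $n\in S$. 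Since $\pi$ is finite, $H^0(\check C,\omega_{\check C}) = H^0(\check C,\pi_*\tilde\omega_{\check C}) = H^0(C,\tilde\omega_{\check C})$, so it suffices to show that $\overline{\sH}$ is exactly the space of such forms. Throughout, fix a Hermitian metric on $C$ in the conformal class determined by $j$; this is what defines $\delbar^*$ and the $L^q$--norm.

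The analytic input is a local description of the elements of $\sH$. Given $\bar\zeta\in L^q\Omega^{0,1}(C,\C)$, interpret $\delbar^*\bar\zeta$ as the distribution $\xi\mapsto\langle\bar\zeta,\delbar\xi\rangle_{L^2}$. On $C\setminus S$ the equation $\delbar^*\bar\zeta = 0$ together with $\delbar\bar\zeta = 0$ (automatic in the top Dolbeault degree of a curve) says that $\bar\zeta$ is a weak solution of the elliptic equation $\Delta_{\delbar}\bar\zeta = 0$; hence $\bar\zeta$ is smooth on $C\setminus S$ and harmonic, so its conjugate $\zeta \coloneq \overline{\bar\zeta}$ is a holomorphic $(1,0)$--form there. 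Because $\zeta\in L^q$ with $q\in(1,2)$, in a local holomorphic coordinate $z$ centered at $n\in S$ one has $\int_{\abs z<1}\abs z^{-q} < \infty$ while $\int_{\abs z<1}\abs z^{-2} = \infty$, so the Laurent expansion of $\zeta$ at $n$ has at worst a simple pole: $\zeta = a_n\,\dz/z + (\text{holomorphic})$ with $a_n = \Res_n\zeta$. A direct computation using the standard identity $\tfrac{\del}{\del\bar z}(1/z) \propto \delta_0$ then shows that for such a $\zeta$ one has $\delbar^*\bar\zeta = \sum_{n\in S} c\,a_n\,\delta_n$ with $c\neq 0$ a universal constant depending only on the normalization of the metric; conversely, any meromorphic $1$--form with at worst simple poles along $S$ produces in this way a $\bar\zeta\in L^q\Omega^{0,1}(C,\C)$ whose $\delbar^*$ is supported on $S$.

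Combining these facts: $\bar\zeta\in\sH$ if and only if $\zeta = \overline{\bar\zeta}$ is a meromorphic $1$--form on $C$ with at worst simple poles along $S$ whose residues satisfy $c\,\Res_n\zeta = g(n)$ for a function $g\colon S\to\C$ with $g\circ\nu = -g$; since $c\neq 0$, the latter is equivalent to $\Res_n\zeta + \Res_{\nu(n)}\zeta = 0$ for every $n\in S$, which is precisely \eqref{Eq_DualizingSheafResidueCondition}. Therefore $\overline{\sH} = H^0(C,\tilde\omega_{\check C}) = H^0(\check C,\omega_{\check C})$. The step requiring the most care is the local distributional analysis at the nodes: one must verify that membership in $L^q$ together with being distributionally $\delbar^*$--closed away from $S$ cuts out exactly the meromorphic forms with simple poles — ruling out both essential singularities and higher-order poles — and that the resulting Dirac mass at $n$ is a \emph{nonzero} multiple of $\Res_n\zeta$. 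The precise value of the constant $c$ is irrelevant, but its nonvanishing is exactly what makes the constraint $g\circ\nu = -g$ translate into the residue condition. (Alternatively, one could read off $\sH = (\coker\fd)^*$ from \autoref{Rmk_NodalKernelAndCokernel} and invoke Serre duality $H^1(\check C,\mathcal O_{\check C})^* \iso H^0(\check C,\omega_{\check C})$ together with the normalization sequence, but the direct argument above keeps the residue bookkeeping transparent.)
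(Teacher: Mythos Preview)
Your argument is correct and follows essentially the same route as the paper: identify $H^0(\check C,\omega_{\check C})$ with $H^0(C,\tilde\omega_{\check C})$ via the description of the dualizing sheaf, and then match elements of $\sH$ with meromorphic $1$--forms having at worst simple poles along $S$ satisfying the residue condition. The paper is terser—asserting the $L^q$ bound and elliptic regularity where you spell out the Laurent-expansion and distributional computations—but the content is the same.
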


\begin{proof}
  If $\check C$ is smooth,
  then $\check C = C$ and the dualizing sheaf $\omega_C$ is simply the canonical sheaf $K_C$.
  By the Kähler identities,
  \begin{align*}
    \overline \sH
    &=
      \overline{\ker\paren[\big]{\delbar^*\co \Omega^{0,1}(C,\C) \to \Omega^0(C,\C) }} \\
    &=
      \overline{\ker\paren[\big]{\del\co \Omega^{0,1}(C,\C) \to \Omega^{1,1}(C,\C) }} \\
    &\iso
      \ker\paren[\big]{\delbar\co \Omega^{1,0}(C,\C) \to \Omega^{1,1}(C,\C)} \\
    &\iso
      H^0(C,K_C).
  \end{align*}
  Recall from the proof of \autoref{Prop_BaseLocusOfDualizingSheaf},
  that the dualizing sheaf of $\check C$ is constructed as follows;
  Denote by $\pi\co C \to \check C$ the normalization map.
  Denote by $\tilde \omega_{\check C}$ the subsheaf of $K_C(S)$ whose sections $\zeta$ satisfy
  \begin{equation*}
    \Res_n \zeta + \Res_{\nu(n)}\zeta = 0
  \end{equation*}
  for every $n \in S$, with $\Res_n\eta$ being the residue of the meromorphic $1$--form $\eta$ at $n$. 
  The dualizing sheaf $\omega_{\check C}$ then is
  \begin{equation*}
    \omega_{\check C} = \pi_*\tilde \omega_{\check C}.
  \end{equation*}
  Therefore,
  $H^0(\check C,\omega_{\check C}) = H^0(C,\tilde\omega_{\check C})$.
  By definition every $\zeta \in H^0(C,\tilde\omega_{\check C})$ is smooth away from $S$ and
  blows-up at most like $1/\mathrm{dist}(n,\cdot)$ at $n$ for $n \in S$; hence: $\zeta \in L^q\Omega^{0,1}(C,\C)$.
  The residue condition amounts to \autoref{Eq_Delbar*Eta=Deltas}.
  This shows that $H^0(\check C,\omega_{\check C}) \subset \sH$.
  Conversely, by elliptic regularity every $\zeta \in \sH$ defines an element of $H^0(\check C,\omega_{\check C})$.
\end{proof}

The following is the technical backbone of the proof of \autoref{Thm_LimitConstraints}.
The reader is advised to recall \autoref{Def_ObstructionMap} and \autoref{Prop_KuranishiModelCoversGromovNeighborhood} because these are the main ingredients of the proof.

\begin{lemma}
  \label{Lem_KuranishiModelGhostComponent}
  Denote by $\check C_\basefree$ the nodal curve corresponding to $(C_\basefree,\nu_\basefree)$.
  There is a constant $c > 0$ such that the obstruction map defined in \autoref{Def_ObstructionMap} satisfies the following. For every $(\sigma,\tau;J;\kappa) \in \Delta\times\sU\times\sI$,
  $\zeta \in H^0(\check C_\basefree,\omega_{\check C_\basefree})$,
  and $v \in T_{x_0}X$
  \begin{equation*}
    \Inner[\big]{
      \pull_{\sigma,\tau;J;\kappa}(\ob(\sigma,\tau;J;\kappa)),
      \pull_{\sigma,\tau;J;\kappa}(\bar\zeta\otimes_\C v)
    }_{L^2}    
    =
    \sum_{n \in S_{C_\basefree}^\rext}
    \pi
    \Inner[\big]{
      \paren*{\zeta(n)\otimes_\C \rd_{\nu_0(n)}u_{\sigma,\tau_1,0;J;\kappa}}(\tau_n),
      v
    }
    +
    \fe
  \end{equation*}
  with
  \begin{align*}
    \abs{\fe}
    &\leq
      c\abs{\zeta}\abs{v}
      \epsilon_\basefree^{\frac12}
      \sum_{n \in S_{C_\basefree}^\rext}
      \epsilon_n \\
    \intertext{and}
    \epsilon_\basefree
    &\coloneq
      \max\set[\big]{ \epsilon_n : n \in S_{C_\basefree}^\rint \cup S_{C_\basefree}^\rext }.
  \end{align*}
\end{lemma}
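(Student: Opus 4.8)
The plan is to read the leading term off the defining equation of the Kuranishi map and localise the computation near the value $x_0\in X$ that $u_0$ takes on $C_\ghost$. Throughout write $u\coloneq\exp_{\tilde u_{\sigma,\tau;J;\kappa}}(\hat\xi(\sigma,\tau;J;\kappa))$.

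\textbf{Step 1 (rewrite the pairing).} By \autoref{Prop_KuranishiModel_Step2} and \autoref{Def_ObstructionMap} the pair $(\hat\xi,\hat o)$ satisfies $\fF_{\tilde u_{\sigma,\tau;J;\kappa}}(\hat\xi)+\pull_{\sigma,\tau;J;\kappa}(\ob(\sigma,\tau;J;\kappa))=0$; hence, by \autoref{Def_FU}, $\pull_{\sigma,\tau;J;\kappa}(\ob)=-\Psi_{\hat\xi}^{-1}\,\delbar_J(u,j_{\sigma,\tau})$ with $\Psi_{\hat\xi}$ the parallel transport along $t\mapsto\exp_{\tilde u_{\sigma,\tau;J;\kappa}}(t\hat\xi)$. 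Since $\bar\zeta\otimes_\C v$ is extended by zero from $C_\ghost$ and every element of $\sO_\star$ vanishes near $S^\rext_{C_\clubsuit}$, the section $\pull_{\sigma,\tau;J;\kappa}(\bar\zeta\otimes_\C v)$ is supported on the part $\cG\subset\Sigma_{\sigma,\tau}$ coming from $C_\ghost$ together with the ghost halves of the attached necks, and this is disjoint from the supports of $\pull_{\sigma,\tau;J;\kappa}(\sO_\star)$, so only the $\sO_\ghost$-component of $\ob$ survives the pairing. Using that on $\cG$ all maps in play are $C^1$-close to $x_0$ (\autoref{Prop_SmallEnergyNeck}, \autoref{Hyp_UTau}, and \autoref{Eq_KuranishiModel_Step1_Estimate}, \autoref{Eq_KuranishiModel_Step2_Estimate}), that $\Psi_{\hat\xi}$ is correspondingly close to the identity, and Hölder's inequality on the small support of $\bar\zeta$ together with \autoref{Prop_QuadraticEstimate}, I would reduce the left-hand side to $-\Inner{\delbar_J(u,j_{\sigma,\tau}),\,\pull_{\sigma,\tau;J;\kappa}(\bar\zeta\otimes_\C v)}_{L^2(\cG)}$ up to an error of the claimed size, and replace $u$ by $\tilde u_{\sigma,\tau;J;\kappa}$ at the cost of another such error using \autoref{Eq_KuranishiModel_Step2_Estimate}.

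\textbf{Step 2 (affine model and the explicit neck formula).} On $\cG$ I would pass to geodesic normal coordinates at $x_0$, replace $J$ by the constant complex structure $J(x_0)$ and $g_\tau$ by the flat model metric (comparable by \autoref{Prop_MetricsAreUniformlyComparable}), and insert the formula of \autoref{Def_TildeUTau} for $\tilde u_{\sigma,\tau;J;\kappa}$ in terms of $u_1\coloneq u_{\sigma,\tau_1,0;J;\kappa}$; the errors introduced are controlled by \autoref{Prop_ErrorEstimate} and \autoref{Prop_QuadraticEstimate}. What is left is $-\int_{\cG}\Inner{\delbar(\tilde u_{\sigma,\tau;J;\kappa}-x_0),\,\bar\zeta\otimes_\C v}$, with $\delbar$ the ordinary Cauchy--Riemann operator on $T_{x_0}X$-valued functions. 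Near an external node $n\in S^\rext_{C_\ghost}$, in the coordinates of \autoref{Def_IotaTau} (so $\iota_\tau(z)=\tau_n/z$ under the identification $T_n\Sigma_0\otimes_\C T_{\nu_0(n)}\Sigma_0$), one has $\tilde u_{\sigma,\tau;J;\kappa}(z)-x_0=(u_1(z)-x_0)+\chi_\tau^n(z)\bigl(u_1(\iota_\tau(z))-x_0\bigr)$ with $u_1(\iota_\tau(z))-x_0=(\rd_{\nu_0(n)}u_1)\,\iota_\tau(z)+O(\abs{\iota_\tau(z)}^2)$. The leading interaction term $(\rd_{\nu_0(n)}u_1)\,\iota_\tau(z)$ is holomorphic in $z$ on the punctured disc, so $\delbar$ annihilates it and only $\delbar\chi_\tau^n$ contributes; integrating by parts and using the description of $\coker(\delbar\otimes_\C\one)^*$ in \autoref{Prop_HGamma=SectionsOfDualizingSheaf} — so that $\delbar^*\bar\zeta$ is a sum of Dirac masses at the \emph{internal} nodes of $C_\ghost$, which cancel in $\nu_\ghost$-pairs, while $\zeta$ is regular at the external nodes — the surviving contribution at $n$ is a circle integral $\oint_{\abs z=r}(\rd_{\nu_0(n)}u_1)\,\iota_\tau(z)\wedge\bar\zeta$, which by the residue theorem (carrying the conjugation in the $L^2$-pairing and the usual factor $\pi$ from $\tfrac{\del}{\del\bar z}(\tfrac1z)=\pi\delta_0$) equals $\pi\,\Inner{(\zeta\otimes_\C\rd_{\nu_0(n)}u_1)(\tau_n),\,v}$. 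Summing over $n\in S^\rext_{C_\ghost}$ produces the stated main term.

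\textbf{Step 3 (error accounting), and the main difficulty.} It remains to check that every other contribution is at most $c\,\epsilon_\ghost^{1/2}\sum_{n\in S^\rext_{C_\ghost}}\epsilon_n$: the Taylor remainders $O(\abs{\iota_\tau(z)}^2)=O(\epsilon_n^2/r^2)$ paired against the at most simple pole of $\bar\zeta$ (each $O(\epsilon_n^{3/2})$), the $\delbar_J$-error of $u_1$ on $C_\ghost$ proper and on the necks at the internal ghost nodes, the error $\hat\xi$ from \autoref{Eq_KuranishiModel_Step2_Estimate}, and the affine-model and transport errors of Steps~1--2. I expect this accounting to be the crux. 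The structural input that makes it go through is that, because $u_0$ — hence $u_1$ — is nearly constant on $C_\ghost$ and its internal necks, the approximate smoothing there interacts only constant maps, so its $\delbar$-error on that region carries an extra factor of the small quantity $\Abs{\xi}_{C^1}$; combined with the fact (from \autoref{Prop_HGamma=SectionsOfDualizingSheaf} and $\nu_\ghost$ being the identity on $S^\rext_{C_\ghost}$) that $\delbar^*\bar\zeta$ has no atom at an external node, this prevents the zeroth-order matching data at the external nodes from entering the pairing, so that only the first-order data $\rd_{\nu_0(n)}u_1$, weighted by $\tau_n$, survives to leading order and the entire ghost-proper and internal-neck contribution falls to the error level.
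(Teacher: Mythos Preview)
Your overall strategy—pair $\ob$ with $\bar\zeta\otimes_\C v$, localise to the external necks of $C_\ghost$, and extract the leading term as a residue—matches the paper's, and your residue computation in Step~2 is equivalent to the paper's Step~3. The gap is that you treat $u_1 \coloneq u_{\sigma,\tau_1,0;J;\kappa}$ as merely ``nearly constant'' on $C_\ghost$, whereas the paper's Step~1 shows it is \emph{exactly} constant there: the first Kuranishi step can be re-run on $C_\clubsuit$ alone using $W^{1,p}_\star$ and $\sO_\star$, and by the uniqueness clause in \autoref{Prop_KuranishiModel_Step1} the constantly-extended output coincides with what the full two-step construction produces on $\Sigma_0$. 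This yields $o(\sigma,\tau_1;J;\kappa)\in\sO_\star$ (so its $\sO_\ghost$-component vanishes) and $\xi(\sigma,\tau_1;J;\kappa)$ constant on $C_\ghost$. Without this, your assertion that only the $\sO_\ghost$-component of $\ob$ survives the pairing is unjustified, and $\delbar_J(\tilde u_{\sigma,\tau;J;\kappa})+\pull_{\sigma,\tau;J;\kappa}(o)$ does not localise to the regions $r_n\leq 2R_0$ with $n\in S^\rext_{C_\ghost}$ as your Step~2 requires.

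The same exact constancy is what produces the sharp error bound. The delicate term is $\Inner{\fd_{\tilde u_{\sigma,\tau;J;\kappa}}\hat\xi,\,\pull_{\sigma,\tau;J;\kappa}(\bar\zeta\otimes_\C v)}_{L^2}$; a direct estimate via \autoref{Eq_KuranishiModel_Step2_Estimate} gives only $O(\sum_n\epsilon_n)$, not $O(\epsilon_\ghost^{1/2}\sum_n\epsilon_n)$. The paper's Step~4 first replaces $\fd_{\tilde u}$ by the flat $\delbar$ on $C^\circ_{\sigma,\tau}$—legitimate precisely because $\kappa$ and $\xi(\sigma,\tau_1;\kappa)$ are constant on $C_\ghost$, so \autoref{Prop_CompareDU1WithDU2} makes the discrepancy quadratically small—and then integrates $\Inner{\delbar\hat\xi,\bar\zeta\otimes_\C v}_{L^2(C^\circ_{\sigma,\tau})}$ by parts. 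Since $\delbar^*\bar\zeta=0$ on the punctured domain $C^\circ_{\sigma,\tau}$, only boundary circles of radius $\epsilon_m^{1/2}$ (for $m$ a ghost node) survive, and these contribute the factor $\epsilon_\ghost^{1/2}$. Your Step~3 gestures toward integration by parts but conflates it with the one used for the main term, and the phrase ``carries an extra factor of $\Abs{\xi}_{C^1}$'' does not describe this mechanism.
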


\begin{example}
  To understand better the significance of \autoref{Lem_KuranishiModelGhostComponent}, it is helpful to consider the following example. 
  Suppose that $(\Sigma_0,j_0,\nu_0)$ consists of two components: the higher genus ghost $C = C_\bullet$ and a spherical bubble $\Sigma_\clubsuit = \CP^1$ meeting at points
  \begin{equation*}
    n \in C \qandq \nu(n) \in \CP^1.
  \end{equation*}
 (In that case, $B=B_0=\emptyset$.) 
  In that case, there is only one smoothing parameter $\tau$, which, after trivializing the tangent spaces of $C$ and $\Sigma_\clubsuit$ at the nodes, can be thought of as a complex number, and $\epsilon = \abs{\tau}$. 
  By \autoref{Lem_KuranishiModelGhostComponent}, for every holomorphic $1$-form $\zeta \in H^0(C,\omega_C)$, 
\begin{equation}
  \label{Eq_KuranishiMapEstimate}
    \Inner[\big]{
      \pull_{\sigma,\tau;J;\kappa}(\ob(\sigma,\tau;J;\kappa)),
      \pull_{\sigma,\tau;J;\kappa}(\bar\zeta\otimes_\C v)
    }_{L^2}    
    =
    \pi
    \Inner[\big]{
      \paren*{\zeta(n)\otimes_\C \rd_{\nu_0(n)}u_{\sigma,0;J;\kappa}}(\tau),
      v
    }
    +
    \fe
  \end{equation}
  with
  \begin{equation*}
    \abs{\fe} \leq c\abs{\zeta}\abs{v}\epsilon^{3/2}.
  \end{equation*}
  Since $C$ has positive genus, there exists $\zeta \in H^0(C,\omega_c)$ such that $\zeta(n) \neq 0$. 
   If the restriction of $u_0$ to the bubble $\CP^1$ is unobstructed, then $u_{\sigma,0;J;\kappa} = u_0$ on $\CP^1$ for all $\sigma$ and $\kappa$.
   If, moreover, $\rd_{\nu(n)}u_0 \neq 0$, it follows that  the right-hand side of \eqref{Eq_KuranishiMapEstimate} is never zero unless $\epsilon=0$, and so $\ob(\sigma,\tau,J;\kappa) \neq 0$ for $\tau\neq 0$. 
   We conclude that in that case $u_0$ cannot be smoothed.
\end{example}

\begin{proof}[Proof of \autoref{Lem_KuranishiModelGhostComponent}]
  The proof is based on analyzing the expression
  \begin{equation*}
    0
    =
    \inner{
      \fF_{\tilde u_{\sigma,\tau;J;\kappa}}(\hat\xi(\sigma,\tau;J;\kappa))
      + \pull_{\sigma,\tau;J;\kappa}\paren*{o(\sigma,\tau_1;J;\kappa) + \hat o(\sigma,\tau;J;\kappa)}}{\pull_{\sigma,\tau;J;\kappa}(\bar\zeta\otimes_\C v)}_{L^2}
  \end{equation*}
  and the identity
  \begin{equation*}
    \ob(\sigma,\tau;J;\kappa) \coloneq o(\sigma,\tau_1;J;\kappa) + \hat o(\sigma,\tau;J;\kappa).
  \end{equation*}

  \setcounter{step}{0}
  \begin{step}
    \label{Step_KappaConstantOnC}
    The vector field $\xi(\sigma,\tau_1;J;\kappa)$ is constant on $C_\basefree$ and $o(\sigma,\tau_1;J;\kappa)$ is supported on $\Sigma_\clubsuit$;
    in particular,
    \begin{equation*}
      \inner{\pull_{\sigma,\tau;J;\kappa}(o(\sigma,\tau_1;J;\kappa))}{\pull_{\sigma,\tau;J;\kappa}(\bar\zeta\otimes_\C v)}_{L^2}
      =
      0.
    \end{equation*}
  \end{step}

  The construction in \autoref{Prop_KuranishiModel_Step1} can be carried out for $u_0|_{\Sigma_\clubsuit}$ with the choice of $\sO=\sO_\clubsuit$ and $S_2 = \emptyset$.
  For every $(\sigma,\tau_1,0;J) \in \Delta\times\sU$ and $\kappa \in \ker \fd_{u_0,\star}$ with $\abs{\kappa} < \delta_\kappa$ denote by $\xi(\sigma,\tau_1;J;\kappa)$ and $o(\sigma,\tau_1;J;\kappa)$ the solution of \autoref{Eq_KuranishiModel_Step1} obtained in this way.

  Henceforth,
  regard $\xi(\sigma,\tau_1;J;\kappa)$ as an element of $W^{1,p}\Gamma(\Sigma_0\nu_0;u_0^*TX)$ and $o(\sigma,\tau_1;J;\kappa)$ as an element of $\sO$.
  By construction these satisfy \autoref{Eq_KuranishiModel_Step1} for $u_0$ and with the choices of $\sO$ and $S = S_1 \amalg S_2$ made in the discussion preceding \autoref{Lem_KuranishiModelGhostComponent}. 
  Therefore and since $\ker \fd_{u_0,\star} = \ker \fd_{u_0}$,
  $\xi(\sigma,\tau_1;J;\kappa)$ and $o(\sigma,\tau_1;J;\kappa)$ are precisely the output produced by \autoref{Prop_KuranishiModel_Step1}.
  The first part of the assertion thus holds by construction.

  \begin{step}
    \label{Step_ExpansionOfDelbarTildeUSigmaTauKappa}
    The term
    \begin{equation*}
      \delbar_J(\tilde u_{\sigma,\tau;J;\kappa},j_{\sigma,\tau}) + \pull_{\sigma,\tau;J;\kappa}(o(\sigma,\tau_1;J;\kappa))
    \end{equation*}
    is supported in the regions where $r_n \leq 2R_0$ for some $n \in S_{C_\basefree}^\rext$.
    Set $x_n \coloneq u_{\sigma,\tau_1,0;J;\kappa}(n)$.
    Identifying $U_{x_n}$ with $\tilde U_{x_n}$ via $\exp_{x_n}$ in the region where $r_n \leq 2R_0$ the error term can be written as
    \begin{equation*}
      \delbar \chi_{\tau_2}^n \cdot u_{\sigma,\tau_1,0;J;\kappa}\circ \iota_{\tau_2}
      + \fq
    \end{equation*}
    with
    \begin{equation}
      \label{Eq_FirstErrorEstimate}
      \abs{\delbar \chi_{\tau_2}^n \cdot u_{\sigma,\tau_1,0;J;\kappa}\circ \iota_{\tau_2}}
      \leq
      c
      \epsilon_n
      \qandq
      \abs{\fq}
      \leq
      c
      \epsilon_n^2.
    \end{equation}
  \end{step}

  The proof is a refinement of that of \autoref{Prop_ErrorEstimate}.
  A priori,
  the error term $\delbar_J(\tilde u_{\sigma,\tau;J;\kappa},j_{\sigma,\tau}) +  \pull_{\sigma,\tau;J;\kappa}(o(\sigma,\tau_1;J;\kappa))$ is supported in the in the regions where $r_n \leq 2R_0$ for some $n \in S_2$.
  If $n \in S_{\Sigma_\clubsuit}^\rext$,  
  then it is immediate from \autoref{Def_TildeUTau} that $\tilde u_{\sigma,\tau;J;\kappa}$ agrees with $u_{\sigma,\tau_1,0;J;\kappa}$ in the region under consideration;
  hence, the error term vanishes.
  For $n \in S_{C_\basefree}^\rext$,  
  in the region under consideration and with the identifications having been made,
  \begin{equation*}
    \tilde u_{\sigma,\tau;J;\kappa}^\circ = \chi_{\tau_2}^n \cdot u_{\sigma,\tau_1,0;J;\kappa} \circ \iota_{\tau_2}.
  \end{equation*}
  Therefore,
  \begin{align*}
    \delbar_J (\tilde u_{\sigma,\tau;J;\kappa}^\circ,j_{\sigma,\tau})
    &=
      \delbar \chi_{\tau_2}^n \cdot u_{\sigma,\tau_1,0;J;\kappa} \circ \iota_{\tau_2} \\
    &\quad+
      \underbrace{\chi_{\tau_2}^n \cdot \frac12 \paren[\big]{J(\tilde u_{\sigma,\tau;J;\kappa}^\circ)-J(u_{\sigma,\tau_1,0;J;\kappa} \circ \iota_{\tau_2})}\circ \rd (u_{\sigma,\tau_1,0;J;\kappa}\circ\iota_{\tau_2})\circ j_{\sigma,\tau_1,0}}_{\eqcolon \rI} \\
    &\quad+      
      \underbrace{\chi_{\tau_2}^n\cdot\delbar_J (u_{\sigma,\tau_1,0;J;\kappa}\circ \iota_{\tau_2},j_{\sigma,\tau_1,0})}_{\eqcolon \rII}.
  \end{align*}
   (Observe  that by elliptic regularity and \autoref{Eq_KuranishiModel_Step1}, the map $u_{\sigma,\tau;J;\kappa}$ is smooth in the region in question, so we can take its derivative.
   We will use this fact in the remaining part of the proof.)
  The term $\rI$ is supported in the region where $R_0 \leq r_n \leq 2R_0$.
  By Taylor expansion at $\nu_0(n)$,
  in this region
  \begin{align*}
    \abs{u_{\sigma,\tau_1,0;J;\kappa}\circ\iota_{\tau_2}}
    &\leq
      c\epsilon_n/r_n \qandq \\
    \abs{\rd(u_{\sigma,\tau_1,0;J;\kappa}\circ\iota_{\tau_2})}
    &\leq
      c\epsilon_n/r_n^2.
  \end{align*}
  Therefore,
  \begin{equation*}
    \abs{\rI}
    \leq
    c
    \epsilon_n^2
    \qandq
    \abs{\delbar \chi_{\tau_2}^n \cdot u_{\sigma,\tau_1,0;J;\kappa}\circ \iota_{\tau_2}}
    \leq
    c
    \epsilon_n.
  \end{equation*}
  Since $\iota_{\tau_2}$ is holomorphic and $o(\sigma,\tau_1;\kappa)$ is defined by \autoref{Eq_KuranishiModel_Step1}, 
  \begin{align*}
    \rII = \chi_{\tau_2}^n \cdot \iota_{\tau_2}^*\delbar_J (u_{\sigma,\tau_1,0;J;\kappa},j_{\sigma,\tau_1,0}) 
    = -  \chi_{\tau_2}^n \cdot \iota_{\tau_2}^*o(\sigma,\tau_1;J;\kappa),
  \end{align*}
  and thus $\rII$ vanishes by our choice of $\sO$.
  
  \begin{step}
    \label{Step_IntegralComputation}
    For every $n \in S_{C_\basefree}^\rext$
    \begin{equation*}
      \inner{\delbar \chi_{\tau_2}^n\cdot u_{\sigma,\tau_1,0;J;\kappa} \circ\iota_{\tau_2}}{\pull_{\sigma,\tau;J;\kappa}(\bar\zeta\otimes_\C v)}_{L^2}
      =
      -  
      \pi
      \Inner[\big]{
        \paren*{\zeta\otimes_\C \rd_{\nu_0(n)}u_{\sigma,\tau_1,0;J;\kappa}}(\tau_n),
        v
      }
      + \fe_2
    \end{equation*}
    with
    \begin{equation*}
      \abs{\fe_2}
      \leq
      c
      \epsilon_n^2\abs{\zeta}\abs{v}.
    \end{equation*}   
  \end{step}

  To simplify the notation,
  we choose an identification $T_nC = \C$ and work in the canonical holomorphic coordinate $z$ on $C$ at $n$ and the coordinate system at $\nu_0(n)$ with respect to which $w = \iota_{\tau_2}(z) = \epsilon_n/z$.
  In particular, with respect to the induced identification $T_{\nu_0(n)}C=\C$ the gluing parameter is simply $\tau_n = \epsilon_n \cdot 1\otimes_\C 1$.

  Since $u_{\sigma,\tau_1,0;J;\kappa}$ is $J$--holomorphic,
  by Taylor expansion,
  \begin{equation*}
    u_{\sigma,\tau_1,0;J;\kappa}(\epsilon_n/z)
    =
    \del_w u_{\sigma,\tau_1,0;J;\kappa}(0) \cdot \epsilon_n/z
    + \fr
    \qwithq
    \abs{\fr}
    \leq c\epsilon_n^2/\abs{z}^2.
  \end{equation*}
  The term
  \begin{equation*}
    \fe_2'
    \coloneq
    \inner{\delbar \chi_{\tau_2}^n\cdot \fr}{\pull_{\sigma,\tau;J;\kappa}(\zeta\otimes_\C v)}_{L^2}
  \end{equation*}
  satisfies
  \begin{equation*}
    \abs*{\fe_2'}
    \leq
    c \epsilon_n^2\abs{\zeta}\abs{v}.
  \end{equation*}

  Since $\zeta$ is holomorphic,
  \begin{equation*}
    \int_{S^1} \zeta(re^{i\alpha}) \,\rd \alpha
    =
    2\pi \cdot \zeta(0).
  \end{equation*}
  Therefore,  
  \begin{align*}
    \inner{\delbar \chi_{\tau_2}^n\cdot z^{-1}}{\bar\zeta}_{L^2}
    &=
      \int_{R_0\leq \abs{z} \leq 2R_0} \frac12\chi'\paren*{\frac{\abs{z}}{R_0}} \frac{\zeta(z)}{R_0\abs{z}}  \, \vol \\
    &=
      \int_{R_0}^{2R_0} \frac12\chi'\paren*{\frac{r}{R_0}} \frac{1}{R_0}\cdot\paren*{\int_{S^1} \zeta(r e^{i\alpha})\,\rd \alpha}\,\rd r \\
    &=
      \int_{R_0}^{2R_0} \chi'\paren*{\frac{r}{R_0}} \frac{1}{R_0} \,\rd r \cdot \pi \zeta(r e^{i\alpha}).
  \end{align*}
  The integral evaluates to $-1$.
  Thus the assertion follows because the term $\inner{\zeta(0)\cdot\del_wu_{\sigma,\tau_1,0;J;\kappa}(0)}{v}$ can be written in coordinate-free form as
  \begin{equation*}
    \pi
    \Inner[\big]{
      \paren*{\zeta\otimes_\C \rd_{\nu_0(n)}u_{\sigma,\tau_1,0;J;\kappa}}(\tau_n),
      v
    }.
  \end{equation*}

  \begin{step}
    \label{Step_ErrorEstimate}
    The term
    \begin{equation*}
      \fe_3
      \coloneq
      \inner{\fd_{\tilde u_{\sigma,\tau;J;\kappa}}\hat\xi(\sigma,\tau;J;\kappa)+ \fn_{\tilde u_{\sigma,\tau;J;\kappa}}(\hat\xi(\sigma,\tau;J;\kappa))}{\pull_{\sigma,\tau;J;\kappa}(\bar\zeta\otimes_\C v)}_{L^2}
    \end{equation*}
    satisfies
    \begin{equation*}
      \abs{\fe_3}
      \leq
      c
      \epsilon_\basefree^{\frac12}
      \sum_{n \in S_{C_\basefree}^\rext}
      \epsilon_n
      \abs{\zeta}\abs{v}
    \end{equation*}
  \end{step}
  
  By \autoref{Step_ExpansionOfDelbarTildeUSigmaTauKappa} and \autoref{Prop_KuranishiModel_Step2},
  \begin{equation*}
    \Abs{\hat\xi(\sigma,\tau;J;\kappa)}_{W^{1,p}}
    \leq
    c
    \epsilon_n.
  \end{equation*}
  This immediately implies that
  \begin{equation*}
    \fe_3'
    \coloneq
    \inner{\fn_{\tilde u_{\sigma,\tau;J;\kappa}}(\hat\xi(\sigma,\tau;J;\kappa))}{\pull_{\sigma,\tau;J;\kappa}(\bar\zeta\otimes_\C v)}_{L^2}
  \end{equation*}
  satisfies
  \begin{equation*}
    \abs{\fe_3'}
    \leq
    c
    \sum_{n \in S_{C_\basefree}^\rext}
    \epsilon_n^2
    \abs{\zeta}\abs{v}.
  \end{equation*}
  It remains to estimate
  \begin{equation*}
    \fe_3''
    \coloneq
    \Inner{
      \fd_{\tilde u_{\sigma,\tau;J;\kappa}}\hat\xi(\sigma,\tau;J;\kappa),
      \pull_{\sigma,\tau;J;\kappa}(\bar\zeta\otimes_\C v)
    }_{L^2}.
  \end{equation*}
  
  Set
  \begin{equation*}
    C_{\sigma,\tau}^\circ
    \coloneq
    C_\basefree \cap \Sigma_{\sigma,\tau}^\circ.
  \end{equation*}
  Since $\kappa$ and $\xi(\sigma,\tau_1;\kappa)$ are constant on $C$, \autoref{Prop_CompareDU1WithDU2} implies that
  the term 
  \begin{equation*}
    \fd_{\tilde u_{\sigma,\tau;J;\kappa}}\hat\xi(\sigma,\tau;J;\kappa) - \delbar\hat\xi(\sigma,\tau;J;\kappa),
  \end{equation*}
  defined over $C_{\sigma,\tau}^\circ$, is supported in the regions where $\epsilon_n^{1/2} \leq r_n \leq 2R_0$ for some $n \in S_{C_\basefree}^\rext$ and satisfies
  \begin{align*}
    &\sum_{n \in S_{C_\basefree}^\rext} \int_{\epsilon_n^{1/2} \leq r_n \leq 2R_0} \abs{\fd_{\tilde u_{\sigma,\tau;J;\kappa}}\hat\xi(\sigma,\tau;J;\kappa) - \delbar\hat\xi(\sigma,\tau;J;\kappa)} \\
    &\quad\leq
      c
      \sum_{n \in S_{C_\basefree}^\rext}
      \int_{\epsilon_n^{1/2} \leq r_n \leq 2R_0}
      \abs{\hat\xi(\sigma,\tau;J;\kappa)}\abs{\nabla\hat\xi(\sigma,\tau;J;\kappa)}
      + \abs{\hat\xi(\sigma,\tau;J;\kappa)}^2 \\
    &\quad\leq
      c
      \Abs{\hat\xi(\sigma,\tau;J;\kappa)}_{W^{1,p}}^2 \\
    &\quad\leq
      c
      \sum_{n \in S_{C_\basefree}^\rext}
      \epsilon_n^2.
  \end{align*}
  Therefore,
  \begin{equation*}
    \abs{\fe_3''}
    \leq
    c
    \sum_{n \in S_{C_\basefree}^\rext}
    \epsilon_n^2
    +
    \abs[\big]{
      \Inner[\big]{
        \delbar \hat\xi(\sigma,\tau;J;\kappa),
        \bar\zeta\otimes_\C v}_{L^2(C_{\sigma,\tau}^\circ)}}.
  \end{equation*}
  Since $\delbar^*\zeta = 0$ on $C_{\sigma,\tau}^\circ$,
  integration by parts yields
  \begin{align*}
    \abs[\big]{
    \Inner[\big]{
    \delbar \hat\xi(\sigma,\tau;J;\kappa),
    \bar\zeta\otimes_\C v}_{L^2(C_{\sigma,\tau}^\circ)}}
    &\leq
      c
      \epsilon_\basefree^{\frac12}
      \sum_{n \in S_{C_\basefree}^\rext}
      \Abs{\hat\xi(\sigma,\tau;J;\kappa)}_{W^{1,p}}\abs{\zeta}\abs{v} \\
    &\leq
      c
      \epsilon_\basefree^{\frac12}
      \sum_{n \in S_{C_\basefree}^\rext}
      \epsilon_n.
  \end{align*}
  Combining the above estimates yields the asserted estimate on $\fe_3$.
  
  \begin{step}
    \label{Step_Conclusion}
    Conclusion of the proof.
  \end{step}

  By \autoref{Step_KappaConstantOnC}, \autoref{Step_ExpansionOfDelbarTildeUSigmaTauKappa}, \autoref{Step_IntegralComputation}, and \autoref{Step_ErrorEstimate} the term
  \begin{equation*}
    \fo \coloneq \inner{\pull_{\sigma,\tau;J;\kappa}(\ob(\sigma,\tau;J;\kappa))}{\pull_{\sigma,\tau;J;\kappa}(\bar\zeta\otimes_\C v)}_{L^2}
  \end{equation*}
  satisfies
  \begin{align*}
    \fo
    &=
      -\inner{\fF_{\tilde u_{\sigma,\tau;J;\kappa}}(\hat\xi(\sigma,\tau;J;\kappa))}{\pull_{\sigma,\tau;J;\kappa}(\bar\zeta\otimes_\C v)}_{L^2} \\
    &=
      \sum_{n \in S_{C_\basefree}^\rext}
      \pi
      \Inner[\big]{
      \paren*{\zeta\otimes_\C \rd_{\nu_0(n)}u_{\sigma,\tau_1,0;J;\kappa}}(\tau_n),
      v
      }
      + \fe_1 + \fe_2 + \fe_3 
  \end{align*}
  with the error terms arising from the corresponding terms in the preceding steps.
  The term
  $\fe_1$ arises from the $L^2$ inner product of the sum, over all nodes $n \in S_{C_\basefree}^\rext$, of error terms from  \autoref{Step_ExpansionOfDelbarTildeUSigmaTauKappa} and $\pull_{\sigma,\tau;J;\kappa}(\bar\zeta\otimes_\C v)$, which multiplies the estimate \autoref{Eq_FirstErrorEstimate}  by $\abs{\xi}\abs{v}$.  
  The preceding steps thus yield the asserted estimate on $\fe = \fe_1+\fe_2+\fe_3$.
\end{proof}

\subsection{Proof of \autoref{Thm_LimitConstraints}}
\label{Sec_ProofOfLimitConstraints}

Without loss of generality, suppose that
\begin{equation*}
(\Sigma_k,j_k) = (\Sigma_{\sigma_k,\tau_k},j_{\sigma_k,\tau_k})
\end{equation*}
 with $\tau_{k;n} \neq 0$ for every $k \in \N$ and $n \in S$; that is: all nodes are smoothed and $\nu_{\sigma_k,\tau_k}$ is the trivial nodal structure.
By \autoref{Prop_GromovNeighborhoodGraph} there is a sequence $(\kappa_k)_{\kappa\in\N}$ in $\sI$ corresponding to $(u_k)_{k\in\N}$.

Again,
without loss of generality,
$u_\infty\co (\Sigma_\infty,j_\infty,\nu_\infty) \to (X,J_\infty)$ has at least one ghost component $C$ with one non-ghost component 
\begin{equation*}
\Sigma_\bubble = \Sigma_\infty\setminus C
\end{equation*}
 attached at a single node, so that \autoref{Hyp_BubbleAtOnePoint} is satisfied.
Denote by $\check C$ the nodal curve corresponding to $C$ and let $B$ be the base-locus of the dualizing sheaf of $\check C$.
Define $B_0 \subset B$ and 
\begin{equation*}
  C_\basefree \coloneq C \setminus B_0 \qandq \Sigma_\clubsuit \coloneq \Sigma_0 \setminus C_\basefree
\end{equation*}
as at the beginning of \autoref{Sec_LeadingOrderTermOfObstructionGhostComponents}.
Observe that 
\begin{equation*}
  \Sigma_\clubsuit = B_0 \amalg \Sigma_\bubble
\end{equation*}
so that $\Sigma_\infty$ decomposes into
\begin{equation*}
  \Sigma_\infty=C_\basefree \amalg \underbrace{B_0 \amalg \Sigma_\bubble}_{\Sigma_\clubsuit}.
\end{equation*}

Write $\tau_k = (\tau_{k,1}, \tau_{k,2})$ with $\tau_{k,1}$ and $\tau_{k,2}$ denoting the smoothing parameters corresponding to the sets of nodes $S_1$ and $S_2$ defined in \autoref{Eq_SetsOfNodes}.
Since $B_0$ is a tree of spheres, the partial smoothing $\Sigma_{\sigma_k,\tau_{k,1},0}$ contains a component biholomorphic to $\Sigma_\bubble$, as discussed in  \autoref{Ex_SphericalBubble}.  
Let 
\begin{equation*}
  b_k \co \Sigma_\bubble \to X
\end{equation*}
 be the restriction of $u_{\sigma,\tau_{k,1},0;\kappa_k}$ to this component. 
After reparametrizing $b_k$ by biholomorphisms of $\Sigma_\bubble$ we can guarantee that $b_k$ converges to $u_0 |_{\Sigma_\bubble}$ in the $C^\infty$ topology.  
Let $x$ be any node in $S_{C_\bullet}^\rext$; the point $\nu_k(x)$ under the above biholomorphisms it is mapped to a point $x_k \in \Sigma_\bubble$ and the sequence $(x_k)$ satisfies 
\begin{equation*}
 \lim_{k\to\infty} x_k = \nu_\infty(n).
\end{equation*}

Let $\check C_\basefree$ be the nodal curve corresponding to $C_\basefree$. 
By the construction of $C_\basefree$, for every node $n\in C_\basefree$ such that $\nu(n) \in \Sigma_\clubsuit$, there is exists a holomorphic section $\zeta \in H^0(\check C_\basefree,\omega_{\check C_\basefree})$ with $\zeta(n) \neq 0$.
Since $\ob(\sigma_k,\tau_k;J_k;\kappa_k) = 0$,
it follows from \autoref{Lem_KuranishiModelGhostComponent}  that
\begin{equation*}
  \abs{\rd_{x_k} b_k}
  \leq
  \epsilon_{k;\ghost}^{1/2}.
\end{equation*}
(Observe that reparametrizing $b_k$ by biholomorphisms of $\Sigma_\bubble$ does not affect the estimate in \autoref{Lem_KuranishiModelGhostComponent}, which is independent of the choice of a Riemannian metric in the given conformal class.) 
Passing to the limit  $k \to \infty$ yields that $\rd_{\nu_\infty(n)}u_0 = 0$.
\hfill\textsc{}
$\xrightswishingghost{\qquad\qquad}$



\section{Calabi--Yau classes in symplectic \texorpdfstring{$6$}{6}--manifolds}

\subsection{Proof of \autoref{Thm_CY3PrimitiveGenericLimitHasSmoothDomain}}

Denote by $C_1,\ldots,C_I$ the connected components of $\Sigma_\infty$ on which $u_\infty$ is non-constant and set $u_\infty^i \coloneq u_\infty|_{C_i}$ and $A_i \coloneq (u_\infty^i)_*[C_i]$.
By the index formula \autoref{Eq_IndexFormula},
\begin{equation*}
  \sum_{i=1}^I \ind(u_\infty^i)
  =
  \sum_{i=1}^I 2\Inner{c_1(X,\omega),A_i}
  =
  2\Inner{c_1(X,\omega),A}
  =
  0
\end{equation*}
Since $J_\infty \in \sJ_\emb(X,\omega)$,
for every $i=1,\ldots,I$,
$\ind(u_\infty^i) \geq 0$ and thus $\ind(u_\infty^i) = 0$.
Consequently, the images of the simple maps underlying $u_\infty^i$ and $u_\infty^j$ either agree or are disjoint.
However,
\begin{equation*}
  \im u_\infty = \bigcup_{i=1}^I \im u_\infty^i
\end{equation*}
is connected.
Therefore and since $A$ is primitive,
$I = 1$ and $u_\infty^1$ is simple and, hence, an embedding because $J \in \sJ_\emb(X,\omega)$.
Given the above, it follows from \autoref{Thm_LimitConstraints} that $(\Sigma_\infty,j_\infty,\nu_\infty)$ is smooth.

\subsection{Proof of \autoref{Thm_CY3PrimitiveGV}}

The proof of \autoref{Thm_CY3PrimitiveGV}\autoref{Thm_CY3PrimitiveGV_Defined} is completely standard and straightforward.
Nevertheless, let us spell it out.
Let $J \in \sJ_\emb^\star(X,\omega)$.
By \autoref{Prop_UnobstructedModuliSpaces} and \autoref{Thm_CY3PrimitiveGenericLimitHasSmoothDomain},
$\sM_{A,g}^\star(X,J)$ is a compact oriented zero-dimensional manifold;
that is: a finite set of points with signs.
The signed count
\begin{equation*}
  \#\sM_{A,g}^\star(X,J)
\end{equation*}
is independent of the choice of $J$.
To see this,
let $J_0,J_1 \in \sJ_\emb^\star(X,\omega)$ and $(J_t)_{t\in[0,1]} \in \sJ_\emb^\star(X,\omega;J_0,J_1)$.
By \autoref{Prop_UnobstructedModuliSpaces} and \autoref{Thm_CY3PrimitiveGenericLimitHasSmoothDomain},
$\sM_{A,g}^\star\paren*{X,(J_t)_{t\in[0,1]}}$ is a compact oriented manifold with boundary 
\begin{equation*}
  \sM_{A,g}^\star(X,J_1) \amalg - \sM_{A,g}^\star(X,J_0).
\end{equation*}
Therefore,
\begin{equation*}
  \#\sM_{A,g}^\star(X,J_1) = \#\sM_{A,g}^\star(X,J_0).
\end{equation*}

\autoref{Thm_CY3PrimitiveGV}\autoref{Thm_CY3PrimitiveGV_Finiteness} follows from \cite[Theorem 1.6]{Doan2018a}.
Indeed,
the latter asserts that for every $J \in \sJ_\star(X,\omega)$ the set
\begin{equation*}
  \coprod_{g=0}^\infty \sM_{A,g}^\star(X,J)
\end{equation*}
is a finite set.
Therefore,
there exists a $g_0\in \N_0$ such that for every $g\geq g_0$ the moduli space $\sM_{A,g}^\star(X,A;J)$ is empty;
in particular,
$n_{A,g}(X,\omega) = 0$ for $g \geq g_0$.
\qed



\section{Fano classes in symplectic \texorpdfstring{$6$}{6}--manifolds}

The proofs in this section make use of definitions and results from \autoref{Sub_JHolomorphicMapsWithConstraints}.

\subsection{Proof of \autoref{Thm_Fano3GenericLimitHasSmoothDomain}}

Denote by $\tilde C_1,\ldots,\tilde C_I$ the connected components of $\Sigma_\infty$ on which $u_\infty$ is non-constant,
set $\tilde u_\infty^i \coloneq u_\infty|_{\tilde C_i}$,
denote by $u_\infty^i\co C_i \to X$ the simple map underlying $\tilde u_\infty^i$,
let $d_i \in \N$ be the degree of the covering map relating $\tilde u_\infty^i$ and $u_\infty^i$,
set $A_i \coloneq (u_\infty^i)_*[C_i]$, and
set $g_i \coloneq g(C_i)$.
Since $(u_k)_{k\in\N}$ Gromov converges to $u_\infty$,
\begin{enumerate}
\item $u_k(\Sigma_k)$ converges to $u_\infty(\Sigma_\infty)$ in the Hausdorff topology, and
\item $\sum_{i=1}^I d_i A_i = A$.
\end{enumerate}
These are the only two consequences of Gromov convergence that will be used in the following argument.

Denote by $I_0$ the subset of those $i \in \set{1,\ldots,I}$ with $\Inner{c_1(X,\omega),A_i} = 0$ and set $I_+ \coloneq \set{1,\ldots,I}\setminus I_0$.
Without loss of generality all of the pseudo-cycles $f_\lambda$ have $\codim(f_\lambda) \geq 4$.
For every $i \in I_+$ denote by $\Lambda_i$ the subset of those $\lambda \in \set{1,\ldots,\Lambda}$ such that 
\begin{equation}
  \label{Eq_ComponentIntersectingPseudocycle}
  \im u_\infty^i \cap \overline{\im f_\lambda} \neq \emptyset.
\end{equation}
Since $J_\infty \in \sJ_\emb(X,\omega;f_1,\ldots,f_\Lambda)$,
for every $i \in I_0$ and $\lambda \in \set{1,\ldots,\Lambda}$ we have 
$\im u_\infty^i \cap \overline{\im f_\lambda} = \emptyset$.
Therefore and since $u_k(\Sigma_k)$ converges to $u_\infty(\Sigma_\infty)$ in the Hausdorff topology,
for every $\lambda \in \set{1,\ldots,\Lambda}$ there exists at least one $i \in I_+$ such that \autoref{Eq_ComponentIntersectingPseudocycle} holds.
For every $i \in \set{1,\ldots,I}$ and $\lambda \in \Lambda_i$ set
\begin{equation*}
  f_\lambda^i
  \coloneq
  \begin{cases}
    f_\lambda & \text{if } \im u_\infty^i \cap \im f_\lambda \neq \emptyset \\
    f_\lambda^\del & \text{otherwise}
  \end{cases}
\end{equation*}
with $f_\lambda^\del$ as in in \autoref{Sub_JHolomorphicMapsWithConstraints};
in particular:
$\codim f_\lambda \leq \codim f_\lambda^i$ with equality if and only if  $\im u_\infty^i \cap \im f_\lambda \neq \emptyset$.
By definition,
$u_\infty^i$ represents an element of $\sM_{A,g}^\star(X,J;(f_\lambda^i)_{\lambda\in \Lambda_i})$.
Therefore and since $J \in \sJ_\emb(X,\omega,f_1,\ldots,f_\Lambda)$,
\begin{equation*}
  2\inner{c_1(X,\omega)}{A_i} - \sum_{\lambda\in \Lambda_i}\paren*{\codim(f_\lambda^i)-2}
  \geq 0.
\end{equation*}
On the one hand,
multiplying by $d_i$ and summing yields
\begin{align*}
  \sum_{i\in I_+}
  \sum_{\lambda\in \Lambda_i}
  \paren*{\codim(f_\lambda^i)-2}
  &\leq
    \sum_{i\in I_+}
    \sum_{\lambda\in \Lambda_i}
    d_i\paren*{\codim(f_\lambda^i)-2} \\
  &\leq
    \sum_{i=1}^I
    2\Inner{c_1(X,\omega),d_iA_i}  \\
  &=
    2\Inner{c_1(X,\omega),A} \\
  &=
    \sum_{\lambda=1}^\Lambda \paren*{\codim(f_\lambda)-2}.
\end{align*}
On the other hand,
by the preceding discussion,
the reverse inequality also holds.
Therefore, equality holds and this implies that
\begin{enumerate}
\item
  $d_i = 1$ for every $i \in I_+$,
\item
  $2\Inner{c_1(X,\omega),A_i} = \sum_{\lambda\in \Lambda_i}\paren[\big]{\codim(f_\lambda^i)-2}$,
\item
  $f_\lambda^i = f_\lambda$,
  and
\item
  the subsets $\Lambda_i$ are non-empty and pairwise disjoint.
\end{enumerate}
This implies that for every $i \in I_+$ the map $\tilde u_\infty^i$ agrees with $u_\infty^i$ and thus is simple;
moreover, this map has index zero (in the sense of \autoref{Eq_IndexFormula}) and its image intersects $f_\lambda$ for every $\lambda \in \Lambda_i$.
Furthermore, every $f_\lambda$ intersects the image of precisely one map $u_\infty^i$ with $i \in I_+$.
Therefore, the images of the maps $u_\infty^i$ with $i \in I_+$ are pairwise disjoint.

Since $2\Inner{c_1(X,\omega),A} > 0$,
$I_+$ is non-empty.
For $i \in I_+$ and $j \in I_0$ the images of $u_\infty^i$ and $u_\infty^j$ must also be disjoint, because otherwise they would have to agree---%
contradicting $A_i \neq A_j$.
However,
\begin{equation*}
  \im u_\infty = \bigcup_{i=1}^I \im u_\infty^i
\end{equation*}
is connected.
Therefore, if $I_0 \neq \emptyset$,
then there is are $i \in I_0$ and $j \in I_+$ such that the images of $u_\infty^i$ and $u_\infty^j$ intersect.
The preceding discussion shows this to be impossible; hence: $I_0 = \emptyset$.
Similarly, if $I_+$ were to contain more than one element, then there are $i,j \in I_+$ with such that the images of $u_\infty^i$ and $u_\infty^j$ intersect---which is impossible.
Therefore,
$I = 1$ and
$\tilde u_\infty^1 = u_\infty^1$ is an embedding.

Given the above, it follows from \autoref{Thm_LimitConstraints} that $(\Sigma_\infty,j_\infty,\nu_\infty)$ is smooth and $\im u_\infty \cap \im f_\lambda \neq \emptyset$ every $\lambda=1,\ldots,\Lambda$.
\qed

\subsection{Proof of \autoref{Thm_Fano3GV}}

Given Gromov compactness, \autoref{Thm_Fano3GenericLimitHasSmoothDomain}, and \autoref{Prop_UnobstructedModuliSpacesPseudoCycles}, the proof that $n_{A,g}(X,\omega;\gamma_1,\ldots,\gamma_\Lambda)$ is well-defined and independent of the choice of $J$ is identical to that of \autoref{Thm_CY3PrimitiveGV} up to changes in notation.

To prove that $n_{A,g}(X,\omega;\gamma_1,\ldots,\gamma_\Lambda)$ is independent of the choice of pseudo-cycle representatives, suppose that  $f_1^0$ and $f_1^1$ are two representatives of $\PD[ \gamma_1]$ such that $f_1^i, f_2, \ldots, f_\Lambda$ are in general position for $i=0,1$.
Let $F \co W \to X$ be a pseudo-cycle cobordism between $f_1^0$ and $f_1^1$ such that $F, f_2, \ldots, f_\Lambda$ are in general position. 
Let $J$ be an element of the set $\sJ_\emb^\star(X,\omega;F,f_2,\ldots, f_\Lambda)$ defined in \autoref{Def_JEmbPseudoCycleCobordism}, which is  residual by \autoref{Prop_TransversalityPseudoCycleCobordism}. 
It follows that  $\sM_{A,g}^\star(X,J;f_1^0,\ldots,f_\Lambda)$ and $\sM_{A,g}^\star(X,J;f_1^1,\ldots,f_\Lambda)$ are finite sets of points with orientations and $\sM_{A,g}^\star(X,J; F,f_2,\ldots,f_\Lambda)$  is an oriented $1$--dimensional cobordism between them.
This cobordism is compact by Gromov compactness and the argument used in the proof in  \autoref{Thm_Fano3GenericLimitHasSmoothDomain}.
Thus, 
\begin{equation*}
  \# \sM_{A,g}^\star(X,J;f_1^0,\ldots,f_\Lambda) = \#\sM_{A,g}^\star(X,J;f_1^1,\ldots,f_\Lambda).
\end{equation*} 

The fact that $n_{A,g}(X,\omega;\gamma_1,\ldots,\gamma_\Lambda) = 0$ for $g \gg 1$ is a consequence of the following analog of \cite[Theorem 1.6]{Doan2018a} for Fano classes.

\begin{theorem}
  \label{Thm_FinitelyManyCurvesFanoCase}
  Let $(X,\omega)$ be a compact symplectic $6$--manifold, let $f_1,\ldots, f_\Lambda$ be a collection of even-dimensional pseudo-cycles in general position, and let $A \in H_2(X,\Z)$ be such that 
  \begin{equation*}
    2\inner{c_1(X,\omega)}{ A} = \sum_{\lambda=1}^\Lambda(\codim f_\lambda - 2) > 0.
  \end{equation*}
  For every $J \in \sJ(X,\omega;f_1,\ldots,f_\Lambda)$ there are only finitely many simple $J$--holomorphic maps representing $A$ and passing through $\im f_\lambda$ for every $\lambda=1,\ldots,\Lambda$. 
\end{theorem}

\begin{proof}
  The proof is a minor variation of the proof of \cite[Theorem 1.6]{Doan2018a}. 
  Suppose, by contradiction, that there are infinitely many distinct $J$--holomorphic curves $C_k$ representing $A$ and passing through $\im f_\lambda$ for all $\lambda=1,\ldots,\Lambda$. 
  Here, by a $J$--holomorphic curve we mean the image of a simple $J$--holomorphic map. 
  Considering $C_k$ as $J$--holomorphic cycles, we can pass to a subsequence which converges geometrically to a $J$--holomorphic cycle $C_\infty = \sum_{i=1}^I d_i C_\infty^i$, see \cite[Definition 4.1, Definition 4.2, Lemma 1.9]{Doan2018a}. 
  Here $d_i > 0$ are integers and each $C_\infty^i$ is a $J$--holomorphic curve. 
  Geometric convergence implies that
  \begin{equation*}
    \sum_{i=1}^I d_i [C_\infty^i] = [C_\infty] = A
  \end{equation*}
  and that $(C_k)_{k\in\N}$ converges to $C_\infty$ in the Hausdorff topology.
  Since these were the only two conditions needed for the argument in the proof   \autoref{Thm_Fano3GenericLimitHasSmoothDomain},
  the same argument shows that:
  \begin{enumerate}
  \item $d_i = 1$ for every $i \in I$, 
  \item $C_\infty$ has only one connected component, 
  \item $C_\infty$ intersects every $\im f_\lambda$, and consequently
  \item $C_\infty$ is embedded and unobstructed by the condition $J\in\sJ_\emb^\star(X,\omega;f_1,\ldots,f_\Lambda)$. 
  \end{enumerate}
  
  We will now adapt the rescaling argument from the proof of \cite[Proposition 5.1]{Doan2018a}---originally due to Taubes in the $4$--dimensional setting \cite{Taubes1996}---to the present situation.
  Let $N \to C_\infty$ be the normal bundle of $C_\infty$ in $X$. 
  Identify a neighborhood of $C_\infty$ with a neighborhood of the zero section in $N$ using the exponential map.
  For sufficiently large $k$,
  $C_k$ is contained in that neighborhood and by abuse of notation we will consider $C_k$ as an $\exp^*J$--holomorphic curve in $N$ and $f_\lambda$ as maps to $N$. 
  
  Since the $C_k$ are distinct,
  $C_k \neq C_\infty$.
  For $\epsilon > 0$ denote by $\sigma_\epsilon \co N \to N$ the map which rescales the fibers by $\epsilon$. 
  Let $(\epsilon_k)_{k\in\N}$ be the sequence of positive numbers such that the rescaled sequence
  \begin{equation*}
    \tilde C_k \coloneq (\sigma_{\epsilon_k})^{-1}(C_k)
  \end{equation*}
  satisfies
  \begin{equation*}
    d_H(\tilde C_k, C_\infty) = 1,
  \end{equation*}
  where $d_H$ is the Hausdorff distance.
  The sequence $(\epsilon_k)_{k\in\N}$ converges to zero.
  The curves $\tilde C_k$ are $J_k$--holomorphic where $J_k \coloneq \sigma_{\epsilon_k}^*\exp^*J$.
  The sequence of rescaled almost complex structures $(J_k)_{k\in\N}$ converges to an almost complex structure $J_\infty$ which is tamed by a symplectic form \cite[Proposition 3.10]{Doan2018a}. 
  In the same way as in the proof of \cite[Proposition 5.1]{Doan2018a} we conclude that the sequence $(\tilde C_k)_{k\in\N}$ converges geometrically to a $J_\infty$--holomorphic cycle whose support is a union of $J_\infty$--holomorphic curves $\tilde C_\infty \subset N$ satisfying 
  \begin{equation*}
    d_H(\tilde C_\infty, C_\infty) = 1.
  \end{equation*}
  Since $[\tilde C_k] = [C_\infty] = A$ for all $k$, and the bundle projection $\pi\co N \to C_\infty$ is $J_\infty$--holomorphic, $\pi$ induces an isomorphism $\tilde C_\infty \iso C_\infty$. 
  Let $\iota \co C_\infty \to X$ be the inclusion map and denote by $\fd_\iota$   the deformation operator corresponding to $\iota$, as in \autoref{Def_JHolomorphicLinearization}.
  By \cite[Proposition 3.12]{Doan2018a}, $\tilde C_\infty$ is the graph of a non-zero section $\xi \in \Gamma(C_\infty, N) \subset \Gamma(C_\infty, \iota^*TX)$ satisfying $\fd_\iota\xi = 0$.
  In \autoref{Prop_PropertiesOfXi} below we show that there is an algebraic constraint for the values of $\xi$ at the points of intersection of $C_\infty$ with each pseudocycle.
  
  For every $\lambda = 1, \ldots, \Lambda$,
  denote by $V_\lambda$ the domain of $f_\lambda$, and
  let $z_{\lambda,k} \in C_k$ and $x_{\lambda,k} \in V_\lambda$ be such that $z_{\lambda,k} = f_\lambda(x_{\lambda,k})$. 
  After passing to a subsequence, we may assume that 
  \begin{equation*}
    \lim_{k\to \infty} z_{\lambda,k} = z_{\lambda} \in C_\infty \qandq   \lim_{k\to \infty} x_{\lambda,k} = x_{\lambda} \in V_\lambda,
  \end{equation*}
  and $z_{\lambda} = f_\lambda(x_{\lambda})$.
  
  \begin{prop}
    \label{Prop_PropertiesOfXi}
    For $\lambda = 1,\ldots,\Lambda$ there exist $v_\lambda \in T_{z_\lambda} C_\infty$ and $w_\lambda \in  T_{x_\lambda} V_\lambda$ such that
    \begin{equation}
      \label{Eq_InfinitesimalIntersectionCondition}
      \xi(z_\lambda) +\rd_{z_\lambda}\iota \cdot v_\lambda = \rd_{x_\lambda}f_\lambda \cdot w_\lambda.
    \end{equation}
  \end{prop}
  
  Equation \autoref{Eq_InfinitesimalIntersectionCondition} can be understood as the limit as $k\to \infty$ of the condition that $C_k$ intersects each of the $\im f_\lambda$.
  The proof is deferred to the end of this section.
  We will now show that \autoref{Prop_PropertiesOfXi} implies \autoref{Thm_FinitelyManyCurvesFanoCase}.
  Let $g$ be the genus of $C_\infty$, so that the embedding $\iota \co C_\infty \to X$ corresponds to an element in $\sM_{A,g,\Lambda}^\star(X,J)$.
  Since $J \in \sJ_\emb^\star(X,\omega, f_1,\ldots,f_\Lambda)$,
  \begin{enumerate}
  \item \label{It_DerivativeOfEvaluation} the derivative of $\ev_\Lambda \co \sM_{A,g,\Lambda}^\star(X,J) \to X^\Lambda$ at $[\iota,z_1,\ldots, z_\Lambda]$, and
  \item \label{It_DerivativeOfPseudocycle} the derivative of $\prod_{\lambda=1}^\Lambda f_\lambda \co \prod_{\lambda=1}^\Lambda V_\lambda \to X^\Lambda$ at $\prod_{\lambda=1}^\Lambda x_\lambda$
  \end{enumerate}
  are transverse to each other. 
  Since
  \begin{equation*}
    \dim \sM_{A,g,\Lambda}^\star(X,J)  + \sum_{\lambda=1}^\Lambda \dim V_\lambda = \Lambda \dim X,
  \end{equation*}
  the images of these two maps intersect trivially. 
  Since $\xi \neq 0$, this contradicts the existence of $v_\lambda$ and $w_\lambda$ satisfying \autoref{Eq_InfinitesimalIntersectionCondition}. 
  The contradiction shows that the sequence $(C_k)$ cannot exist.
\end{proof}

\begin{proof}[Proof of \autoref{Prop_PropertiesOfXi}]
  Set $\tilde z_{\lambda,k} \coloneq \sigma_{\epsilon_k}^{-1}(z_{\lambda,k})$.
  After possibly passing to a further subsequence, 
  \begin{equation}
    \label{Eq_LimitOfTildeZ}
    \lim_{k\to\infty}\tilde z_{\lambda,k} = \xi(z_{\lambda}). 
  \end{equation}
  Let $\pr_N \rd_{x_\lambda} f_\lambda \co T_{x_\lambda}V_\lambda \to N_{z_\lambda}$ be the projection of the derivative of $f_\lambda$ at $x_\lambda$ on $N_{z_\lambda} \subset T_{z_\lambda}X$. 
  We will show that for every $\lambda$ there exists $w_\lambda \in T_{x_\lambda}V_\lambda$ such that $\lim_{k\to\infty} \tilde z_{\lambda,k} = \pr_N \rd_{x_\lambda}f_\lambda \cdot w_\lambda$. 
  
  The fact that the images of the maps \autoref{It_DerivativeOfEvaluation} and  \autoref{It_DerivativeOfPseudocycle} introduced above intersect trivially implies that $\pr_N\rd_{x_\lambda} f_\lambda$ is injective for every $\lambda$.
  Indeed, otherwise there would exist $v \in T_{z_\lambda} C_\infty$ and $w\in T_{x_\lambda}V_\lambda$ for some $\lambda$ such that
  \begin{equation*}
    \rd_{z_\lambda}\iota\cdot v = \rd_{x_\lambda} f_\lambda \cdot w,
  \end{equation*}
  violating the above transversality condition.
  Fix a trivialization of $N$ in a neighborhood of $z_\lambda$ and a chart centered at $x_\lambda$ in $V_\lambda$.
  Denoting by $\pr_N$ the projection on the fiber $N_{z_\lambda}$ in the given trivialization, the Taylor expansion gives us
  \begin{equation*}
    \pr_N z_{\lambda,k}
    =
    \pr_N f_\lambda(x_{\lambda,k}) = \pr_N \rd_{x_\lambda}f_\lambda (x_{\lambda,k}-x_\lambda) + O\paren*{\abs{x_{\lambda,k}-x_\lambda}^2}.
  \end{equation*}
  Since $\pr_N\rd_{x_\lambda}f_\lambda$ is injective, there is a constant $c>0$ such that
  \begin{equation*}
    \abs{x_{\lambda,k}-x_\lambda} \leq c \abs{\pr_N z_{\lambda,k}} \leq c \epsilon_k.
  \end{equation*}
  Thus, after passing to a subsequence, we may assume that the sequence $\epsilon_k^{-1}(x_{\lambda,k}-x_\lambda)$ converges to a limit $w_\lambda \in T_{x_\lambda}V_\lambda$.
  By construction,
  \begin{equation*}
    \lim_{k\to \infty} \tilde z_{\lambda,k} =   \lim_{k\to \infty} \pr_N \tilde z_{\lambda,k} = \pr_N \rd_{x_\lambda}f_\lambda \cdot w_\lambda.
  \end{equation*}
  Comparing this with \autoref{Eq_LimitOfTildeZ}, we see that for every $\lambda$ there exists $v_\lambda \in T_{z_\lambda}C_\infty$ such that \autoref{Eq_InfinitesimalIntersectionCondition} holds. 
\end{proof}

\appendix

\section{Transversality for evaluation maps}
\label{Sec_TransversalityEvaluationMaps}

Throughout this section, $(X,\omega)$ is a symplectic manifold of dimension $\dim X\geq 6$ and $\sJ(X,\omega)$ denotes the space of almost complex structures on $X$ compatible with $\omega$. 

\begin{definition}
  \label{Def_GeneralizedDiagonal}
  Let $\Lambda \in \N$.
  Given a partition into nonempty pairwise disjoint subsets
  \begin{equation*}
    \set{1,2,\ldots,\Lambda} = I_1 \sqcup \ldots \sqcup I_k  \quad\text{with } k < \Lambda,
  \end{equation*} 
  the \defined{generalized diagonal} associated with the partition is the submanifold $\Delta \subset X^\Lambda$ consisting of the points $(x_1,\ldots, x_\Lambda)$ such that for every pair of indices $\alpha,\beta \in I_i$ we have $x_\alpha = x_\beta$. 
\end{definition}

Generalized diagonals are partially ordered by inclusion and each point of $X^\Lambda$ which belongs to a generalized diagonal belongs to a unique one which is minimal with respect to the partial order. 

\begin{prop}
  \label{Prop_TransversalityEvaluationMaps}
  Let $V$ be a manifold and let $f \co V \to X^\Lambda$ be a map which is transverse to every generalized diagonal.
  Denote by $\sJ^\star(X,\omega;f)$  the set of all $J\in\sJ^\star(X,\omega)$ such that
  \begin{enumerate}
    \item \label{It_Unobstructed}
     every simple $J$--holomorphic map is unobstructed, and
    \item \label{It_EvaluationMap} for every $A\in H_2(X,\Z)$ and $g\in\N_0$, the evaluation map from the $\Lambda$--pointed moduli space (cf. \autoref{Def_ModuliOfPointedMaps})
    \begin{equation*} 
    \ev \co \sM_{A,g,\Lambda}^\star(X,J)\to X^\Lambda
    \end{equation*}
    is transverse to $f$.
  \end{enumerate}
  The set $\sJ^\star(X,\omega;f)$  is residual in $J\in\sJ^\star(X,\omega)$.
\end{prop}

\begin{proof}
    The proof that condition  \autoref{It_Unobstructed} is generic is a standard application of the Sard--Smale theorem \cite[Theorem 1.2]{Oh2009}, \cite[Proposition A.4]{Ionel2018}, \cite[Sections 3.2 and 6.3]{McDuff2012}.
  Below we outline this proof and adapt it to show that condition \autoref{It_EvaluationMap} is generic.
  
  Let  $(\Sigma,j_0)$ be a closed Riemann surface of genus $g$, and let $A \in H_2(X,\Z)$. 
  Denote by $W^{1,p}_{\inj}(\Sigma,X;A)$ the subset of $W^{1,p}(\Sigma,X)$  consisting of functions $u \co \Sigma \to X$ which represent $A$ and are \defined{somewhere injective} in the sense that there exist $z_0 \in\Sigma$ and $\delta >0$ such that for all $z\in\Sigma$
  \begin{equation*}
    \mathrm{dist}_X( u(z_0), u(z)) \geq \delta \mathrm{dist}_\Sigma(z_0,z).
  \end{equation*}
  A $J$--holomorphic map is somewhere injective if and only if it is simple \cite[Proposition 2.5.1]{McDuff2012}.
  Given a slice $\sS \subset \sJ(\Sigma)$ for the action of $\Diff_0(\Sigma)$ on $\sJ(\Sigma)$ passing through $j_0$, set 
  \begin{equation*}
    \sX =W^{1,p}_{\inj}(\Sigma,X;A) \times \sS
  \end{equation*}
  and let $\sE \to \sX$ be a Banach vector bundle whose fiber over $(u,j)$ is the space $L^p\Omega^{0,1}(\Sigma,u^*TX)$ defined using the complex structure $j$.
  
  Let $s \co \sJ(X,\omega) \times \sX \to \sE$ be a section given by $s(J, u,j) = \delbar_J(u,j)$. 
  The following hold; see, for example, \cite[Section 3.2]{McDuff2012}:
    \begin{itemize}
    \item  this section is Fredholm,
    \item it is transverse to the zero section, therefore
    \item $s^{-1}(0)$ is a submanifold of $\sX$; in particular, it is a Banach manifold, and
    \item the universal moduli space $\sM_{A,g}^\star(X,\omega)$ can be covered by a countable number of submanifolds of the form $s^{-1}(0)$, for different choices of $(\Sigma,j_0)$.
  \end{itemize}
  The projection $\pi \co s^{-1}(0) \to \sJ(X,\omega)$ is a Fredholm map of index $\vdim \sM_{A,g}^\star(X,J)$; in fact, the kernel and cokernel of $\rd\pi_{u,j}$ are isomorphic to the kernel and cokernel of $\rd_{u,j}\delbar_J$, and therefore finite-dimensional.
  It follows from the implicit function theorem that if $J$ is a regular value of this map, the preimage
  \begin{equation*}
     \pi^{-1}(J) = \sM_{A,g}^\star(X,J)
  \end{equation*}
  is a manifold of dimension $\vdim \sM_{A,g}^\star(X,J)$ and every map in $\sM_{A,g}^\star(X,J)$ is unobstructed. 
  Since $\pi \co s^{-1}(0) \to \sJ(X,\omega)$ is a Fredholm map between separable Banach manifolds, the Sard--Smale theorem implies that the set of regular values of $\pi$ is residual in $\sJ(X,\omega)$. 
  This shows that condition \autoref{It_Unobstructed} holds for a generic $J$. 

  Using a similar argument, we will show for a generic $J$, the evaluation map
  \begin{equation*} 
    \ev \co \sM_{A,g,\Lambda}^\star(X,J)\to X^\Lambda
  \end{equation*}
  is transverse to $f$.
  With the notation introduced above, consider the Fredholm map 
  \begin{gather*}
    S \co \sJ(X,\omega) \times \sX \times \Sigma^\Lambda \to \sE \times X^\Lambda \\
    S(J, (u,j), z_1, \ldots, z_\Lambda) = (s(J,u,j), u(z_1), \ldots, u(z_\Lambda)).
  \end{gather*} 
  We will show that $S$ is transverse to the map
  \begin{equation}
    \label{Eq_ZeroSectionTimesF}
    \text{zero section}\times f \co \sX\times V \to \sE\times X^\Lambda.
  \end{equation} 
  Since $s$ is transverse to the zero section $\sX \to \sE$, it suffices to show that whenever $(J,(u,j),z_1,\ldots, z_\Lambda)$ and $x\in V$ satisfy
  \begin{equation*}
    s(J,u,j) = 0 \qandq (u(z_1),\ldots, u(z_\Lambda)) = f(x),
  \end{equation*}
  then
  \begin{equation*}
   \im \rd S + \im \rd_x f = T_{f(x)} X^\Lambda = \bigoplus_{i=1}^\Lambda T_{u(z_i)} X.
  \end{equation*}
  Here $\rd S$ denotes the projection on $T_{f(x)} X^\Lambda$ of the derivative of $S$ at  $(J,(u,j),z_1,\ldots, z_\Lambda)$ and $\rd_x f$ is the derivative of $f$ at $x$. 
  The variation of $S$ in the direction of a vector field
  \begin{equation*}
    \xi \in W^{1,p}\Gamma(\Sigma,u^*TX) 
  \end{equation*}
  is
  \begin{equation}
    \label{Eq_VariationOfS}
    \rd S(\xi) = (\xi(z_1),\ldots, \xi(z_\Lambda)).
  \end{equation} 
  If $(u(z_1),\ldots, u(z_\Lambda))$ does not lie on any generalized diagonal in $X^\Lambda$, we can find $\xi$ with any prescribed values at $z_1, \ldots, z_\Lambda$, and 
  \begin{equation*}
    \im \rd S = T_{f(x)} X^\Lambda.
  \end{equation*}
  Suppose, on the other hand, that $(u(z_1),\ldots, u(z_\Lambda))$ belongs to a generalized diagonal $\Delta\subset X^\Lambda$, and let $\Delta$ be the minimal such diagonal.
  In that case, \autoref{Eq_VariationOfS} implies that 
  \begin{equation*}
    \im \rd S = T_{f(x)} \Delta.
  \end{equation*}
  Since $f$ is transverse to $\Delta$, we have
  \begin{equation*}
   \im \rd S + \im \rd_x f = T_{f(x)} \Delta + \im \rd_x f  = T_{f(x)} X^\Lambda  
  \end{equation*}
  as desired. 
  This shows that $S$ is transverse to the map \autoref{Eq_ZeroSectionTimesF}.
  It follows from the Sard--Smale theorem that the set of $J$ such that $S(J,\cdot)$ is transverse to \autoref{Eq_ZeroSectionTimesF} is residual in $\sJ(X,\omega)$.
  This completes the proof that condition \autoref{It_EvaluationMap} is generic.
\end{proof}



\section{Pseudo-Cycles}
\label{Sec_Pseudocycles}

Given a collection of homology classes, we are interested in counting $J$--holomorphic maps passing through cycles representing these classes.
Since not every homology class is represented by a map from a manifold, it is convenient to use the language of pseudo-cycles.
We briefly review the theory of pseudo-cycles below; for details, see \cites[Section 6.5]{McDuff2012}{Schwarz1999,Kahn2001,Zinger2008}. 

\begin{definition}
  \label{Def_Pseudocycle}
  ~
  \begin{enumerate}
  \item A subset of a smooth manifold $X$ is said to have \defined{dimension at most $k$} if it is contained in the image of a smooth map from a smooth $k$--dimensional manifold.
  \item A \defined{$k$--pseudo-cycle} is a smooth map $f\co V \to X$ from an oriented $k$--dimensional manifold $V$ such that the closure $\overline{f(V)}$ is compact and the \defined{boundary of $f$}, defined by
    \begin{equation*}
      \bdry(f) \coloneq \bigcap_{K \subset V \text{ compact}} \overline{f(V-K)},
    \end{equation*}
    has dimension at most $k-2$.  
    We will use notation
    \begin{equation*}
      \codim(f) \coloneq \dim(X) - \dim(V).
    \end{equation*} 
  \item Two $k$--pseudo-cycles $f_i \co V_i \to X$, for $i=0,1$, are \defined{cobordant} if there exists a smooth, oriented $(k+1)$--dimensional manifold with boundary $W$ and a smooth map $F \co W \to X$ such that $\overline{F(W)}$ is compact, $\bdry(F)$ has dimension at most $k-1$, and 
    \begin{equation*}
      \partial W = V_1 \amalg -V_0 \qandq F|_{V_1} = f_1, \quad F|_{V_0} = f_0.
    \end{equation*}
  \item Denote by $H_k^{\rm{pseudo}}(X)$ the set of equivalence classes of $k$--pseudo-cycles up to cobordism. 
    The disjoint union operation endows $H_k^{\rm{pseudo}}(X)$ with the structure of an abelian group.
    \qedhere
  \end{enumerate}
\end{definition}  

A smooth map $g \co X \to Y$ between two smooth manifolds induces a group homomorphism $g_* \co H_*^{\rm{pseudo}}(X) \to H_*^{\rm{pseudo}}(Y)$ by composing pseudo-cycles with $g$. 
Thus, $H_*^{\rm{pseudo}}(\cdot)$ is a functor from the category of smooth manifolds to the category of $\Z$--graded abelian groups.

\begin{theorem}[{\cite{Schwarz1999,Kahn2001,Zinger2008}}]
  There exists a natural isomorphism $H_*^{\rm{pseudo}}(\cdot) \iso H_*(\cdot,\Z)$ as functors from the category of smooth manifolds to the category of $\Z$--graded abelian groups.
\end{theorem}

In what follows we will use this isomorphism to identify these two homology theories and represent any class in $H_*(X,\Z)$ by a pseudo-cycle. 

\begin{definition}
  \label{Def_PseudocycleTransversality}
  Let $M$ be a smooth manifold and let $g \co M \to X$ be a smooth map. 
  We say that a $k$--pseudo-cycle $f \co V \to X$ is \defined{transverse as a pseudo-cycle to $g$}%
  \footnote{\citet[Definition 6.5.10]{McDuff2012} use the term \defined{weakly transverse}, which we prefer to avoid, regarding that this notion of transversality is stronger than the transversality of $f$ and $g$ as smooth maps in the usual sense.}
  if 
  \begin{enumerate}
  \item
    there exists a smooth manifold $V^\del$ of dimension $\dim V^\del \leq \dim V -2$ and a smooth map $f^\del \co V^\del \to X$ such that $\bdry(f) \subset \im f^\del$,
    and
  \item
    $f$ and $f^\del$ are transverse to $g$ as smooth maps from manifolds. 
  \end{enumerate}
  If $W$ is a manifold with boundary $\partial W$, we require additionally that $f$ is transverse as a pseudo-cycle to $g|_{\partial W} \co \partial W \to X$. 
  
  Similarly, if $M$ is a manifold without boundary and $F \co W \to X$ is a cobordism between two pseudo-cycles $f_0$ and $f_1$, we say that $F$ is \defined{transverse as a pseudo-cycle cobordism to $g$} if 
  \begin{enumerate}
  \item there exists a smooth manifold with boundary $W^\del$ of dimension $\dim W^\del \leq \dim W - 2$ and a smooth map $F^\del \co W^\del \to X$ such that $\bdry(F) \subset \im F^\del$ and $\bdry(f_i) \subset \im F^\del|_{\del W^\del}$  for $i=0,1$, 
  \item $F$ and $F^\del$ are transverse to $g$ as smooth maps from manifolds with boundary.
    \qedhere
  \end{enumerate}
\end{definition}

Note that  if $f \co V \to X$ be a $k$--pseudo-cycle and $g \co W \to X$ is an $\ell$--pseudo-cycle,
then $f \times g \co V \times W \to X^2$ is a $(k+\ell)$--pseudo-cycle.

\begin{definition}
  \label{Def_PseudocyclesInGeneralPosition}
  Let $(f_\lambda \co V_\lambda \to X)$ be a collection of pseudo-cycles indexed by a finite set $I$.
  We say that $(f_\lambda)_{\lambda \in I}$ are \defined{in general position} if the pseudo-cycle
  \begin{equation*}
    \prod_{\lambda\in I} f_\lambda \co \prod_{\lambda\in I} V_\lambda \to X^{\abs{I}}
  \end{equation*}
  is transverse as a pseudo-cycle to all generalized diagonals in $X^{\abs{I}}$; see \autoref{Def_GeneralizedDiagonal} for the definition of a generalized diagonal. 
  This is equivalent to the following condition: for every subset $S\subset I$, the pseudo-cycle $\prod_{\lambda\in S} f_{\lambda}$ is transverse as a pseudo-cycle to the diagonal $X \hookrightarrow X^{\abs{S}}$.   
  
  Similarly, if one of $f_\lambda$ is a cobordism between two pseudo-cycles, then so is $\prod_{\lambda \in I} f_\lambda$ and we require that it is transverse to all generalized diagonals as a pseudo-cycle cobordism. 
\end{definition}

\begin{prop}
  Given a finite collection of pseudo-cycles $(f_\lambda \co V_\lambda \to X)_{\lambda \in I}$, the set
  \begin{equation*}
    \set*{
      (\phi_\lambda)_{\lambda\in I} \in\Diff(X)^{\abs{I}}
      :
      (\phi_\lambda \circ f_\lambda)_{\lambda\in I} \text{ are in general position}
    }
  \end{equation*}
  is residual in $\Diff(X)^{\abs{I}}$. 
\end{prop}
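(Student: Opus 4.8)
The plan is to run a parametric transversality (Sard--Smale) argument in which the diffeomorphisms $\phi_\lambda$ play the role of abstract perturbations, in the spirit of the proof of \autoref{Prop_TransversalityPseudoCycles}. The first step is to unwind what has to be arranged. By \autoref{Def_PseudocyclesInGeneralPosition} together with \autoref{Def_PseudocycleTransversality}, the tuple $\paren{\phi_\lambda\circ f_\lambda}_{\lambda\in I}$ is in general position precisely when, for every ordered subset $S\subset I$, the product $\prod_{\lambda\in S}\paren{\phi_\lambda\circ f_\lambda}$ \emph{and} a map covering $\bdry\paren[\big]{\prod_{\lambda\in S}f_\lambda}$ are transverse, as smooth maps, to the diagonal $X\hookrightarrow X^{\abs{S}}$ (the ``additionally'' clauses of \autoref{Def_PseudocycleTransversality} are vacuous here, the diagonal being a map from a manifold without boundary). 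Since $\phi_\lambda$ is a diffeomorphism, $\phi_\lambda\circ f_\lambda^\del$ is a boundary map for $\phi_\lambda\circ f_\lambda$, and by \autoref{Prop_ProductOfPseudocyclesIsAPseudocycle}---more precisely the formula for the boundary of a product of pseudocycles established in its proof---a covering map for $\bdry\paren[\big]{\prod_{\lambda\in S}\paren{\phi_\lambda\circ f_\lambda}}$ may be taken to be the disjoint union, over all $b\co S\to\set{0,1}$ with $b\not\equiv 0$, of the products $\prod_{\lambda\in S}\paren{\phi_\lambda\circ f_\lambda^{b(\lambda)}}$, where $f_\lambda^{0}\coloneq f_\lambda$ and $f_\lambda^{1}\coloneq f_\lambda^\del$; each such summand has domain of dimension at most $\dim\paren[\big]{\prod_{\lambda\in S}V_\lambda}-2$ because $\dim V_\lambda^\del\le\dim V_\lambda-2$. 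Since a map out of a disjoint union is transverse to a submanifold if and only if each summand is, it therefore suffices to prove that for every ordered subset $S\subset I$ and every $b\co S\to\set{0,1}$ the set
\begin{equation*}
  \mathcal R_{S,b}
  \coloneq
  \set*{
    (\phi_\mu)_{\mu\in I}\in\Diff(X)^{\abs{I}}
    :
    \prod_{\lambda\in S}\paren*{\phi_\lambda\circ f_\lambda^{b(\lambda)}}
    \text{ is transverse to the diagonal } X\hookrightarrow X^{\abs{S}}
  }
\end{equation*}
is residual; the set in the statement is then the intersection of the finitely many $\mathcal R_{S,b}$, hence residual.

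The heart of the matter is that the corresponding universal map is a submersion, so no nontrivial transversality argument is needed for it. Fixing $S$ and $b$, set $N\coloneq\prod_{\lambda\in S}V_\lambda^{b(\lambda)}$ and define
\begin{equation*}
  \Phi\co \Diff(X)^{\abs{I}}\times N\to X^{\abs{S}},
  \qquad
  \Phi\paren[\big]{(\phi_\mu)_{\mu},(x_\lambda)_{\lambda}}
  \coloneq
  \paren[\big]{\phi_\lambda\paren{f_\lambda^{b(\lambda)}(x_\lambda)}}_{\lambda\in S}.
\end{equation*}
At any point, writing $y_\lambda\coloneq f_\lambda^{b(\lambda)}(x_\lambda)$, a tangent vector to $\Diff(X)$ at $\phi_\lambda$ is represented by a vector field $v_\lambda$ on $X$ (in the standard identification $T_{\phi_\lambda}\Diff(X)\iso\Gamma(TX)$), and the induced infinitesimal variation of the $\lambda$--th component of $\Phi$ is $v_\lambda\paren{\phi_\lambda(y_\lambda)}$. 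Since the indices in $S$ are pairwise distinct the $v_\lambda$ vary independently, and each $v_\lambda$ may be chosen (compactly supported, by a cut-off) so as to attain any prescribed value at the single point $\phi_\lambda(y_\lambda)$. Hence the derivative of $\Phi$, already in the $\Diff(X)^{\abs{S}}$--directions, is onto $TX^{\abs{S}}$, so $\Phi$ is a submersion and in particular transverse to the diagonal $X\hookrightarrow X^{\abs{S}}$.

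Finally I would invoke the Sard--Smale theorem. Since $N$ is finite-dimensional, passing to $\Diff^r(X)$ with $r$ finite makes $\Phi^{-1}$ of the diagonal a Banach manifold and the projection $\Phi^{-1}(\text{diagonal})\to\Diff^r(X)^{\abs{I}}$ a Fredholm map, whose regular values---a residual set, by Sard--Smale---are exactly the tuples $(\phi_\mu)$ for which $\Phi$, with the diffeomorphisms held fixed, is transverse to the diagonal, i.e.\ the elements of $\mathcal R_{S,b}$ computed in $\Diff^r(X)^{\abs{I}}$. The passage back to $\Diff(X)$ with the $C^\infty$ topology is then the standard one (cf.\ \cite[Sections 3.2 and 6.3]{McDuff2012}, and the proof of \autoref{Prop_TransversalityPseudoCycles}). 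Intersecting over the finitely many pairs $(S,b)$ gives the proposition; the case in which one of the $f_\lambda$ is a pseudocycle cobordism is identical, using $F^\del$ and the notion of transversality as a pseudocycle cobordism from \autoref{Def_PseudocycleTransversality}. I expect the only genuinely delicate point to be this last $\Diff^r\to\Diff^\infty$ step, which is routine; the submersivity of $\Phi$, which carries all of the transversality, is immediate.
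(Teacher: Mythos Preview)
Your argument is correct and follows essentially the same route as the paper's proof: both consider the universal map $\Diff(X)^{\abs{I}}\times\prod_\lambda V_\lambda^\bullet\to X^{\abs{S}}$, observe that it is a submersion because the evaluation $\Vect(X)\to T_pX$ is surjective at each factor independently, invoke Sard--Smale on a $C^r$ completion, and pass to $C^\infty$ via the standard mechanism. Your unpacking of the boundary map of the product into the disjoint union over $b\co S\to\set{0,1}$ is exactly what the paper covers with its parenthetical remark that ``the same argument can be applied to $f_\lambda^\del$''.
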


\begin{proof}
  The proof is similar to that of \cite[Lemma 6.5.5]{McDuff2012}.
  Let us work with the group $\Diff_k(X)$ of $C^k$ diffeomorphism for any integer $k\geq 1$; the corresponding statement for $\Diff(X)$ follows then using standard arguments \cite[pp. 52--54, Remark 3.2.7]{McDuff2012}. 
  A countable intersection of residual sets is residual;
  therefore, without loss of generality, consider the case $S=I$ in \autoref{Def_PseudocyclesInGeneralPosition}.
  Define the map $\sF \co \Diff_k(X)^{\abs{I}} \times \prod_{\lambda\in I} V_\lambda \to X^{\abs{I}}$ by
  \begin{equation*}
    \sF\paren*{(\phi_\lambda)_{\lambda\in I}, (x_\lambda)_{\lambda\in I}}
    \coloneq
    \paren*{\phi_\lambda\circ f_\lambda(x_\lambda)}_{\lambda\in I}.
  \end{equation*}  
  Let $\Delta \subset X^{\abs{I}}$ be the diagonal. 
  If we show that $\sF$ is transverse to $\Delta$,
  then it follows from the Sard--Smale theorem that for all $(\phi_\lambda)_{\lambda\in I}$ from a residual subset of $\Diff_k(X)$ the maps $\prod\phi_\lambda\circ f_\lambda$ is transverse to $\Delta$. 
  (The same argument can be applied to $f_\lambda^\del$ to conclude transversality as pseudo-cycles.)
  In fact, the derivative of $\sF$ is surjective at every point $\bx = ((\phi_\lambda)_{\lambda\in I}, (x_\lambda)_{\lambda\in I})$.
  Without loss of generality suppose that $\phi_\lambda = \id$ for all $\lambda\in I$.
  Let $\Vect_k(X)$ denote the space of $C^k$ vector fields on $X$.
  Given
  \begin{equation*}
    \bxi = (\xi_\lambda)_{\lambda \in I} \in \prod_{\lambda \in I} T_{\id}\Diff_k(X) = \prod_{\lambda\in I} \Vect_k(X),
  \end{equation*} 
  we have
  \begin{equation*}
    \rd_\bx\sF (\bxi) = ( \xi_\lambda( f_\lambda(x_\lambda)) )_{\lambda\in I} \in  \prod_{\lambda \in I} T_{f_\lambda(x_\lambda)} X.
  \end{equation*}
  Since for every $p \in X$ the evaluation map $\Vect(X) \to T_p X$ is surjective, the map $\rd_\bx \sF$ is surjective, which finishes the proof.
\end{proof}



\section{Proof of \texorpdfstring{$n_{A,g} = \BPS_{A,g}$}{n=BPS}}
\label{Sec_N=BPS}

In this section, we outline Zinger's proof that for a primitive Calabi--Yau class
\begin{equation*}
 n_{A,g}(X,\omega) = \BPS_{A,g}(X,\omega),
\end{equation*}
 where $\BPS_{A,g}(X,\omega)$ is the Gopakumar--Vafa invariant defined in terms of the Gromov--Witten invariants via \autoref{Eq_CY3GopakumarVafa}.
We use the same notation as in the proof of \autoref{Thm_CY3PrimitiveGV}.

Given $J\in\sJ^\star_\emb(X,\omega)$, every stable $J$--holomorphic map of arithmetic genus $h$ factors through a $J$--holomorphic embedding from a smooth domain of genus $g\leq h$.
In other words, every element of $\overline\sM_{A,h}(X,J)$ is of the form $[u\circ\varphi]$ for some $[u] \in \sM_{A,g}^\star(X,J)$ with $g\leq h$, and $[\varphi]\in\overline\sM_{[\Sigma],h}(\Sigma,j)$. 
Here $(\Sigma,j)$ is the domain of $u$. 
Denote by $(\tilde\Sigma,\tilde\nu,\tilde j)$ the domain of $\varphi$. 
Given such $J$--holomorphic maps, let $N$ be the normal bundle of $u(\Sigma)$, and let
\begin{equation*}
  \fd_{u}^N \co W^{1,p}\Gamma(\Sigma, u^*N) \to L^p\Omega^{0,1}(\Sigma,u^*N)
\end{equation*}
be the restriction of the operator $\fd_{u} = \fd_{u, j;J}$ to the subbundle $u^*N \subset u^*TX$ followed by the projection on $\tilde u^*N$. 
Similarly,
we define 
\begin{equation*}
  \fd_{\tilde u}^N \co W^{1,p}\Gamma(\tilde\Sigma, \tilde\nu;\tilde u^*N) \to L^p\Omega^{0,1}(\tilde\Sigma,\tilde u^*N).
\end{equation*}
The spaces $\coker\fd_{\tilde u}^N$, as $\varphi$ varies, play an important role in computing the contribution of maps factoring through $u$ to the Gromov--Witten invariant of $(X,\omega)$. 
In this case, there is a simple description of these spaces.

First, we will see that $\ker\fd_u^N =\set{0}$ and $\coker\fd_u^N = \set{0}$. 
Indeed, the Hermitian metric on $u^*TX$ induced from $X$ gives us a splitting $u^*TX = T\Sigma\oplus N_u$, with respect to which
\begin{equation*}
  \fd_u =
  \begin{pmatrix}
    \delbar_{T\Sigma} & \ast \\
    0 & \fd_u^N
  \end{pmatrix}; 
\end{equation*}
see, for example, \cite[Appendix A]{Doan2018}. 
Since $u$ is unobstructed, i.e. $\coker\fd_u = \set{0}$, and $\ind(u) = 0$, we have $\ker \fd_u^N = \set{0}$ and $\coker\fd_u^N = \set{0}$.

Second, since $\varphi \co (\tilde\Sigma,\tilde\nu,\tilde j) \to (\Sigma,j)$ has degree one, $(\tilde\Sigma,\tilde\nu,\tilde j)$ has a unique irreducible component which is mapped by $\varphi$ biholomorphically to $(\Sigma,j)$, and $\varphi$ is constant on the other components.
In particular, $\tilde u^*N$ is trivial over these components.
It follows that $\ker\fd_{\tilde u}^N \iso \set{0}$ and $\coker\fd_{\tilde u}^N$ is the direct sum of the corresponding spaces for the standard $\delbar$--operator with values in the trivial bundle $\tilde u^*N$ over the components which are mapped to a point by $\varphi$. 

In this situation, the following is a special instance of \cite[Theorem 1.2]{Zinger2011}.

\begin{prop}
  ~
  \begin{enumerate}
     \item
     The family of vector spaces $\coker\fd_{u\circ\varphi}^N$, as $[\tilde\Sigma,\tilde\nu,\tilde j,\varphi]\in\overline\sM_{[\Sigma],h}(\Sigma,j)$ varies, forms an oriented orbibundle $\fO_h(\Sigma,j,u) \to \overline\sM_{[\Sigma],h}(\Sigma,j)$, called the \defined{obstruction bundle}.
     \item 
     Denoting by $[\overline\sM_{[\Sigma],h}(\Sigma,j)]^\text{vir}$ the virtual fundamental class and by  
     $e(\fO_h(\Sigma,j,u))$ the Euler class of the obstruction bundle, we have 
     \begin{equation*}
       \GW_{A,h}(X,\omega) = \sum_{g=0}^h \sum_{[u]\in\sM_{A,g}^\star(X,J)}\sign(\Sigma,j,u)\inner{e(\fO_h(\Sigma,j,u))}{[\overline\sM_{[\Sigma],h}(\Sigma,j)]^\text{vir} }. 
     \end{equation*}
   \end{enumerate}
\end{prop}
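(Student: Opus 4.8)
This proposition is, as noted, a special case of \cite[Theorem 1.2]{Zinger2011}, and the plan is to deduce it from that theorem by checking that its geometric hypotheses hold in the present situation and unwinding what it produces. The essential input is a stratification of the compactified moduli space. By Gromov's compactness theorem, \autoref{Thm_CY3PrimitiveGenericLimitHasSmoothDomain}, and the fact recalled above that for $J \in \sJ_\emb^\star(X,\omega)$ every stable $J$--holomorphic map of arithmetic genus $h$ representing $A$ factors through a $J$--holomorphic embedding, together with the property that distinct simple $J$--holomorphic maps have disjoint images, one obtains a decomposition
\[
  \overline\sM_{A,h}(X,J) \;=\; \coprod_{g=0}^{h}\;\coprod_{[u]\in\sM_{A,g}^\star(X,J)} \cZ_{[u]}, \qquad \cZ_{[u]} \cong \overline\sM_{[\Sigma_u],h}(\Sigma_u,j_u),
\]
into open and closed substacks, where $(\Sigma_u,j_u)$ is the domain of $u$ and the identification sends $[\varphi]$ to $[u\circ\varphi]$. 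First I would make this decomposition precise, including its compatibility with the Gromov topologies and the orbifold structures.

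For part (1), fix $[u] \in \sM_{A,g}^\star(X,J)$ and work along the stratum $\cZ_{[u]}$. Because $u$ is an embedding, the $\omega$--orthogonal, $J$--invariant splitting $u^*TX = T\Sigma_u \oplus N$ is globally defined, and with respect to it $\fd_{u\circ\varphi}$ is block upper triangular with diagonal entries $\delbar_{T\Sigma_u}$ (pulled back by $\varphi$) and $\fd^N_{u\circ\varphi}$, exactly as recalled before the proposition. The vanishing $\ker\fd^N_u = \coker\fd^N_u = \set{0}$, a consequence of $u$ being unobstructed of index zero, propagates across the nodes to give $\ker\fd^N_{u\circ\varphi} = \set{0}$; and on the contracted components $\fd^N_{u\circ\varphi}$ is a complex--linear Dolbeault operator on a trivial bundle, so $\coker\fd^N_{u\circ\varphi}$ is a complex vector space whose dimension, being $-\ind\fd^N_{u\circ\varphi}$ and hence computable from the index formula (the rank-$(n-1)$ analogue of \eqref{Eq_IndexFormula}), is constant along $\cZ_{[u]}$. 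Upper semicontinuity of the cokernel combined with constancy of rank then shows these spaces assemble into an orbibundle $\fO_h(\Sigma_u,j_u,u) \to \overline\sM_{[\Sigma_u],h}(\Sigma_u,j_u)$, whose fibres, being complex, carry canonical orientations; the identification of the fibre over $[\varphi]$ with $H^0$ of the dualizing sheaf of the contracted subcurve tensored with $N$ is \autoref{Prop_HGamma=SectionsOfDualizingSheaf}. This proves (1).

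For part (2), I would invoke the excess--intersection principle for virtual fundamental classes. Along $\cZ_{[u]}$ a Kuranishi chart for $\overline\sM_{A,h}(X,J)$ is obtained from a Kuranishi chart for $\overline\sM_{[\Sigma_u],h}(\Sigma_u,j_u)$ by adjoining the trivial normal deformation space $\ker\fd^N_{u\circ\varphi} = \set{0}$ and the extra obstruction bundle $\fO_h(\Sigma_u,j_u,u)$; localizing the virtual class to this stratum therefore produces the contribution $\inner{e(\fO_h(\Sigma_u,j_u,u))}{[\overline\sM_{[\Sigma_u],h}(\Sigma_u,j_u)]^{\mathrm{vir}}}$, up to a sign, and summing over all strata gives the asserted formula. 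The sign is $\sign(\Sigma_u,j_u,u)$: the obstruction directions and the tangent--obstruction complex of $\overline\sM_{[\Sigma_u],h}(\Sigma_u,j_u)$ (the moduli space of stable maps to a smooth complex curve) are complex and hence positively oriented, so the only non-complex ingredient in orienting the virtual class near $\cZ_{[u]}$ is $\det\fd_u$, whose complex orientation differs from the one used to define the sign of $[u]$ as a point of the oriented zero--dimensional manifold $\sM_{A,g}^\star(X,J)$ by precisely that factor.

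The main obstacle is making the localization in the last step rigorous in the symplectic category: it relies on the full machinery of virtual fundamental classes (Kuranishi structures, Pardon's VFC, or polyfolds) and on a functoriality statement for the virtual class with respect to the above stratification, together with careful orientation bookkeeping. Since this is exactly the content of \cite[Theorem 1.2]{Zinger2011}, the honest work of the proof is the reduction described in the first two paragraphs — verifying that the geometric hypotheses required by Zinger's theorem are supplied by $J \in \sJ_\emb^\star(X,\omega)$ and by \autoref{Thm_CY3PrimitiveGenericLimitHasSmoothDomain}.
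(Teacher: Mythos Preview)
The paper does not give a proof of this proposition at all: it is stated immediately after the sentence ``In this situation, the following is a special instance of \cite[Theorem 1.2]{Zinger2011}'' and that citation \emph{is} the proof. Your proposal is consistent with this --- you correctly identify that the content is Zinger's theorem and that the only thing to check is that his hypotheses are met by the choice $J \in \sJ_\emb^\star(X,\omega)$ together with \autoref{Thm_CY3PrimitiveGenericLimitHasSmoothDomain} --- so there is nothing to compare: you have simply written out more of the reduction than the paper does.
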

\citet[Section 2.3]{Pandharipande1999} proved that
for $g \coloneq g(\Sigma)$,
\begin{equation*}
  \sum_{h=g}^\infty \inner{e(\fO_h(\Sigma,j,u))}{[\overline\sM_{[\Sigma],h}(\Sigma,j)]^\text{vir} }t^{2h-2}
  =
  t^{2g-2} \paren*{\frac{\sin(t/2)}{t/2}}^{2g-2}
\end{equation*}
Therefore, after changing the order of summation $\sum_{h=0}^\infty\sum_{g=0}^h = \sum_{g=0}^\infty\sum_{h=g}^\infty$, we obtain
\begin{equation*}
  \sum_{h = 0}^\infty \GW_{A,h}(X,\omega) t^{2h-2}
  =
  \sum_{g=0}^\infty n_{A,g}(X,\omega)   t^{2g-2} \paren*{\frac{\sin(t/2)}{t/2}}^{2g-2}.
\end{equation*}
Since the numbers $\BPS_{A,g}(X,\omega)$ are uniquely determined by the Gopakumar--Vafa formula \autoref{Eq_CY3GopakumarVafa} \cite[Section 2]{Bryan2001},
$n_{A,g}(X,\omega) = \BPS_{A,g}(X,\omega)$. 


 
\printreferences

\end{document}
